\sloppy \pagestyle{plain}
\newtheorem{theorem}[equation]{Theorem}
\newtheorem*{theorem*}{Theorem}
\newtheorem{lemma}[equation]{Lemma}
\newtheorem{corollary}[equation]{Corollary}
\newtheorem{proposition}[equation]{Proposition}
\theoremstyle{definition}
\newtheorem{example}[equation]{Example}
\newtheorem{definition}[equation]{Definition}
\newtheorem*{definition*}{Definition}
\theoremstyle{remark}
\newtheorem{remark}[equation]{Remark}
\makeatletter\@addtoreset{equation}{section}
\def \KK {\mathbb{K}}
\def \LL {\mathbb{L}}
\def \Z {\mathbb{Z}}
\def \Q {\mathbb{Q}}
\def \A {\mathbb{A}}
\newcommand{\PP}{\mathbb{P}}
\newcommand{\G}{\mathbb{G}}
\newcommand{\Gm}{\mathbb{G}_{\mathrm{m}}}
\newcommand{\Ga}{\mathbb{G}_{\mathrm{a}}}
\def \Mat {\mathrm{Mat}}
\def \pr {\mathrm{pr}}
\newcommand{\cT}{\mathcal{T}}
\newcommand{\cS}{\mathcal{S}}
\newcommand{\cX}{\mathcal{X}}
\newcommand{\cO}{{\mathcal O}}
\newcommand{\Br}{\operatorname{Br}}
\newcommand{\Hom}{\operatorname{Hom}}
\newcommand{\Spec}{\operatorname{spec}}
\newcommand{\Aut}{\operatorname{Aut}}
\newcommand{\Iso}{\operatorname{Iso}}
\newcommand{\GL}{\operatorname{GL}}
\newcommand{\SL}{\operatorname{SL}}
\newcommand{\Bir}{\operatorname{Bir}}
\newcommand{\PGL}{\operatorname{PGL}}
\newcommand{\Res}{\operatorname{Res}}
\newcommand{\coker}{\operatorname{coker}}
\newcommand{\Ann}{\operatorname{Ann}}
\newcommand{\Id}{\operatorname{Id}}
\newcommand{\Proj}{\operatorname{Proj}}
\newcommand{\rk}{\operatorname{rk}}
\newcommand{\ad}{\operatorname{ad}}
\newcommand{\Pic}{\operatorname{Pic}}
\newcommand{\Char}{\operatorname{char}}
\newcommand{\Gal}{\operatorname{Gal}}
\newcommand{\rar}[1]{\stackrel{#1}{\longrightarrow}}
\newcommand{\Norm}{\operatorname{Norm}}
\newcommand{\iso}{\buildrel{\sim}\over{\longrightarrow}}
\def \et {{\acute e}t}
\def \ge {\geqslant}
\def \le {\leqslant}
\title{Automorphisms of pointless surfaces}
\author{Constantin Shramov and Vadim Vologodsky}
\address{\emph{Constantin Shramov}
\newline
\textnormal{Steklov Mathematical Institute of RAS,
8 Gubkina street, Moscow 119991, Russia.
}
\newline
\textnormal{National Research University Higher School of Economics, Laboratory of Algebraic Geometry, NRU HSE, 6 Usacheva str., Moscow, 117312, Russia.
}
\newline
\textnormal{\texttt{costya.shramov@gmail.com}}}
\address{\emph{Vadim Vologodsky}
\newline
\textnormal{National Research University Higher School of Economics,  Laboratory of Mirror Symmetry, NRU HSE, 6 Usacheva str., Moscow, 117312, Russia.
}
\newline
\textnormal{\texttt{vologod@gmail.com}}}
\begin{document}

\begin{abstract}
For a geometrically
rational surface $X$ over an arbitrary field of characteristic
different from $2$ and $3$ that contains
all roots of~$1$,
we show that either $X$ is birational to a product of a projective line and a conic,
or the group of birational automorphisms of $X$ has bounded finite subgroups.
As a key step in the proof,
we show boundedness of finite subgroups in any anisotropic reductive
algebraic group over a perfect field that  contains
all roots of~$1$.
Also, we provide applications to Jordan property for
groups of birational automorphisms.
\end{abstract}

\maketitle
\tableofcontents

\section{Introduction}

The group of birational automorphisms $\Bir(X)$ of an algebraic variety $X$ can be rather difficult to understand.
However, in many cases the structure of its finite subgroups is more accessible.
An amazing example of this phenomenon is provided by the following theorem.
We shall say that a field $\KK$ \emph{contains all roots of~$1$},
 if, for every positive integer~$n$, the polynomial $x^n-1$ splits completely in $\KK[x]$.

\begin{theorem}[{\cite[Corollary 4.11]{BandmanZarhin2015a}}]
\label{theorem:Zarhin-conic}
Let $\KK$ be a field of characteristic zero
that contains all roots of~$1$.
Let $C$ be a conic over $\KK$.
Assume that $C$ is not rational, i.e.,
that~\mbox{$C(\KK)=\varnothing$}. Then
every non-trivial element of finite order in $\Aut(C)$
has order $2$,
and every finite subgroup of~\mbox{$\Aut(C)$}
has order at most~$4$.
\end{theorem}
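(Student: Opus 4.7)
The plan is to identify $\Aut(C)$ with the group of units of a quaternion division algebra $A$ over $\KK$ modulo scalars, and then to combine Kummer theory with the multiplication table of $A$. The only genuinely nontrivial input will be the hypothesis that $\KK$ contains all roots of unity, which enters in the Kummer step; the remainder is a direct extraction of consequences of the quaternion algebra structure.

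\textbf{Step 1: identification of $\Aut(C)$.} Since $C$ is a smooth conic with $C(\KK)=\varnothing$, it is the Severi--Brauer variety of a non-split quaternion algebra $A$ over $\KK$, and $\Aut(C)(\KK)$ is naturally identified with $A^\times/\KK^\times$, where $A^\times=A\setminus\{0\}$ (which coincides with the invertible elements of $A$ because $A$ is a division algebra).

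\textbf{Step 2: bounding the order of a single element.} Let $g\in A^\times$ project to an element of order $n>1$ in $A^\times/\KK^\times$. Since $g\notin\KK$, the subalgebra $L=\KK(g)\subset A$ is a maximal commutative subalgebra, hence a quadratic field extension of $\KK$. Because $\Char\KK=0$ and $\KK$ contains all roots of unity, Kummer theory gives $L=\KK(\sqrt d)$ for some $d\in\KK^\times$, and the nontrivial element $\sigma\in\Gal(L/\KK)$ negates $\sqrt d$. Put $\zeta:=\sigma(g)/g\in L$; then $\zeta^n=\sigma(g^n)/g^n=1$ since $g^n\in\KK$, so $\zeta$ is a root of unity. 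By hypothesis $\zeta\in\KK$, and applying $\sigma$ once more gives $\zeta\sigma(\zeta)=\zeta^2=1$, whence $\zeta=\pm 1$. The case $\zeta=1$ contradicts $g\notin\KK$, while $\zeta=-1$ forces $g=b\sqrt d$ for some $b\in\KK^\times$, so $g^2\in\KK^\times$ and $g$ has order $2$ modulo $\KK^\times$.

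\textbf{Step 3: bounding finite subgroups.} By Step 2 any finite subgroup $G\subset A^\times/\KK^\times$ is $2$-torsion, hence abelian. For distinct nontrivial $g_1,g_2\in G$, pick lifts $\tilde g_i\in A^\times$ with $\tilde g_i^2\in\KK^\times$. Since $\tilde g_i^2$ is central, the commutator $[\tilde g_1,\tilde g_2]\in\KK^\times$ satisfies $[\tilde g_1,\tilde g_2]^2=1$, so it equals $\pm 1$. If it were $+1$, then $\tilde g_2$ would lie in the centralizer $\KK(\tilde g_1)=\KK(\sqrt d)$, and the Kummer computation from Step 2 applied to $\tilde g_2\in L$ would force $\tilde g_2\in\KK\cdot\sqrt d=\KK\cdot\tilde g_1$, contradicting $g_1\neq g_2$. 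Hence $\tilde g_1,\tilde g_2$ anticommute, and $\{1,\tilde g_1,\tilde g_2,\tilde g_1\tilde g_2\}$ is a $\KK$-basis of $A$. A direct eigenspace decomposition of $A$ for simultaneous conjugation by $\tilde g_1$ and $\tilde g_2$ then shows that each of the four joint $(\pm 1,\pm 1)$-eigenspaces is one-dimensional, spanned by $1$, $\tilde g_1$, $\tilde g_2$, or $\tilde g_1\tilde g_2$. Since every element of the abelian group $G$ commutes with $g_1$ and $g_2$ in $A^\times/\KK^\times$, this gives $G\subseteq\{1,g_1,g_2,g_1g_2\}$ and so $|G|\le 4$.
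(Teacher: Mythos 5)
Your proof is correct. Step 1 is exactly the paper's Lemma~\ref{lemma:SB-Aut} specialized to $n=2$; Step 2 is sound (the only point worth making explicit is that $\zeta$ lies in the field $L$, so it is among the roots of $x^n-1$, all of which already lie in $\KK$ by hypothesis, and then $\zeta\sigma(\zeta)=1$ forces $\zeta=\pm1$); and Step 3 correctly deduces $c^2=1$ from the centrality of $\tilde g_1^{2}$ and rules out $c=+1$ via the double-centralizer description $C_A(\tilde g_1)=\KK(\tilde g_1)$.

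The route differs from the paper's in the second half. The paper does not reprove this statement directly but obtains it (over any field of characteristic $\neq 2$, and over any perfect field) as the case $n=2$ of Proposition~\ref{proposition:SB}: the exponent bound comes from Lemma~\ref{lemma:SB-exponent}, which analyzes the minimal polynomial of a lift and shows it has the form $y^r-a$ with $r\mid n$ — morally the same Kummer-theoretic computation as your Step 2, phrased via irreducibility of $y^r-b$ and the constraint that subfield degrees divide $n$. For the order bound $|G|\le 4$, however, the paper passes to $\PGL_n(\KK^{sep})$ and invokes the Burnside-type theorem of Herzog–Praeger (Theorem~\ref{theorem:Burnside} via Lemma~\ref{lemma:PGL-order-vs-exponent}), whereas you give a self-contained quaternion computation with anticommuting lifts and a joint eigenspace decomposition of $A$ under the two conjugations. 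Your argument avoids the external input entirely and yields the sharper structural conclusion $G\subseteq\{1,g_1,g_2,g_1g_2\}\cong(\Z/2\Z)^2$ (cf.\ Corollary~\ref{corollary:Zarhin-improved}); the paper's method is less explicit here but scales to division algebras of arbitrary degree $n$, which is what the rest of the paper needs.
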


In this paper we study finite subgroups of the group  of birational automorphisms  of geometrically rational surfaces and certain higher-dimensional varieties.
A group $\Gamma$ is said to have \emph{bounded finite subgroups},
if there exists a constant~\mbox{$B=B(\Gamma)$} such that,
for any finite subgroup
$G\subset\Gamma$, one has $|G|\le B$.
If this is not the case, we say that $\Gamma$ \emph{has unbounded finite subgroups}.

One of the important results concerning
boundedness of finite subgroups of birational automorphism groups is the
following theorem proved by Yu.\,Prokhorov and C.\,Shramov.

\begin{theorem}[{\cite[Theorem~1.6]{ProkhorovShramov-3folds}}]
\label{theorem:rational-surface-vs-BFS}
Let $\KK$ be a field of characteristic zero
that contains
all roots of~$1$, and let $X$ be a geometrically rational surface over $\KK$.
Assume that $X$ is not rational but has a $\KK$-point.
Then the group $\Bir(X)$ has bounded finite subgroups.
\end{theorem}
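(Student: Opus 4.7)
The plan is to apply the Minimal Model Program equivariantly to each finite subgroup $G\subset\Bir(X)$. After a $G$-equivariant resolution of indeterminacies followed by a $G$-MMP, one arrives at a smooth projective surface $Y=Y_G$ birational to $X$ on which $G$ acts biregularly, and which is a $G$-Mori fiber space over $\KK$: either (a) a del Pezzo surface with $G$-invariant Picard rank $1$, or (b) a $G$-equivariant conic bundle $\pi\colon Y\to C$ with $G$-invariant relative Picard rank $1$. Throughout, $Y$ inherits non-$\KK$-rationality from $X$, while $\KK$-unirationality of $X$ (a classical consequence of $X$ being geometrically rational with a $\KK$-point) ensures $Y(\KK)\neq\varnothing$; in case (b) this forces $C\cong\PP^1_\KK$ by projecting a $\KK$-point.

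In case (a), if $\deg Y\geqslant5$ then a classical theorem of Enriques--Manin--Iskovskikh asserts that any del Pezzo surface of degree at least $5$ over $\KK$ with a $\KK$-point is $\KK$-rational, contradicting non-rationality of $Y$. Hence $\deg Y\leqslant 4$, and for such $Y$ the geometric automorphism group $\Aut(Y_{\bar\KK})$ is finite (its action on the configuration of exceptional curves embeds it into the Weyl group of type $D_5$, $E_6$, $E_7$, or $E_8$). Therefore $|G|\leqslant|\Aut(Y)|\leqslant|\Aut(Y_{\bar\KK})|$ is bounded by an absolute constant.

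In case (b), I would use the short exact sequence
$$
1\longrightarrow G_\pi\longrightarrow G\longrightarrow G_B\longrightarrow 1,
$$
where $G_\pi$ acts trivially on the base and $G_B\subset\Aut(\PP^1_\KK)=\PGL_2(\KK)$. Any element of $G_\pi$ is determined by its action on the generic fiber $Y_\eta$, a smooth conic over $\KK(t)$. If $Y_\eta(\KK(t))\neq\varnothing$, then $\pi$ admits a rational section and $Y$ is birational to $\PP^1\times\PP^1$, hence $\KK$-rational, a contradiction. Since $\KK(t)$ also contains all roots of $1$, Theorem~\ref{theorem:Zarhin-conic} applied to $Y_\eta$ yields $|G_\pi|\leqslant 4$. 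For $G_B$, I would exploit that it preserves the discriminant divisor $\Delta\subset\PP^1_\KK$ of $\pi$ together with the Galois-theoretic structure on components of singular fibers, so that $G$-minimality combined with non-$\KK$-rationality of $Y$ constrains $\Delta$ rigidly enough that the stabilizer of $\Delta$ in $\PGL_2(\KK)$ has uniformly bounded finite subgroups.

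The main obstacle is the conic bundle case, specifically the uniform bound on $|G_B|$. The kernel $G_\pi$ is handled cleanly by Theorem~\ref{theorem:Zarhin-conic}, and the del Pezzo case reduces to a finite combinatorial check, but controlling the base action requires a careful interplay between $G$-minimality of $\pi$, the Galois rigidity of the components of singular fibers, and the hypothesis that $\KK$ contains all roots of $1$. In particular, one must rule out scenarios in which $G_B$ is a large cyclic subgroup of $\PGL_2(\KK)$ preserving very few points of $\Delta$, since over $\KK$ such cyclic actions can exist in abundance; it is precisely the non-$\KK$-rationality of $Y$ together with the Galois structure on $\Delta$ that excludes them.
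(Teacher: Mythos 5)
Your overall architecture (equivariant MMP, dichotomy into del Pezzo surfaces and $G$-conic bundles, bounding the fiberwise part via Theorem~\ref{theorem:Zarhin-conic} applied to the generic fiber over $\KK(t)$) is the same as the paper's, and your treatment of the del Pezzo case is correct and in fact simpler than the paper's, since the hypothesis $X(\KK)\neq\varnothing$ lets you invoke rationality of pointed del Pezzo surfaces of degree $\ge 5$ and reduce to surfaces with finite geometric automorphism group. The kernel $G_\pi$ in the conic bundle case is also handled correctly.

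However, there is a genuine gap exactly where you flag it: you never establish the uniform bound on $|G_B|$, and the difficulty is not merely technical. The stabilizer of the discriminant $\Delta$ in $\PGL_2(\KK)$ is bounded in terms of $|\Delta(\bar\KK)|$ only when $|\Delta(\bar\KK)|\ge 3$, and even then the bound depends on $|\Delta(\bar\KK)|$ --- which depends on the $G$-minimal model $Y_G$ and hence on $G$, so it is not a priori uniform over all finite subgroups of $\Bir(X)$. The missing ingredient is the following two-step argument. First, factor the $G$-conic bundle $Y\to\PP^1$ through a relatively minimal (not $G$-equivariant) conic bundle $Y'\to\PP^1$; although $G$ need not act regularly on $Y'$, one still has $G\subset\Bir(Y,\phi)=\Bir(Y',\phi')$, and elements of $\Bir(Y',\phi')$ descend to automorphisms of $\PP^1$ preserving the discriminant of $\phi'$. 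Second, invoke Iskovskikh's theorem (Theorem~\ref{theorem:conic-bundles-negative-degree}) that $K_{Y'}^2$ is the same for all relatively minimal conic bundle models of $X$ once $K_{Y'}^2\le 0$; since the number of degenerate geometric fibers equals $8-K_{Y'}^2$, this number is bounded by a constant depending only on $X$ (and by $7$ if $K_{Y'}^2\ge 1$). This is what makes the bound on $|G_B|$ uniform in $G$. Finally, the degenerate cases $|\Delta(\bar\KK)|\le 2$, where the stabilizer contains a one-dimensional torus with unbounded finite subgroups, must be excluded separately: zero degenerate fibers forces $Y$ birational to $\PP^1\times(\text{conic})$ (rational here, a contradiction), one degenerate fiber contradicts relative minimality (Lemma~\ref{lemma:CB-1-fiber}), and two degenerate fibers either again yield $\PP^1\times(\text{conic})$ (Lemma~\ref{lemma:CB-2-fibersbis}) or form a single Galois orbit, in which case the relevant automorphism group of $\PP^1$ minus a quadratic point is an anisotropic torus extension with finite subgroups of order at most $4$ (Corollary~\ref{corollary:P1-without-2-pts}). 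Without these steps the conic bundle case, and hence the theorem, is not proved.
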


The first main result of the present paper is the following.

\begin{theorem}\label{theorem:main}
Let $\KK$  be a field  that contains
all roots of~$1$.
Assume that either $\Char \KK$ is different from~$2$ and~$3$, or $\KK$ is a perfect field of characterstic~$3$.
Let $X$ be a geometrically rational surface over $\KK$.
Then the group $\Bir(X)$ has bounded finite subgroups if and only if
$X$ is not birational to~\mbox{$\PP^1\times C$}, where $C$ is a conic.
\end{theorem}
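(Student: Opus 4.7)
\emph{Only if direction.} If $X$ is birational to $\PP^1\times C$ for some conic $C$, then $\Bir(X)\cong\Bir(\PP^1\times C)$ contains the subgroup $\PGL_2(\KK)=\Aut(\PP^1)(\KK)$ acting on the first factor. Since $\KK$ contains all roots of unity, $\PGL_2(\KK)$ contains $\mumu_n(\KK)\cong\Z/n\Z$ for every positive integer $n$ via the diagonal embedding $\mumu_n\hookrightarrow\Gm\hookrightarrow\PGL_2$, so $\Bir(X)$ has unbounded finite subgroups.

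\emph{If direction.} Assume $X$ is not birational to $\PP^1\times C$ for any conic $C$; in particular $X$ is not $\KK$-rational, since $\PP^2$ is birational to $\PP^1\times\PP^1$. The plan is to pass to a $\KK$-minimal model $Y$ of $X$, so that $\Bir(X)=\Bir(Y)$, and then invoke the Iskovskikh-type classification of $\KK$-minimal geometrically rational surfaces: $Y$ is either a del Pezzo surface with $\Pic(Y)^{\Gal_\KK}\cong\Z$, or $Y$ admits a $\KK$-rational conic bundle structure $\pi\colon Y\to B$ over a smooth conic $B$. If $Y(\KK)\neq\varnothing$, then $Y$ cannot be $\KK$-rational (else birational to $\PP^1\times\PP^1$, excluded), and Theorem~\ref{theorem:rational-surface-vs-BFS} finishes the argument. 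Thus the substantive case is $Y(\KK)=\varnothing$.

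In the del Pezzo case with $Y(\KK)=\varnothing$ and $\Pic(Y)^{\Gal_\KK}\cong\Z$, the identity component $\Aut(Y)^0$ is a reductive $\KK$-group, and it must be anisotropic: any split subtorus acting on the proper surface $Y$ would have a $\KK$-rational fixed point, contradicting pointlessness. The key technical step announced in the abstract—boundedness of finite subgroups of anisotropic reductive groups over a perfect field containing all roots of unity—then bounds $\Aut(Y)^0(\KK)$, and combined with the finite component group this bounds $\Aut(Y)(\KK)$. To upgrade from $\Aut(Y)$ to $\Bir(Y)$ I will apply the Iskovskikh–Manin theory of Sarkisov links between $\KK$-minimal geometrically rational surfaces: for pointless del Pezzos every rival minimal model reached by a link is either itself pointless (hence anisotropic, with its automorphism group bounded by the same key step) or is birationally $\PP^1\times C$, excluded by hypothesis.

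In the conic bundle case $\pi\colon Y\to B$ with $Y(\KK)=\varnothing$, the generic fibre $Y_\eta$ is a conic over $\KK(B)$; if $Y_\eta$ had a $\KK(B)$-point then $Y$ would be birational over $B$ to $\PP^1\times B=\PP^1\times C$ with $C=B$, contradicting the hypothesis, so $Y_\eta$ is a pointless conic over $\KK(B)$, and Theorem~\ref{theorem:Zarhin-conic} applied over the field $\KK(B)$ gives an absolute bound on the subgroup of $\Bir(Y)$ acting fibrewise. The induced action on $B$ must preserve the discriminant divisor $\Delta\subset B$ of $\pi$: if $\Delta=\varnothing$ then $\pi$ is smooth and $Y$ is birational to $\PP^1\times B$, excluded; if $\Delta$ has at least three geometric points then its stabiliser in $\Aut(B)$ is finite of bounded order; and $\Aut(B)$ itself is anisotropic reductive if $B$ is pointless, bounded again by the key step. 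The \emph{main obstacle} I foresee is to treat the residual low-discriminant degenerate cases and, above all, to control the quotient $\Bir(Y)/\Bir_\pi(Y)$ arising from Sarkisov links between distinct conic bundle or del Pezzo minimal models birational to $Y$; the strategy is to show that each alternative model obtained in this way is either itself anisotropic, hence bounded by the key step, or falls into the excluded birational class $\PP^1\times C$, so that the desired uniform bound follows from the finiteness of the link combinatorics.
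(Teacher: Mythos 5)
Your ``only if'' direction is fine and matches the paper. The ``if'' direction, however, has a genuine structural gap at exactly the point you flag as the ``main obstacle,'' and the way you propose to close it is not the right mechanism. The paper does \emph{not} fix a single $\KK$-minimal model $Y$ and then try to control $\Bir(Y)/\Aut(Y)$ via Sarkisov links. Instead, for \emph{each} finite subgroup $G\subset\Bir(X)$ separately, it regularizes the action (replaces $X$ by a smooth projective model $\tilde X$ with $G\subset\Aut(\tilde X)$) and runs the $G$-equivariant minimal model program, landing on a $G$-minimal model $Y_G$ \emph{depending on $G$}, which by Iskovskikh's classification is a $G$-del Pezzo surface or a $G$-conic bundle. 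The del Pezzo case is then handled by explicit bounds (Proposition~\ref{proposition:dP} and Remark~\ref{remark:dP-low-degree}). In the conic bundle case the real difficulty is that the number of degenerate fibres of $Y_G\to C$ is not a priori bounded as $G$ varies; this is resolved by Theorem~\ref{theorem:conic-bundles-negative-degree} (Iskovskikh), which makes $K^2$ of a relatively minimal conic bundle model a birational invariant once $K^2\le 0$, hence bounds the discriminant degree by a constant depending only on $X$, after which Proposition~\ref{proposition:conicsummary} applies. Your sketch never bounds $|\Delta|$ uniformly, and your plan to enumerate ``alternative models reached by links'' does not address the basic problem that a finite subgroup of $\Bir(Y)$ need not act biregularly on $Y$ or on any model obtained from $Y$ by a fixed chain of links.

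Two further specific errors. First, you invoke Theorem~\ref{theorem:rational-surface-vs-BFS} when $Y(\KK)\neq\varnothing$, but that theorem is stated only in characteristic zero, while the statement you are proving allows positive characteristic; this step is circular there (indeed the positive-characteristic case with a rational point is part of what Theorem~\ref{theorem:main} newly establishes). Second, your claim that a split subtorus acting on a proper pointless surface would have a $\KK$-rational fixed point is false: $\Gm$ acting on the second factor of $C\times\PP^1$, with $C$ a pointless conic, has fixed locus $C\times\{0,\infty\}$, which has no $\KK$-points. The correct argument, used in the paper, is Lemma~\ref{lemma:quotient-by-Gm}: a non-trivial $\Gm$-action forces $Y$ to be birational to $\PP^1\times Y'$, which is excluded by hypothesis.
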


According to a theorem of Lang and Nishimura, if a smooth projective variety  $X$ over a field $\KK$ has a $\KK$-point, then the same is true for any  smooth projective variety  $X'$ which is birational to $X$; see for instance~\mbox{\cite[Proposition~IV.6.2]{Kollar-1996-RC}}.
It follows that if a smooth projective variety $X$
is birational to $\PP^1\times Y$, for some smooth projective variety $Y$, then~$X$ has a $\KK$-point if and only if $Y$ has a  $\KK$-point.
Thus, Theorem~\ref{theorem:main} is a generalization of
Theorem~\ref{theorem:rational-surface-vs-BFS}.

In  Example~\ref{example:main-counterexample} and Lemma~\ref{p-center}(ii)  we show that the group $\Bir(X)$ may have unbounded finite subgroups if the assumptions on the characteristic
of $\KK$ in  Theorem~\ref{theorem:main}(ii) are dropped. However, in small characteristics  we can still prove a weaker result stated in  Theorem \ref{theorem:main-char} below.

Given an integer $m>0$,  we say that a group $\Gamma$
\emph{has  $m$-bounded finite subgroups}
if there exists a constant~\mbox{$B=B(\Gamma)$} such that,
for any finite subgroup
$G\subset\Gamma$, one has $|G|'\le B$,  where~$|G|'$ is
the largest factor of $|G|$ which is coprime to $m$.
We say that $\Gamma$ has \emph{$0$-bounded subgroups}
if it has bounded subgroups in the usual sense.

\begin{theorem}\label{theorem:main-char}
Let $\KK$  be a field of characteristic $2$ or $3$ that contains
all roots of~$1$, and let~$X$ be a geometrically rational surface over $\KK$.
Then the group $\Bir(X)$ has $\Char \KK$-bounded finite subgroups
if and only if~$X$ is not birational to~\mbox{$\PP^1\times C$}, where $C$ is a conic.
\end{theorem}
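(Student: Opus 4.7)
The plan is to imitate the proof of Theorem~\ref{theorem:main} while systematically replacing ``bounded finite subgroups'' by ``$p$-bounded finite subgroups'' throughout, where $p=\Char\KK\in\{2,3\}$. The ``only if'' direction is immediate: if $X$ is birational to $\PP^1\times C$, then $\PGL_2(\KK)=\Bir(\PP^1_\KK)$ sits inside $\Bir(X)$ via the first factor, and since $\KK$ contains a primitive $n$-th root of $1$ for every $n$ coprime to $p$, the group $\PGL_2(\KK)$ contains cyclic subgroups of every such order $n$. These give finite subgroups $G\subset\Bir(X)$ with $|G|'=n$ arbitrarily large, so $\Bir(X)$ fails to have $p$-bounded finite subgroups.

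For the ``if'' direction, I fix a finite subgroup $G\subset\Bir(X)$ and pass to a smooth projective $G$-equivariant model of $X$; running the $G$-equivariant minimal model program produces a $G$-Mori fibre space $X'$. There are two cases: either $X'$ is a $G$-del Pezzo surface with $\rk\Pic(X')^G=1$, or $X'$ admits a $G$-Mori conic bundle structure~\mbox{$\pi\colon X'\to B$}. In the del Pezzo case, $\Aut(X')$ sits inside a reductive algebraic group over $\KK$ whose identity component is anisotropic modulo centre, and the key theorem on finite subgroups of anisotropic reductive groups (announced in the abstract) bounds the prime-to-$p$ part~$|G|'$ in terms of $X'$ alone; since del Pezzo surfaces of fixed degree form a bounded family over $\KK$, this suffices. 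In the conic bundle case, the hypothesis that $X$ is not birational to $\PP^1\times C$ forces either $B$ to be a pointless conic, or the generic fibre $X'_\eta$ over $\KK(B)$ to be a pointless conic — for otherwise a birational modification would trivialise $\pi$. Applying Theorem~\ref{theorem:Zarhin-conic} to $B$ and its generic-fibre analogue bounds the images of $G$ in $\Aut(B)$ and in $\Aut(X'_\eta)$, again on the prime-to-$p$ level, and these two bounds combine to bound~$|G|'$.

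The main obstacle will be to establish the $p$-coprime analogue of Theorem~\ref{theorem:Zarhin-conic} over $\KK(B)$ in positive characteristic $p=2,3$, since the original statement is formulated in characteristic zero. However, $\KK(B)$ still contains all roots of~$1$, and the subgroups whose order one needs to bound are coprime to $p$; the characteristic-zero proof, which proceeds by analysing cyclic subgroups of $\Aut(C)$ for a pointless conic $C$ in terms of norm-one elements of a quadratic extension, carries over provided one restricts attention to elements of order coprime to~$p$. Unipotent $p$-subgroups — the source of the counterexamples in Example~\ref{example:main-counterexample} and Lemma~\ref{p-center}(ii) — are automatically discarded by this restriction. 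In this way every step of the proof of Theorem~\ref{theorem:main} that invokes boundedness of finite subgroups translates into a statement about prime-to-$p$ orders, and the overall argument goes through.
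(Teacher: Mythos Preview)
Your overall strategy mirrors the paper's, and the ``only if'' direction as well as the del Pezzo case are essentially correct (if somewhat sketchy). However, your treatment of the conic bundle case has a genuine gap.

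You write that the hypothesis forces either $B$ to be pointless or the generic fibre $X'_\eta$ to be pointless, and that applying a $p$-coprime analogue of Theorem~\ref{theorem:Zarhin-conic} to each of these ``combines to bound $|G|'$''. But consider the case $B\cong\PP^1$ (with $X'_\eta$ pointless). The kernel of $\Bir(X',\phi)\to\Aut(B)$ is indeed $\Aut(X'_\eta/\KK(B))$, and its prime-to-$p$ finite subgroups are bounded by the conic result over~$\KK(B)$. But the \emph{image} of $G$ lands in $\Aut(\PP^1)=\PGL_2(\KK)$, which contains finite subgroups of arbitrarily large prime-to-$p$ order. Nothing you have said constrains this image. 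The pointlessness of~$X'_\eta$ gives no control over automorphisms of the base.

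The missing ingredient is that the image of $G$ in $\Aut(\PP^1)$ must preserve the discriminant locus $\Omega$ of a relatively minimal model $Y'\to\PP^1$, so it lies in the stabiliser $\Aut(\PP^1-\Omega)$. When $|\Omega(\bar\KK)|\geq 3$ this group is finite of order at most $|\Omega(\bar\KK)|!$, but you still need (a) a bound on $|\Omega(\bar\KK)|$ that depends only on the birational class of $X$ and not on the particular $G$-minimal model, and (b) a separate analysis when $|\Omega(\bar\KK)|\leq 2$. Point~(a) is nontrivial: it requires Iskovskikh's rigidity result (Theorem~\ref{theorem:conic-bundles-negative-degree}) that $K_{Y'}^2$ is a birational invariant once $K_{Y'}^2\leq 0$. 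Point~(b) requires the content of Lemmas~\ref{lemma:CB-1-fiber}--\ref{lemma:CB-2-fibersbis} and Corollary~\ref{corollary:P1-without-2-pts}. Without these, your bound on $|G|'$ depends on the $G$-minimal model $X'$, which varies with $G$, and is therefore not uniform.
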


Given  a geometrically rational surface $X$ it could be hard to decide whether it is birational to~\mbox{$\PP^1\times C$}, for some conic  $C$, or not. However, in some cases, one can show
that~$X$ is not birational to~\mbox{$\PP^1\times C$} by analyzing  the kernel of the  pullback
homomorphism~\mbox{$\Br(\KK) \to \Br(X)$} of the Brauer groups. Using this approach we obtain the following
corollary of Theorems~\ref{theorem:main} and~\ref{theorem:main-char}.

\begin{corollary}\label{corollary:SB-Bir}
Let  $\KK$ be a field that contains
all roots of~$1$, and let  $X$ be one of the following
\begin{itemize}
\item  a  Severi--Brauer surface
 over $\KK$ without $\KK$-points;

\item a product  of two non-rational  non-isomorphic conics;

\item a  smooth quadric $X$ in $\PP^3$ with no $\KK$-points  and  $\Pic(X)\cong \Z$.
\end{itemize}
Then the group $\Bir(X)$ has $\Char \KK$-bounded finite subgroups. Moreover, if either
the characteristic of $\KK$ is different from~$2$  and~$3$ or $\KK$ is a perfect field of characterstic~$3$ then
the group $\Bir(X)$  has bounded finite subgroups.
\end{corollary}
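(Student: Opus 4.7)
The plan is to apply Theorems~\ref{theorem:main} and~\ref{theorem:main-char} to each of the three cases. This reduces the corollary to the assertion that, in every case, the surface $X$ is not birational to $\PP^1\times C$ for any conic $C$ over~$\KK$. To prove non-birationality I use, as suggested by the paragraph preceding the corollary, the kernel $\ker\bigl(\Br(\KK)\to\Br(X)\bigr)$, which is a birational invariant for smooth projective geometrically rational surfaces (the geometric Brauer group vanishes, and $\Br_1$ is a birational invariant for surfaces). For $\PP^1\times C$ the pullback factors through $\Br(\KK)\to\Br(C)$ using any $\KK$-point of $\PP^1$, and the kernel of the latter is either trivial (if $C\cong\PP^1$) or the order-$2$ subgroup generated by $[C]$; in either case it is $2$-torsion of order at most~$2$.

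For each case I would compute $\ker\bigl(\Br(\KK)\to\Br(X)\bigr)$ via the Hochschild--Serre low-degree sequence, which after Hilbert~$90$ reads
\[
0\to\Pic(X)\to\Pic(X_{\bar\KK})^{\Gal}\to\Br(\KK)\to\Br_1(X),
\]
identifying the sought kernel with $\coker\bigl(\Pic(X)\to\Pic(X_{\bar\KK})^{\Gal}\bigr)$. In case~(i), $\Pic(X_{\bar\KK})\cong\Z$ carries trivial Galois action and $\Pic(X)$ sits inside as the subgroup of index~$3$, so the cokernel is $\Z/3$; this is not $2$-torsion and hence rules out birationality to $\PP^1\times C$. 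In case~(ii), $\Pic(X_{\bar\KK})\cong\Z^2$ carries trivial Galois action and $\Pic(X)=2\Z\oplus 2\Z$, so the cokernel is $(\Z/2)^2$, visibly containing the two distinct nonzero classes $[C_1]$ and $[C_2]$; its order~$4$ exceeds the maximal order~$2$ allowed by $\PP^1\times C$. In case~(iii), $\Pic(X_{\bar\KK})\cong\Z^2$ has Galois interchanging the two rulings, so $\Pic(X_{\bar\KK})^{\Gal}$ is the rank-one subgroup generated by the hyperplane class $e_1+e_2$; since this class lies in $\Pic(X)\cong\Z$, the map is surjective and the Brauer kernel vanishes.

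The main obstacle is case~(iii), where the Brauer kernel is too small to contradict birationality to $\PP^1\times C$ directly. I would handle it as follows: if $X$ were birational to $\PP^1\times C$, then $[C]$ would have to vanish, hence $C\cong\PP^1$, hence $X$ would be rational, and so by Lang--Nishimura $X(\KK)\neq\varnothing$, contradicting the hypothesis. Cases~(i) and~(ii), by contrast, fall out directly from the order or the torsion structure of the Brauer kernel. Once the three non-birationality statements are in place, Theorems~\ref{theorem:main} and~\ref{theorem:main-char} immediately yield the claimed $\Char\KK$-boundedness, with ordinary boundedness holding under the stronger hypotheses on $\Char\KK$.
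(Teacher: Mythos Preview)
Your proposal is correct and follows essentially the same approach as the paper: reduce via Theorems~\ref{theorem:main} and~\ref{theorem:main-char} to showing $X$ is not birational to $\PP^1\times C$, then use the birational invariant $\ker\bigl(\Br(\KK)\to\Br(X)\bigr)$ computed through the Hochschild--Serre sequence, together with Lang--Nishimura. The only organizational difference is that the paper applies Lang--Nishimura once at the outset (forcing $C(\KK)=\varnothing$, so the target kernel has order exactly~$2$, which is then compared with $3$, $4$, and $1$), whereas you invoke it only in case~(iii) to rule out $C\cong\PP^1$; both orderings work.
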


Note that a smooth quadric surface with Picard group different from
$\Z$ is  isomorphic to the product  $C\times C$, where $C$ is a conic (see Lemma \ref{lemma:DP8ptnew}(ii)).
The latter is birationally equivalent to $\PP^1 \times C$.

The key step in the proof of Theorem~\ref{theorem:main} is a certain general property of linear algebraic groups, see Theorem \ref{theorem:LAG} below.
To state the result we must  recall a bit of terminology.

A linear algebraic group is said to be \emph{anisotropic}
if it does not contain a subgroup isomorphic to  $\Gm$.
A connected semi-simple algebraic group $\Gamma$ is said to be \emph{simply connected} if every central isogeny $\tilde \Gamma \to \Gamma$, where $\tilde \Gamma$ is
a connected semi-simple group, is  necessarily  an isomorphism. Recall that every connected semi-simple  group $\Gamma$ admits a \emph{universal cover} which is a pair consisting of a connected semi-simple
simply connected group $\tilde \Gamma$ and a central isogeny $\pi\colon \tilde \Gamma \to \Gamma$. The group scheme theoretic kernel of $\pi$ is called the  \emph{algebraic fundamental group}
of $\Gamma$ and denoted by $\pi_1(\Gamma)$. This is a finite group scheme
whose order~$|\pi_1(\Gamma)|$ equals the order of the topological fundamental group of the
connected semi-simple group over $\mathbb{C}$ constructed from the the root datum of $\Gamma_{\bar{\KK}}$.

Let $H$ be a quasi-simple algebraic group over an algebraically closed field
(that is, $H$ has no proper infinite normal closed subgroups).
One defines the set $\mathcal{T}(H)$ of \emph{torsion primes of $H$} to be the empty set  if $H$ has type
$\mathrm{A}_n$ or $\mathrm{C}_n$. If $H$ has type $\mathrm{B}_n$, $\mathrm{D}_n$, or $\mathrm{G}_2$,
we set $\mathcal{T}(H)=\{2\}$; if  $H$ has type $\mathrm{F}_4$, $\mathrm{E}_6$, or $\mathrm{E}_7$, we set
$\mathcal{T}(H)= \{2, 3\}$; if  $H$ has type  $\mathrm{E}_8$, we set $\mathcal{T}(H)= \{2, 3, 5\}$.
Given any connected semi-simple algebraic group $\Gamma$ over a field $\KK$
we say that a prime $p$  is a \emph{torsion prime of $\Gamma$} if  $p$ is a torsion prime for some quasi-simple direct factor of $\tilde \Gamma_{\bar{\KK}}$, where $\tilde \Gamma$
is the universal cover of $\Gamma$.

The second main result of our paper is the following.

\begin{theorem}\label{theorem:LAG}
Let $r$ and $n$ be positive integers.
Then there exists a constant $L=L(r,n)$ with the following property.
Let $\KK$ be a field that contains all roots of~$1$,
and let $\Gamma$ be an anisotropic linear algebraic group
over $\KK$ such that the number of connected components of $\Gamma$ is at most~$r$
and the rank of $\Gamma$ is at most~$n$.
Let $G$ be a finite subgroup of $\Gamma(\KK)$.
The following assertions hold.
\begin{itemize}
\item[(i)]
If $\Gamma$ is reductive and   $\KK$ is perfect, then $|G|\le L$.

\item[(ii)]
Suppose that $\Gamma$ is an arbitrary linear algebraic group. If $\Char\KK>0$, denote by~$|G|'$
the largest factor of $|G|$ which is coprime to
$\Char\KK$; otherwise put~\mbox{$|G|'=|G|$}. Then~\mbox{$|G|'\le L$}.
In particular, the group $\Gamma(\KK)$ has $\Char\KK$-bounded finite subgroups.

\item[(iii)] Assume that $\Gamma$  is  connected,  semi-simple, and  $\Char \KK=p>0$  is not a torsion prime for $\Gamma$.
Write~\mbox{$|\pi_1(\Gamma)| = l p^m$},  for some non-negative integers $m$ and $l$ such that $l$ is coprime to~$p$.
Then $G$
is a semi-direct product $G=H\rtimes S$ of its normal subgroup $H$ whose order is coprime to $p$ and is less than or equal to~$L$,
and an abelian $p$-group $S$ of exponent less than or equal to~$p^m$.
\end{itemize}
\end{theorem}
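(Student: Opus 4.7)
\emph{Proof plan.} The strategy reduces to bounding torsion in anisotropic tori via Galois cohomology of their cocharacter lattices, combined with the Borel--Tits theorem and Jordan decomposition.

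\emph{Step 1 (torsion in anisotropic tori).} For an anisotropic torus $T$ of rank at most $n$ over $\KK$, the Kummer sequence $1 \to T[m] \to T \to T \to 1$ together with the assumption $\mu_m \subset \KK$ identifies $T(\KK)[m]$ with $(X_*(T)/m)^{\Gal}$. Anisotropy of $T$ is equivalent to $X_*(T)^{\Gal} = 0$, and applying the snake lemma to $0 \to X_*(T) \xrightarrow{m} X_*(T) \to X_*(T)/m \to 0$ then yields $T(\KK)[m] \cong H^1(\Gal, X_*(T))[m]$. Hence $T(\KK)_{\mathrm{tors}}$ embeds into the finite group $H^1(\Gal, X_*(T))$, whose order is bounded in terms of $n$ alone: Minkowski's bound controls the image of Galois in $\Aut(X_*(T)) \subset \GL_n(\Z)$, and Galois cohomology of a $\Z$-free module of rank $n$ under a bounded finite group admits uniform bounds. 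In characteristic $p$, tori have no $p$-torsion since $\mu_{p^k}(\bar{\KK})$ is trivial. So $|T(\KK)_{\mathrm{tors}}|$ (or its prime-to-$p$ part in characteristic~$p$) is at most some constant $C(n)$.

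\emph{Step 2 (parts~(i) and~(ii)).} For $\Gamma$ connected reductive anisotropic over perfect $\KK$, the Borel--Tits theorem rules out nontrivial unipotent elements of $\Gamma(\KK)$: any such would lie in the unipotent radical of a proper $\KK$-parabolic, which does not exist. By Steinberg, every semisimple finite-order element of $\Gamma(\KK)$ lies in a maximal $\KK$-torus, necessarily anisotropic, and hence has order at most $C(n)$ by Step~1. Jordan decomposition over perfect $\KK$ then shows every finite-order element of $\Gamma(\KK)$ is semisimple of order at most $C(n)$, so $G$ has bounded exponent. Jordan's theorem for finite linear groups provides a normal abelian subgroup $A \triangleleft G$ of bounded index; since $A$ has bounded exponent and embeds in a maximal torus over $\bar{\KK}$, we obtain $|A| \le C(n)^n$, and the component group contributes a factor $\le r$, establishing part~(i). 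For part~(ii), pass to the perfect closure $\KK^{\mathrm{perf}}$ (anisotropy is preserved under purely inseparable extensions by descent of split tori) and use the Levi decomposition $\Gamma = R_u(\Gamma) \rtimes L$ with $L$ reductive anisotropic; since $R_u(\Gamma)(\KK^{\mathrm{perf}})$ has only $p$-power torsion, the prime-to-$p$ part of $G$ injects into $L(\KK^{\mathrm{perf}})$ and is bounded by part~(i).

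\emph{Step 3 (part~(iii)) and main obstacle.} For $\Gamma$ connected semisimple over $\KK$ in characteristic $p$ with $p$ not a torsion prime, the arguments of Step~2 applied over perfect $\KK$ yield that $\Gamma(\KK)$ has only prime-to-$p$ torsion, so $S$ is trivial and $G = H$ has bounded order coprime to $p$. Over non-perfect $\KK$, the $p$-part of $G$ is analyzed via the central isogeny $\tilde{\Gamma} \to \Gamma$ with kernel $\pi_1(\Gamma)$: the coboundary $\Gamma(\KK) \to H^1(\KK, \pi_1(\Gamma))$ lands in an abelian group, and the hypothesis that $p$ is not a torsion prime forces the $p$-primary part of $\pi_1(\Gamma)$ to be multiplicative of exponent dividing $p^m$, so the relevant cohomology has exponent $\le p^m$. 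This makes the Sylow $p$-subgroup $S$ of $G$ abelian of exponent $\le p^m$, split off as a normal complement to the bounded prime-to-$p$ part $H$. The main obstacle is Step~1, namely the uniform bound on $|H^1(\Gal, X_*(T))|$ in terms of $n$; the non-perfect case of part~(iii) is also delicate, since Jordan decomposition need not give $\KK$-rational components and the $p$-primary structure must be tracked through the group scheme $\pi_1(\Gamma)$ and its Galois cohomology, where the non-torsion-prime hypothesis is essential.
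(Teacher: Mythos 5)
Your Step~1 is essentially the paper's own argument (in the cohomological reformulation the authors attribute to Colliot-Th\'el\`ene): the $d$-torsion of an anisotropic torus injects into $H^1(\Gal,\check T)$, which is killed by the order of the Galois image in $\GL_n(\Z)$ and hence bounded via Minkowski. The global architecture of your Steps~2--3 also matches the paper. But several individual steps fail as written. In Step~2 you invoke a Levi decomposition $\Gamma = R_u(\Gamma)\rtimes L$; Levi decompositions do not exist in general in positive characteristic (Mostow's theorem is a characteristic-zero statement), and you additionally assert without proof that anisotropy descends through $\KK\subset\KK^{\mathrm{perf}}$. The paper avoids both issues: it bounds the exponent of the prime-to-$p$ torsion directly over $\KK$ (where anisotropy is a hypothesis), passes to the \emph{quotient} $\Gamma_{\bar\KK}/\Delta_{\bar\KK}$ by the unipotent radical over $\bar\KK$ (no splitting needed, since the kernel of $G\to(\Gamma_{\bar\KK}/\Delta_{\bar\KK})(\bar\KK)$ is a $p$-group), and deduces (i) from (ii) rather than (ii) from (i). Also, your claim that the normal abelian subgroup $A$ ``embeds in a maximal torus over $\bar\KK$'' is false: a finite abelian group of semisimple elements need not be toral (the Klein four-group in $\PGL_2(\bar\KK)$ generated by $\operatorname{diag}(1,-1)$ and the antidiagonal involution lies in no torus), so the bound $|A|\le C(n)^n$ is unjustified. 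Finally, both Jordan's theorem and any diagonalization bound require a faithful representation of $\Gamma_{\bar\KK}/\Delta_{\bar\KK}$ of dimension $N(n)$ bounded in terms of the rank alone; this needs Chevalley's finiteness theorem for reductive groups of bounded rank, which you never supply. The paper sidesteps Jordan entirely by applying the Herzog--Praeger theorem, which converts the exponent bound $d(n)$ directly into $|G|'\le d(n)^{N(n)}$.

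In Step~3 the shape of your answer is right (central isogeny, coboundary into an abelian group of exponent $p^m$, Sylow $p$-subgroup mapping isomorphically onto $G/H$), but you misplace where the torsion-prime hypothesis enters. It is not needed to make the $p$-primary part of $\pi_1(\Gamma)$ multiplicative of exponent $p^m$ --- that is automatic, since $\pi_1(\Gamma)$ is a subgroup scheme of the center. The hypothesis is needed for Tits' theorem: for $\tilde\Gamma$ simply connected, semisimple, anisotropic, with $p$ not a torsion prime, every element of $\tilde\Gamma(\KK)$ lies in a torus, hence $\tilde\Gamma(\KK)$ has no $p$-torsion even over a non-perfect field. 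This, combined with the factorization $\pi_1(\Gamma)\cong Z\times Z'$ and the fact that $H^1_{fl}(\Spec\KK,Z')$ has no $p$-torsion, is what shows that $H=G\cap\ker N$ is $p$-torsion-free and therefore of order coprime to $p$ and bounded by (ii); your sketch does not establish this, and without it the decomposition $G=H\rtimes S$ does not follow.
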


As an application of Theorem~\ref{theorem:LAG}, 
we obtain the following result.
%Recall that the \emph{irregularity}
%of a smooth projective variety $X$
%is defined to be~$h^1(\mathcal{O}_{X})$.
%Abusing the terminology a little bit, we define the irregularity
%of a singular projective variety in the same way.

\begin{theorem}\label{theorem:family}
Let $\KK$ be a perfect field that contains all roots of~$1$,
and let~$\mathcal{S}$ be  a variety over its algebraic closure~$\bar{\KK}$.
Let $\pi\colon \mathcal{X}\to\mathcal{S}$ be a  projective morphism
such that every fiber $\mathcal{X}_s$ of $\pi$ is an integral 
scheme with~\mbox{$h^1(\mathcal{O}_{\mathcal{X}_s})=0$}.
Then there exists a constant~\mbox{$B=B(\mathcal{X})$}
with the following property.
Suppose that $Y$ is a variety over $\KK$ such that $Y_{\bar{\KK}}$ is isomorphic to the fiber
$\mathcal{X}_s$, for some~\mbox{$s\in S(\bar{\KK})$,
and~$Y$ is not birational to~\mbox{$\PP^1\times Y'$},  for any variety~$Y'$
defined over $\KK$.}
Then every  finite subgroup of $\Aut(Y)$ has order at most~$B$.
\end{theorem}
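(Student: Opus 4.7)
The plan is to realize $\Aut(Y)$ as sitting inside the $\KK$-points of a linear algebraic group whose identity component is anisotropic reductive of uniformly bounded rank---so that Theorem~\ref{theorem:LAG}(i) applies to the connected part---and to separately bound the component part using the action on the N\'eron--Severi group.

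The first step is to extract the desired uniform invariants from $\pi\colon\mathcal{X}\to\mathcal{S}$. Since $Y_{\bar{\KK}}\cong\mathcal{X}_{s}$ has irregularity zero, Chevalley's structure theorem implies that the identity component $\underline{\Aut}^{0}(Y)$ of the automorphism group scheme is an affine, hence linear, algebraic group over $\KK$: the Albanese variety of $Y_{\bar{\KK}}$ has dimension equal to the irregularity and therefore vanishes. Fixing a relatively ample line bundle on $\mathcal{X}/\mathcal{S}$, I invoke Matsusaka--Mumford representability of the relative polarized automorphism scheme; by stratifying $\mathcal{S}$ according to the Hilbert polynomial and using upper semicontinuity, the dimension of $\underline{\Aut}^{0}(\mathcal{X}_{s})$, the rank of $\Pic(\mathcal{X}_{s})$, and the number of connected components of the polarized automorphism scheme are all uniformly bounded over $s\in\mathcal{S}(\bar{\KK})$. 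In particular the rank of $\underline{\Aut}^{0}(Y)$ is uniformly bounded by a constant depending only on the family $\mathcal{X}$.

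The second step shows that the birationality hypothesis forces $\underline{\Aut}^{0}(Y)$ to be reductive and anisotropic over $\KK$. If $\underline{\Aut}^{0}(Y)$ contains a $\KK$-subgroup isomorphic to $\Gm$, then $Y$ carries a faithful $\Gm$-action over $\KK$; decomposing $\KK(Y)$ into weight spaces produces a transcendental $t\in\KK(Y)$ on which $\Gm$ acts by a nontrivial character, so $\KK(Y)=\KK(Y)^{\Gm}(t)$ and hence $Y\sim\PP^{1}\times Y'$ over $\KK$, where $Y'$ is the Rosenlicht quotient, contrary to the hypothesis. Since $\KK$ is perfect, the unipotent radical of $\underline{\Aut}^{0}(Y)$ is defined over $\KK$; if it is nontrivial, then it contains a closed $\KK$-subgroup isomorphic to $\Ga$ (every nontrivial smooth connected unipotent group over a perfect field contains one, via its nontrivial smooth connected commutative center), and an analogous argument using the vanishing of $H^{1}(\KK(Y'),\Ga)$ over a field yields $Y\sim\A^{1}\times Y'\sim\PP^{1}\times Y'$, again a contradiction. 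Therefore $\underline{\Aut}^{0}(Y)$ is reductive and anisotropic.

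Given a finite subgroup $G\subset\Aut(Y)$, Theorem~\ref{theorem:LAG}(i) applied to $\underline{\Aut}^{0}(Y)$---reductive, anisotropic, of rank bounded by the first step, over the perfect field $\KK$ containing all roots of unity---gives $|G\cap\underline{\Aut}^{0}(Y)(\KK)|\le L$ for a constant $L$ depending only on $\mathcal{X}$. The quotient $G/(G\cap\underline{\Aut}^{0}(Y)(\KK))$ embeds into $\pi_{0}(\underline{\Aut}(Y))$, which acts on the finitely generated abelian group $\operatorname{NS}(Y_{\bar{\KK}})$; by Minkowski's lemma the finite image in $\operatorname{Aut}(\operatorname{NS}(Y_{\bar{\KK}}))\subset\GL_{n}(\mathbb{Z})$ is bounded in terms of the uniform bound on $\rk\,\operatorname{NS}(\mathcal{X}_{s})$. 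The main obstacle is controlling the kernel of this action, i.e.\ bounding the finite subgroup of numerically trivial automorphisms modulo $\underline{\Aut}^{0}(Y)$: this requires a family-theoretic argument (for instance, passing to a $G$-invariant polarization $\prod_{g\in G}g^{*}\mathcal{L}$ and reapplying the polarized-automorphism machinery, combined with semicontinuity in the family) to bound the order of this ``symplectic''-like group uniformly in $s$, independently of the individual fiber.
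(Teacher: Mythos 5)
Your overall strategy coincides with the paper's: exhibit $\underline{\Aut}^{\circ}(Y)$ as an anisotropic reductive group of rank bounded by the family, apply Theorem~\ref{theorem:LAG}(i) to it, and control the component part via the action on the (finitely generated, since $q=0$) Picard group together with Minkowski's theorem. The reductivity/anisotropy step is correct and is exactly Corollary~\ref{corollary:unipotent-action} and Lemma~\ref{lemma:quotient-by-Gm} of the paper, and the rank bound via the family is fine once one notes, as the paper does, that $\underline{\Aut}^{\circ}(Y)_{\bar{\KK}}\cong\underline{\Aut}^{\circ}(\mathcal{X}_s,\mathcal{L}_s)$ because a connected group scheme acts trivially on the discrete Picard scheme of a fiber with zero irregularity.

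However, the step you yourself flag as ``the main obstacle'' --- bounding the finite subgroups of the group of automorphisms acting trivially on $\Pic(Y_{\bar{\KK}})$ modulo $\underline{\Aut}^{\circ}(Y)$ --- is a genuine gap as written, and the fix you sketch does not obviously work: the $G$-invariant polarization $\bigotimes_{g\in G}g^{*}L$ has numerical class depending on $G$, so the polarized automorphism scheme it defines is not captured by any single bounded family extracted from $\mathcal{X}\to\mathcal{S}$ in advance, and semicontinuity over $\mathcal{S}$ gives you nothing uniform in $G$. The paper's resolution is simpler and requires no $G$-invariance at all: an automorphism of $\mathcal{X}_s$ that acts trivially on $\Pic(\mathcal{X}_s)$ automatically preserves the isomorphism class of the already-chosen relatively very ample $\mathcal{L}_s$, hence lies in $\underline{\Aut}(\mathcal{X},\mathcal{L})_s(\bar{\KK})$, whose group of connected components is uniformly bounded over $\mathcal{S}$ (after shrinking $\mathcal{S}$, it is a finite \'etale group scheme $\mathcal{C}$). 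Thus $\Gamma_{\Pic}(\mathcal{X}_s)$ meets $\underline{\Aut}^{\circ}(\mathcal{X},\mathcal{L})_s(\bar{\KK})$ in a subgroup of index at most $|\mathcal{C}|$, and combining this with Minkowski applied to $\Aut(\Pic(\mathcal{X}_s))$ --- where both the number of generators and the torsion of $\Pic(\mathcal{X}_s)$ are bounded in the family by Lemma~\ref{lemma:Pic-bounded} --- closes the argument. A related minor point: you should work with $\Pic(Y_{\bar{\KK}})$ rather than $\operatorname{NS}(Y_{\bar{\KK}})$, since an automorphism numerically trivial on $\operatorname{NS}$ could still move the class of $\mathcal{L}_s$ by torsion; because the irregularity vanishes this torsion is finite and bounded in the family, but it must be accounted for.
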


Applying a result of C.\,Birkar on the boundedness of Fano varieties~\cite[Theorem~1.1]{Birkar}, we obtain the following.

\begin{corollary}\label{corollary:Fanos}
Let $\KK$ be a field of characteristic zero that  contains all roots of~$1$.  Then,  for every $\varepsilon>0$ and a  positive integer $n$, there exists a constant
$B(\varepsilon, n)$ with the following property. Suppose that $Y$ is a Fano variety of
dimension~$n$ over $\KK$ with $\varepsilon$-log canonical singularities which is not birational to  $\PP^1\times Y'$,  for any variety~$Y'$. Then the orders of finite subgroups of
$\Aut(Y)$ are bounded by $B(\varepsilon, n)$.
 \end{corollary}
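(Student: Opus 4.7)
The plan is to combine Birkar's boundedness theorem for $\varepsilon$-log canonical Fano varieties with Theorem~\ref{theorem:family}. The strategy proceeds in three steps: produce a bounded family over $\bar\KK$ that parametrizes all such Fanos, verify that the family satisfies the hypotheses of Theorem~\ref{theorem:family} (integral fibers with vanishing irregularity), and then apply that theorem.

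First, by \cite[Theorem~1.1]{Birkar}, the class of $\varepsilon$-log canonical Fano varieties of dimension $n$ over $\bar\KK$ is bounded. Thus there exists a projective morphism $\pi\colon \mathcal{X}\to\mathcal{S}$ of finite-type schemes over $\bar\KK$ such that every $\varepsilon$-log canonical Fano variety of dimension $n$ over $\bar\KK$ is isomorphic to a fiber $\mathcal{X}_s$ for some $s\in \mathcal{S}(\bar\KK)$. After stratifying $\mathcal{S}$ into finitely many locally closed subsets and discarding those strata whose fibers are not $\varepsilon$-log canonical Fanos of dimension $n$ (in particular, any non-integral fibers), we may assume that $\mathcal{S}$ is a disjoint union of finitely many irreducible varieties $\mathcal{S}_i$ over $\bar\KK$, and that every fiber of each restriction~$\pi|_{\mathcal{S}_i}$ is an integral $\varepsilon$-log canonical Fano variety of dimension~$n$.

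Next, we verify that every fiber has irregularity zero. Since $\varepsilon>0$, each fiber is Kawamata log terminal with ample anticanonical divisor, so Kawamata--Viehweg vanishing gives~\mbox{$H^i(\mathcal{X}_s, \mathcal{O}_{\mathcal{X}_s})=0$} for every~\mbox{$i>0$}; in particular the irregularity vanishes, as required.

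Finally, applying Theorem~\ref{theorem:family} to each family~\mbox{$\pi|_{\mathcal{S}_i}\colon \mathcal{X}|_{\mathcal{S}_i}\to \mathcal{S}_i$} yields a constant $B_i$; define $B(\varepsilon,n)$ to be the maximum of the finitely many $B_i$. For any Fano variety $Y$ as in the statement, the base change $Y_{\bar\KK}$ is an $\varepsilon$-log canonical Fano of dimension $n$ (it is geometrically integral because $Y$ is a normal variety and $\KK$ has characteristic zero), hence isomorphic to some $\mathcal{X}_s$; the assumption that $Y$ is not birational to~\mbox{$\PP^1\times Y'$} then lets us invoke Theorem~\ref{theorem:family} to conclude that~\mbox{$|G|\le B(\varepsilon,n)$} for every finite subgroup~\mbox{$G\subset\Aut(Y)$}. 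The only real technical point is the stratification step ensuring that Birkar's output can be refined into a family meeting the hypotheses of Theorem~\ref{theorem:family}; everything else is bookkeeping.
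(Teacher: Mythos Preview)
Your proof is correct and follows exactly the approach the paper intends: the paper does not give a separate proof of this corollary, but simply states that it follows from Theorem~\ref{theorem:family} together with Birkar's boundedness result~\cite[Theorem~1.1]{Birkar}, and you have supplied the expected details (stratifying the bounded family, checking integrality and the vanishing of irregularity via Kawamata--Viehweg, and then invoking Theorem~\ref{theorem:family}). One minor remark: the clause ``geometrically integral because $Y$ is a normal variety and $\KK$ has characteristic zero'' is not literally correct as stated---normality alone does not force geometric integrality---but under any reasonable convention for ``Fano variety'' (in particular geometric connectedness) this is harmless.
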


A particular case of Corollary~\ref{corollary:Fanos}
for automorphism groups of flag varieties was established in~\cite{Atilla}.

One can apply  Theorem~\ref{theorem:LAG} to study the automorphism groups of Severi--Brauer varieties.
For a Severi--Brauer variety $X$ associated to a central simple algebra $A$ over a perfect  field $\KK$
that contains all roots of~$1$, Theorem~\ref{theorem:LAG} implies that $\Aut(X)$ has bounded finite
subgroups if and only if $A$ is a division algebra; see Remark~\ref{remark:SB-via-LAG} for details.
The following proposition, which we prove directly, amplifies this observation.

\begin{proposition}\label{proposition:SB}
Let $\KK$ be a field
that contains all roots of~$1$. Let~$X$ be a Severi--Brauer variety of dimension $n-1$ over~$\KK$,
and let $A$ be the corresponding central simple algebra. Assume that the characteristic
$\Char\KK$ of $\KK$ does not divide $n$.
The following assertions hold.
\begin{itemize}
\item[(i)] The group $\Aut(X)$ has bounded finite
subgroups if and only if $A$ is a division algebra; in particular, if $n$ is a prime number,
then $\Aut(X)$ has bounded finite
subgroups if and only if $X(\KK)=\varnothing$, i.e., $X$ is not isomorphic to~$\PP^{n-1}$.

\item[(ii)] Suppose that $A$ is a division algebra.  Let~$g\in\Aut(X)$ be an element of finite order,
and $G\subset\Aut(X)$ be a finite subgroup. Then $g^{n}=1$ and $|G|\le {n}^{2(n-1)}$.

\item[(iii)] Suppose that  $\Char\KK =0$, $n=3$, and $X(\KK)=\varnothing$. Let $G\subset\Aut(X)$ be a finite subgroup.
Then~\mbox{$|G|\le 27$}.
\end{itemize}
\end{proposition}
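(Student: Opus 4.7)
The plan rests on the identification $\Aut(X)=\PGL_1(A)(\KK)=A^{*}/\KK^{*}$ (the latter equality by Hilbert 90). For part~(i), if $A$ is not a division algebra, write $A\cong \Mat_k(D)$ with $D$ a division algebra and $k\ge 2$; the inclusion of scalar matrices $\Mat_k(\KK)\hookrightarrow A$ induces an embedding $\PGL_k(\KK)\hookrightarrow \Aut(X)$, and since $\KK$ contains all roots of unity the diagonal subgroups $\mu_N^{k-1}\subset \PGL_k(\KK)$ (with $N$ coprime to $\Char \KK$) yield finite subgroups of arbitrarily large order, exhibiting unbounded finite subgroups in $\Aut(X)$. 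Conversely, when $A$ is a division algebra, the bound in part~(ii) gives boundedness. For $n$ prime, $A\cong \Mat_k(D)$ with $k\deg D=n$ forces $k\in\{1,n\}$: $k=n$ gives $A=\Mat_n(\KK)$ and $X\cong \PP^{n-1}$, which has $\KK$-points.

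For part~(ii), assume $A$ is a division algebra and let $g\in \Aut(X)$ have finite order $N$ with lift $\tilde g\in A^{*}$ satisfying $\tilde g^{N}=\lambda\in \KK^{*}$ for $N$ minimal. I would first verify $\gcd(N,\Char \KK)=1$: if $p:=\Char \KK$ divided $N$, then $g^{N/p}$ would have order $p$ and admit a lift $\tilde h\in A\setminus \KK$ with $\tilde h^{p}\in \KK^{*}$; but in characteristic $p$ the polynomial $x^{p}-\tilde h^{p}$ is either the $p$-th power $(x-c)^{p}$ (forcing $\tilde h=c\in \KK$) or irreducible over $\KK$ (forcing $[\KK[\tilde h]:\KK]=p$, which must divide $n$, contradicting $p\nmid n$). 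Hence $L:=\KK[\tilde g]$ is a separable subfield of $A$ of degree $m\mid n$; since $\KK$ contains $\mu_{N}$ and $L$ contains the root $\tilde g$ of $x^{N}-\lambda$, the extension $L/\KK$ is the splitting field of $x^{N}-\lambda$, hence cyclic Galois of degree $m$ by Kummer theory. Let $\sigma$ generate $\Gal(L/\KK)$; then $\sigma(\tilde g)=\zeta\tilde g$ for some $\zeta\in \mu_{N}\cap \KK^{*}$, and the fact that $\tilde g$ generates $L$ forces the order of $\zeta$ to be exactly $m$ (a smaller power $\sigma^{d}$ fixing $\tilde g$ would fix all of $L$). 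The relation $\tilde g^{N}\in \KK^{*}$ then becomes $\zeta^{N}=1$, so $m\mid N$; by minimality $N=m\mid n$, and $g^{n}=1$.

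To bound $|G|\le n^{2(n-1)}$: after base change $G$ embeds as a finite subgroup of $\PGL_n(\bar\KK)$ of exponent $n$. Passing to the preimage $\tilde G\subset \SL_n(\bar\KK)$ under the isogeny with kernel $\mu_{n}$ gives $|\tilde G|=n|G|$ and $\tilde G$ has exponent dividing $n^{2}$; any maximal abelian subgroup of $\tilde G$ consists of commuting semisimple elements, so is simultaneously diagonalizable in a maximal torus $T\cong (\bar\KK^{*})^{n-1}$ of $\SL_n$ and has order at most $|\mu_{n^{2}}^{n-1}|=n^{2(n-1)}$; combined with a Jordan-type analysis of the quotient $\tilde G/\tilde A$ and the commutator pairing on abelian normal subgroups (which lands in $\mu_n$ by the calculation $[\tilde g_1,\tilde g_2]^n=1$ using that $\tilde g_i^n$ is central), this yields the asserted bound on $|G|$. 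For part~(iii), with $n=3$, $\Char \KK=0$, and $X(\KK)=\varnothing$, $A$ is a division algebra of degree $3$ by part~(i), and $G\subset \PGL_{3}(\bar\KK)$ has exponent $3$; the classical Blichfeldt classification of finite subgroups of $\PGL_{3}(\mathbb{C})$ shows that every exponent-$3$ subgroup is contained in a Heisenberg group of order $27$ (realised in $\PGL_3$ as the subgroup generated by a diagonal matrix with entries $1,\omega,\omega^{2}$ and a cyclic permutation matrix), yielding $|G|\le 27$.

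The main obstacle is justifying the explicit constant $n^{2(n-1)}$ in part~(ii). Boundedness per se is immediate from Theorem~\ref{theorem:LAG}(ii) applied to the anisotropic algebraic group $\PGL_1(A)$ (using that $|G|$ is coprime to $\Char \KK$), but extracting the precise bound requires the structural analysis of Heisenberg-type subgroups of $\PGL_n(\bar\KK)$ of exponent $n$ sketched above, leveraging the $\mu_n$-valued commutator pairing on their abelian normal subgroups.
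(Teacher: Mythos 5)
Your identification $\Aut(X)\cong A^*(\KK)/\KK^*$, your part~(i), your part~(iii), and your proof that $g^n=1$ are all sound. The argument for $g^n=1$ is in fact a pleasant alternative to the paper's: where the paper proves a general statement about minimal polynomials of torsion elements of $A^*/\KK^*$ (Lemma~\ref{lemma:SB-exponent}, which also covers the inseparable case), you exploit the hypothesis $\Char\KK\nmid n$ to see directly that $L=\KK[\tilde g]$ is a cyclic Kummer extension, and read off $N=[L:\KK]\mid n$ from the Galois action $\sigma(\tilde g)=\zeta\tilde g$. That is correct and somewhat more self-contained in the case at hand.

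The genuine gap is the explicit bound $|G|\le n^{2(n-1)}$ in part~(ii), which you yourself flag as ``the main obstacle'' and leave as a sketch. The sketch does not close: bounding a maximal abelian subgroup $\tilde A\subset\tilde G$ by $n^{2(n-1)}$ and then invoking ``a Jordan-type analysis of $\tilde G/\tilde A$'' would give $|\tilde G|\le[\tilde G:\tilde A]\cdot n^{2(n-1)}$, and you have no control forcing $[\tilde G:\tilde A]\le n$; the Heisenberg example already shows maximal abelian subgroups can have index $n$ and need not be normal, so the commutator-pairing argument does not obviously recover the stated constant. The tool you are missing is already stated in the paper as Theorem~\ref{theorem:Burnside} (Herzog--Praeger): a finite subgroup of $\GL_N(\KK)$ in which every element of order prime to $\Char\KK$ satisfies $x^e=1$ has order (prime-to-characteristic part) at most $e^N$. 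Applying it to the subgroup $\hat G\subset\GL_n(\bar\KK)$ generated by the preimage $\tilde G\subset\SL_n(\bar\KK)$ of $G$ and the scalar subgroup $\mu_{n^2}$ --- a group of exponent dividing $n^2$ with $|\hat G|=n^2|G|$ --- gives $n^2|G|\le(n^2)^n$, i.e.\ $|G|\le n^{2(n-1)}$, which is exactly how the paper (via Lemma~\ref{lemma:PGL-order-vs-exponent}) derives the constant. With that substitution your proof is complete; without it, part~(ii) establishes only boundedness (via Theorem~\ref{theorem:LAG}), not the stated bound.
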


In particular, if $\KK$ is a perfect field that contains all roots of~$1$, then
Proposition~\ref{proposition:SB} applies to all Severi--Brauer varieties over~$\KK$;
indeed, in this case $\Char\KK$ cannot divide the dimension of a division algebra over~$\KK$,
see Remark~\ref{remark:char-vs-dim-perfect} below. In the case of an arbitrary field whose characteristic divides
the dimension of the division algebra, the structure of finite subgroups of the
corresponding automorphism group is still rather simple,
cf. Theorem~\ref{theorem:LAG}(iii).

\begin{proposition}\label{proposition:SBp}
Let $\KK$ be a field of characteristic $p>0$
that contains all roots of~$1$. Let $A$ be a division algebra over $\KK$ of dimension~$n^2$,
and let $X$ be a corresponding Severi--Brauer variety.
Write~\mbox{$n= n' p^m$} for some non-negative integers $m$ and $n'$ such that $n'$ is coprime to~$p$.
Then every finite subgroup~\mbox{$G\subset\Aut(X)$}
is a semi-direct product $G=H\rtimes S$ of its normal subgroup $H$ whose order is coprime to $p$ and is less than or equal to~${n'}^{2(n-1)}$,
and an abelian $p$-group $S$ of exponent less than or equal to~$p^m$.
\end{proposition}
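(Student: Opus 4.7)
The plan is to derive Proposition~\ref{proposition:SBp} directly from Theorem~\ref{theorem:LAG}(iii), augmented by a reduction to Proposition~\ref{proposition:SB}(ii) on a smaller Severi--Brauer variety to obtain the explicit bound on the prime-to-$p$ part of $G$.

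First, identify $\Aut(X)$ with $\PGL_1(A)(\KK)$ and consider the algebraic group $\Gamma = \PGL_1(A)$. This $\Gamma$ is connected and semisimple of type $\mathrm{A}_{n-1}$; the torsion-prime set for type $\mathrm{A}$ is empty, so $p = \Char\KK$ is not a torsion prime of $\Gamma$. Since $A$ is a division algebra, $\Gamma$ is anisotropic. The universal cover is $\SL_1(A)\to\PGL_1(A)$ with kernel $\mu_n$, so $|\pi_1(\Gamma)|=n=n'p^m$; in the notation of Theorem~\ref{theorem:LAG}(iii), the integer $l$ equals $n'$. Applying that theorem produces the semidirect decomposition $G = H \rtimes S$, where $H$ is normal of order coprime to $p$ and $S$ is an abelian $p$-group of exponent at most $p^m$.

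It remains to prove $|H| \leq (n')^{2(n-1)}$. Because $|H|$ is coprime to $p$ and $\KK$ contains all roots of unity, a Schur-multiplier argument produces a finite lift $\hat H \subset A^*$ of $H$ with $|\hat H|$ coprime to $p$. The $\KK$-subalgebra $R = \KK[\hat H] \subset A$ is then a finite-dimensional domain, hence a division algebra, and it is semisimple because $|\hat H|$ is coprime to $p$. Since all $\bar{\KK}$-character values of $\hat H$ are sums of $|\hat H|$-th roots of unity and therefore lie in $\KK$, the Wedderburn decomposition of $\KK[\hat H]$ shows that each simple summand has center $\KK$; in particular $R$ is a central division $\KK$-algebra. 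Its Schur index $m$ divides the exponent of $\hat H$ and is therefore coprime to $p$; the double centralizer theorem applied to $R\subset A$ forces $m \mid n$, so $m \mid n'$. The inclusion $R^*\hookrightarrow A^*$ descends to an injection $H \hookrightarrow \PGL_1(R)(\KK)$. Applying Proposition~\ref{proposition:SB}(ii) to the Severi--Brauer variety of $R$ (a division algebra of index $m$ coprime to $\Char\KK$) yields
\[
|H| \leq m^{2(m-1)} \leq (n')^{2(n-1)}.
\]

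The chief technical obstacle is the Schur-multiplier lifting. One must verify that the class in $H^2(H,\KK^*)$ of the central extension $1 \to \KK^* \to \tilde H \to H \to 1$, where $\tilde H \subset A^*$ is the preimage of $H$, descends to a class in $H^2(H, \mu_N)$ for some $N$ coprime to $p$; this relies on the hypothesis that $\KK$ contains all roots of unity and on divisibility properties of $\KK^*$ modulo torsion. Once the lift $\hat H$ is in hand, the properties of $R$ follow from standard Wedderburn theory of group algebras, and the bound is immediate from Proposition~\ref{proposition:SB}(ii).
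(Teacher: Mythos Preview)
Your reduction to Theorem~\ref{theorem:LAG}(iii) for the semidirect product structure is correct and is exactly what the paper records in Remark~\ref{remark:SBp-via-LAG}. The difficulty, as you anticipate, is the explicit bound $|H|\le (n')^{2(n-1)}$, and here your argument has a genuine gap.

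The lifting step fails in general. Take $p$ odd, $\KK=\bar{\mathbf{F}}_p(s,t)$, and let $A=(s,t)_{\KK}$ be the quaternion algebra with generators $i,j$ satisfying $i^2=s$, $j^2=t$, $ij=-ji$; this is a division algebra. Let $H=\langle\bar i\rangle\cong\Z/2\Z\subset A^*(\KK)/\KK^*$, so $|H|$ is coprime to $p$. Any lift of the generator has the form $\lambda i$ with $\lambda\in\KK^*$, and $(\lambda i)^2=\lambda^2 s$. For the subgroup generated by $\lambda i$ to be finite one needs $\lambda^2 s$ to be a root of unity, i.e.\ $s\in(\KK^*)^2\cdot\mu_\infty$, which is false since $s$ has odd valuation at the prime $(s)$. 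Hence no finite $\hat H\subset A^*$ surjects onto $H$. Cohomologically, the obstruction lives in $H^2(H,\KK^*/\mu_\infty)$, and your appeal to ``divisibility properties of $\KK^*$ modulo torsion'' is exactly what fails: $\KK^*/\mu_\infty$ is torsion-free but not divisible over such fields, so $H^2(H,\mu_\infty)\to H^2(H,\KK^*)$ is not surjective.

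The paper bypasses lifting entirely. Once $|H|$ is known to be coprime to $p$, Lemma~\ref{lemma:SB-exponent} forces every element of $H$ to have order dividing $n$, hence dividing $n'$. One then embeds $H\subset\PGL_n(\KK^{sep})$ and applies Lemma~\ref{lemma:PGL-order-vs-exponent} with $d=n'$ to obtain $|H|\le(n'\cdot n')^{n-1}=(n')^{2(n-1)}$ directly. (The paper also constructs $H$ concretely as the kernel of the reduced norm map $A^*(\KK)/\KK^*\to\KK^*/(\KK^*)^{p^m}$ and verifies the absence of $p$-torsion by a short norm computation, giving an argument independent of Theorem~\ref{theorem:LAG}.) This route is both shorter and avoids the obstruction above.
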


As another application of  Theorem~\ref{theorem:LAG}, one can prove that the automorphism group of a smooth quadric $Q$ over a field $\KK$ of characteristic different from $ 2$ that contains all roots of~$1$ has bounded finite subgroups if and only if~\mbox{$Q(\KK)=\varnothing$}. In the case when $\KK$ has zero characteristic, this result was proved earlier in \cite[\S4]{BandmanZarhin2015a}. We
find explicit bounds for orders of finite automorphism groups
of quadrics over appropriate fields, thus generalizing the results of
\cite[\S4]{BandmanZarhin2015a} and making them more precise.

\begin{proposition}
\label{proposition:quadricnew}
Let $\KK$ be a field  that contains all roots of $1$. Assume that $\Char \KK \ne 2$ or~$\KK$ is perfect.
Let~\mbox{$n\ge 3$} be an  integer, and
let $Q\subset\PP^{n-1}$ be a smooth quadric hypersurface over~$\KK$.
The following assertions hold.
\begin{itemize}
\item[(i)] The group $\Aut(Q)$ has bounded finite
subgroups if and only if~\mbox{$Q(\KK)=\varnothing$}.

\item[(ii)] If $n$ is odd  and~\mbox{$Q(\KK)=\varnothing$}, then every non-trivial element
of finite order in the group $\Aut(Q)$ has order $2$, and
every finite subgroup of $\Aut(Q)$ has order
at most~$2^{n-1}$.

\item[(iii)] If $n$ is even and~\mbox{$Q(\KK)=\varnothing$}, then every
non-trivial element of finite order in the group $\Aut(Q)$
has order $2$ or $4$, and
every finite subgroup of $\Aut(Q)$ has order
at most~$8^{n-1}$.

\item[(iv)]
If $n=4$ and~\mbox{$Q(\KK)=\varnothing$}, then
every finite subgroup of $\Aut(Q)$ has order
at most~$32$.
\end{itemize}
\end{proposition}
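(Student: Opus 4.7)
The plan is to settle the dichotomy of Part~(i) via Theorem~\ref{theorem:LAG}, and to derive the explicit bounds of Parts~(ii)--(iv) by direct eigenspace analysis inside the orthogonal and projective similitude groups of the defining form $q$. The central technical tool, used throughout, is the following \emph{Key Lemma}: when $q$ is anisotropic, every element of $\mathrm{O}(q)(\KK)$ of finite order is an involution. Indeed, if $g$ has order $m$ then $g$ is diagonalisable over $\KK$ (since $\KK$ contains $\mu_m$), and on each eigenspace $V_\zeta$ the identity $q(gv,gw)=q(v,w)$ yields $(\zeta^2-1)b_q(v,w)=0$, making $V_\zeta$ totally isotropic whenever $\zeta^2\ne 1$; by anisotropy such $V_\zeta$ must vanish. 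In characteristic $2$ the perfectness assumption is used to promote subspaces totally isotropic for the polar bilinear form to subspaces totally isotropic for $q$ itself.

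For Part~(i), if $Q(\KK)\ne\varnothing$ the form $q$ contains a hyperbolic plane and $\PGO(q)$ contains a $\Gm$-subgroup whose $\KK$-points contain $\mu_N(\KK)$ for every $N$, so finite subgroups are unbounded. If $Q(\KK)=\varnothing$, then $\PGO(q)$ is anisotropic, since any algebraic subgroup $\Gm\subset\PGO(q)$ would act on the projective variety $Q$ and produce a $\KK$-rational fixed point, contradicting pointlessness. Theorem~\ref{theorem:LAG}(i) then gives bounded finite subgroups when $\KK$ is perfect; when $\KK$ is non-perfect of odd characteristic, the explicit bounds from Parts~(ii)--(iii) already supply boundedness without any perfectness hypothesis.

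For Part~(ii), when $n$ is odd one has $\PGO(q)=\mathrm{SO}(q)\subset \mathrm{O}(q)$, so the Key Lemma makes each finite subgroup $G\subset\Aut(Q)(\KK)$ an elementary abelian $2$-group $(\Z/2)^{k}$. Simultaneous diagonalisation gives an orthogonal decomposition $V=\bigoplus_\chi V_\chi$; the characters with non-zero eigenspace span $\widehat G$ (so there are at least $k$ of them) and their dimensions sum to $n$ (so at most $n$ of them). The condition $\det(g)=1$ becomes an $\FF_2$-linear relation $\sum_\chi(\dim V_\chi\bmod 2)\,\chi=0$ in $\widehat G$ with coefficient sum $n$; for $n$ odd this relation is non-trivial, forcing $k\le n-1$ and $|G|\le 2^{n-1}$. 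For Part~(iii), given $g\in\PGO(q)(\KK)$ of finite order, lift it to $\tilde g\in\GO(q)(\KK)$ and check directly that $h:=\mu(\tilde g)^{-1}\tilde g^{2}$ lies in $\mathrm{O}(q)(\KK)$ and has finite order; by the Key Lemma $h^2=1$, so $g^4=1$ in $\PGO(q)$. The multiplier homomorphism $\bar\mu\colon\PGO(q)(\KK)\to\KK^*/(\KK^*)^2$ has kernel $\mathrm{PO}(q)(\KK)=\mathrm{O}(q)(\KK)/\{\pm1\}$; setting $G_1:=G\cap\ker\bar\mu$ and lifting to a central $\mu_2$-extension inside $\mathrm{O}(q)(\KK)$, the elementary abelian bound $|H|\le 2^n$ gives $|G_1|\le 2^{n-1}$. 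For the quotient $G/G_1\hookrightarrow\KK^*/(\KK^*)^2$, a shifted form of the eigenspace analysis (with ``$V_\zeta$ totally isotropic unless $\zeta^2=\mu(\tilde g)$'' replacing ``$\zeta^2=1$'') applied to lifts of a coset system yields $|G/G_1|\le 4^{n-1}$, whence $|G|\le 8^{n-1}$.

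For Part~(iv), $Q$ is a form of $\PP^1\times\PP^1$; by Lemma~\ref{lemma:DP8ptnew} either $\Pic(Q)$ has rank $2$ and $Q\cong C\times C'$ for two pointless conics, giving $\Aut(Q)\subset(\Aut(C)\times\Aut(C'))\rtimes\Z/2$ and $|\Aut(Q)|\le 4\cdot 4\cdot 2=32$ by Theorem~\ref{theorem:Zarhin-conic}; or $\Pic(Q)\cong\Z$, in which case the two Galois-conjugate rulings of $Q_{\bar\KK}$ correspond to a quaternion algebra over a quadratic extension, and the bound $32$ follows by combining Proposition~\ref{proposition:SB} with the $\Z/2$-Galois action. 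The main obstacle in this plan is establishing $|G/G_1|\le 4^{n-1}$ in Part~(iii): the image $\bar\mu(G)\subset\KK^*/(\KK^*)^2$ is a priori unconstrained by the field alone, so the bound must be extracted from the joint eigenstructure of the $\GO$-lifts of a coset system using the shifted Key Lemma rather than from purely arithmetic considerations.
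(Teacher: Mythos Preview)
Your approach to parts~(i) and~(ii), and to the order claim in~(iii), matches the paper's closely: the Key Lemma is exactly Lemma~\ref{lemma:subgroups-in-O}, and the splitting $\Aut(Q)\cong\mathrm{SO}(q)(\KK)$ for odd~$n$ is what the paper uses. One caveat: your diagonalisation argument does not survive in characteristic~$2$ (an involution there need not be semisimple). The paper handles this differently, invoking Arf's result (Lemma~\ref{lemma:quadratic-form-in-char-2}) to show that over a perfect field of characteristic~$2$ every smooth quadric of dimension $\ge 1$ has a $\KK$-point; thus the hypotheses of~(ii)--(iv) are vacuous in that case, and only the ``unbounded'' direction of~(i) needs checking.

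The genuine gap is the subgroup bound in~(iii). Your shifted Key Lemma (``$V_\zeta$ totally isotropic unless $\zeta^2=\mu(\tilde g)$'') is literally true, but only over~$\bar\KK$, where anisotropy of~$q$ is lost and you cannot conclude $V_\zeta=0$. Over~$\KK$, what you actually get from $h=\mu(\tilde g)^{-1}\tilde g^2\in\mathrm{O}(q)(\KK)$ and $h^2=1$ is the weaker conclusion $\zeta^2=\pm\mu(\tilde g)$; the $W_-=\ker(\tilde g^2+\mu(\tilde g))$ component can be nonzero and carries a non-degenerate restriction of~$q$. So the eigenstructure does not directly bound the number of multiplier classes, and I see no way to extract $|G/G_1|\le 4^{n-1}$ from this decomposition alone.

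The paper sidesteps this entirely. After establishing $g^4=1$ as you do, it embeds $G\subset\Aut(Q_{\bar\KK})\cong\mathrm{O}(q)(\bar\KK)/\{\pm1\}$, pulls back to $\tilde G\subset\mathrm{O}(q)(\bar\KK)\subset\GL_n(\bar\KK)$ (every element now of order dividing~$8$), and then adjoins the scalar $8$th roots of unity to form $\hat G$; since these scalars are central, the exponent of~$\hat G$ is still~$8$, while $|\hat G|=8|G|$. A single application of the Herzog--Praeger bound (Theorem~\ref{theorem:Burnside}) gives $|\hat G|\le 8^n$ and hence $|G|\le 8^{n-1}$. This avoids the multiplier filtration altogether.

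For~(iv), your outline is in the right spirit, but note that by Lemma~\ref{lemma:DP8ptnew}(ii) a smooth quadric surface is isomorphic to $C\times C$ for a \emph{single} conic~$C$ (not $C\times C'$) or to $R_{\LL/\KK}C_\LL$; the paper packages both cases uniformly via Corollary~\ref{cor:DP8pt}, using Corollary~\ref{corollary:Zarhin-improved} (valid in all characteristics) rather than Theorem~\ref{theorem:Zarhin-conic}.
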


In particular, we see from Proposition~\ref{proposition:SB}(ii) or Proposition~\ref{proposition:quadricnew}(ii)
that Theorem~\ref{theorem:Zarhin-conic} actually holds
over any perfect field, and also over any field of characteristic different from~$2$ (we will use this in the proof of our Theorem~\ref{theorem:main}). Note that over a non-perfect field
of characteristic~$2$ this is not the case: for every conic over  any infinite field
of characteristic~$2$, its automorphism group has unbounded finite subgroups;
see Lemma~\ref{p-center}(ii) below for details.
Note also that one cannot drop the assumption on the existence of  roots of~$1$ in
Theorem~\ref{theorem:Zarhin-conic}, as well as in Propositions~\ref{proposition:SB} and~\ref{proposition:quadricnew}.
Indeed, the conic over the field of real numbers defined by the equation~\mbox{$x^2+y^2+z^2=0$}
has automorphisms of arbitrary finite order.

Next, we provide explicit bounds for orders of finite subgroups in the
automorphism groups of smooth del Pezzo surfaces.

\begin{proposition}\label{proposition:dP}
Let $\KK$ be a field that
contains all roots of~$1$. Let~$X$ be a smooth
del Pezzo surface over $\KK$ with $K_X^2\ge 6$ such that
$X$ is not birational to $\PP^1\times C$, where $C$ is a conic.
Let $G$ be a finite subgroup of $\Aut(X)$.
The following assertions hold.
\begin{itemize}
\item[(i)] If either $\Char \KK$ is different from $2$ and $3$,
or~$\KK$ is perfect, then $|G|\le 432$.

\item[(ii)] If $\Char \KK$  is equal to $2$ or $3$, then
$|G|' \le 432$, where  $|G|'$ is the largest factor of $|G|$ coprime to $\Char \KK$.
\end{itemize}
\end{proposition}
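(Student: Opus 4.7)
The plan is to argue by case analysis on the anticanonical degree $d=K_X^2\in\{6,7,8,9\}$, reducing several cases to previously established propositions and handling the remaining one with Theorem~\ref{theorem:LAG}. First I would show that two of the four degrees are automatically excluded by the hypothesis. At $d=7$, the unique middle $(-1)$-curve in the chain of three geometric $(-1)$-curves is the only one meeting two others, hence is defined over $\KK$; contracting it produces a del Pezzo surface of degree $8$ with a distinguished $\KK$-rational point (the image of the contraction), making this degree-$8$ surface an isotropic form of $\PP^1\times\PP^1$ and thus $\KK$-rational, so $X$ is $\KK$-rational as well. A form of $\mathbb{F}_1$ at $d=8$ admits a unique $(-1)$-curve that likewise descends and contracts to a $\KK$-point of a Severi--Brauer surface $P$, forcing $P\cong\PP^2$ and $X\cong\mathbb{F}_1$, which is $\KK$-rational. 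Both scenarios imply that $X$ is birational to $\PP^1\times\PP^1$ and are therefore excluded.

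For $d=9$ the hypothesis rules out $X\cong\PP^2$, so $X$ corresponds to a non-trivial division algebra, and Proposition~\ref{proposition:SB}(ii) delivers the bound in case~(i) (respectively Proposition~\ref{proposition:SBp} in case~(ii)). For $d=8$ with $X$ a form of $\PP^1\times\PP^1$, the hypothesis combined with the fact that any such form with a $\KK$-point is $\KK$-rational forces $X(\KK)=\varnothing$. If $X$ embeds as a smooth quadric in $\PP^3$, Proposition~\ref{proposition:quadricnew}(iv) gives $|G|\le 32$; otherwise the geometric analysis of forms of $\PP^1\times\PP^1$ (invoking Lemma~\ref{lemma:DP8ptnew}(ii) when applicable) identifies $X$ as a product $C_1\times C_2$ of conics which the hypothesis forces to be non-rational, and the generalized version of Theorem~\ref{theorem:Zarhin-conic} valid in our setting (as recorded after Proposition~\ref{proposition:quadricnew}) bounds each finite subgroup of $\Aut(C_i)$ by $4$, yielding $|G|\le 32$ again after accounting for a possible factor swap of order~$2$.

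The main case is $d=6$. Since $\Aut(X_{\bar{\KK}})^0\cong\Gm^2$, the identity component $T:=\Aut(X)^0$ is a rank-$2$ algebraic torus over $\KK$, and $\pi_0(\Aut(X))$ embeds into the Weyl group $W(A_2\times A_1)\cong S_3\times\Z/2$ of order $12$. The decisive observation is that the hypothesis forces $T$ to be anisotropic: otherwise $T$ would contain $\Gm$, whose action on $X$ produces rational generic orbits and a rational quotient $X\dashrightarrow C$ with $C$ a geometrically rational curve, hence a conic by Lang--Nishimura, so $X$ would be birational to $\PP^1\times C$. With $T$ anisotropic, Theorem~\ref{theorem:LAG}(i) applied with $r=1$ and $n=2$ bounds $|T(\KK)_{\mathrm{tors}}|$ in case~(i), while Theorem~\ref{theorem:LAG}(ii) bounds its prime-to-$\Char\KK$ part in case~(ii); combining with the factor of at most $12$ from the component group yields the stated bound on $|G|$ or $|G|'$. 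The explicit constant $432$ should emerge from a direct enumeration of the finitely many rank-$2$ integer lattices supporting an anisotropic Galois action, together with the relevant torsion computation over a field containing all roots of~$1$.

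I expect the principal obstacle to be the $d=6$ analysis: converting isotropy of $T$ into an honest birational decomposition $X\sim\PP^1\times C$ with $C$ a conic (rather than a generic geometrically rational curve), and extracting the explicit constant $432$ from the rank-$2$ anisotropic-torus computation, both require some care beyond a black-box invocation of Theorem~\ref{theorem:LAG}. A secondary subtlety is the $d=8$ case with $X$ a form of $\PP^1\times\PP^1$, where one must consistently apply the correct previously-established bound across the several possible Picard-rank configurations.
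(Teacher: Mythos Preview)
Your case analysis and overall strategy match the paper's proof closely: exclude $d=7$ and the $\mathbb{F}_1$ form at $d=8$ by rationality, then invoke Propositions~\ref{proposition:SB}/\ref{proposition:SBp} at $d=9$, Corollary~\ref{cor:DP8pt} at $d=8$, and the anisotropic rank-$2$ torus argument at $d=6$. Two points need tightening.

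First, your $d=8$ dichotomy ``quadric versus product $C_1\times C_2$'' is incomplete. By Lemma~\ref{lemma:DP8ptnew}(i) a product-type degree-$8$ surface is either $C\times C'$ or a Weil restriction $R_{\LL/\KK}Q$ for a conic $Q$ over a separable quadratic extension $\LL$; by part~(ii), the latter is a quadric only when $Q\cong C_\LL$ for some conic $C$ over $\KK$. So there exist non-quadric Weil restrictions that are not products of conics over $\KK$ (this is precisely the point of the remark following Lemma~\ref{lemma:DP8ptnew}). The paper handles all three sub-cases uniformly via Corollary~\ref{cor:DP8pt}, using Lemma~\ref{lemma:DP8ptnew}(iii) to identify $\Aut^\circ(R_{\LL/\KK}Q)\cong\Aut(Q)$ and then applying Corollary~\ref{corollary:Zarhin-improved} to the pointless conic $Q$ over $\LL$. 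Note also that invoking Proposition~\ref{proposition:quadricnew}(iv) here is logically delicate, since in the paper that assertion is \emph{deduced from} Corollary~\ref{cor:DP8pt}.

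Second, at $d=6$ your invocation of Theorem~\ref{theorem:LAG} yields only \emph{some} bound $L(1,2)$, not the explicit $432$. The paper instead uses the sharp Lemma~\ref{lemma:tori-bounded-subgroups}, which shows that every finite subgroup of an anisotropic two-dimensional torus over a field containing all roots of~$1$ has order at most $36$; combined with the component-group factor of~$12$ this gives $12\cdot 36=432$. Your instinct that ``the explicit constant $432$ should emerge from a direct enumeration'' is exactly right, and this enumeration is carried out in Lemmas~\ref{lemma:GL2Z-ord-2}--\ref{lemma:tori-bounded-subgroups}. The passage from isotropy of $T$ to $X$ being birational to $\PP^1\times C$ is Lemma~\ref{lemma:quotient-by-Gm}, which handles the conic issue you flagged.
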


We point out that if $\Char \KK$  is equal to $2$ or $3$, the bound provided by
Proposition~\ref{proposition:dP} can be improved using a more accurate case-by-case study, see
Remark~\ref{remark:dP-char-2-3}. Also,
we remind the reader that for a smooth del Pezzo surface $X$
of degree $K_X^2\le 5$ over an arbitrary field the group $\Aut(X)$
is finite, and there is are explicit upper bounds for  its order;
see Remark~\ref{remark:dP-low-degree} for details.

Similarly to~\cite{BandmanZarhin2015a}, where Theorem~\ref{theorem:Zarhin-conic}
was applied to study
groups of birational selfmaps of conic bundles over
non-uniruled varieties, we will
apply Theorem~\ref{theorem:main} to higher dimensional varieties
whose maximal rationally connected
fibration has relative dimension~$2$,
see Proposition~\ref{proposition:Jordan} below.

\smallskip
The plan of our paper is as follows.
In~\S\ref{section:tori} we prove Theorem \ref{theorem:LAG} in the case when $\Gamma$ is a torus. The proof is based on the  Minkowski theorem on finite subgroups of $\GL_n(\Z)$ and elementary Galois theory.

In~\S\ref{section:lin-alg-groups} we study finite subgroups of linear algebraic groups
and prove Theorem~\ref{theorem:LAG}. The idea  of the proof is the following. According to
a result of Borel and Tits, for every connected anisotropic reductive group $\Gamma$ over a perfect field~$\KK$, every element~\mbox{$g\in \Gamma(\KK)$} is contained in $T(\KK)$, for some torus $T\subset \Gamma$. Using the results of~\S\ref{section:tori} we bound the order of $g$. On the other hand, choosing a faithful representation of $\Gamma$
we get an embedding~\mbox{$\Gamma(\KK) \subset \GL_N(\KK)$} for some positive integer $N$. This, together with a Burnside type result due to~\cite{HerzogPraeger} (see Theorem~\ref{theorem:Burnside} below), proves that $\Gamma(\KK)$ has bounded finite subgroups.

In~\S\ref{section:SB} we describe automorphism groups of Severi--Brauer varieties and prove
Propositions~\ref{proposition:SB} and~\ref{proposition:SBp}.
In~\S\ref{section:quadrics} we prove
assertions~\mbox{(i)--(iii)} of
Proposition~\ref{proposition:quadricnew}.
In~\S\ref{section:zero-irregularity} we discuss automorphism groups
of varieties whose first cohomology group
of the structure sheaf is trivial, 
and prove Theorem~\ref{theorem:family}.
In~\S\ref{section:DP} we study groups acting on del Pezzo surfaces, prove
Proposition~\ref{proposition:dP},  and complete the proof
of Proposition~\ref{proposition:quadricnew}.
In~\S\ref{section:conic-bundles} we study groups acting on conic bundles. The results of this section are summarized in Proposition  \ref{proposition:conicsummary} that plays a key role
in the proof of  Theorem~\ref{theorem:main}.

In~\S\ref{section:proof} we prove Theorems~\ref{theorem:main} and~\ref{theorem:main-char}
and Corollary~\ref{corollary:SB-Bir};
we also provide a counterexample to Theorem~\ref{theorem:main} over a perfect field of characteristic~$2$.
The
strategy of our  proof of Theorem~\ref{theorem:main} is the following. For every finite subgroup~\mbox{$G\subset \Bir(X)$}, there exists  a smooth projective surface $Y$ birational to~$X$
such that the induced action of $G$ on $Y$ is regular, that is, one has $G\subset \Aut(Y)$. We may assume that $Y$ is  $G$-minimal.  According
to a well known classification result (see~\cite[Theorem~1G]{Iskovskikh80}),
the surface $Y$ is either a del Pezzo surface or a $G$-conic bundle.
In the former case the order of $G$ is bounded by
universal constants from Proposition~\ref{proposition:dP} and  Remark~\ref{remark:dP-low-degree}. We remark that if $\KK$ is perfect then instead of detailed analysis of the automorphism groups of  del Pezzo surfaces
from Proposition~\ref{proposition:dP} and  Remark~\ref{remark:dP-low-degree}
we could refer to Theorem \ref{theorem:family}: it is well known that smooth del Pezzo surfaces over $\bar{\KK}$ form finitely many families. This
approach does not work for conic bundles. One reason for this is that
we do not have  an {\it a priory}  bound for the Picard number of~$Y_{\bar{\KK}}$, where $Y$ is a
conic bundle appearing as a $G$-minimal model of~$X$. The trick here that we borrow from~\cite{ProkhorovShramov-3folds}
is based on Proposition~\ref{proposition:conicsummary} and a deep result of  Iskovskikh, see Theorem~\ref{theorem:conic-bundles-negative-degree}.

In~\S\ref{section:Jordan} we derive some consequences
of Theorem~\ref{theorem:main} for birational automorphism groups
of higher dimensional varieties, and prove
Jordan property for some of these groups.
Finally, in Appendix~\ref{section:appendix} we collect several
well known facts about quotients of algebraic varieties by
the groups~$\Gm$ and~$\Ga$.

\smallskip
Throughout the paper by $\bar{\KK}$ we denote an algebraic closure
of a field~$\KK$, and by $\KK^{sep}$ we denote a separable closure of~$\KK$
(recall that $\KK^{sep}=\bar{\KK}$ provided that $\KK$ is perfect;
we prefer to use the notation $\bar{\KK}$
in this case).
Given a variety $X$ or a morphism $\phi$ defined over~$\KK$,
for an arbitrary field extension~\mbox{$K\supset\KK$} we denote
by~$X_{K}$ and~$\phi_{K}$
the corresponding scalar extensions to~$K$, and by $X(K)$ we denote the set of $K$-points of~$X$.
Abusing notation a bit, we write $\PP^n$ for a projective space over a field $\KK$,
and similarly write $\Gm$ and $\Ga$ for the multiplicative and additive groups, respectively.

\smallskip
We are grateful to I.\,Cheltsov,
J.-L.\,Colliot-Th\'el\`ene, A.\,Duncan,
S.\,Gorchinsky, A.\,Kuznetsov, Ch.\,Liedtke, V.\,Popov, Yu.\,Prokhorov,
D.\,Timashev, A.\,Trepalin, and B.\,Zavyalov for useful discussions.
Constantin Shramov was partially supported by the Russian Academic Excellence Project ``5-100'',
by Young Russian Mathematics award, and by the Foundation for the
Advancement of Theoretical Physics and Mathematics ``BASIS''.
Vadim Vologodsky was partially supported
by the  Laboratory of Mirror Symmetry NRU~HSE, RF government grant,
ag. N\textsuperscript{\underline{o}}~$14.641.31.0001$.

\section{Tori}
\label{section:tori}

In this section we  study elements of finite order
in algebraic tori.

The following result is a famous theorem of Minkowski, see~\cite{Minkowski-1887}
or~\cite[Theorem~1]{Serre-07}.

\begin{theorem}
\label{theorem:Minkowski}
For any positive integer $n$, the group $\GL_n(\mathbb{Z})$ has bounded finite subgroups.
\end{theorem}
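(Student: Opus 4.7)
The plan is to produce an explicit bound by showing that the reduction modulo $3$ homomorphism
$$
\varphi\colon \GL_n(\Z) \longrightarrow \GL_n(\FF_3)
$$
is injective on every finite subgroup of $\GL_n(\Z)$, which gives the bound $|G| \le |\GL_n(\FF_3)|$ for every finite subgroup $G \subset \GL_n(\Z)$.

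The key step is to show that the congruence subgroup $K = \ker(\varphi)$ is torsion-free. First, I would suppose for contradiction that $K$ contains a non-identity element of finite order, and, by passing to a suitable power, reduce to the case of an element $g \in K$ of some prime order $p$. Then I would write
$$
g = I + 3^k A,
$$
where $k \ge 1$ is chosen maximal with this property, so that $A$ is an integer matrix with at least one entry not divisible by $3$. Expanding $g^p = I$ via the binomial theorem yields
$$
p\cdot 3^k A + \binom{p}{2} 3^{2k} A^2 + \cdots + 3^{pk} A^p = 0.
$$

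The main obstacle is that this identity has to be analyzed differently depending on whether the prime $p$ coincides with $3$ or not; in both cases one finds the leading term and shows all remaining terms have strictly higher $3$-adic valuation. If $p \ne 3$, then $p \cdot 3^k A$ has $3$-adic valuation exactly $k$, while every other term has valuation at least $2k \ge k+1$, so the equation fails modulo $3^{k+1}$. If $p = 3$, the leading term $3^{k+1} A$ has valuation exactly $k+1$; the remaining terms $\binom{3}{2} 3^{2k} A^2 = 3^{2k+1} A^2$ and $3^{3k} A^3$ have valuations at least $2k+1$ and $3k$, respectively, both of which exceed $k+1$ since $k \ge 1$, again yielding a contradiction modulo $3^{k+2}$.

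Having established that $K$ is torsion-free, any finite subgroup $G \subset \GL_n(\Z)$ intersects $K$ trivially, hence injects into $\GL_n(\FF_3)$. Therefore $|G| \le |\GL_n(\FF_3)| = \prod_{i=0}^{n-1}(3^n - 3^i)$, which is the desired uniform bound depending only on $n$. I would note that the same argument works with $3$ replaced by any odd prime, but would stick with $p=3$ for concreteness; the reason $2$ does not work directly is that for $p=2$ the comparison of $3$-adic (here $2$-adic) valuations of the leading and subleading terms of the binomial expansion breaks down when $k=1$.
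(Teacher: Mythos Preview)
Your proposal is correct and follows exactly the approach the paper indicates in Remark~\ref{remark:Upsilon-1-2-3}: the paper cites Minkowski and Serre rather than proving the theorem in full, but the remark explicitly notes that one can deduce it from the fact that the kernel of the reduction map $\GL_n(\Z)\to\GL_n(\Z/m\Z)$ is torsion-free for $m>2$, yielding $\Upsilon(n)\le|\GL_n(\Z/3\Z)|$. Your binomial-expansion argument is the standard way to verify this torsion-freeness, and your observation about why $m=2$ fails is also in line with the paper's condition $m>2$.
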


Theorem~\ref{theorem:Minkowski} tells us that there
is a constant $\Upsilon(n)<\infty$ equal to the
maximal order of a finite subgroup
in $\GL_n(\mathbb{Z})$.

\begin{remark}\label{remark:Upsilon-1-2-3}
One can deduce Theorem~\ref{theorem:Minkowski} from the following assertion:
for every integer~\mbox{$m>2$}, the kernel of the reduction
homomorphism~\mbox{$\GL_n(\mathbb{Z}) \to  \GL_n(\mathbb{Z}/m \mathbb{Z})$}
is torsion free.
In particular, one has~\mbox{$\Upsilon(n)\le |\GL_n(\mathbb{Z}/3\mathbb{Z})|$}.
However, in general this bound is not sharp. However, for small $n$ one can find the precise value of the
constant~\mbox{$\Upsilon(n)$}.
For instance, we have $\Upsilon(1)=2$, $\Upsilon(2)=12$,
and $\Upsilon(3)=48$, see e.g.~\mbox{\cite[\S1.1]{Serre-07}}
or~\mbox{\cite[\S1]{Tahara}}.
\end{remark}

The following is the main technical result of this section.

\begin{lemma}\label{lemma:tori-bounded-subgroups-general-case}
Let $n$ and $d>\Upsilon(n)$ be positive integers.
Let $\KK$ be a field such that the characteristic of
$\KK$ does not divide $d$, and $\KK$
contains a primitive $d$-th root of $1$.
Let~$T$ be an $n$-dimensional algebraic
torus over $\KK$ such that $T(\KK)$ contains a point of order~$d$.
Then~$T$
contains a subtorus isomorphic to $\Gm$.
\end{lemma}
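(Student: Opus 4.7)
The plan is to work via the correspondence between algebraic tori and their Galois-equivariant cocharacter lattices. Let $N = \Hom(\Gm, T_{\bar{\KK}})$ denote the cocharacter lattice of $T$, a free $\Z$-module of rank $n$ on which the absolute Galois group $\Gal(\KK^{sep}/\KK)$ acts through a finite quotient $H \subset \GL(N) \cong \GL_n(\Z)$. Theorem~\ref{theorem:Minkowski} gives $|H| \le \Upsilon(n) < d$. A subtorus of $T$ isomorphic to $\Gm$ is exactly the image of a nonzero Galois-fixed cocharacter, so the problem reduces to showing that $N^H \neq 0$.

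Next I would translate the hypothesis on $\KK$-points into a statement about the lattice $N$. The condition $\Char \KK \nmid d$ makes $T[d]$ an \'etale group scheme, and the standard identification of Galois modules
\[
T[d](\bar{\KK}) \cong N \otimes_{\Z} \mu_d(\bar{\KK}),
\]
combined with the assumption that $\KK$ contains a primitive $d$-th root of $1$ (so that $\mu_d$ is a trivial Galois module and the cyclotomic twist is trivialized), yields an isomorphism $T[d](\bar{\KK}) \cong N/dN$ of Galois modules. A point~\mbox{$t \in T(\KK)$} of order exactly $d$ therefore corresponds to an element $v \in N$ whose class $\bar v \in N/dN$ is $H$-invariant and has order $d$; equivalently, $hv - v \in dN$ for every $h \in H$ while $kv \notin dN$ for $0 < k < d$.

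The core of the proof is an averaging argument. Set
\[
w = \sum_{h \in H} h\cdot v \in N^H.
\]
Writing $hv - v = d\,y_h$ with $y_h \in N$, we obtain $w = |H|\,v + d \sum_{h \in H} y_h$. If $w$ were zero, then $|H|\,v$ would lie in $dN$; but $\bar v$ has order $d$ in $N/dN$ and $0 < |H| \le \Upsilon(n) < d$, so this is impossible. Hence $w$ is a nonzero $H$-invariant cocharacter. A suitable primitive multiple of $w$ (which remains Galois-invariant since $N$ is torsion-free) then defines an embedding $\Gm \hookrightarrow T$ over $\KK$, producing the desired subtorus.

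I expect the only delicate point to be the verification of the identification of $T[d](\bar{\KK})$ with $N/dN$ as Galois modules: both hypotheses on the characteristic and on the roots of unity feed into this step, and without either assumption the class $\bar v$ corresponding to a $\KK$-point of order $d$ would only be invariant for the twisted Galois action on $N \otimes \mu_d$. Once this identification is in place, the remainder is a short lattice-theoretic averaging argument of the same flavor as the classical proof of the Minkowski theorem itself.
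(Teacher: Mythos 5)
Your proof is correct and follows essentially the same route as the paper: identify $T[d](\bar{\KK})$ with $\check T/d\check T$ using the triviality of $\mu_d$, lift the invariant class of order $d$, and average over the finite Galois image, using $|\Gamma|\le\Upsilon(n)<d$ to see the average is nonzero. The only cosmetic difference is that the paper observes directly that the image of the averaged vector in $\check T/d\check T$ equals $|\Gamma|\bar v\neq 0$, rather than writing out $hv-v=dy_h$.
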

\begin{proof}
Let  $\check T = \mathrm{Hom} (\Gm, T_{\KK^{sep}})$ be the lattice of cocharacters of $T$.
Recall (see~\mbox{\cite[\S8.12]{Borel}}) that the functor $T \mapsto \check T$ induces an equivalence between the category of algebraic tori over $\KK$ and the category of free abelian groups of finite rank equipped with
an action of the Galois group $\Gal(\KK^{sep}/\KK)$ such that the image of
the homomorphism~\mbox{$\Gal(\KK^{sep}/\KK) \to \Aut(\check T)$} is finite. Denote this image by~$\Gamma$.

The group of $d$-torsion elements of $T(\KK^{sep})$ is isomorphic,
as a Galois module, to~\mbox{$\check T \otimes \mu_d$}, where $\mu_d$  is the group of $d$-th roots of unity in
$\KK^{sep}$.  Since $\KK$ contains a primitive $d$-th root of $1$,
the Galois module $\mu_d$ is the trivial module $\mathbb{Z}/d\mathbb{Z}$, so that the Galois
module~\mbox{$\check T \otimes \mu_d$}  is isomorphic to $\check T/d\check T$. Hence, a point $x\in T(\KK)$
of order $d$ can be viewed as a
$\Gal(\KK^{sep} /\KK)$-invariant element $\bar v \in \check T/d\check T $ of order $d$ (so that $m\bar{v}\neq 0$
for~\mbox{$m<d$}).
Let~\mbox{$v\in  \check T$} be any preimage of $\bar v$ under the projection $ \check T \to  \check T/d\check T$, and let
$$
w= \sum_{\gamma \in \Gamma} \gamma(v).
$$
Since  $\bar v$ is  $\Gal(\KK^{sep} /\KK)$-invariant, the image of
$w$ in $\check T/d\check T$ is equal to $|\Gamma|\bar v$. By assumption, the latter is not equal to $0$. Hence $w\ne 0$.
On the other hand, it is clear that $w$ is a $\Gal(\KK^{sep} /\KK)$-invariant element of $\check T$.
By the above mentioned equivalence of categories, $w$ gives rise to
a non-zero homomorphism  $\Gm \to T$ whose image is the required subtorus.
\end{proof}

\begin{remark}
J.-L.\,Colliot-Th\'el\`ene pointed out to us that the proof of Lemma~\ref{lemma:tori-bounded-subgroups-general-case}
can be reformulated in the following way.
The short exact sequence of $\Gamma$-modules
$$ 0 \longrightarrow \check T \stackrel{d}{\longrightarrow}\check T \longrightarrow \check T/d\check T \longrightarrow 0$$
gives rise to the long exact sequence of cohomology groups
\begin{equation}\label{longex}
\ldots\to  H^0(\Gamma, \check T) \to
H^0(\Gamma, \check T/d\check T) \to  H^1(\Gamma, \check T) \to \ldots
\end{equation}
If $H^0(\Gamma, \check T)=0$ then the second map in~\eqref{longex} is injective.
On the other hand, the group~\mbox{$H^1(\Gamma, \check T)$} is annihilated
by $|\Gamma|$ (see e.g.~\cite[Proposition~IV.6.3]{CF67}).
It follows that the group  $H^0(\Gamma, \check T/d\check T)$  of $d$-torsion points of $T(\KK)$  is also  annihilated by~$|\Gamma|$.
\end{remark}

Lemma~\ref{lemma:tori-bounded-subgroups-general-case} immediately implies the following result.

\begin{corollary}\label{corollary:tori}
Let $\KK$ be a field
that contains all roots of~$1$, and let $T$ be an anisotropic $n$-dimensional
torus over $\KK$.
Then every element of finite order in~$T(\KK)$ has
order at most~$\Upsilon(n)$, and every finite subgroup in $T(\KK)$ has
order at most~$\Upsilon(n)^n$. Moreover, every element of finite order and every finite subgroup of
$T(\KK)$ has order coprime to the characteristic of~$\KK$.
\end{corollary}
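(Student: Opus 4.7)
The plan is to deduce the corollary from Lemma~\ref{lemma:tori-bounded-subgroups-general-case} by contraposition, after first disposing of the characteristic-coprimality assertion, which is in any case needed to verify one of the hypotheses of that lemma.

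First I would observe that if $\Char\KK=p>0$, then $T(\KK)\subset T(\bar{\KK})$ has no non-trivial $p$-torsion. Indeed, $T_{\bar{\KK}}\cong\Gm^n$, and in characteristic $p$ the identity $x^p-1=(x-1)^p$ shows that $\Gm(\bar{\KK})=\bar{\KK}^{\times}$ has trivial $p$-torsion. Consequently every element of finite order in $T(\KK)$, and thus every finite subgroup of $T(\KK)$, has order coprime to $\Char\KK$.

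Next, let $g\in T(\KK)$ be an element of finite order $d$. By the previous paragraph the characteristic of $\KK$ does not divide $d$, and since $\KK$ contains all roots of $1$ it contains a primitive $d$-th root of unity; thus the hypotheses of Lemma~\ref{lemma:tori-bounded-subgroups-general-case} on $\KK$ and $d$ are met. If one had $d>\Upsilon(n)$, the lemma would produce a subtorus $\Gm\subset T$, contradicting the anisotropy of $T$. Hence $d\le\Upsilon(n)$.

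Finally, for a finite subgroup $G\subset T(\KK)$, let $\ell$ be its exponent. By the previous step $\ell\le\Upsilon(n)$ and $\gcd(\ell,\Char\KK)=1$, so $G$ is contained in the group of $\ell$-torsion points of $T$, and the latter, being \'etale of rank $n$, satisfies $T[\ell](\bar{\KK})\cong(\mathbb{Z}/\ell\mathbb{Z})^n$. Therefore $|G|\le\ell^n\le\Upsilon(n)^n$, which completes the proof. I do not expect a serious obstacle here: the only subtlety is to run the characteristic-coprimality argument first so that the hypothesis on $d$ in Lemma~\ref{lemma:tori-bounded-subgroups-general-case} is legitimately available.
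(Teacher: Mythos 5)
Your proposal is correct and follows essentially the same route as the paper: first rule out $p$-torsion by splitting $T$ over the (separable or algebraic) closure, then bound element orders by applying Lemma~\ref{lemma:tori-bounded-subgroups-general-case} contrapositively against anisotropy, and finally bound $|G|$ by noting that a finite subgroup of exponent $\ell$ sits inside an $n$-fold product of cyclic groups of order $\ell$. The only cosmetic difference is that the paper phrases the last step as ``abelian and generated by at most $n$ elements'' rather than via $T[\ell]$, which is the same observation.
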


\begin{proof}
Note that if $\Char\KK=p$ is positive, then $T$ does not contain elements of order $p$,
because there are no such elements
even in $T_{\KK^{sep}}\cong\Gm^n$.
Thus, if there is an element of finite order $d$ in $T(\KK)$, then $d$ is coprime to~$p$,
so that $d\le\Upsilon(n)$ by Lemma~\ref{lemma:tori-bounded-subgroups-general-case}.
It remains to notice that every finite subgroup of $T(\KK)$ is an abelian
group generated by at most $n$ elements, and thus has order at most~$\Upsilon(n)^n$.
\end{proof}

Remark~\ref{remark:Upsilon-1-2-3}
and Corollary~\ref{corollary:tori} immediately imply the following (well-known)
assertion.

\begin{lemma}
\label{lemma:P1-without-2-pts}
Let $\KK$ be a field
that contains all roots of~$1$, and let $T$ be a one-dimensional torus over $\KK$ that is
different from $\Gm$.
Then every non-trivial finite subgroup of $T(\KK)$ has order~$2$.
\end{lemma}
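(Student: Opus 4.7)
The plan is to deduce the lemma directly from Corollary \ref{corollary:tori} together with the explicit value $\Upsilon(1)=2$ recorded in Remark \ref{remark:Upsilon-1-2-3}. The only preliminary observation needed is that a one-dimensional torus different from $\Gm$ is automatically anisotropic.

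First I would verify this anisotropy claim. Any closed subgroup scheme isomorphic to $\Gm$ inside a one-dimensional torus $T$ must, by dimension count, coincide with the whole of $T$ on the level of underlying schemes (both are irreducible of the same dimension); in particular such an embedding would exhibit an isomorphism $T\cong\Gm$, contradicting the hypothesis. Hence $T$ contains no subtorus isomorphic to $\Gm$, i.e., $T$ is anisotropic in the sense of the paper.

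Next, I would apply Corollary \ref{corollary:tori} with $n=1$. It gives that every element of finite order in $T(\KK)$ has order at most $\Upsilon(1)$, and every finite subgroup of $T(\KK)$ has order at most $\Upsilon(1)^{1}$. Invoking Remark \ref{remark:Upsilon-1-2-3}, which records $\Upsilon(1)=2$, we conclude that every finite subgroup of $T(\KK)$ has order at most~$2$. Therefore every non-trivial finite subgroup has order exactly~$2$, which is precisely the statement of the lemma.

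There is essentially no obstacle here: the content is packaged in Corollary \ref{corollary:tori}, and the only thing to check by hand is the anisotropy of a non-split one-dimensional torus, which is immediate. If desired, one can phrase the conclusion a bit more concretely by noting that the unique non-split one-dimensional torus over $\KK$ is the norm-one torus $R^{(1)}_{L/\KK}\Gm$ associated to a quadratic separable extension $L/\KK$, and its $\KK$-points form the kernel of the norm $L^\times\to \KK^\times$; but this extra information is not needed for the proof.
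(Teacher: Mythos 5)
Your proof is correct and follows exactly the route the paper intends: the paper derives this lemma as an immediate consequence of Corollary~\ref{corollary:tori} (with $n=1$) and the value $\Upsilon(1)=2$ from Remark~\ref{remark:Upsilon-1-2-3}. Your explicit verification that a one-dimensional torus different from $\Gm$ is anisotropic is the only step the paper leaves implicit, and your dimension-count argument for it is sound.
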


Lemma~\ref{lemma:P1-without-2-pts} implies the following assertion that is
in some sense analogous to Theorem~\ref{theorem:Zarhin-conic}.

\begin{corollary}[{cf.~\cite[Lemma~3.3]{BandmanZarhin2017}}]
\label{corollary:P1-without-2-pts}
Let $\KK$ be a field
that contains all roots of~$1$.
Let~\mbox{$O_1, O_2\in\PP^1(\KK^{sep})$}
be two distinct points
that form a $\Gal(\KK^{sep}/\KK)$-orbit (so that~\mbox{$\{O_1,O_2\}$} is a closed point of
$\PP^1$ whose residue field~$\KK_{O_1,O_2}$ is a separable quadratic extension of~$\KK$).
Consider the affine curve
$$
U=\PP^1 - \{O_1, O_2\}
$$
defined over $\KK$.
Let $G$ be a finite group acting
by automorphisms of $U$. Then every non-trivial element of $G$ has order $2$, and $|G|\le 4$.
\end{corollary}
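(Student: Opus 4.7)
The plan is to realize $\Aut(U)$ as an extension of $\Z/2\Z$ by a non-split one-dimensional $\KK$-torus $T$, and then combine Lemma~\ref{lemma:P1-without-2-pts} (which bounds $G\cap T(\KK)$) with a direct check that the remaining coset consists of involutions.

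First I would observe that any automorphism of $U$ extends uniquely to $\PP^1$ and must preserve the closed subscheme $\{O_1,O_2\}$; thus $\Aut(U)$ coincides with the stabilizer of $\{O_1,O_2\}$ inside $\Aut(\PP^1)\cong\PGL_2$. Let $T\subset\Aut(U)$ be the identity component, i.e., the subgroup fixing each $O_i$ individually, giving an exact sequence
\[
1 \to T(\KK) \to \Aut(U)(\KK) \to \Z/2\Z.
\]
After base change to $L=\KK_{O_1,O_2}$ and a choice of coordinate sending $O_1\mapsto 0$ and $O_2\mapsto\infty$, the torus $T_L$ becomes $\Gm$ acting by $z\mapsto az$, while the non-identity coset consists of the involutions $z\mapsto b/z$ with $b\in L^*$. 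The non-trivial element of $\Gal(L/\KK)$ swaps $O_1$ and $O_2$, hence acts on the cocharacter lattice of $T_{\bar\KK}\cong\Gm$ by $-1$. Consequently $T$ is the norm-one torus $\Res_{L/\KK}^{(1)}\Gm$, a one-dimensional $\KK$-torus different from $\Gm$ since $L\neq\KK$.

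With this setup the conclusion is immediate. By Lemma~\ref{lemma:P1-without-2-pts}, every non-trivial finite subgroup of $T(\KK)$ has order $2$, so $|G\cap T(\KK)|\le 2$. Every element $g\in G\setminus T(\KK)$ swaps $O_1$ and $O_2$, so over $L$ it has the form $z\mapsto b/z$ and satisfies $g^2=\id$; in particular it has order $2$ as a $\KK$-automorphism of $U$ as well. Hence every non-trivial element of $G$ has order $2$, and the bound $[G:G\cap T(\KK)]\le 2$ forces $|G|\le 4$.

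The only real subtlety is the identification of $T$ as the non-split norm-one torus, which uses the hypothesis that $\{O_1,O_2\}$ forms a Galois orbit (rather than a pair of $\KK$-rational points) via the resulting $-1$ action on cocharacters; were $T$ isomorphic to $\Gm$, Lemma~\ref{lemma:P1-without-2-pts} would not apply and one could produce arbitrarily large cyclic subgroups of diagonal automorphisms. Everything else is formal once $\Aut(U)$ has been expressed in this extension form.
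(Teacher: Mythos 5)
Your proposal is correct and follows essentially the same route as the paper: identify the pointwise stabilizer of $\{O_1,O_2\}$ with the norm-one torus $T$ attached to the quadratic extension $\KK_{O_1,O_2}/\KK$, bound $G\cap T(\KK)$ via Lemma~\ref{lemma:P1-without-2-pts}, and check that every element swapping $O_1$ and $O_2$ is an involution (the paper phrases this as $g^2=t\sigma t\sigma=tt^{-1}=1$ in the semidirect product $T(\KK)\rtimes\Z/2\Z$, which is your computation $z\mapsto b/z$ in coordinates). No gaps.
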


\begin{proof}
Let $T$ be the one-dimensional
torus that corresponds to the quadratic field extension $\KK\subset\KK_{O_1,O_2}$. Then $T(\KK)$ is isomorphic to  the group
of all automorphisms $\phi$ of $\PP^1$ such that $\phi_{\KK^{sep}}$ preserves
both points~$O_1$ and~$O_2$.
Therefore, we have~\mbox{$\Aut(U)\cong T(\KK) \rtimes \Z/2\Z$},
where $\Z/2\Z$ acts on  $T(\KK)$ carrying every element of $ T(\KK)$ to its inverse.
Let~\mbox{$g\in   T(\KK) \rtimes \Z/2\Z$} be an element of finite order. If $g \in  T(\KK)$, then~\mbox{$g^2=1$} by  Lemma~\ref{lemma:P1-without-2-pts}. Otherwise,
write $g=t \sigma$, where $t\in  T(\KK) $ and $\sigma$ is the non-trivial element of  $ \Z/2\Z $. We have that
$$
g^2 = t\sigma t \sigma=tt^{-1}=1.
$$
Finally, by Lemma~\ref{lemma:P1-without-2-pts}, we have~\mbox{$|G\cap T(\KK)|\le 2$}, so~\mbox{$|G|\le 4$}.
\end{proof}

Now we will provide more precise bounds for the possible finite orders of elements and subgroups
of two-dimensional tori that will be used in~\S\ref{section:DP}.
Note that there is an explicit classification
of two-dimensional tori (see~\cite{Voskr65}), but we will not use it.
Our approach here is similar to the proof of
Lemma~\ref{lemma:tori-bounded-subgroups-general-case} but instead of the bounds
on the orders of finite subgroups of $\GL_n(\mathbb{Z})$ we
use the bounds on the orders of \emph{elements} of $\GL_2(\mathbb{Z})$
together with the following simple observation that is specific to rank~$2$.

\begin{lemma}\label{lemma:GL2Z-ord-2}
Put $\Lambda=\Z^2$, and let $\Gamma\subset\GL(\Lambda)$ be a finite subgroup.
Suppose that every element of $\Gamma$
has a non-zero invariant vector in $\Lambda$. Then $\Gamma$ has a non-zero invariant vector in $\Lambda$.
\end{lemma}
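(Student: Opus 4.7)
The plan is to exploit the strong restriction that the hypothesis imposes on the traces of elements of $\Gamma$, and then to deduce the existence of a common non-zero fixed vector by averaging over $\Gamma$.

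First I would analyze a single element $\gamma \in \Gamma$. Since $\Gamma$ is finite, $\gamma$ has finite order, so its minimal polynomial divides $x^{k}-1$ for some $k \ge 1$; in particular $\gamma$ is diagonalizable over $\bar{\Q}$ with eigenvalues that are roots of unity. By hypothesis $\gamma$ fixes a non-zero lattice vector, so $1$ is one of its eigenvalues. The characteristic polynomial of $\gamma$ is a degree-$2$ polynomial with integer coefficients that has $1$ as a root, hence its second root is rational, and being a root of unity it must equal $1$ or $-1$. Consequently $\mathrm{tr}(\gamma) \in \{0, 2\}$ for every $\gamma \in \Gamma$, with value $2$ attained only at the identity.

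Next I would pass to the $\Q$-representation $V = \Lambda \otimes_{\Z} \Q \cong \Q^{2}$ and apply the standard character formula for the invariants:
\[
\dim_{\Q} V^{\Gamma} \;=\; \frac{1}{|\Gamma|} \sum_{\gamma \in \Gamma} \mathrm{tr}(\gamma).
\]
Since each term on the right is non-negative and the identity contributes $2$, the right-hand side is at least $2/|\Gamma| > 0$, so $V^{\Gamma} \ne 0$. Any non-zero vector $v \in V^{\Gamma}$ can be multiplied by a suitable positive integer so as to lie in $\Lambda$, producing the desired non-zero element of $\Lambda^{\Gamma}$.

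The only delicate point is the eigenvalue analysis in the first step, and it relies crucially on the rank being $2$: this is what forces the second eigenvalue, as a rational root of unity, to lie in $\{\pm 1\}$. In higher rank the analogous statement would fail, since a cyclotomic polynomial of higher degree could appear as a factor of the characteristic polynomial without contradicting the existence of a fixed vector. I do not expect any additional obstacle beyond this observation, as the averaging argument is routine once the trace condition is in hand.
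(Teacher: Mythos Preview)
Your proof is correct and takes a genuinely different route from the paper's argument. Both proofs begin with the same observation: since each $\gamma\in\Gamma$ has $1$ as an eigenvalue and lies in $\GL_2(\Z)$, its other eigenvalue is $\det(\gamma)\in\{\pm 1\}$, and finite order forces $\gamma^2=1$. From there the paths diverge. The paper infers that $\Gamma$ is abelian (every element squares to the identity), simultaneously diagonalizes $\Gamma$ over $\Q$ to obtain two $\Gamma$-invariant one-dimensional sublattices generated by $v_1,v_2$, and then argues by a short case analysis that if neither $v_i$ were globally fixed one could manufacture an element acting as $-\mathrm{Id}$, contradicting the hypothesis. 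Your averaging argument sidesteps all of this: once you know $\mathrm{tr}(\gamma)\in\{0,2\}$ with $2$ only at the identity, the character formula immediately gives $\dim_{\Q}V^{\Gamma}=2/|\Gamma|>0$, and clearing denominators produces a lattice fixed vector. Your approach is slicker and in fact yields the bonus conclusion $|\Gamma|\le 2$, since $2/|\Gamma|$ must be a positive integer. The paper's approach, on the other hand, is more elementary in that it avoids invoking Maschke/character theory and stays entirely within linear algebra over~$\Z$.
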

\begin{proof}
Let $\gamma$ be an element of $\Gamma$. Since $\gamma\in\GL(\Lambda)\cong\GL_2(\Z)$,
its determinant is $\pm 1$. Since one of the eigen-values of $\gamma$ equals $1$,
the other eigen-value equals $\pm 1$.
Since $\gamma$ is of finite order, this implies that $\gamma^2=1$.
Hence the group $\Gamma$ is abelian,
and thus there are two non-proportional vectors
$v_1$ and $v_2$ in $\Lambda$
that generate one-dimensional $\Gamma$-invariant sublattices.
If there exist two non-trivial elements $\gamma_1$
and $\gamma_2$ in $\Gamma$ such that $\gamma_iv_i=-v_i$, then
it is easy to see that there is an element $\gamma'$ in $\Gamma$
such that $\gamma'v_i=-v_i$ for both $i=1,2$. This
means that $\gamma'$ acts on $\Lambda$ by multiplication by $-1$, which
is impossible because
$\gamma'$ must have a non-zero invariant vector by assumption.
Therefore, one of the vectors $v_1$ and $v_2$ is invariant with respect
to the whole group~$\Gamma$.
\end{proof}

\begin{lemma}\label{lemma:GL2Z}
Put $\Lambda=\Z^2$, and let $\Gamma\subset\GL(\Lambda)$ be a finite subgroup.
Let $d>6$ be an integer.
Suppose that there is a $\Gamma$-invariant vector
$v$ in $\Lambda\pmod d$ such that $mv\neq 0$ for~\mbox{$m<d$}.
Then there is a non-zero $\Gamma$-invariant vector
in $\Lambda$.
\end{lemma}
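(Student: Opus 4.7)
The plan is to reduce to Lemma~\ref{lemma:GL2Z-ord-2}: it suffices to show that every individual element $\gamma \in \Gamma$ fixes some non-zero vector in $\Lambda$, i.e., that $1$ is an eigenvalue of $\gamma$.

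So fix $\gamma \in \Gamma$ of order $n$, and suppose for contradiction that $1$ is not an eigenvalue of $\gamma$. Since $\gamma$ has finite order in $\GL_2(\Z)$, its eigenvalues are $n$-th roots of unity, and since the characteristic polynomial of $\gamma$ has degree $2$ with integer coefficients, the $n$-th cyclotomic polynomial $\Phi_n$ must have degree at most $2$. This forces $n \in \{1, 2, 3, 4, 6\}$. Combined with the hypothesis that $1$ is not an eigenvalue, the minimal polynomial of $\gamma$ is exactly $\Phi_n$: namely $x+1$ for $n=2$ (so $\gamma = -I$), and $x^2+x+1$, $x^2+1$, $x^2-x+1$ for $n = 3, 4, 6$ respectively. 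In each of these cases one computes $\Phi_n(1) \in \{1, 2, 3\}$.

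Now choose any lift $\tilde v \in \Lambda$ of $v$. The relation $\gamma \tilde v \equiv \tilde v \pmod{d\Lambda}$ implies, by a straightforward induction on degree, that $p(\gamma)\tilde v \equiv p(1)\tilde v \pmod{d\Lambda}$ for every $p \in \Z[x]$. Applied to the minimal polynomial $p = \Phi_n$, which annihilates $\gamma$, this gives $\Phi_n(1)\, v = 0$ in $\Lambda/d\Lambda$. Because $v$ has exact additive order $d$ (equivalently, the gcd of its coordinates with $d$ is $1$), this forces $d \mid \Phi_n(1) \le 3$, contradicting the hypothesis $d > 6$. Thus $1$ is an eigenvalue of every $\gamma \in \Gamma$, and taking a primitive integer eigenvector produces a non-zero $\gamma$-invariant element of $\Lambda$; applying Lemma~\ref{lemma:GL2Z-ord-2} yields the desired $\Gamma$-invariant vector.

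There is no real obstacle in this argument: the key input is the very restrictive list of possible orders of finite elements of $\GL_2(\Z)$, which makes each cyclotomic polynomial $\Phi_n$ easy to evaluate at $1$. The only step that requires a little care is distinguishing, within a given order $n$, the case where $1$ is already an eigenvalue (in which case there is nothing to prove) from the case where the minimal polynomial is the full cyclotomic polynomial $\Phi_n$.
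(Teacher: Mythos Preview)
Your proof is correct and follows essentially the same strategy as the paper's: show that every $\gamma\in\Gamma$ has a non-zero invariant vector in $\Lambda$, then invoke Lemma~\ref{lemma:GL2Z-ord-2}. The only difference is the choice of annihilating polynomial: the paper observes that if $\gamma-1$ is invertible over~$\Q$ then $\gamma^{r-1}+\cdots+\gamma+1=0$, giving $rv=0$ in $\Lambda/d\Lambda$ and hence $d\le r\le 6$; you instead use the minimal polynomial $\Phi_n$ and obtain the slightly sharper $d\mid\Phi_n(1)\le 3$.
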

\begin{proof}
Let $\gamma$ be an element of the group $\Gamma$,
and $r$ be the order of $\gamma$.
It is easy to check that $r\in\{1,2,3,4,6\}$, see for instance
\cite[\S1]{Tahara}.

Suppose that $\gamma$ has no non-zero invariant vectors in $\Lambda$.
Then the matrix $\gamma-1$ is invertible in $\GL_2(\Q)$.
Since we know that $\gamma^r-1=0$, this implies that the matrix
$$
\delta=\gamma^{r-1}+\ldots+\gamma+1\in\Mat_2(\Z)\subset\Mat_2(\Q)
$$
is the zero matrix, where $\Mat_2$ denotes the ring of $2\times 2$-matrices.
Thus
$$
\delta_d=\delta\pmod d
$$
is a zero matrix as well.
Write
$$
0=\delta_d v= \left(\gamma^{r-1}+\ldots+\gamma+1\right) v \pmod d=rv,
$$
which is a contradiction since $r\le 6<d$.

Therefore, we see that every element of $\Gamma$ has an invariant vector in $\Lambda$.
Now it remains to apply Lemma~\ref{lemma:GL2Z-ord-2}.
\end{proof}

\begin{lemma}\label{lemma:tori-bounded-subgroups}
Let $\KK$ be a field that contains all roots of~$1$,
and let~$T$ be an anisotropic two-dimensional torus over $\KK$.
Then every element of finite order in~$T(\KK)$
has order at most~$6$, and every finite subgroup
of~$T(\KK)$ has order at most~$36$.
\end{lemma}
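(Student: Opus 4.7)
The strategy is to imitate the proof of Lemma~\ref{lemma:tori-bounded-subgroups-general-case}, but to replace the crude Minkowski bound of Theorem~\ref{theorem:Minkowski} with the sharper rank-$2$ lifting statement provided by Lemma~\ref{lemma:GL2Z}.

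First, I would set up the standard Galois description of $T$: let $\check T = \Hom(\Gm, T_{\KK^{sep}}) \cong \Z^2$ be the cocharacter lattice, and let $\Gamma \subset \GL(\check T)$ denote the (necessarily finite) image of $\Gal(\KK^{sep}/\KK)$. Under the equivalence of categories between tori over $\KK$ and finite-action Galois lattices, the anisotropy of $T$ translates into the statement that $\check T$ contains no nonzero $\Gamma$-invariant vector, since a nonzero invariant cocharacter would produce a $\Gm$-subtorus.

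For the element bound, suppose $x \in T(\KK)$ has exact order $d$. Over $\bar\KK$ the torus $T$ becomes $\Gm^2$ and thus has no $\Char\KK$-torsion, so $d$ is coprime to $\Char\KK$. Because $\KK$ contains all roots of~$1$, the Galois module $\mu_d$ is trivial, so the argument of Lemma~\ref{lemma:tori-bounded-subgroups-general-case} identifies the $d$-torsion subgroup of $T(\KK^{sep})$ with $\check T / d\check T$ as a $\Gamma$-module, and $x$ corresponds to a $\Gamma$-invariant element $\bar v \in \check T / d\check T$ with $m\bar v \neq 0$ for $m<d$. Applying Lemma~\ref{lemma:GL2Z} with $n=2$: if $d>6$, we would obtain a nonzero $\Gamma$-invariant vector in $\check T$, contradicting anisotropy. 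Hence $d \le 6$.

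For the subgroup bound, note that any finite subgroup $G \subset T(\KK)$ sits inside $T(\bar\KK) \cong (\bar\KK^\times)^2$, and finite subgroups of $(\bar\KK^\times)^2$ are abelian and generated by at most two elements (being isomorphic to quotients of~$\Z^2$). Combined with the exponent bound $\le 6$ established above, this forces $|G| \le 6 \cdot 6 = 36$. The only nontrivial step is the reduction of the element-order bound to Lemma~\ref{lemma:GL2Z}, which was designed exactly for this purpose; no serious obstacle should arise beyond correctly transcribing the cocharacter-lattice translation.
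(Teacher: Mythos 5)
Your proposal is correct and follows essentially the same route as the paper: the cocharacter-lattice translation identifying the $d$-torsion of $T(\KK^{sep})$ with $\check T/d\check T$, the application of Lemma~\ref{lemma:GL2Z} to rule out $d>6$ via anisotropy, and the observation that a finite subgroup of a two-dimensional torus is abelian on at most two generators. The only cosmetic difference is that you spell out the steps the paper delegates to ``as in the proof of Lemma~\ref{lemma:tori-bounded-subgroups-general-case}''.
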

\begin{proof}
As in the proof of Lemma~\ref{lemma:tori-bounded-subgroups-general-case},
we put
$\check{T} = \mathrm{Hom}(\Gm, T_{\KK^{sep}})$ 
and let $\Gamma$ be the image of the Galois group
$\Gal(\bar{\KK}/\KK)$
in $\Aut(\check{T})$.
Then $\Gamma$ is a finite
subgroup of $\Aut(\check{T})\cong\GL_2(\Z)$.

Suppose that $T(\KK)$ has an element of finite order $d>1$.
We note that $d$ is coprime to the characteristic
of the field~$\KK$. As in the proof of Lemma~\ref{lemma:tori-bounded-subgroups-general-case},
we see that $\check{T}/d\check{T}$
has a $\Gamma$-invariant
vector $v$ such that $mv\neq 0$ for~\mbox{$m<d$}.
By Lemma~\ref{lemma:GL2Z}, if~\mbox{$d>6$}, then there
is a non-zero $\Gamma$-invariant vector in~$\check{T}$.
Hence there is an embedding~\mbox{$\Gm\hookrightarrow T$}. Since the latter
is not the case by assumption, we conclude that $d\le 6$.
As in the proof of Corollary~\ref{corollary:tori}, we observe that
every finite subgroup of $T$ is an abelian
group generated by at most two elements, and thus
has order at most~$36$.
\end{proof}

\begin{remark}
The proofs of Lemmas~\ref{lemma:GL2Z}
and~\ref{lemma:tori-bounded-subgroups}
also imply that an anisotropic two-dimensional torus over $\KK$ does not
have elements of order~$5$.
\end{remark}

\section{Linear algebraic groups}
\label{section:lin-alg-groups}

In this section we study finite subgroups
of  linear algebraic groups and prove
Theorem~\ref{theorem:LAG}.

Recall that a linear algebraic group $\Gamma$ over a field $\KK$ is a smooth closed subgroup scheme
of $\GL_N$ over $\KK$. In particular, the group $\Gamma(\KK)$ of its $\KK$-points
has a faithful finite-dimensional representation in a $\KK$-vector space.
We refer the reader to~\cite{Borel} and~\cite{Springer}
for the basics of the theory of linear algebraic groups.

Similarly to the case of an algebraically closed field,
many properties of linear algebraic groups are determined
by their maximal tori. Note that in general a linear algebraic group
$\Gamma$ over a non-algebraically closed field $\KK$
may contain non-isomorphic maximal tori,
but their dimension still equals the dimension
of maximal tori in $\Gamma_{\bar{\KK}}$, see~\mbox{\cite[Theorem~13.3.6(i)]{Springer}}
and~\cite[Remark~13.3.7]{Springer}; this dimension is called the \emph{rank} of~$\Gamma$.

Recall that an element $\gamma \in \Gamma(\KK)$  is called semi-simple if  its image  in $\GL_N(\KK)$ is diagonalizable over an algebraic closure $\bar{\KK}$  of $\KK$. The notion of a
semi-simple element is intrinsic, that is,
it does not depend on the choice of $N$ and an
embedding~\mbox{$\Gamma \hookrightarrow \GL_N(\KK)$},  see~\mbox{\cite[\S2.4]{Springer}}.
The main tool that will allow
us to apply the results of~\S\ref{section:tori}
is the following theorem.

\begin{theorem}[{see~\cite[Corollary~13.3.8(i)]{Springer}}]
\label{theorem:Springerold}
Let $\Gamma$ be a
connected linear algebraic group over a field $\KK$, and
let $\gamma\in\Gamma(\KK)$ be a  semi-simple  element. Then
there exists a torus $T\subset\Gamma$ such that
$\gamma$ is contained in~$T(\KK)$.
\end{theorem}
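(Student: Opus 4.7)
The plan is to reduce the statement to locating $\gamma$ as a central element of a connected subgroup of $\Gamma$ defined over $\KK$, and then to invoke the existence of maximal tori over the ground field.

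First I would introduce the scheme-theoretic centralizer $Z = Z_\Gamma(\gamma)$, together with its identity component $Z^0$. Since $\gamma \in \Gamma(\KK)$, both $Z$ and $Z^0$ are closed subgroup schemes of $\Gamma$ defined over $\KK$, and $\gamma$ is central in $Z$ by construction. I would then verify that $\gamma$ actually lies in $Z^0(\KK)$: because $\gamma$ is semi-simple, it is contained in some maximal torus $T_0$ of $\Gamma_{\bar{\KK}}$; every element of $T_0$ commutes with $\gamma$, so the connected group $T_0$ lies in $Z^0_{\bar{\KK}}$, and hence $\gamma \in Z^0(\bar{\KK}) \cap \Gamma(\KK) = Z^0(\KK)$. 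In particular $\gamma$ is central in the connected group $Z^0$.

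Next I would invoke the theorem of Grothendieck asserting that every connected smooth linear algebraic group over an arbitrary field admits a maximal torus defined over that field, and apply it to $Z^0$ to produce a maximal torus $T \subset Z^0$ defined over $\KK$. It then remains to check that this $T$ contains $\gamma$. Since $\gamma$ is semi-simple, it lies in some maximal torus $T' \subset Z^0_{\bar{\KK}}$; by conjugacy of maximal tori of $Z^0_{\bar{\KK}}$, one has $T_{\bar{\KK}} = g T' g^{-1}$ for some $g \in Z^0(\bar{\KK})$, so that $g\gamma g^{-1} \in T_{\bar{\KK}}$. Centrality of $\gamma$ in $Z^0$ then yields $g\gamma g^{-1} = \gamma$, and therefore $\gamma \in T(\bar{\KK}) \cap \Gamma(\KK) = T(\KK)$, as required.

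The main obstacle is the non-elementary input of Grothendieck's theorem on the existence of maximal tori over a non-algebraically-closed base field; once this ingredient is granted, the remaining steps are formal consequences of conjugacy of maximal tori over $\bar{\KK}$ together with the defining property of semi-simple elements used to place $\gamma$ in some torus over $\bar{\KK}$ in the first place.
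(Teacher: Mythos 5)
The paper does not prove this statement at all --- it quotes it directly from Springer's book --- and your argument is essentially Springer's own proof of Corollary~13.3.8(i): pass to the connected centralizer $Z^0=Z_\Gamma(\gamma)^0$, observe that $\gamma$ is a central semi-simple element of it, and apply Grothendieck's theorem on the existence of maximal tori over the ground field. The argument is correct; the one point that deserves explicit care in positive characteristic over an imperfect field is that $Z_\Gamma(\gamma)$ is smooth and defined over $\KK$ (this holds because it is the fixed-point scheme of the multiplicative-type subgroup scheme generated by the semi-simple element $\gamma$, and such group schemes are linearly reductive), since without smoothness one cannot legitimately regard $Z^0$ as a linear algebraic group over $\KK$ and invoke the existence of a maximal $\KK$-torus in it.
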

\begin{corollary}\label{cor:Springerold}
Let ~$\Gamma$ be a
connected linear algebraic group over a field $\KK$, and
let~\mbox{$\gamma\in\Gamma(\KK)$} be a finite order  element whose order is coprime to the characteristic of~$\KK$.  Then
there exists a torus $T\subset\Gamma$ such that
$\gamma$ is contained in~$T(\KK)$.
\end{corollary}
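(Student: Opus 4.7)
The plan is to reduce Corollary~\ref{cor:Springerold} directly to Theorem~\ref{theorem:Springerold} by verifying that, under the given hypotheses, the element $\gamma$ is automatically semi-simple. The bridge is the Jordan decomposition for linear algebraic groups.

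First I would invoke the existence and uniqueness of the Jordan decomposition $\gamma = \gamma_s \gamma_u$ in $\Gamma(\KK)$, where $\gamma_s$ is semi-simple, $\gamma_u$ is unipotent, and the two commute (see \cite[\S2.4]{Springer} for the intrinsic character of this decomposition, which justifies that $\gamma_s$ and $\gamma_u$ lie in $\Gamma(\KK)$ after replacing $\KK$ by a suitable extension and descending; alternatively one can work over $\bar{\KK}$ and use functoriality). Let $n$ be the order of $\gamma$. Since $\gamma_s$ and $\gamma_u$ commute, we have $\gamma_s^n \gamma_u^n = 1$. Because $\gamma_s^n$ is semi-simple and $\gamma_u^n$ is unipotent, and $1 = \gamma_s^n \cdot \gamma_u^n = \gamma_u^n \cdot \gamma_s^n$ is both semi-simple and unipotent, uniqueness of Jordan decomposition forces $\gamma_s^n = 1$ and $\gamma_u^n = 1$.

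Next I would exploit the hypothesis on $n$. In characteristic $0$ the only unipotent element of finite order is the identity, so $\gamma_u = 1$. In characteristic $p > 0$, every unipotent element of a linear algebraic group has order a power of $p$; since $\gamma_u^n = 1$ and $\gcd(n, p) = 1$, we again conclude $\gamma_u = 1$. Therefore $\gamma = \gamma_s$ is semi-simple.

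Finally, applying Theorem~\ref{theorem:Springerold} to the connected linear algebraic group $\Gamma$ and the semi-simple element $\gamma \in \Gamma(\KK)$, we obtain a torus $T \subset \Gamma$ containing $\gamma$ in its $\KK$-points, which is the desired conclusion. There is no substantive obstacle here: the only subtle point is to ensure that the Jordan decomposition lives in $\Gamma(\KK)$ rather than only in $\Gamma(\bar{\KK})$, but this is standard for elements of $\Gamma(\KK)$ via the intrinsic formulation of Jordan decomposition for linear algebraic groups over $\KK$.
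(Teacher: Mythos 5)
Your proof is correct and is exactly the intended deduction: the paper leaves this corollary unproved precisely because an element of finite order coprime to $\Char\KK$ is semi-simple (trivial unipotent Jordan part), so Theorem~\ref{theorem:Springerold} applies directly. The rationality worry at the end is moot, since the paper defines semi-simplicity of $\gamma\in\Gamma(\KK)$ by diagonalizability over $\bar{\KK}$, so it suffices to check $\gamma=\gamma_s$ after base change to $\bar{\KK}$ (or simply to note that the minimal polynomial of $\gamma$ divides the separable polynomial $x^n-1$).
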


For anisotropic reductive  groups over perfect  fields  and for simply connected  semi-simple  anisotropic  groups over arbitrary fields
whose characteristic is large enough, one has a stronger result.

\begin{theorem}[{see \cite[Corollary~3.8]{BorelTits}} and  {\cite[Corollary 2.6]{Tits86}}]
\label{theorem:Springer}
Let~$\Gamma$ be a
connected anisotropic reductive linear algebraic group over $\KK$.
Assume,  in addition,   that either $\KK$ is perfect, or $\Gamma$ is  semi-simple,  simply connected, and $\Char \KK = p>0$ is not a torsion prime for $\Gamma$.
Then, for every element $\gamma\in\Gamma(\KK)$,
there exists a torus $T\subset\Gamma$ such that
$\gamma$ is contained in~$T(\KK)$.
\end{theorem}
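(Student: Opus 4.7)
The plan is to reduce the statement to Theorem~\ref{theorem:Springerold} by proving that, under either set of hypotheses, every element $\gamma\in\Gamma(\KK)$ is automatically semi-simple. Once this is known, Theorem~\ref{theorem:Springerold} produces the desired torus. So the real content of the theorem is ruling out a non-trivial unipotent part in the Jordan decomposition of $\gamma$.

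First I would invoke Jordan decomposition over $\KK$, writing $\gamma=\gamma_s\gamma_u$ with $\gamma_s\in\Gamma(\KK)$ semi-simple, $\gamma_u\in\Gamma(\KK)$ unipotent, and $\gamma_s\gamma_u=\gamma_u\gamma_s$. Over a perfect field this is classical (see~\cite[\S2.4]{Springer}); in the second case, where $\KK$ may be imperfect but $\Gamma$ is simply connected semi-simple, this still holds because the unipotent and semi-simple parts of any $\KK$-rational element of a simply connected semi-simple group are themselves $\KK$-rational. The next step is to show $\gamma_u=1$. For this I would use the Borel--Tits theorem stating that every unipotent element of a connected reductive $\KK$-group lies in the unipotent radical of some parabolic $\KK$-subgroup $P\subset\Gamma$. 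The relative theory of reductive groups shows that conjugacy classes of parabolic $\KK$-subgroups of a connected reductive group $\Gamma$ are in bijection with subsets of the simple roots of a maximal $\KK$-split torus of $\Gamma$; since $\Gamma$ is anisotropic, this maximal $\KK$-split torus is trivial, so $\Gamma$ has no proper parabolic $\KK$-subgroup and hence no non-trivial unipotent element. Therefore $\gamma_u=1$, so $\gamma=\gamma_s$ is semi-simple and Theorem~\ref{theorem:Springerold} applies.

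The main obstacle is the second hypothesis: when $\KK$ is not perfect, both inputs above become delicate. The Jordan decomposition need not descend to~$\KK$ for a general reductive group; one really needs the simply connected assumption, because the decomposition is characterised by the central isogeny behaviour, and extending $\gamma_s,\gamma_u$ back through a central isogeny from the universal cover requires control over the fundamental group. Similarly, the Borel--Tits conjugacy statement for unipotent elements in positive characteristic can fail when $p$ divides orders of exceptional isogenies; ruling out such exotic unipotents is precisely what the assumption that $p=\Char\KK$ is not a torsion prime of $\Gamma$ buys. The refined version of the argument needed here is the one carried out in~\cite[Corollary~2.6]{Tits86}; the hardest part of any self-contained proof would be reworking the Borel--Tits conjugacy theorem over imperfect fields away from the torsion primes, which requires a careful analysis of unipotent $\KK$-subgroups of simply connected semi-simple groups and their compatibility with parabolic subgroups.
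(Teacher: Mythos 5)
The paper contains no proof of this statement: it is imported verbatim from \cite[Corollary~3.8]{BorelTits} (perfect field) and \cite[Corollary~2.6]{Tits86} (simply connected case away from torsion primes), so there is no internal argument to compare yours against. Your reduction --- Jordan decomposition, then the Borel--Tits theorem placing $\KK$-rational unipotent elements in unipotent radicals of parabolic $\KK$-subgroups, then the observation that an anisotropic group has no proper parabolic $\KK$-subgroup, and finally Theorem~\ref{theorem:Springerold} --- is exactly the derivation underlying those citations, and it is complete in the perfect-field case. The one place where you assert more than you justify is the claim that the semi-simple and unipotent parts of a $\KK$-rational element of a simply connected semi-simple group over an imperfect field are themselves $\KK$-rational: this is not a formal consequence of simple connectedness but a genuine theorem that also uses the torsion-prime hypothesis, and both it and the parabolic-conjugacy statement really can fail without these hypotheses --- for instance the anisotropic group $\mathcal{A}^*/\Gm\cong\PGL_p$ attached to the division algebra of Example~\ref{example:BMR} contains non-trivial $\KK$-rational unipotent elements of order $p$. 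Since you explicitly defer these two ingredients to \cite[Corollary~2.6]{Tits86}, your write-up is an accurate map of where the real work lies rather than a self-contained proof, which puts it on the same footing as the paper's own treatment of the statement as a quoted result.
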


\begin{corollary}\label{cor:Springer}
Under the assumptions of Theorem~\ref{theorem:Springer} the order of every finite order element of~$\Gamma(\KK)$
is coprime to the characteristic of $\KK$.
\end{corollary}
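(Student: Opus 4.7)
The plan is to reduce immediately to the case of tori, where the claim becomes transparent. Suppose $p = \Char \KK > 0$ (otherwise there is nothing to prove) and let $\gamma \in \Gamma(\KK)$ be an element of finite order. The assumptions of Theorem~\ref{theorem:Springer} are in force, so there exists a torus $T \subset \Gamma$ with $\gamma \in T(\KK)$. It therefore suffices to show that $T(\KK)$ has no $p$-torsion.

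For this I would argue as follows. Pick an algebraic closure $\bar{\KK}$ of $\KK$, and observe that $T_{\bar{\KK}} \cong \Gm^n$ for some $n$, so that
\[
T(\KK) \subset T(\bar{\KK}) \cong (\bar{\KK}^{\times})^n.
\]
Hence it is enough to check that $\bar{\KK}^{\times}$ has no element of order $p$. But if $x \in \bar{\KK}^{\times}$ satisfies $x^p = 1$, then in characteristic $p$ we have $(x-1)^p = x^p - 1 = 0$, whence $x = 1$. Consequently the order of $\gamma$ is coprime to $p$, which is the desired conclusion.

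There is no substantive obstacle here: once Theorem~\ref{theorem:Springer} is available, the corollary reduces to the elementary observation that $\Gm$ has no $p$-torsion in characteristic $p$. The only thing to remark is that the assumption on $\KK$ containing roots of unity, which was crucial in the torus computations of \S\ref{section:tori}, is not needed for this particular statement — indeed we do not invoke Corollary~\ref{corollary:tori}, only the triviality of the $p$-torsion of $\Gm$.
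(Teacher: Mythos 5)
Your proof is correct and follows the paper's own route: reduce to the torus case via Theorem~\ref{theorem:Springer}, then observe that a torus has no $p$-torsion in characteristic $p$ because $T_{\bar{\KK}}\cong\Gm^n$ and $x^p=1$ forces $(x-1)^p=0$. The only difference is that the paper cites Corollary~\ref{corollary:tori} for the torus step (whose proof contains exactly your observation), whereas you inline the argument --- which, as you correctly note, also makes it clear that the roots-of-unity hypothesis of that corollary is not actually needed here.
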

\begin{proof}
By Theorem~\ref{theorem:Springer}  it suffices to prove the assertion in the case when $\Gamma$ is a torus,
in which case it is given by Corollary~\ref{corollary:tori}.
\end{proof}

Note that over fields of positive characteristic non-reductive algebraic groups
may have unbounded finite subgroups. For instance, the $p$-torsion subgroup of $\Ga$ over an infinite
field of characteristic $p$
is an infinite-dimensional vector space over the field~$\mathbf{F}_p$ of~$p$ elements.
However, this example is in a certain sense the only source of unboundedness
for unipotent groups.

\begin{lemma}\label{lemma:unipotent-torsion}
Let $\KK$ be a  field,
and let $\Gamma$ be a unipotent group over $\KK$.
Then $\Gamma(\KK)$ does not contain elements of finite
order coprime to the characteristic of~$\KK$.
\end{lemma}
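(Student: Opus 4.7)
The plan is to reduce to a standard matrix computation in a faithful representation. Since $\Gamma$ is a linear algebraic group in the paper's sense, it comes with a closed embedding $\Gamma \hookrightarrow \GL_N$ for some $N \ge 1$. Unipotency of $\Gamma$ means precisely that every element $g \in \Gamma(\KK) \subset \GL_N(\KK)$ is a unipotent matrix, i.e.\ $g = I + M$ with $M \in \Mat_N(\KK)$ satisfying $M^N = 0$.

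Next, I would suppose $g$ has finite order $d \ge 1$ and split into two cases according to $\Char \KK$. In characteristic zero, expanding $g^d = I$ via the binomial theorem gives a relation $\sum_{k=1}^{N-1} \binom{d}{k} M^k = 0$; reading off the coefficient of the highest nonzero power of $M$ and descending inductively forces $M = 0$. Hence $\Gamma(\KK)$ is torsion-free, which settles the statement since every positive integer is coprime to the characteristic~$0$.

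In characteristic $p > 0$, I would choose an integer $k$ with $p^k \ge N$ and invoke the Frobenius identity $(I+M)^{p^k} = I + M^{p^k}$, valid because $M$ commutes with itself and all intermediate binomial coefficients $\binom{p^k}{j}$ vanish modulo $p$. This gives $g^{p^k} = I + M^{p^k} = I$, so the order of $g$ divides $p^k$ and is therefore a power of $p$. If $d$ is assumed coprime to $p$, this forces $d = 1$.

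I do not anticipate any real obstacle here. The only non-elementary ingredient is the characterization of the elements of a unipotent group as unipotent matrices in any faithful representation, which is built into the definition and holds over arbitrary (not necessarily perfect) base fields; everything else is elementary binomial arithmetic, and in particular no embedding of $\Gamma$ into the upper-triangular unipotent subgroup is needed.
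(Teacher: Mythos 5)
Your argument is correct, but it takes a genuinely different route from the paper. The paper reduces to the case $\KK=\bar{\KK}$ and invokes the Lie--Kolchin theorem to present a unipotent group as a successive extension of copies of $\Ga$, then quotes the (obvious) statement for $\Ga$ itself. You instead work directly in a faithful representation $\Gamma\hookrightarrow\GL_N$ and use only the fact that each individual element of a unipotent group is a unipotent matrix there, reducing everything to binomial arithmetic with a nilpotent $M$. This is more elementary and self-contained: it needs no structure theory of unipotent groups and no base change. Two small remarks. First, in the characteristic-zero case your phrase ``reading off the coefficient of the highest nonzero power of $M$'' should be made precise, since the powers of $M$ need not be linearly independent; the clean version is to let $r$ be maximal with $M^r\neq 0$ and multiply the relation $\sum_{k\ge 1}\binom{d}{k}M^k=0$ by $M^{r-1}$, which kills all terms except $dM^r$ and yields $M^r=0$, a contradiction. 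Second, the case split is avoidable: if $g^d=1$ with $d$ coprime to $\Char\KK$, then the minimal polynomial of $g$ divides the separable polynomial $x^d-1$, so $g$ is semisimple; a semisimple unipotent matrix is the identity. Either way, your proof is sound.
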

\begin{proof}
Without  loss of generality we may assume that $\KK$ is algebraically closed. In this case the assertion of the lemma follows from a similar assertion for $\Ga$ together with
Lie--Kolchin theorem (see e.g.~\cite[Corollary~10.5]{Borel}), which implies that
any unipotent group over an algebraically closed field can be obtained as a consecutive
extensions of groups isomorphic to~$\Ga$.
\end{proof}

We will need the following auxiliary fact about
orders of finite groups with given exponents.

\begin{theorem}[{see \cite[Theorem~1]{HerzogPraeger}}]
\label{theorem:Burnside}
Let $n$ and $d$ be positive integers,
and let $\KK$ be a field.
Let~\mbox{$G\subset\GL_n(\KK)$} be a finite subgroup.  If $\Char\KK>0$, denote by~$|G|'$
the largest factor of $|G|$ which is coprime to
$\Char\KK$; otherwise put~\mbox{$|G|'=|G|$}.
Suppose that
for every~\mbox{$g\in G$} such that the order of $g$ 
is coprime to the characteristic of $\KK$,  one has $g^d=1$.
Then~\mbox{$|G|'\le d^n$}.
\end{theorem}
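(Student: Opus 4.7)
The plan is to reduce to the case of an algebraically closed field and then proceed by induction on $n$. After replacing $\KK$ by $\bar\KK$, the finite group $G$ embeds in $\GL_n(\bar\KK)$ with the same order, and the hypothesis on elements of order coprime to $p=\Char\KK$ is preserved; thus we may assume $\KK$ is algebraically closed. The base case $n=1$ is immediate: any finite subgroup of $\bar\KK^\times$ is cyclic, and its $p'$-part consists of elements of order prime to $p$, each of which has order dividing $d$ by hypothesis, so $|G|'\le d$.

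For the inductive step, first consider the reducible case, in which the natural representation $V=\bar\KK^n$ of $G$ has a proper nonzero $G$-invariant subspace $W$. Choosing a basis adapted to the flag $0\subset W\subset V$ places $G$ inside the block upper-triangular parabolic, and the projection $\rho\colon G\to\GL(W)\times\GL(V/W)$ has kernel contained in the unipotent radical of this parabolic. By Lemma~\ref{lemma:unipotent-torsion}, the intersection of this radical with $G$ is a finite $p$-group (trivial in characteristic zero), so $|G|'=|\rho(G)|'$. The hypothesis on $p'$-elements descends to each of the two projections of $\rho(G)$, and the inductive hypothesis applied to each factor gives $|\rho(G)|'\le d^{\dim W}\cdot d^{n-\dim W}=d^n$.

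The irreducible case is the main obstacle, and I would handle it by analyzing the Sylow $q$-subgroups of $G$ separately for each prime $q\ne p$. Every element of such a Sylow has $q$-power order, hence order prime to $p$, so by hypothesis its order divides the $q$-part $d_q=q^{v_q(d)}$ of $d$. The core technical claim is that any finite $q$-subgroup $Q\subset\GL_n(\bar\KK)$ of exponent $d_q$ (with $q\ne p$) has order at most $d_q^n$. For abelian $Q$ this follows from simultaneous diagonalization. In general, one argues again by induction on $n$: the reducible reduction above applies verbatim (the unipotent kernel has trivial intersection with the $q$-group $Q$), and in the irreducible case Schur's lemma forces the center $Z(Q)$ to be cyclic of order at most $d_q$, after which a detailed analysis of the resulting extraspecial-type structure—using that the minimal faithful degree of such a group is bounded below by $[Q:Z(Q)]^{1/2}$—yields the required bound. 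Once the $q$-group case is established, multiplying over all primes $q\ne p$ gives $|G|'=\prod_{q\ne p}|Q_q|\le\prod_{q\ne p}d_q^n\le d^n$. The subtle point throughout is the tightness of the constant $d^n$: a naive application of Jordan's theorem would give only $d^n\cdot f(n)$ for some function $f$, so removing this extra factor requires exploiting the diagonalizability of abelian subgroups of $\GL_n$ in a sharp way.
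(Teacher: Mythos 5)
The paper does not actually prove this statement: it is imported wholesale from Herzog--Praeger \cite{HerzogPraeger} and used as a black box, so there is no internal proof to compare against. Judged on its own terms, your proposal gets the easy reductions right --- passing to $\bar{\KK}$, the reducible case via the unipotent kernel of the projection to $\GL(W)\times\GL(V/W)$ (where Lemma~\ref{lemma:unipotent-torsion} correctly disposes of $G\cap U$ as a $p$-group), and the reduction to Sylow $q$-subgroups of exponent dividing $d_q$. All of that is sound, including the slightly delicate point that the exponent hypothesis descends to the projections (one must pass to the $p'$-part of a preimage).

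The genuine gap is exactly where you flag ``the main obstacle'': the irreducible $q$-group case. The one concrete tool you invoke there --- that the minimal faithful degree of $Q$ is bounded \emph{below} by $[Q:Z(Q)]^{1/2}$ --- is false for irreducible $q$-subgroups of $\GL_n(\bar{\KK})$ in general; the standard character-theoretic inequality $\chi(1)^2\le [Q:Z(\chi)]$ goes the other way, and it only becomes an equality for groups of symplectic (extraspecial) type, i.e.\ in the \emph{primitive} case. Imprimitive irreducible $q$-groups violate your claim outright: the monomial group $Q=\mu_{q^k}\wr(\Z/q\Z)\subset\GL_q(\bar{\KK})$ is irreducible and faithful in degree $n=q$, yet $[Q:Z(Q)]^{1/2}=q^{(kq+1-k)/2}$ exceeds $q$ as soon as $k\ge 2$. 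So your argument is missing the primitive/imprimitive dichotomy entirely: one must separately handle induced representations (where the exponent of the wreath-product-like group grows relative to that of the base, and the bookkeeping needed to preserve the sharp constant $e^n$ is delicate --- e.g.\ $|Q|=q^{kq+1}$ against $e^n=q^{(k+1)q}$ in the example above) and then reduce the primitive case to symplectic-type groups. That two-step analysis is precisely the content of the Herzog--Praeger paper, and without it the theorem is not proved. Even granting a corrected primitive-case inequality $[Q:Z(Q)]\le n^2$, the deduction $|Q|\le e\cdot n^2\le e^n$ needs a case check for small $n$ and $e$ that you have not supplied.
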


Now we prove Theorem~\ref{theorem:LAG}.

\begin{proof}[Proof of Theorem~\ref{theorem:LAG}]
Clearly, we may assume that $\Gamma$ is connected. Note that   assertion~(i) follows from  assertion~(ii) and
Corollary~\ref{cor:Springer}.

Let $\Gamma$ be an arbitrary linear algebraic group of rank $n$, and let $\gamma\in\Gamma(\KK)$ be an element of finite order coprime to $\Char\KK$.
Then, by Corollary~\ref{cor:Springerold}, the element
$\gamma$ is contained in some  subtorus of $\Gamma$.
Thus,  it follows from Corollary~\ref{corollary:tori}
that the order of  $\gamma$  is bounded by some
constant that depends only on $n$. In other words, there exists a constant $d(n)$, such that for every connected linear algebraic group of rank $n$ and every element
$\gamma\in\Gamma(\KK)$ of order coprime to $\Char \KK$, one has
$$
\gamma^{d(n)} =1.
$$

Let $\Gamma_{\bar{\KK}}$ be the algebraic group over $\bar{\KK}$ obtained from $\Gamma$ by the base change, and let
$\Delta_{\bar{\KK}}$ be the unipotent radical of~$\Gamma_{\bar{\KK}}$. (Note that unless $\KK$ is perfect the group $\Delta_{\bar{\KK}}$  need not be defined over $\KK$.)
 Then $\Gamma_{\bar{\KK}}/\Delta_{\bar{\KK}}$ is a
reductive linear algebraic group over $\bar{\KK}$.  Moreover, the group~\mbox{$\Gamma_{\bar{\KK}}/\Delta_{\bar{\KK}}$}
has the same rank as~$\Gamma_{\bar{\KK}}$
(which is equal to the rank of ~$\Gamma$).
Indeed, the rank of a unipotent algebraic group  is zero. Hence, the rank of~\mbox{$\Gamma_{\bar{\KK}}/\Delta_{\bar{\KK}}$} is greater than or equal to  the rank of~$\Gamma_{\bar{\KK}}$. On the other hand, by~\cite[Theorem~10.6(4)]{Borel},
every extension of a torus by
a unipotent group admits a section, which means that the rank  of~$\Gamma_{\bar{\KK}}$ is greater than or equal to the rank of the quotient~\mbox{$\Gamma_{\bar{\KK}}/\Delta_{\bar{\KK}}$}.
 By a theorem of Chevalley  (see~\mbox{\cite[Theorem~9.6.2]{Springer}})
there are only finitely many isomorphism classes of connected reductive
groups of given rank over an algebraically closed field.
Every such group is linear, that is, it admits a faithful
finite-dimensional representation.
Applying this to~\mbox{$\Gamma_{\bar{\KK}}/\Delta_{\bar{\KK}}$}
we infer that there exists a constant $N(n)$,
which  depends only on~$n$,  such that,
for every connected linear  algebraic group $\Gamma$ of rank $n$, the group ~\mbox{$(\Gamma_{\bar{\KK}}/\Delta_{\bar{\KK}}) (\bar{\KK})$}
admits a faithful representation in an $N(n)$-dimensional vector space over $\bar{\KK}$:
$$
(\Gamma_{\bar{\KK}}/\Delta_{\bar{\KK}})(\bar{\KK}) \hookrightarrow \GL_{N(n)}(\bar{\KK}).
$$
Composing this embedding  with the projection  $\Gamma(\KK) \to (\Gamma_{\bar{\KK}}/\Delta_{\bar{\KK}})(\bar{\KK})$ we construct a homomorphism
$$
\phi\colon \Gamma(\KK) \to \GL_{N(n)}(\bar{\KK}),
$$
 whose kernel is contained in  $\Delta_{\bar{\KK}}(\bar{\KK})$.
By Lemma~\ref{lemma:unipotent-torsion}, every element of finite order in~\mbox{$\Delta_{\bar{\KK}}(\bar{\KK})$} has order divisible by~$\Char\KK$.
This means that the image $\phi(G)$ of a finite subgroup~\mbox{$G\subset\Gamma(\KK)$} in  $\GL_{N(n)}(\bar{\KK})$
  has order divisible by the largest factor $|G|'$
of $|G|$ coprime to~$\Char\KK$; in particular,
if $\Char\KK=0$, then $G$ projects isomorphically to~\mbox{$(\Gamma_{\bar{\KK}}/\Delta_{\bar{\KK}})(\bar{\KK})$}.
Theorem~\ref{theorem:Burnside} applied to $\phi(G)$ gives us a bound $|G|'\le L(1,n)$, where
$$
L(1,n)=d(n)^{N(n)}.
$$
This proves assertion~(ii).

For the proof of assertion~(iii), observe that since the group scheme $\pi_1(\Gamma)$ is commutative, we have that $ \pi_1(\Gamma)\cong Z \times Z'$,  where $Z$ is a group scheme of order~$p^m$
and  $Z'$ is a group scheme whose order is coprime to $p$.
The central extensions
\[
\aligned
& Z' \to \tilde \Gamma \to \tilde \Gamma/Z',\\
&Z \to \tilde \Gamma /Z' \to \Gamma
\endaligned
\]
give rise to the exact sequences of groups
\[
\aligned
& Z'(\KK)   \rar{}  \tilde \Gamma (\KK) \rar{} (\tilde \Gamma / Z') (\KK) \rar{} H^1_{fl}(\Spec \KK, Z'),\\
& Z(\KK)   \rar{}  (\tilde \Gamma /Z')(\KK) \rar{} \Gamma(\KK) \rar{N} H^1_{fl}(\Spec \KK, Z),
\endaligned
\]
where the groups on the right stand for cohomology of $Z'$ and $Z$  regarded as  sheaves for the
fppf  topology on $\Spec \KK$, and $H^1_{fl}$
denotes the first cohomology group
for the fppf  topology (see, for example, \cite[\S\,III.4]{Milne1980}).
Set $H=G \cap  \ker N$.  By Corollary \ref{cor:Springer} the group  $\tilde \Gamma (\KK)$ has no elements of order $p$. Since the multiplication by $p$ is invertible in~$Z'$, the same is true for   $H^1_{fl}(\Spec \KK, Z')$. Hence $H$ has no elements of order $p$. Thus, by assertion~(ii) the order of~$H$ is less than or equal to $L(1,n)=d(n)^{N(n)}$.
On the other hand, by construction~$H$ is a normal subgroup of $G$, and $G/H$ is a subgroup of~\mbox{$H^1_{fl}(\Spec \KK, Z)$}. The latter is an abelian group annihilated by~$p^m$.  Hence, the same is true for $G/H$. Finally, since $H$ has no elements of order $p$,  a $p$-Sylow subgroup of $G$ projects  isomorphically to~$G/H$.
Thus, the group~$G$ is isomorphic to a semi-direct product of $H$ and~$G/H$.
\end{proof}

\section{Severi--Brauer varieties}
\label{section:SB}

In this section
we describe automorphism groups of Severi--Brauer varieties and prove Propositions~\ref{proposition:SB}
and~\ref{proposition:SBp}.
We refer the reader to \cite{Artin1982} for the definition and basic
facts concerning Severi--Brauer varieties.

Let $A$ be a central simple algebra of dimension $n^2$
over an arbitrary field $\KK$,
and $X$ be the corresponding
Severi--Brauer variety of dimension~\mbox{$n-1$}.
As usual, for a field extension~\mbox{$K\supset\KK$} we denote
by $A^*(K)$ the multiplicative group of invertible elements
in~\mbox{$A\otimes_{\KK} K$}.
In particular, one has $A^*(\KK^{sep})=\GL_n(\KK^{sep})$.
If $A$ is a division algebra, then
$A^*(\KK)$ consists of all non-zero elements of~$A(\KK)$.

The following fact is well known to experts (cf. Theorem~E on page~266
of~\cite{Chatelet}, or~\cite[\S1.6.1]{Artin1982}),
but for the reader's
convenience we provide a proof.

\begin{lemma}\label{lemma:SB-Aut}
One has $\Aut(X)\cong A^*(\KK)/\KK^*$.
\end{lemma}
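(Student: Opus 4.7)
The plan is to realize both sides of the claimed isomorphism as $\KK$-points of the same algebraic group over $\KK$, namely the projective linear group $\PGL_1(A)=\GL_1(A)/\Gm$, where $\GL_1(A)$ denotes the algebraic $\KK$-group whose functor of points sends a $\KK$-algebra $R$ to $(A\otimes_\KK R)^*$. Over $\KK^{sep}$ we have $A\otimes_\KK \KK^{sep}\cong \Mat_n(\KK^{sep})$ and $X_{\KK^{sep}}\cong \PP^{n-1}$, so both $\GL_1(A)$ and $\PGL_1(A)$ become $\GL_n$ and $\PGL_n$ respectively after base change to $\KK^{sep}$.

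First, I would identify the automorphism group scheme $\mathbf{Aut}(X)$ with $\PGL_1(A)$ as $\KK$-group schemes. The classical non-abelian $H^1$ classification of Severi--Brauer varieties identifies the pointed set of $\KK$-forms of $\PP^{n-1}$ with $H^1\bigl(\Gal(\KK^{sep}/\KK),\PGL_n(\KK^{sep})\bigr)$, where $\PGL_n$ is simultaneously $\Aut(\PP^{n-1})$ and the inner automorphism group of $\Mat_n$. Thus $X$ and $A$ are classified by the same cocycle, and twisting $\PGL_n$ by this cocycle yields both $\mathbf{Aut}(X)$ and $\PGL_1(A)$. Therefore $\mathbf{Aut}(X)\cong \PGL_1(A)$ as algebraic groups over $\KK$, and in particular $\Aut(X)=\PGL_1(A)(\KK)$.

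Next, I would apply Galois (or fppf) cohomology to the central short exact sequence of algebraic $\KK$-groups
\begin{equation*}
1\longrightarrow \Gm\longrightarrow \GL_1(A)\longrightarrow \PGL_1(A)\longrightarrow 1.
\end{equation*}
Taking $\KK$-points and invoking Hilbert~$90$, which gives $H^1(\KK,\Gm)=0$, yields the exact sequence
\begin{equation*}
1\longrightarrow \KK^*\longrightarrow A^*(\KK)\longrightarrow \PGL_1(A)(\KK)\longrightarrow 1.
\end{equation*}
Combining this with the identification of the previous step, we obtain the desired isomorphism $\Aut(X)\cong A^*(\KK)/\KK^*$.

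The main obstacle is the first step: justifying that $\mathbf{Aut}(X)$ and $\PGL_1(A)$ are isomorphic $\KK$-forms of $\PGL_n$ requires a careful descent argument showing that the cocycle twisting $\PP^{n-1}$ into $X$ is the same as (or, more precisely, the image under a canonical identification of) the cocycle twisting $\Mat_n$ into $A$. Everything after that is formal from Hilbert~$90$.
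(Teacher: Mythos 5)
Your proof is correct and is essentially the approach the paper takes --- indeed, the remark immediately following the paper's own proof restates that proof in exactly your form: identify the automorphism group scheme $\underline{\Aut}(X)$ with the quotient $\mathcal{A}^*/\Gm$ of the algebraic group of units of $A$ by its center, and then apply Hilbert's Theorem~90 to the central extension $1\to\Gm\to\mathcal{A}^*\to\mathcal{A}^*/\Gm\to 1$. The only real difference is that the paper builds the homomorphism $A^*(\KK)/\KK^*\to\Aut(X)$ directly from the conjugation action on the functor of right ideals defining $X$ (and then checks bijectivity via the $\Gal(\KK^{sep}/\KK)$-invariants of $\PGL_n(\KK^{sep})$), which sidesteps the cocycle-matching descent argument that you correctly flag as the main obstacle in your first step.
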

\begin{proof}
Recall that the scheme $X$ represents the functor that takes a scheme $S$ over $\KK$ to the set of right ideals $I$  in the sheaf of algebras $A\otimes _{\KK} \mathcal{O}_S$ which are locally free of
rank~$n$ as $\mathcal{O}_S$-modules and are locally direct summands, that is,~\mbox{$I\oplus J =  A\otimes _{\KK} \mathcal{O}_S$} for some ideal~$J$.
The action of the group~\mbox{$A^*(\KK)$} on $A$ by conjugation induces an action of~\mbox{$A^*(\KK)$}  on the above functor and thus, by Yoneda Lemma, on $X$.
Obviously, the action of the central subgroup~\mbox{$\KK^*\subset A^*(\KK)$}
is trivial on $A$ and on $X$.
This gives a homomorphism of  groups
$$
\xi_{\KK}\colon  A^*(\KK)/\KK^* \to\Aut(X).
$$
Recall that
$X_{\KK^{sep}}\cong\PP_{\KK^{sep}}^{n-1}$ is the Severi--Brauer variety associated with the split central simple algebra
$A \otimes _{\KK} \KK^{sep}$ over $\KK^{sep}$.
Let~\mbox{$\Gamma=\Gal(\KK^{sep}/ \KK)$}.
We have a commutative diagram
\begin{equation*}
\def\normalbaselines{\baselineskip20pt
\lineskip3pt  \lineskiplimit3pt}
\def\mapright#1{\smash{
\mathop{\to}\limits^{#1}}}
\def\mapdown#1{\Big\downarrow\rlap
{$\vcenter{\hbox{$\scriptstyle#1$}}$}}
\begin{matrix}
 A^*(\KK)/\KK^*  & \rar{\xi_{\KK}} & \Aut(X) \cr
 \mapdown{}  && \mapdown{}  \cr
  (A^*(\KK^{sep})/(\KK^{sep})^*)^\Gamma & \rar{\xi_{\KK^{sep}}^\Gamma} &
  \big(\Aut(X_{\KK^{sep}})\big)^\Gamma
\end{matrix}
\end{equation*}
Here, for a group $C$ with an action of $\Gamma$,
we write $C^\Gamma$ for the subgroup of $\Gamma$-invariant elements.
The homomorphism
$$
\xi_{\KK^{sep}}\colon  A^*(\KK^{sep})/(\KK^{sep})^* \to\Aut(X_{\KK^{sep}})\cong\PGL_n(\KK^{sep})
$$
is an isomorphism. Thus, the lower horizontal arrow in the diagram
is also an isomorphism. In addition, the vertical arrows are injections.
Hence, to complete the proof it suffices to show
that the morphism
$$
A^*(\KK)/\KK^* \to  \big(A^*(\KK^{sep})/(\KK^{sep})^*\big)^\Gamma
$$
is surjective.
Indeed, the exact sequence of groups with $\Gamma$-action
$$
1\to (\KK^{sep})^* \to A^*(\KK^{sep}) \to A^*(\KK^{sep})/(\KK^{sep})^*\to 1
$$
gives rise to the exact sequence of Galois cohomology
groups
$$
1\to \KK^*\to A^*(\KK)\to  (A^*(\KK^{sep})/(\KK^{sep})^*)^\Gamma\to H^1(\Gamma, \KK^*).
$$
The latter cohomology group vanishes by Hilbert's Theorem~90,
and the assertion of the lemma follows.
\end{proof}

\begin{remark}
The above argument can be restated as follows: let $ \mathcal{A}^*$ be the algebraic group whose  $S$-points are invertible elements in the algebra $A \otimes _\KK  \mathcal{O}_S $.
There is a natural embedding $\Gm \to  \mathcal{A}^*$ and the quotient group scheme  is identified with the group
scheme~\mbox{$\underline{\Aut}(X)$}. Hilbert's Theorem~90 implies
that the group of $\KK$-points of the quotient~$\mathcal{A}^*/\Gm $ is  $A^*(\KK)/\KK^*$.
\end{remark}

\begin{lemma}\label{lemma:SB-exponent}
Let~$\KK$ be a field that contains all
roots of $1$.
Let $A$ be a central simple algebra  of dimension $n^2$ over a field $\KK$, and let~$x$ be an element of
$A^*(\KK)$ whose image in~\mbox{$A^*(\KK)/\KK^*$} has finite order.
Then the minimal polynomial $f(y)\in \KK[y]$ of $x$ has the form
$$
f(y)= \prod_i(y^r-a_i),
$$
for some positive integer $r$ that divides $n$ and some $a_i\in \KK$.
In particular, if $A$ is a division algebra, then the order of the image of $x$ in $A^*(\KK)/\KK^*$ divides $n$.
\end{lemma}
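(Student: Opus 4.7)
Let $r_0$ denote the order of the image of $x$ in $A^*(\KK)/\KK^*$, so that $x^{r_0} = c$ for some $c \in \KK^*$; the minimal polynomial $f$ of $x$ then divides $y^{r_0} - c$ in $\KK[y]$. My plan is to factor $y^{r_0}-c$ completely over $\KK$ and show that every irreducible factor has the common form $y^r-e$, so that $f$ inherits this form, and moreover that this $r$ divides $n$.

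When $\gcd(r_0, \Char\KK) = 1$, the polynomial $y^{r_0}-c$ is separable and the factorization is obtained via Kummer theory: since $\KK$ contains all roots of $1$, the Galois group $\Gal(\KK^{sep}/\KK)$ acts on the roots $\{\zeta^i\mu\}_{i\in\Z/r_0\Z}$ of $y^{r_0}-c$ (with $\mu^{r_0}=c$ and $\zeta$ a primitive $r_0$-th root of unity) by translation via a homomorphism $k\colon \Gal(\KK^{sep}/\KK)\to\Z/r_0\Z$. Setting $\Sigma = \mathrm{Im}\,k$ and $r:=|\Sigma|$, each Galois orbit is a coset of $\Sigma$ of size $r$, and a direct computation shows that $\prod_{i\in C}(y-\zeta^i\mu)$ for each coset $C$ equals $y^r-e_C$ with $e_C\in\KK$; hence $y^{r_0}-c=\prod_O(y^r-e_O)$ as a product of irreducibles over $\KK$, and $f$ is a product of some of these. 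The inseparable case ($p=\Char\KK$ divides $r_0$) is handled by writing $r_0=p^kr_0'$ with $\gcd(r_0',p)=1$ and $c=c_0^{p^l}$ with $l$ maximal and $c_0$ not a $p$-th power in $\KK$; then $y^{r_0}-c=(y^{p^{k-l}r_0'}-c_0)^{p^l}$, and the Galois analysis applied to the separable part $z^{r_0'}-c_0$ combined with inseparable multiplicity tracking produces a factorization of the same shape.

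To verify $r \mid n$, I would use the reduced characteristic polynomial $\chi_x\in\KK[y]$ of degree $n$: the roots of $\chi_x$ over $\bar\KK$ lie among the roots of $y^{r_0}-c$, so any irreducible $\KK$-factor of $\chi_x$ divides $y^{r_0}-c$ and hence has the form $y^r - e_O$; therefore $\chi_x = \prod_O (y^r-e_O)^{N_O}$ and $n=r\sum_O N_O$. Finally, in the division algebra case $\KK[x]\subset A$ is a commutative subalgebra without zero divisors, hence a field, so $f$ is irreducible and the factorization collapses to $f(y) = y^r - a_1$; then $x^r = a_1 \in \KK^*$ forces $r_0 \mid r$, and combined with $r \mid n$ this gives $r_0 \mid n$. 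The main technical challenge will be the inseparable case in positive characteristic, where one must carefully distinguish Galois-invariance from $\KK$-rationality of polynomial coefficients.
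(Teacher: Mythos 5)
Your proposal is correct in substance, and its first half is essentially the paper's argument in different clothing: the paper also reduces to factoring $y^{m}-a$ over $\KK$, taking $r$ to be the minimal positive integer with $\alpha^{r}\in\KK$ for a root $\alpha$ and proving $y^{r}-\alpha^{r}$ irreducible; your translation action of $\Gal(\KK^{sep}/\KK)$ on the roots via a homomorphism to $\Z/r_0\Z$ is the same Kummer computation. Where you genuinely diverge is the divisibility $r\mid n$. The paper stays inside $A$: it embeds $\KK[y]/((y^{r}-b)^{e})$ into $\KK[x]$ via the Chinese Remainder Theorem, uses Hensel's lemma to produce a subfield $\KK[y]/(y^{r}-b)\subset A$ when $\Char\KK\nmid r$, invokes the fact that degrees of subfields of $A$ divide $n$, and then handles the $p$-part of $r$ by a separate representation-dimension argument over $\KK^{sep}$. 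You instead observe that the reduced characteristic polynomial $\chi_x\in\KK[y]$ has degree $n$ and all its roots satisfy $y^{r_0}=c$, so each of its irreducible factors is one of the $y^{r}-e_O$ and $r\mid n$ follows by counting degrees. This is shorter and uniform in the characteristic (no case split on whether $p\mid r$), at the price of importing the existence and $\KK$-rationality of $\chi_x$, which rests on $A$ being split by a separable extension. Two points to tighten before this is complete: first, the inseparable ``multiplicity tracking'' is where the real work lies --- after writing $y^{r_0}-c$ as a $p^{l}$-th power of a polynomial with separable part $z^{r_0'}-c_0$, you must still prove that the resulting degree-$r$ factors are irreducible over $\KK$ (otherwise a factor of the form $y^{ps}-e$ with $e=e_1^{p}$ collapses to $(y^{s}-e_1)^{p}$ and the common exponent $r$ changes); the paper does this by comparing the separable subextension $\KK(\alpha^{p^{k}})$ and the purely inseparable subextension $\KK(\alpha^{m})$ of $\KK(\alpha)$ and multiplying their degrees, and you will need the same (or an equivalent) argument. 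Second, your normalization ``$l$ maximal with $c$ a $p^{l}$-th power'' should be capped at $k$, since over a perfect field no maximal such $l$ exists.
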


\begin{proof}
We know that for some
positive integer $m$, the element $a=x^m$ is contained in $\KK^*$.
We claim that there exists a positive integer $r$ dividing $m$ such that every irreducicble factor of the polynomial $y^m - a \in   \KK[y]$ has the form $y^r- b$ for some $b\in \KK$.
To show this, let $\alpha $  be a root of $y^m - a$ in $\bar{\KK}$, and let $r$ be the minimal positive integer  such that $b=\alpha ^r$ is contained in~$\KK$. Clearly, $r$ divides $m$. We need to check that the polynomial
$y^r- b$ is irreducible in  $\KK[y]$. Consider the extension $F=\KK(\alpha)\supset \KK$. Since
$\KK$ contains all roots of~$1$,
the extension $F$ is the splitting field of the polynomial $y^r- b$.

If the characteristic of $\KK$ does not divide $r$, then $F$ is separable over $\KK$ .  We have  an injective (Kummer) homomorphism
$\Gal(F/\KK) \hookrightarrow \mu_r$ to the cyclic group of $r$-th roots
of~$1$ which sends
$g\in \Gal(F/\KK)$ to $\frac{g(\alpha)}{\alpha}$. If $\mu_{r'} \subset \mu_r$ is its image, then~$\alpha^{r'}$ is invariant under the action of the Galois group and, thus, belongs to~$\KK$.
Therefore, one has~\mbox{$r=r'=[F: \KK]$}. This implies that  $y^r- b$ is irreducible in  $\KK[y]$.

If $\KK$ has positive characteristic $p$ which divides $r$,  write $r=p^k m$ with $m$ coprime to~$p$. The subfield $\KK(\alpha ^{p^k})\subset F$ is the splitting field of the separable polynomial $y^m - b$, which, by the previous argument, is irreducible over $\KK$.
On the other hand,  $\KK(\alpha ^{m})\subset F$ is the splitting field of the polynomial
$$
y^{p^k}- b= (y- \alpha ^{m})^{p^k}
$$
which is also irreducible over $\KK$. Since
the extension  $\KK \subset \KK(\alpha ^{p^k})$ is separable and the extension  $\KK \subset \KK(\alpha ^m)$ is purely inseparable, the degree of their composite is equal to the product
$$
[\KK(\alpha ^{p^k})\colon \KK] \cdot[  \KK(\alpha ^{m})\colon \KK]=r.
$$
Hence,  the degree of $F$ over $\KK$ is at least $r$, and therefore $y^r- b$ is irreducible in  $\KK[y]$.

Since the minimal polynomial $f(y)$ of $x$ divides  $y^m - a$, each of its irreducible factors also has the
form $y^r- b$ for some integer $r$ which divides $m$ and $b\in\KK$ such that ~$b^{\frac{m}{r}}=a$. We need to show that $r$ divides $n$.  Consider the subalgebra
$$
R=\KK[x] \subset A.
$$
By definition of the minimal polynomial, the algebra $R$ is isomorphic to  the quotient~\mbox{$\KK[y]/(f(y))$}.
If~\mbox{$y^r-b$} is an irreducible factor of $f(y)$ then,
using the assumption  that~$\KK$ contains all roots of~$1$ and the Chinese Remainder theorem,
we see that there exists an  integer~$e$ and a unital monomorphism
$$\KK[y]/((y^r- b)^e)\hookrightarrow R.$$

If the characteristic of $\KK$ does not divide $r$, then by
Hensel's lemma the projection
$$
\KK[y]/((y^r- b)^e) \to \KK[y]/(y^r- b)
$$
splits, and thus $A$ contains a subfield isomorphic to
$\KK[y]/(y^r- b)$. On the other hand, the degree $[F\colon \KK]$ of any subfield $F \subset A$ divides $n$. Indeed, the central simple algebra~\mbox{$A\otimes _{\KK} F$}
over $F$ acts on the $F$-vector space $A$. Hence, $[F\colon \KK]=\dim_F A$ is divisible by $n$. Therefore, we see that~$r$ divides~$n$.

If $\Char \KK = p$ divides $r$, write $r=p^k m$ with $m$ coprime to~$p$. Then $R$ contains a subfield isomorphic to $\KK[z]/(z^m- b)$. Therefore, $m$ divides $n$. Next, the algebra~\mbox{$A\otimes _{\KK} \KK^{sep}$} splits and thus
has a representaion of dimension $n$ over~$\KK^{sep}$. On the other hand, $R\otimes _{\KK} \KK^{sep}$ contains a subalgebra of the form~\mbox{$\KK^{sep}[u]/((u^{p^k}- b)^e)$}. Since the polynomial $u^{p^k}- b$ remains irreducible over $\KK^{sep}$, the dimension of any reprentation of~\mbox{$R\otimes _{\KK} \KK^{sep}$}  in a $\KK^{sep}$-vector space is divisible by~$p^k$. In particular, $n$ is divisible by $p^k$.

If $A$ is a division algebra, then the minimal polynomial of any of its elements is irreducible and the last assertion of the lemma follows.
\end{proof}

\begin{remark}
T.\,Bandman and Yu.\,Zarhin proved in~\cite[Theorem~3.4]{BandmanZarhin2017} a special case of
Lemma~\ref{lemma:SB-exponent} for $A=\mathrm{Mat}_n(\KK)$ and $\Char \KK=0$.
\end{remark}

Theorem~\ref{theorem:Burnside} implies the following result
about orders of finite groups with given exponents.

\begin{lemma}
\label{lemma:PGL-order-vs-exponent}
Let $n$ and $d$ be positive integers,
and let $\KK$ be a field such that~\mbox{$\Char\KK$} does not divide $d$.
If $\Char\KK>0$, denote by $n'$  the largest factor of $n$ which is coprime to~\mbox{$\Char\KK$}; otherwise put $n'=n$.
Let $G\subset\PGL_n(\KK)$ be a finite subgroup. Suppose that
for every~\mbox{$g\in G$} one has $g^d=1$.
Then
$$
|G|\le (n'd)^{n-1}.
$$
Moreover, if $n=d=3$ and $\Char\KK=0$, then
$$
|G|\le 27.
$$
\end{lemma}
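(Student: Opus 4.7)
The strategy is to lift $G$ from $\PGL_n(\KK)$ to a finite subgroup of $\SL_n$ over the algebraic closure, control its exponent, and then apply the Burnside-type Theorem~\ref{theorem:Burnside}. First, since $G \hookrightarrow \PGL_n(\KK) \hookrightarrow \PGL_n(\bar{\KK})$ and the quantities $n$, $n'$, and $\Char\KK$ are insensitive to this extension, I will assume $\KK = \bar{\KK}$.

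Over $\bar{\KK}$ the morphism $\pi\colon \SL_n(\bar{\KK}) \to \PGL_n(\bar{\KK})$ is surjective with kernel $\mu_n(\bar{\KK}) = \mu_{n'}(\bar{\KK})$, cyclic of order~$n'$ (since $\Char\KK \nmid n'$). Setting $\hat G := \pi^{-1}(G) \subset \SL_n(\bar{\KK})$ produces a central extension
$$
1 \to \mu_{n'}(\bar{\KK}) \to \hat G \to G \to 1,
$$
so $|\hat G| = n' |G|$. For any $\tilde g \in \hat G$ projecting to $g \in G$, the hypothesis $g^d = 1$ gives $\tilde g^d \in \mu_{n'}(\bar{\KK})$, whence $\tilde g^{dn'} = 1$. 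Therefore $\hat G \subset \SL_n(\bar{\KK})$ is a finite subgroup of exponent dividing~$dn'$, which is coprime to $\Char\KK$.

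Direct application of Theorem~\ref{theorem:Burnside} to $\hat G \subset \GL_n(\bar{\KK})$ gives only the crude bound $|\hat G| \le (dn')^n$ and hence $|G| \le d^n (n')^{n-1}$, which falls short of the desired $(n'd)^{n-1}$ by a factor of~$d$. The main obstacle will be to recover this factor. I would do so by sharpening the counting argument underlying Theorem~\ref{theorem:Burnside} to exploit the $\SL_n$-constraint $\det = 1$: the eigenvalues $\zeta_1, \ldots, \zeta_n \in \mu_{dn'}$ of an element of $\hat G$ satisfy $\prod_i \zeta_i = 1$, which cuts down the set of characteristic polynomials of elements of $\hat G$ by a factor of roughly $dn'$ compared with the ambient $\GL_n$ situation. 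Combining this with the fact that $\hat G$ sits inside $\SL_n$ as the preimage of $G$ under $\pi$ should yield $|\hat G| \le n' \cdot (dn')^{n-1}$ and hence $|G| \le (n'd)^{n-1}$.

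For the moreover statement ($n = d = 3$, $\Char\KK = 0$, bound~$27$), the group $G$ has exponent~$3$ and is therefore a $3$-group. A direct analysis shows that any finite $3$-subgroup of $\PGL_3(\bar{\KK})$ is contained in the Heisenberg group of order $27$, realised as the semi-direct product of the $3$-torsion $(\mathbb{Z}/3\mathbb{Z})^2$ of a diagonal maximal torus with the cyclic subgroup of order~$3$ inside the Weyl group~$S_3$. This yields $|G| \le 27$.
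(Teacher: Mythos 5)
There is a genuine gap at the crucial step. You correctly lift $G$ to $\tilde G=\pi^{-1}(G)\subset\SL_n(\bar\KK)$ of exponent dividing $n'd$ and correctly observe that a direct application of Theorem~\ref{theorem:Burnside} only gives $|G|\le (n'd)^n/n'$, which is off by a factor of $d$. But your proposed remedy --- reworking the counting inside the proof of Theorem~\ref{theorem:Burnside} to exploit $\det=1$ --- is never carried out: the sentence ``should yield $|\hat G|\le n'\cdot(dn')^{n-1}$'' is a heuristic about characteristic polynomials, not an argument, and it would require reopening a result that the paper uses strictly as a black box. It is not at all clear that the determinant constraint interacts with the Herzog--Praeger proof in the way you suggest, so the main inequality $|G|\le (n'd)^{n-1}$ remains unproved in your write-up. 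The paper's device is different and much cheaper: instead of shrinking the bound, it \emph{enlarges the group}. Let $\mu_{n'd}\subset\GL_n(\bar\KK)$ be the scalar matrices of order dividing $n'd$ and set $\hat G=\langle \tilde G,\mu_{n'd}\rangle$. Since $\mu_{n'd}$ is central and $\tilde G\cap\mu_{n'd}=\mu_{n'}$ (the kernel of $\pi$), one gets $|\hat G|=n'd\,|G|$ while every element of $\hat G$ still satisfies $g^{n'd}=1$. Now Theorem~\ref{theorem:Burnside} applied to $\hat G$ gives $n'd\,|G|=|\hat G|\le (n'd)^n$, i.e.\ exactly $|G|\le(n'd)^{n-1}$. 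You should adopt this step (or supply a complete proof of your sharpened counting claim).

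Two smaller points. First, your reduction to $\bar\KK$ and the computation $|\hat G|=n'|G|$ with exponent dividing $dn'$ coincide with the paper's and are fine. Second, in the ``moreover'' part the assertion that \emph{any} finite $3$-subgroup of $\PGL_3(\bar\KK)$ lies in the Heisenberg group of order $27$ is false as stated: the image of the diagonal subgroup $\mumu_9\times\mumu_9$ gives a $3$-subgroup of order $81$. What is true (and what you need) is that a finite subgroup of \emph{exponent} $3$ has order at most $27$; this follows from the Blichfeldt classification of finite subgroups of $\PGL_3$ in characteristic zero, which is how the paper concludes. With that restriction your argument for the bound $27$ is essentially the paper's.
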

\begin{proof}
We can assume that the field $\KK$ is algebraically (or separably) closed.
Let~\mbox{$\tilde{G}\subset\SL_n(\KK)$} be the preimage
of $G$ with respect to the natural projection~\mbox{$\phi\colon \SL_n(\KK)\to\PGL_n(\KK)$}. The kernel of $\phi$ is  a cyclic group of  order $n'$ that consists of scalar matrices.  Thus,
for every $g\in\tilde{G}$, one has $g^{n'd}=1$ and $|\tilde G| = n'|G|$.
Let~\mbox{$\mu_{n' d} \subset \GL_n(\KK)$} be the subgroup of scalar matrices whose order divides $n'd$. Consider the subgroup
$\hat{G}\subset  \GL_n(\KK)$ generated by $\tilde{G}$ and~$\mu_{n' d}$.
The order of any of its elements still divides~$n'd$. Thus, by Theorem~\ref{theorem:Burnside}, we have
$$
|\hat{G}|\le  (n'd)^n.
$$
On the other hand, we also have
$$
|\hat{G}|=n'd |G|,
$$
and the first assertion of the lemma follows.

If $n=d=3$, then $|G|=3^r$ for some $r$, so that the second assertion of the lemma follows from
the classification of finite subgroups of $\PGL_3(\KK)$ over a field of characteristic zero, see~\cite[Chapter~V]{Blichfeldt}.
\end{proof}

Now we are ready to prove Proposition~\ref{proposition:SB}.

\begin{proof}[Proof of Proposition~\ref{proposition:SB}]
Suppose that $A$ is a division algebra.
Let $g\in \Aut(X)$ be an element of finite order.
We claim that $g^{n}=1$.
Indeed, by Lemma~\ref{lemma:SB-Aut}
the element~$g$ corresponds to some invertible element $x$
of $A$ (defined up to~$\KK^*$). Since $A$ is a division algebra, the minimal polynomial of $x$ must be irreducible over $\KK$.
By Lemma \ref{lemma:SB-exponent}, the minimal polynomial of~$x$ has the form $y^r -a$
for some $a\in \KK^*$ and some positive integer~$r$ which divides~$n$.

Furthermore, one has
$$
G\subset\Aut(X)\subset\Aut(X_{\KK^{sep}})\cong\PGL_n(\KK^{sep}).
$$
Therefore, by Lemma~\ref{lemma:PGL-order-vs-exponent} we have $|G|\le {n}^{2(n-1)}$ in general, and
also $|G|\le 27$ in the case when $\Char\KK=0$ and $n=3$. Moreover,
every central simple algebra of dimension $n^2$ is a division algebra provided that
$n$ is a prime number. This proves assertions~(ii) and~(iii).

Now suppose that $A$ is not a division algebra. Then
$$
A\cong D\otimes_{\KK}\mathrm{Mat}_m(\KK)
$$
for some $2\le m\le n$, where $\mathrm{Mat}_m(\KK)$ denotes the algebra of $m\times m$-matrices.
Thus~$A$ contains $\mathrm{Mat}_m(\KK)$ as a subalgebra. Since the field~$\KK$ contains
roots of $1$ of arbitrarily large degree, we see from Lemma~\ref{lemma:SB-Aut} that the group $\Aut(X)$ contains elements of
arbitrarily large finite order. This completes the proof of assertion~(i).
\end{proof}

\begin{remark}\label{remark:char-vs-dim-perfect}
Let $\KK$ be a perfect field of positive characteristic $p$, and let $A$ be a division algebra of dimension~$n^2$ over $\KK$.
Then $p$ does not divide $n$. Indeed, the Frobenius morphism
$\mathrm{Fr}\colon \bar{\KK}^* \to \bar{\KK}^*$ is an isomorphism and, hence, the Brauer group
$$
\Br(\KK)\cong H^2(\Gal(\bar{\KK}/\KK), \bar{\KK}^*)
$$
of $\KK$ has no $p$-torsion elements and it is $p$-divisible. Therefore, our claim  follows from the fact that, over any field, the dimension of a central division algebra and the order of its class in the Brauer group have the same prime factors
(see for instance~\mbox{\cite[Lemma~2.1.1.3]{Lieblich}}).
\end{remark}

\begin{remark}\label{remark:SB-via-LAG}
Let $A$ be a central simple algebra  over a field $\KK$.  Denote by $ \mathcal{A}^*$ the algebraic group whose  $S$-points are invertible elements in the algebra $A \otimes _\KK  \mathcal{O}_S$. We have a natural embedding
$\Gm \hookrightarrow \mathcal{A}^*$ induced by the homomorphism $\mathcal{O}_S^* \hookrightarrow (A \otimes _\KK  \mathcal{O}_S)^*$.  The quotient group scheme
$\mathcal{A}^*/\Gm$ is anisotropic if and only if $A$ is a division algebra (see, for instance,~\mbox{\cite[\S23.1]{Borel}}).
In particular,  if $\KK$ is perfect,  then Proposition~\ref{proposition:SB}(i) follows from Theorem~\ref{theorem:LAG} applied to the reductive group $\mathcal{A}^*/\Gm$.
\end{remark}

The restriction on the characteristic of $\KK$ in  Proposition~\ref{proposition:SB} is essential for validity of the statement.

\begin{example}
\label{example:BMR}
Let $F$ be a field of characteristic $p>0$, and
let~\mbox{$\KK=F(x, y)$} be the field of rational
functions in two variables, so that $\KK$ is a non-perfect field of characteristic~$p$.
Let $A$ be an algebra over~$\KK$
with generators $u$ and $v$
and relations
$$
v^p =x,\quad u^p=y,\quad vu-uv=1.
$$
Then $A$ is a central division algebra of dimension~$p^2$ over $\KK$. This is a special case of the Azumaya property of the ring of differential operators in characteristic~$p$ (see~\cite[Theorem~2.2.3]{BMR08}), but can be also checked directly. The group $A^*(\KK)/\KK^*$  contains~\mbox{$F(v)^*/F(v^p)^*$} as a subgroup. The latter
is an infinite-dimensional vector space over the field~$\mathbf{F}_p$ of~$p$ elements.
In particular, for~\mbox{$p=2$} this construction provides an example of a conic $C$ over a non-perfect field of characteristic~$2$
such that $C$ is acted on by elementary $2$-groups of arbitrarily large order.
\end{example}

Now we prove Proposition~\ref{proposition:SBp}.

\begin{proof}[Proof of Proposition~\ref{proposition:SBp}]
By Lemma~\ref{lemma:SB-Aut} we have $\Aut(X)\cong A^*(\KK)/\KK^*$.
Recall the reduced norm homomorphism:
$$
\Norm \colon A^*(\KK) \to \KK^*.
$$
One has $\Norm(cx) = c^n \Norm(x)$ for every
$c\in \KK^*$ and $x\in  A^*(\KK)$. Hence, $\Norm$ induces a homomorphism
\begin{equation}\label{norm}
 A^*(\KK)/\KK^* \to  \KK^*/(\KK^*)^n,
\end{equation}
where $(\KK^*)^n \subset  \KK^*$ is the subgroups of $n$-th powers.  Composing homomorphism~\eqref{norm} with the projection   $\KK^*/(\KK^*)^n \to  \KK^*/(\KK^*)^{p^m}$, we get
\begin{equation}\label{norm1}
A^*(\KK)/\KK^* \to  \KK^*/(\KK^*)^{p^m}.
\end{equation}

Let $\Gamma(A)$ be the kernel of homomorphism~\eqref{norm1}. We claim that
the order of every element  of $\Gamma(A)$ of finite order divides~$n'$.
Indeed, if $n=n'$, this follows from  Proposition~\ref{proposition:SB}(ii). Hence, we may assume that~\mbox{$m>0$}.
By Lemma \ref{lemma:SB-exponent},  the order of every element  of $\Gamma(A)$ of finite order divides~$n$. Thus, it suffices to check that
$\Gamma(A)$ has no elements of order $p$.  Assuming the contrary,  let $g\in  \Gamma(A)$ be an element of order $p$, and let $x$
be its preimage in $A^*(\KK)$.
Then $x^p =a$ for some $a \in \KK^*$.
Since $a$  does not belong to $(\KK^*)^{p}$, the image of~$a$
in $\KK^*/(\KK^*)^{p^m}$ has order $p^m$. On the other hand, we have~\mbox{$\Norm (x)= a^ \frac{n}{p}$}.
Thus,~\mbox{$\Norm(x)$} is not equal to $1$ in $\KK^*/(\KK^*)^{p^m}$, that is,
$g$ does not belong to $\Gamma(A)$, which gives a contradiction.

It follows that for every finite subgroup $H\subset \Gamma(A)$,  one has~\mbox{$|H|\le {n'}^{2(n-1)}$}.
The proof repeats {\it verbatim} the proof of the corresponding estimate from  Proposition~\ref{proposition:SB}(ii),
using the above multiplicative bound on the orders of finite order elements of~\mbox{$\Gamma(A)$}.

Now let $G\subset  A^*(\KK)/\KK^*$  be a finite subgroup. Set $H= G\cap \Gamma(A)$. Then $H$ is normal and the quotient
$G/H$ is a subgroup of  $\KK^*/(\KK^*)^{p^m}$. Moreover, a $p$-Sylow subgroup of~$G$ must project isomorphically to $G/H$.
Thus, $G$ is isomorphic to a semi-direct product of~$H$ and~$G/H$.
\end{proof}

\begin{remark}\label{remark:SBp-via-LAG}
Under the assumptions of  Proposition~\ref{proposition:SBp},  the fact that every finite subgroup of  $  A^*(\KK)/\KK^*$ is a semi-direct product of its abelian $p$-Sylow subgroup  and a normal subgroup of bounded order is a special case of
 Theorem~\ref{theorem:LAG}(iii) applied to the reductive group $\mathcal{A}^*/\Gm$.
Indeed, the algebraic fundamental group of  $\mathcal{A}^*/\Gm$ is isomorphic to the group scheme $\mu_n=\ker(\Gm\rar{n} \Gm)$, which has order $n$. Also, since  $\mathcal{A}^*/\Gm$
has type~$\mathrm{A}_{n-1}$, the set of torsion primes for  $\mathcal{A}^*/\Gm$ is empty.
\end{remark}

Later we will need the following generalization of Theorem~\ref{theorem:Zarhin-conic}.

\begin{corollary}
\label{corollary:Zarhin-improved}
Let $\KK$ be  a field that contains all roots of~$1$, and let
$C$  be a conic over~$\KK$ with $C(\KK)=\varnothing$. Then
 every finite  subgroup  of $\Aut(C)$ is isomorphic to $(\Z/2\Z)^n$ for some non-negative integer $n$. Moreover, if $\Char \KK \ne 2$ or $\KK$ is perfect then $n\le 2$.
 \end{corollary}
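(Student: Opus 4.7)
The plan is to realize $C$ as the Severi--Brauer variety attached to a quaternion algebra $A$ and then invoke the machinery developed earlier in the paper. Since $C$ is a conic without $\KK$-points, the central simple algebra $A$ of dimension $n^{2}=4$ corresponding to $C$ must be a division algebra; indeed, a Severi--Brauer variety has a $\KK$-point exactly when its algebra splits. Lemma~\ref{lemma:SB-Aut} then identifies $\Aut(C)$ with $A^{*}(\KK)/\KK^{*}$. I would apply the last assertion of Lemma~\ref{lemma:SB-exponent}, with $n=2$, to conclude that every finite-order element of $\Aut(C)$ has order dividing~$2$.

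The structural part of the corollary is now automatic: a group in which every non-identity element is an involution is forced to be abelian (from $(gh)^{2}=1$ one obtains $gh=h^{-1}g^{-1}=hg$) and of exponent~$2$, and is therefore isomorphic to $(\Z/2\Z)^{n}$ for some $n\ge 0$.

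For the numerical bound, I would split according to the two clauses of the hypothesis. If $\Char\KK\neq 2$, then Proposition~\ref{proposition:SB}(ii) applies directly with $n=2$ and gives $|G|\le 2^{2(2-1)}=4$, so $n\le 2$. If instead $\KK$ is a perfect field of characteristic~$2$, then Remark~\ref{remark:char-vs-dim-perfect} forbids the existence of any quaternion division algebra over $\KK$, contradicting $C(\KK)=\varnothing$; so this sub-case is vacuous. Perfect fields of characteristic different from~$2$ are already covered by the first clause. I do not expect any genuine obstacle here: the argument is essentially a packaging of results already established in the paper, and the only conceptual step is the reduction to the Severi--Brauer / quaternion algebra picture, after which everything reduces to reading off consequences of Lemmas~\ref{lemma:SB-Aut} and~\ref{lemma:SB-exponent} and Proposition~\ref{proposition:SB}.
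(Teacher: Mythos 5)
Your proof is correct and follows essentially the same route as the paper: the paper deduces the first assertion from Proposition~\ref{proposition:SBp} and the bound from Remark~\ref{remark:char-vs-dim-perfect} together with Proposition~\ref{proposition:SB}, both of which rest on exactly the reduction you make (Lemma~\ref{lemma:SB-Aut} plus the exponent statement of Lemma~\ref{lemma:SB-exponent} for the quaternion division algebra). Your direct appeal to Lemma~\ref{lemma:SB-exponent} in place of Proposition~\ref{proposition:SBp} is a harmless streamlining, and your observation that the perfect characteristic-$2$ case is vacuous matches the paper's argument.
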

\begin{proof}
The first assertion follows from Proposition~\ref{proposition:SBp}. Furthermore, if $\KK$ is perfect, then~\mbox{$\Char\KK\neq 2$} by Remark~\ref{remark:char-vs-dim-perfect}.
Thus, the second assertion follows from
Proposition~\ref{proposition:SB}(iii).
\end{proof}

We conclude this sections with a few further remarks on division algebras over a field of characteristic $p$.

\begin{lemma}\label{boundnessfornonperfect}
Let $A$ be  a division algebra of dimension $n^2$ over a field $\KK$ of finite characteristic  $p$ that contains all roots
of $1$. Then  the group $A^*(\KK)/\KK^*$ has bounded finite subgroups if and only if every element of $v\in A$ is separable over $\KK$, that is, the field extension~\mbox{$\KK(v)\supset \KK$} is separable.
\end{lemma}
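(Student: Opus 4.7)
For the easy direction, I would assume every element of $A$ is separable and show that $A^*(\KK)/\KK^*$ has no element of order $p$: if $[x]$ had order $p$, then $x^p \in \KK^*$, and by Lemma \ref{lemma:SB-exponent} the minimal polynomial of $x$ over $\KK$ (irreducible, as $A$ is a division algebra) would take the form $y^r - a$ with $r \mid n$. The order of $[x]$ divides $r$, so $p \mid r$; then the derivative $r y^{r-1}$ vanishes in characteristic $p$, making $y^r - a$, and hence $x$, inseparable --- a contradiction. By Proposition \ref{proposition:SBp}, every finite subgroup $G \subset A^*(\KK)/\KK^*$ decomposes as $H \rtimes S$ with $|H| \le (n')^{2(n-1)}$ and $S$ an abelian $p$-group; the absence of order-$p$ elements forces $S = 1$, and hence a uniform bound on $|G|$.

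For the converse, suppose some $v \in A$ is inseparable over $\KK$. Its minimal polynomial has the form $g(y^p)$ with $g$ irreducible, so $p \mid [\KK(v):\KK]$, and $p \mid n$ follows from the fact that subfields of a central simple algebra of degree $n$ have degree dividing $n$. Moreover $\KK \supset \bar{\FF}_p$ (as $\KK$ contains all roots of unity), so $\KK$ is infinite. The plan is to exhibit an element $w \in A \setminus \KK$ with $w^p \in \KK^*$: granted such $w$, the purely inseparable extension $\KK(w) \subset A$ yields classes $\{[w + \alpha] : \alpha \in \KK\} \subset A^*(\KK)/\KK^*$ that are pairwise distinct (since $w + \alpha = c(w + \beta)$ with $c \in \KK^*$ forces $c = 1$ and $\alpha = \beta$), each of order dividing $p$ since $(w + \alpha)^p = w^p + \alpha^p \in \KK^*$. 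These classes span an infinite $\FF_p$-vector subspace of $A^*(\KK)/\KK^*$ inside which finite subgroups of arbitrarily large order are readily found.

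To produce $w$, I would first try the iteration $v \mapsto v^p$: the minimal polynomial of $v^p$ is $g$, of strictly smaller degree, and one continues while the current polynomial remains inseparable. If the iteration terminates with $v^{p^k} \in \KK$ for some minimal $k \ge 1$, then $w := v^{p^{k-1}}$ works directly. The main obstacle is the alternative case, in which the iteration stabilizes at a separable element $v^{p^k} \notin \KK$; here $\KK(v)$ itself can fail to contain any degree-$p$ purely inseparable extension of $\KK$, so one must exploit the non-commutative structure of $A$. The cleanest route is via Albert's classification of central simple $p$-algebras in characteristic $p$: since $p \mid n$, the $p$-primary part of the Brauer class $[A] \in \Br(\KK)$ is non-trivial, and any central division algebra with non-trivial $p$-primary Brauer class contains a purely inseparable extension of $\KK$ of degree $p$, supplying the required $w$.
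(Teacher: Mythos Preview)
Your ``all separable $\Rightarrow$ bounded'' direction is correct and essentially matches the paper. You rule out order-$p$ elements and then invoke Proposition~\ref{proposition:SBp}; the paper instead observes that every finite-order $v$ satisfies $v^{n'}\in\KK^*$ (otherwise $v^{n'}$ would generate an inseparable extension of $\KK$) and finishes via Lemma~\ref{lemma:PGL-order-vs-exponent}. Either route works.

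The converse has a genuine gap. Your final appeal to ``Albert's classification'' asserts that any central division algebra whose Brauer class has non-trivial $p$-primary part contains a purely inseparable extension of $\KK$ of degree $p$. That implication is false: in the paragraph immediately following this lemma the paper itself recalls that there exist division algebras of dimension $p^4$ over fields of characteristic $p$ in which \emph{every} element is separable over $\KK$. Such an algebra has degree $p^2$, so $p\mid n$ and its Brauer class has non-trivial $p$-primary part, yet it contains no inseparable element whatsoever. The problem is that once you extract $p\mid n$ from the inseparable $v$, you discard $v$ and argue from the divisibility alone --- which, as these examples show, is not enough. By contrast, the paper's proof never leaves $\KK(v)$: it asserts that $\KK(v)$ already contains an intermediate field $\LL$ purely inseparable of degree $p$ over $\KK$, after which $\LL^*/\KK^*$ is an infinite elementary abelian $p$-group. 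Your suspicion that $\KK(v)$ alone might not suffice is a reasonable one to raise (and indeed there are inseparable simple extensions with no purely inseparable intermediate field, e.g.\ $\KK=\overline{\mathbf F}_2(s,t)$ and $\alpha^4+s\alpha^2+t=0$), but the replacement you propose does not stand; any fix must make essential use of the given inseparable element rather than only the consequence $p\mid n$.
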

\begin{proof}
If $v\in A$ is not separable, then there exists a subfield
$$
\KK \subset \LL \subset \KK(v)
$$
which is a purely inseparable extension of $\KK$ of degree $p$. Then
every non-trivial element of the group~\mbox{$\LL^*/\KK^*$} has order~$p$. Since $|\LL^*/\KK^*|=\infty$,
we conclude that the group~\mbox{$\LL^*/\KK^* \subset A^*(\KK)/\KK^*$} has unbounded finite subgroups.

Conversely, suppose that every element of $A$ is separable over $\KK$. Denote by $n'$  the largest factor of $n$ which is coprime to $\Char\KK$. Then by Lemma~\ref{lemma:SB-exponent}, for every element~\mbox{$v\in  A^*(\KK)$} whose image in  $A^*(\KK)/\KK^*$ has finite order, one has~\mbox{$v^{n'} \in \KK^*$}  (otherwise~$v^{n'}$ would be inseparable over~$\KK$).
Hence, the assertion follows from  Lemma~\ref{lemma:PGL-order-vs-exponent}.
\end{proof}

There are examples of division algebras of dimension $p^4$ over a field $\KK$ of characteristic~$p$ such that all their elements are separable over $\KK$. On the other hand,
we do not know if it is true that every  division algebra $A$ of dimension $p^2$ over a field $\KK$ of characteristic  $p$ contains an element $v\in A$ which is inseparable over $\KK$.
We refer the reader to~\mbox{\cite[\S1]{ABGV11}} and references therein for a further discussion of this problem.

The following result shows that  the division algebra in Example~\ref{example:BMR} provides a universal example of a division algebra of dimension $p^2$ over a field of characteristic $p$ which contains an inseparable element.

\begin{lemma}[{cf. ``Albert's cyclicity criterion'', \cite{Alb61}}]
\label{p-center}
Let $A$ be  a division algebra of dimension $p^2$ over a field $\KK$ of finite characteristic  $p$.
\begin{itemize}
\item[(i)]
The following conditions are equivalent.
\begin{itemize}
\item[(1)] $A$  contains an element $v\in A$ which is inseparable over $\KK$.
\item[(2)] $A$ is generated by two elements   $v, u \in A$   such that $v^p, u^p \in \KK$ and    $vu-uv=1$.
\item[(3)]$A$   contains  a subfield $\LL\subset A$ that is a Galois extension of $\KK$  of degree $p$.
\end{itemize}
\item[(ii)]
If $p=2$ or $p=3$, then every  division algebra $A$ of dimension $p^2$ over a field $\KK$ of characteristic  $p$ contains an element $v\in A$ which is inseparable over $\KK$.
In partucular, for every Severi--Brauer variety $X$ of dimension $p-1$ over an infinite field $\KK$ of characteristic $p$, the group $\Aut(X)$ has unbounded finite subgroups.
\end{itemize}
\end{lemma}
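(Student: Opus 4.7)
My plan for (i) is to establish the cycle of implications $(1)\Rightarrow(2)\Rightarrow(3)\Rightarrow(1)$. The implication $(2)\Rightarrow(1)$ is immediate: under (2), the element $v$ satisfies $v^p\in\KK$ and $v\notin\KK$ (otherwise $vu-uv=0\ne 1$), so its minimal polynomial $(y-v)^p=y^p-v^p$ is inseparable.

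The key implication is $(1)\Rightarrow(2)$. Given an inseparable $v$, the field $\LL:=\KK(v)$ is purely inseparable of degree $p$ over $\KK$, and the double centralizer theorem gives $Z_A(\LL)=\LL$. I would first produce $u\in A$ with $vu-uv=1$, which amounts to verifying $1\in\mathrm{im}(\mathrm{ad}(v))$. By faithfully flat descent this reduces to the split case $A\otimes_\KK\bar\KK\cong M_p(\bar\KK)$, where $v$ becomes a cyclic matrix whose nilpotent part is a single Jordan block $J$ of size $p$; an explicit matrix (for instance $U=\sum_{i=1}^{p-1}iE_{i+1,i}$) then satisfies $[J,U]=I$, the final check resting on $-(p-1)\equiv 1\pmod p$. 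To upgrade $u^p$ from $\LL$ to $\KK$, I would analyze the subalgebra $B:=\KK\langle u,v\rangle\subset A$. Induction from $[v,u]=1$ yields $[v,u^n]=nu^{n-1}$, so $u^p$ commutes with both $v$ and $u$ and lies in $Z(B)$. Any $z\in Z(B)$ commutes with $v$, hence lies in $\LL$; and conjugation by $u$ acts on $\LL=\KK[v]$ as the derivation $-d/dv$ (from $[u,v^k]=-kv^{k-1}$), whose kernel is $\KK$. Thus $Z(B)=\KK$, making $B$ a central division $\KK$-algebra containing the subfield $\LL$ of degree $p$; since $\dim_\KK B$ must be a perfect square, lie in $(p,p^2]$, and be a multiple of $p^2$ (as $p\mid\sqrt{\dim_\KK B}$), we obtain $B=A$, hence $u^p\in Z(B)=\KK$.

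For $(2)\Rightarrow(3)$, I would set $w=uv$ and use the standard identity $(uv)^n=\sum_k S(n,k)u^kv^k$, where $S(n,k)$ are Stirling numbers of the second kind, valid in any ring satisfying $vu-uv=1$. Combined with the classical congruence $S(p,k)\equiv 0\pmod p$ for $2\le k\le p-1$ (a consequence of Fermat's little theorem applied to the closed formula for $S(p,k)$), this gives $(uv)^p=uv+u^pv^p$ in characteristic $p$, so $w$ satisfies the Artin--Schreier relation $w^p-w=u^pv^p\in\KK$; as $w\notin\KK$ (else $[v,u]=0$), the polynomial $y^p-y-u^pv^p$ is irreducible and $\KK(w)$ is a Galois subfield of $A$ of degree $p$. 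For $(3)\Rightarrow(1)$, let $\sigma$ generate $\Gal(\LL/\KK)$ and use the Skolem--Noether theorem to produce $\tau\in A^*$ with $\tau\alpha\tau^{-1}=\sigma(\alpha)$ for all $\alpha\in\LL$. Then $\tau^p$ centralises $\LL$, so $\tau^p\in Z_A(\LL)=\LL$, and the identity $\sigma(\tau^p)=\tau\tau^p\tau^{-1}=\tau^p$ forces $\tau^p\in\LL^\sigma=\KK$. Since $\tau$ acts non-trivially on $\LL$, we have $\tau\notin\KK$, and the minimal polynomial of $\tau$ is $y^p-\tau^p$, which is inseparable.

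For (ii), the case $p=2$ is direct: the reduced trace $\mathrm{trd}\colon A\to\KK$ is surjective with $3$-dimensional kernel $A_0$, and any $a\in A_0\setminus\KK$ has reduced characteristic polynomial $y^2+\mathrm{nrd}(a)$, which is inseparable in characteristic $2$. The case $p=3$ is the main obstacle in the whole lemma; I would invoke the classical theorem of Albert asserting that every central division algebra of dimension $9$ over a field of characteristic $3$ is cyclic, i.e.\ contains a Galois subfield of degree $3$, and then extract an inseparable element via $(3)\Rightarrow(1)$ of part~(i). Finally, for the statement about $\Aut(X)$ for a Severi--Brauer variety $X$ of dimension $p-1$ over an infinite $\KK$: if the corresponding central simple algebra is $M_p(\KK)$, then $X\cong\PP^{p-1}$ and $\Aut(X)=\PGL_p(\KK)$ contains the additive group of upper triangular unipotent $p\times p$ matrices, which is an infinite elementary abelian $p$-group and thus yields unbounded finite subgroups; if instead $A$ is a division algebra, the conclusion follows by combining (ii) with Lemma~\ref{boundnessfornonperfect}.
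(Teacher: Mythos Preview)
Your proof is correct and follows the same cyclic scheme $(1)\Rightarrow(2)\Rightarrow(3)\Rightarrow(1)$ as the paper, but several steps are argued by different means. For $(1)\Rightarrow(2)$ the paper stays over~$\KK$: from $(\ad v)^p=0$ and $\dim\ker(\ad v)=p$ one sees that every Jordan block of $\ad v$ has size exactly~$p$, so $1\in\ker(\ad v)\subset\operatorname{im}(\ad v)$; then, instead of your centre-of-subalgebra argument, the paper applies $\ad v$ to the minimal polynomial relation $u^p+a_{p-1}u^{p-1}+\cdots+a_0=0$ and obtains a relation of degree $<p$, forcing $a_1=\cdots=a_{p-1}=0$. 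Your route via $Z(\KK\langle u,v\rangle)=\KK$ works as well and has the bonus of immediately giving $\KK\langle u,v\rangle=A$, which is part of condition~(2). (One wording slip: it is $[u,\,\cdot\,]$, not ``conjugation by $u$'', that acts on $\KK[v]$ as $-d/dv$; your parenthetical formula shows you mean the commutator.) For $(2)\Rightarrow(3)$ the paper simply cites \cite{BMR08} for $(uv)^p-uv=u^pv^p$, while your Stirling-number computation is a self-contained proof of that identity. For~(ii) with $p=2$ the paper uses the equivalence: a separable quadratic extension is automatically Galois, so condition~(3) holds; your reduced-trace argument is more direct. For $p=3$ the cyclicity result is due to Wedderburn~\cite{Wed21} rather than Albert. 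Finally, your explicit handling of the split case $X\cong\PP^{p-1}$ is a small addition: the paper's appeal to Lemma~\ref{boundnessfornonperfect} formally covers only the division case.
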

\begin{proof}
The equivalence of conditions~(1) and~(3) is due to Albert \cite[Theorem~XI.4.4]{Alb61}.

Let us show that condition~(1) implies condition~(2).  Suppose that an element  $v\in A$ is inseparable over $\KK$.
Consider the linear map $\ad v\colon A \to A$, which takes  an element $w\in A$ to the commutator~\mbox{$vw-wv$}. Since $v^p \in \KK$, we see that $(\ad v)^p=\ad v^p =0$. On the other hand, the kernel of $\ad v$ has dimension $p$ over $\KK$. Hence, the image of $\ad v$ is equal
to the kernel of $(\ad v)^{p-1}$. In particular, the element $1$ is in the image of $\ad v$. Thus, there exists an element $u\in A$ such that   $vu-uv=1$. Let
$$
y^p + a_{p-1} y^{p-1} +\ldots + a_0 \in \KK[y]
$$
be the minimal polynomial for $u$. Applying the operator $\ad v$ to the equality
$$
u^p + a_{p-1} u^{p-1} +\ldots +  a_0=0,
$$
we compute that
$$
(p-1)a_{p-1} u^{p-1} +\ldots + 2a_2 u+ a_1=0,
$$
which is impossible unless $a_{p-1}= \ldots =a_1=0$. Thus, we have $u^p \in \KK$.

To prove that condition~(2) implies condition~(3),
assume that $v, u\in A$ are as in~(2). Then one can easily check (or deduce from \cite[Lemma 1.3.1]{BMR08}) that
$$
(uv)^p - uv = u^pv^p \in \KK.
$$
Thus, the subfield $\KK(uv)\subset A$ is a Galois extension of $\KK$.

Finally, assume that a subfield $\LL \subset A$ is a Galois extension of $\KK$.  Its Galois group is a cyclic group of order $p$. Let $\sigma$ be its generator. By the Skolem--Noether theorem the action of $\sigma$  on $\LL$ extends to
an inner automorphism
of $A$ given by an element $v\in A$, that is, $vuv^{-1}=\sigma(u)$ for every $u\in \LL$.  Then, since the normalizer of $\LL$ in $A$ is $\LL$ itself,  we have that
$v^p\in \LL$. Since $\LL$ is separable over $\KK$, the element $v^p$ is, in fact, contained in~$\KK$. This completes
the proof of assertion~(i).

To prove assertion~(ii), recall that every division algebra of dimension $p^2$ contains a subfield $\LL\subset A$ of degree $p$ over $\KK$, which is separable over $\KK$. If $p=2$, then
any such subfield is a Galois extension of $\KK$. For $p=3$, existence of a subfield  $\LL\subset A$ that is a Galois extension $\KK$  of degree $p$  was proved in  \cite{Wed21}.
The last assertion follows from  Lemmas~\ref{lemma:SB-Aut} and~\ref{boundnessfornonperfect}.
\end{proof}

\section{Quadrics}
\label{section:quadrics}

In this section we study automorphism groups of quadrics and
prove assertions~\mbox{(i)--(iii)} of
Proposition~\ref{proposition:quadricnew}.

Let $V$ be a finite-dimensional vector space over a field $\KK$,  and let $q$ be a non-degenerate quadratic form on $V$.
Recall that a quadratic form $q$ is said to be non-degenerate if the associated symmetric bilinear form
$$
B_q\colon V\times V \to \KK,\quad B_q(v,w)= q(v+w) -q(v) -q(w),
$$
is non-degenerate. If $\Char \KK =2$ the form $B_q$ is also alternating. Hence, in this case the dimension of $V$ must be even.

Denote by $\mathrm{O}(V,q)$ the orthogonal (linear algebraic) group corresponding to $q$.
The following result is well know (see, for instance,~\cite[\S\S22.4, 22.6]{Borel}).

\begin{lemma}\label{lemma:O-anisotropic}
The group  $\mathrm{O}(V,q)$ is reductive. It is anisotropic if and only
if $q$ does not represent $0$.
\end{lemma}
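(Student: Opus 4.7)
The plan is to handle the two assertions of Lemma~\ref{lemma:O-anisotropic} separately. For reductivity of $\mathrm{O}(V,q)$, I would appeal to the classical structure theory of orthogonal groups as developed in the Borel sections cited above: over $\bar{\KK}$ the group becomes a split classical group whose unipotent radical is known to be trivial, and reductivity is preserved under faithfully flat base change, so $\mathrm{O}(V,q)$ itself is reductive.

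For the anisotropy criterion, I would prove both implications directly. If $q$ represents $0$, choose $v\in V\setminus\{0\}$ with $q(v)=0$; nondegeneracy of $B_q$ provides $w\in V$ with $B_q(v,w)=1$. Setting $w'=w-q(w)v$ and using $q(\lambda u)=\lambda^2 q(u)$ together with $q(a+b)=q(a)+q(b)+B_q(a,b)$, one computes $q(w')=q(w)+q(w)^2 q(v)-q(w)B_q(w,v)=q(w)-q(w)=0$. Thus $\langle v,w'\rangle$ is a hyperbolic plane $H\subset V$, its orthogonal complement $H^{\perp}$ is a direct summand by nondegeneracy, and the family $v\mapsto tv$, $w'\mapsto t^{-1}w'$ (acting trivially on $H^{\perp}$) defines an embedding $\Gm\hookrightarrow\mathrm{O}(V,q)$, so the latter is not anisotropic.

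For the converse, suppose $\Gm\hookrightarrow\mathrm{O}(V,q)$. Decomposing the induced algebraic representation of $\Gm$ on $V$ into weight spaces gives $V=\bigoplus_{n\in\Z}V_n$, and since the embedding is nontrivial some $V_n$ with $n\neq 0$ must be nonzero. For any nonzero $v\in V_n$, invariance of $q$ under $\Gm$ yields the polynomial identity $q(v)=q(tv)=t^{2n}q(v)$ in $t$, which forces $q(v)=0$; hence $q$ represents $0$.

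The only delicate point is characteristic $2$, where $B_q$ is alternating so that $B_q(v,v)=0$ holds tautologically and the polarization identity $q(v)=\tfrac12 B_q(v,v)$ is unavailable. The hyperbolic-plane construction must therefore be verified by direct quadratic-form manipulation rather than by the usual Gram--Schmidt procedure; however the explicit correction $w\mapsto w-q(w)v$ works uniformly in all characteristics, and the weight-space argument is characteristic-free, so no additional hypothesis on $\Char\KK$ is needed.
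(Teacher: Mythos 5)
Your proof is correct, but it is worth noting that the paper does not actually prove Lemma~\ref{lemma:O-anisotropic}: it declares the statement well known and points to~\cite[\S\S22.4, 22.6]{Borel}, so there is no argument to match yours against. What you supply is a self-contained replacement. For reductivity you do essentially what the citation does (reduce to $\bar{\KK}$ and invoke the classical structure of split orthogonal groups, which is legitimate since reductivity is tested on the geometric unipotent radical). For the anisotropy criterion your two implications are both sound and characteristic-free: the correction $w\mapsto w-q(w)v$ does produce a hyperbolic plane in every characteristic (note $B_q(v,v)=2q(v)=0$, so $v$ and $w'$ are automatically independent and $B_q\vert_H$ is nondegenerate, giving $V=H\oplus H^{\perp}$), and the resulting one-parameter subgroup visibly preserves $q$; conversely, the weight-space decomposition of a $\Gm$-representation is defined over $\KK$ because $\Gm$ is split, so the isotropic vector you extract from a nonzero weight space is genuinely a $\KK$-point, which is what the paper's definition of anisotropic requires. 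The trade-off is the usual one: the citation is shorter, while your argument makes transparent exactly where nondegeneracy of $B_q$ and the triviality of weights enter, and it confirms that no hypothesis on $\Char\KK$ is hidden in the statement. One small point you could make explicit is that the homomorphism $t\mapsto\operatorname{diag}(t,t^{-1},1,\dots,1)$ is a closed immersion (its cocharacter is primitive), so its image really is a subgroup isomorphic to $\Gm$ in the sense of the paper's definition.
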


We will also need the following structural result on quadratic forms over a perfect field of characteristic $2$ due to Arf (\cite{Arf41}).

\begin{lemma}\label{lemma:quadratic-form-in-char-2}
Let $V$ be a finite-dimensional vector space over a perfect field $\KK$ of characteristic $2$,  and let $q$ be a non-degenerate quadratic form on $V$ (so that in particular~\mbox{$\dim V=2k$} is even).  Then, for
 some coordinates $x_1, \ldots, x_{2k}$ on $V$, the quadratic form $q$ is given by
\begin{equation}\label{canforminchartwo}
q_a(x_1, \ldots , x_{2k})= x_1^2 +x_1 x_2 + a x_2^2 + x_3 x_4 +\ldots +x_{2k-1}x_{2k},
\end{equation}
where $a$ is an element of $\KK$.
Moreover, two quadratic forms $q_a(x_1, \ldots , x_{2k})$ and~\mbox{$q_{a'}(x_1, \ldots , x_{2k})$} are equivalent if and only if $a$ and $a'$ have the same image
in the cokernel of Artin--Schreier homomorphism
$$
\KK \to \KK,\quad c \mapsto c^2 -c,
$$
which is  the Arf invariant  of the quadratic form.
In particular, if $\dim V >2$ then every non-degenerate quadratic form on $V$ represents $0$.
\end{lemma}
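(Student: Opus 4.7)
The plan is to proceed by induction on $k = \dim V / 2$. Since $\Char \KK = 2$ and $q$ is non-degenerate, the associated bilinear form $B_q$ is non-degenerate and alternating, so $\dim V$ is necessarily even and $V$ admits a symplectic basis $(e_1, f_1, \ldots, e_k, f_k)$ with $B_q(e_i, f_j) = \delta_{ij}$ and $B_q(e_i, e_j) = B_q(f_i, f_j) = 0$. In this basis $q$ takes the shape $\sum_{i=1}^{k} (a_i x_i^2 + x_i y_i + b_i y_i^2)$ with $a_i = q(e_i)$ and $b_i = q(f_i)$. For the base case $k = 1$, I would exploit the perfectness of $\KK$: if $a_1 \ne 0$, the rescaling $e_1 \mapsto a_1^{-1/2} e_1$ and $f_1 \mapsto a_1^{1/2} f_1$ preserves the symplectic pairing and brings $q$ to the desired shape $x^2 + xy + a y^2$ with $a = a_1 b_1$. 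The remaining cases reduce to this one by swapping $e_1, f_1$ if $a_1 = 0$ but $b_1 \ne 0$, and by applying $y_1 \mapsto x_1 + y_1$ if both vanish (using that $x_1(x_1 + y_1) = x_1^2 + x_1 y_1$ in characteristic $2$).

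For the inductive step, the key lemma is that every non-degenerate quadratic form of dimension $\ge 4$ over a perfect field of characteristic $2$ is isotropic. To prove this, I would split $V = P_1 \perp P_1^{\perp}$ with respect to $B_q$, where $P_1 = \langle e_1, f_1 \rangle$, and inspect the restrictions to $P_1$ and $P_2 := \langle e_2, f_2 \rangle$. If either restriction is isotropic, there is nothing more to do; otherwise each restriction is anisotropic with $a_i, b_i \ne 0$, and after rescaling we may assume $a_i = 1$. Perfectness of $\KK$ makes squaring a bijection on $\KK^{*}$, so $q|_{P_i}$ attains every nonzero value on its nonzero vectors; in particular $q(e_1) = 1 = q(v_2)$ for a suitable $v_2 \in P_2$, and then
\[
q(e_1 + v_2) = q(e_1) + q(v_2) + B_q(e_1, v_2) = 1 + 1 + 0 = 0
\]
exhibits a nonzero isotropic vector. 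Given such a $v$, the standard construction produces a hyperbolic plane $H = \langle v, w \rangle$ with $q|_H = xy$: take $w$ with $B_q(v, w) = 1$ (possible by non-degeneracy), then replace $w$ by $w + q(w) v$ to kill $q(w)$. With $V = H \perp H^{\perp}$, the inductive hypothesis applied to $H^{\perp}$ (dimension $2(k-1)$) furnishes the canonical form on $H^\perp$, which combines with $H$ to produce the canonical form on $V$.

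For the Arf invariant statement, the substitution $x_1 \mapsto x_1 + c y_1$ converts $q_a$ into $q_{a + c^2 + c}$, and since $c^2 + c = c^2 - c$ in characteristic $2$ this proves $q_a \sim q_{a'}$ whenever $a - a' \in \{c^2 - c : c \in \KK\}$. For the converse direction I would invoke the standard fact that $\sum_i q(e_i) q(f_i)$ is, modulo the Artin--Schreier image, independent of the choice of symplectic basis (hence an invariant of the equivalence class of $q$); for $q_a$ this quantity evaluates to $a$. The last assertion follows immediately: when $\dim V > 2$, the canonical form contains a hyperbolic summand $x_j y_j$ with $j \ge 2$, and setting all other coordinates to $0$ with $x_j = 1$, $y_j = 0$ produces a nonzero isotropic vector. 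The main obstacle I anticipate is the isotropy lemma in dimension $\ge 4$; its proof depends essentially on perfectness of $\KK$, since over a non-perfect field of characteristic $2$ anisotropic non-degenerate forms of arbitrarily large even dimension can occur.
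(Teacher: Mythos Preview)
The paper does not actually prove this lemma: it is stated as a result ``due to Arf'' and attributed to~\cite{Arf41} with no argument given. So there is no paper proof to compare against, and your proposal stands on its own.

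Your argument is essentially the standard proof and is correct. The induction scheme, the base case via rescaling by square roots (this is where perfectness enters), the isotropy lemma in dimension~$\ge 4$, and the hyperbolic splitting are all sound. A minor remark: in the isotropy step you assert that an anisotropic binary form $x^2 + xy + b y^2$ represents every nonzero value, but you only need that it represents~$1$, and after your rescaling $q(e_2)=1$ already, so you can simply take $v_2 = e_2$. The one place where you defer to the literature is the well-definedness of the Arf invariant (that $\sum_i q(e_i)q(f_i)$ modulo the Artin--Schreier image is independent of the symplectic basis); this is the genuine content of Arf's theorem and is not entirely trivial, but since the paper itself cites Arf without proof, invoking it here is in keeping with the paper's level of detail.
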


\begin{lemma}[{cf. \cite[Lemma~2.1]{GA13}}]
\label{lemma:orderp-in-O}
Let $\KK$ be a field of characteristic $p>2$.
Let $V$ be a vector space over $\KK$, and let $q$ be a non-degenerate quadratic form on~$V$. Assume that $q$ does not represent~$0$. Then  the group of  $\KK$-points of~\mbox{$\mathrm{O}(V,q)$} has no elements of order~$p$.
 \end{lemma}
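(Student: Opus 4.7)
The plan is to argue by contradiction using the Cayley transform to convert a hypothetical order-$p$ element of $\mathrm{O}(V,q)(\KK)$ into a nilpotent element of the orthogonal Lie algebra, and then extract a nonzero isotropic vector from its Jordan structure through a single vector.

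First, I would suppose $g \in \mathrm{O}(V,q)(\KK)$ has order $p$. Since $\Char \KK = p$ and $g$ commutes with $1$, the Freshman's dream gives $(g-1)^p = g^p - 1 = 0$, so $u := g-1$ is a nonzero nilpotent endomorphism of $V$. Because $\Char \KK \neq 2$, the operator $g+1 = 2 + u$ is the sum of an invertible scalar and a nilpotent, hence invertible, so I would form the Cayley transform $U := (g-1)(g+1)^{-1}$. Since $u$ commutes with $g+1$, the element $U$ is a polynomial in $u$ with zero constant term; in particular $U$ is nilpotent and nonzero.

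Next I would check that $U$ lies in the orthogonal Lie algebra $\mathfrak{o}(V,\tilde B)$, where $\tilde B := \tfrac{1}{2}B_q$. Rewriting $g$-invariance of $\tilde B$ as $g^T \tilde B = \tilde B g^{-1}$ and using that $(g-1)$, $(g+1)^{-1}$, $g$ mutually commute, a short manipulation yields $U^T \tilde B = -\tilde B U$, i.e.\ $\tilde B(Uv, w) + \tilde B(v, Uw) = 0$ for all $v,w \in V$. Iterating this skew-symmetry gives the key identity $\tilde B(U^a v, U^b w) = (-1)^a \tilde B(v, U^{a+b}w)$ for all $a, b \ge 0$.

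To derive the contradiction, I would let $k \ge 2$ be the nilpotency index of $U$ and pick $v_0 \in V$ with $U^{k-1}v_0 \ne 0$. If $k$ is even, set $w = U^{k/2}v_0$; this is nonzero (because $U^{k/2 - 1} w = U^{k-1}v_0 \ne 0$) and satisfies $q(w) = \tilde B(w, w) = (-1)^{k/2}\tilde B(v_0, U^k v_0) = 0$. If $k$ is odd (so $k \ge 3$), set $w = U^{(k+1)/2}v_0$; this is nonzero because $(k+1)/2 \le k-1$, and $q(w) = (-1)^{(k+1)/2}\tilde B(v_0, U^{k+1}v_0) = 0$. In both cases $w$ is a nonzero isotropic vector for $q$, contradicting the assumption that $q$ does not represent $0$.

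The one delicate point will be verifying that the Cayley transform actually lands in $\mathfrak{o}(V, \tilde B)$; this hinges on the invertibility of $g+1$, which is precisely where $\Char \KK \ne 2$ enters. Everything else is routine linear algebra with nilpotent operators in an orthogonal Lie algebra, the parity split for $k$ being the natural choice of ``middle position'' in the Jordan block of $U$ through $v_0$.
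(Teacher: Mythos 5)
Your proof is correct, but it takes a noticeably heavier route than the paper's. The paper argues in two lines: since $(g-1)^p=0$ and $g\ne 1$, the Jordan form of $g$ (all eigenvalues equal to $1$, so it exists over $\KK$) yields independent vectors $v_1,v_2$ with $g(v_1)=v_1$ and $g(v_2)=v_1+v_2$; expanding $B_q(v_1,v_2)=B_q(gv_1,gv_2)$ immediately gives $B_q(v_1,v_1)=2q(v_1)=0$, and $p\ne 2$ finishes. You instead pass through the Cayley transform to produce a nonzero nilpotent $U$ in the orthogonal Lie algebra and then extract an isotropic vector from the middle of the longest Jordan chain of $U$. All your steps check out: $g+1=2+(g-1)$ is invertible since $\Char\KK\ne 2$ and $g-1$ is nilpotent; the computation $U^T\tilde B=-\tilde B U$ follows from $g^T\tilde B=\tilde B g^{-1}$ and commutativity of the factors; $\tilde B(v,v)=q(v)$; and the parity split on the nilpotency index $k\ge 2$ does produce a nonzero isotropic $w$. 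What your approach buys is a slightly more general fact (any nonzero nilpotent in $\mathfrak{o}(V,\tilde B)$ forces $q$ to represent $0$), at the cost of the Cayley-transform bookkeeping; what the paper's approach buys is brevity, since the isotropic vector $v_1=(g-1)v_2$ can be read off directly from the unipotent $g$ without ever passing to the Lie algebra. One small stylistic point: the nonvanishing of $U$ follows from $U=(g-1)(g+1)^{-1}$ with $(g+1)^{-1}$ invertible and $g\ne 1$, rather than from $U$ being a polynomial in $g-1$ with zero constant term, so you should state that reason explicitly.
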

\begin{proof}
Assuming the contrary, let $g\in  \mathrm{O}(V,q)(\KK)$ be an element of order $p$. Viewing  $g$  as a linear endomorphism of $V$, we have
$$
g^p -1 =(g-1)^p=0.
$$
Applying the Jordan Normal Form theorem to $g-1$,
we can find linearly independent vectors  $v_1, v_2 \in V$
such that $g (v_1) = v_1$  and $g (v_2) = v_1 + v_2$.  Thus, we have
$$
B_q(v_1, v_2) = B_q(g (v_1), g (v_2 )) = B_q(v_1, v_1) + B_q(v_1, v_2).
$$
Hence, we obtain $2q(v_1) = B_q(v_1, v_1)= 0$, that is, $q$ represents $0$.
\end{proof}

\begin{lemma}[{cf. \cite[Corollary~4.4]{BandmanZarhin2015a}}]
\label{lemma:subgroups-in-O}
Let $\KK$ be a  field that contains all roots of $1$.
Assume that $\Char \KK \ne 2$ or $\KK$ is perfect.
Suppose that~$q$ does not represent $0$.
Then every non-trivial element of finite order in
$\mathrm{O}(V,q)(\KK)$ has order $2$,
and every finite subgroup of~\mbox{$\mathrm{O}(V,q)(\KK)$}
is abelian of order less or equal to~$2^{\dim V}$.
\end{lemma}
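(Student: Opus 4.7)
The plan is to prove the two assertions in order: first, that any finite-order element of $\mathrm{O}(V,q)(\KK)$ squares to the identity, and then that any finite subgroup is elementary abelian of order at most $2^{\dim V}$.

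For the element order, I would argue as follows. Suppose $g\in\mathrm{O}(V,q)(\KK)$ has finite order coprime to $\Char\KK$. Then $g$ is diagonalizable over $\bar\KK$ with eigenvalues that are roots of unity; since $\KK$ contains all roots of $1$, these eigenvalues lie in $\KK$ and $g$ is already diagonalizable over $\KK$. For a $\KK$-rational eigenvector $v$ with eigenvalue $\lambda$, the relation $q(v)=q(gv)=\lambda^2 q(v)$ forces $q(v)=0$ whenever $\lambda^2\ne 1$; by anisotropy of $q$, the corresponding $\KK$-rational eigenspace must then vanish. Hence the only eigenvalues of $g$ are $\pm 1$, and $g^2=1$.

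The coprimality of the order of $g$ to $\Char\KK$ is automatic in characteristic zero, and follows from Lemmas~\ref{lemma:O-anisotropic} and~\ref{lemma:orderp-in-O} in characteristic $p>2$. When $\Char\KK=2$ and $\KK$ is perfect, Lemma~\ref{lemma:quadratic-form-in-char-2} combined with anisotropy of $q$ forces $\dim V=2$; then I would invoke the Jordan decomposition $g=g_sg_u$ in $\mathrm{O}(V,q)(\KK)$ (available because $\KK$ is perfect), apply the preceding eigenvalue argument to $g_s$ (which has odd order since it is semisimple in characteristic~$2$) to conclude $g_s=1$, and note that a unipotent linear map on a $2$-dimensional space in characteristic $2$ automatically satisfies $(g_u-1)^2=0$, so $g_u^2=1$.

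For the subgroup bound, once every $g\in G$ satisfies $g^2=1$, the identity $(gh)^2=1$ yields $gh=hg$, so $G$ is elementary abelian of exponent $2$. In characteristic different from $2$, commuting involutions on $V$ are simultaneously diagonalizable over $\KK$ with eigenvalues in $\{\pm 1\}$, embedding $G\hookrightarrow(\Z/2\Z)^{\dim V}$ and giving $|G|\le 2^{\dim V}$. In characteristic $2$ with $\KK$ perfect and $\dim V=2$, the identity component $\mathrm{SO}(V,q)$ is an anisotropic one-dimensional torus; since $\Gm$ has no $2$-torsion in characteristic $2$, neither does $\mathrm{SO}(V,q)(\KK)$, hence $G\cap\mathrm{SO}(V,q)(\KK)=\{1\}$ and $|G|\le[\mathrm{O}(V,q)(\KK):\mathrm{SO}(V,q)(\KK)]\le 2\le 2^{\dim V}$. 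The main obstacle is the characteristic~$2$ case, where $-1=1$ collapses the ``eigenvalues $\pm 1$'' dichotomy and non-semisimple finite-order elements genuinely appear; the resolution is to exploit Lemma~\ref{lemma:quadratic-form-in-char-2} to reduce to $\dim V=2$ and to handle the semisimple and unipotent parts of $g$ separately, using the absence of $2$-torsion in $\mathrm{SO}(V,q)$ for the subgroup bound.
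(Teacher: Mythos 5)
Your proof is correct. In characteristic different from $2$ it is essentially the paper's argument: the order of a finite-order element is prime to the characteristic by Lemma~\ref{lemma:orderp-in-O}, the element is diagonalizable over $\KK$ because all roots of unity lie in $\KK$, anisotropy forces all eigenvalues to be $\pm1$, and commuting involutions are simultaneously diagonalizable, giving $|G|\le 2^{\dim V}$. The difference lies in the characteristic~$2$ case (where Lemma~\ref{lemma:quadratic-form-in-char-2} forces $\dim V=2$, as you note): the paper simply identifies $\mathrm{O}(V,q)$ with the product of a one-dimensional anisotropic torus and $\Z/2\Z$ and quotes Lemma~\ref{lemma:P1-without-2-pts}, whereas you run a Jordan decomposition $g=g_sg_u$ (legitimate since $\KK$ is perfect), kill $g_s$ by the eigenvalue argument, and bound $g_u$ by the nilpotency of $g_u-1$ on a two-dimensional space; for the subgroup bound you use that $\Gm$, hence the anisotropic torus $\mathrm{SO}(V,q)$, has no $2$-torsion in characteristic~$2$. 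Both routes are short and valid; the paper's is slightly more structural (and reuses a lemma already proved for tori), while yours is more self-contained at the level of linear algebra and makes explicit why the nontrivial coset of $\mathrm{SO}(V,q)$ consists of unipotent involutions in characteristic~$2$.
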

\begin{proof}
By Lemma \ref{lemma:quadratic-form-in-char-2}, if $\KK$ is perfect and $\Char \KK=2$, then we must have $\dim V =2$.
 In this case, by  Lemma \ref{lemma:O-anisotropic}, the group $\mathrm{O}(V,q)$ is anisotropic and, thus,
 isomorphic to the product of an anisotropic torus of rank $1$ and the finite group $\Z/2\Z$. Therefore, the assertion of the lemma follows from Lemma~\ref{lemma:P1-without-2-pts}.

Now, assume  that $\Char\KK\neq 2$.
Let $g\in  \mathrm{O}(V,q)(\KK)$ be an element of finite order. By Lemma \ref{lemma:orderp-in-O} the order of $g$ is coprime
to the characteristic of $\KK$.
Since $\KK$ contains all roots of unity, it follows that every
such element~\mbox{$g\in  \mathrm{O}(V,q)(\KK)$} viewed as a linear endomorphism of $V$
is diagonalizable in an appropriate basis for $V$.
Moreover, since $q$ does not represent~$0$, the diagonal entries of the matrix
of $g$ in this basis must be equal to~\mbox{$\pm 1$}. Hence $g^2=1$.
It follows that every finite subgroup~\mbox{$G\subset\mathrm{O}(V,q)(\KK)$} is abelian, and $G$ is conjugate
to a subgroup of
the group of diagonal matrices in $\GL(V)$.
Hence, the order of~$G$ is at most~$2^{\dim V}$.
\end{proof}

Now we are ready to prove most of
Proposition~\ref{proposition:quadricnew}.

\begin{proof}[Proof of assertions~\mbox{(i)--(iii)} of
Proposition~\ref{proposition:quadricnew}]
Let $V$ be an $n$-dimensional vector space such that $\PP^{n-1}$ is identified
with the projectivization $\PP(V)$, and let $q$ be a quadratic form
corresponding to the quadric $Q$.

First, assume that $\KK$ is a perfect field of characteristic $2$. If $n$ is even, then the quadratic form is non-degenerate;
indeed, otherwise its kernel $T$ would be at least two-dimensional, so that the singular locus of $Q$,
which is $Q\cap\PP(T)$, would be non-empty.
Thus, by Lemma~\ref{lemma:quadratic-form-in-char-2} one has~\mbox{$Q(\KK)\ne \varnothing$}.  Moreover, writing  $q$ in the form~\eqref{canforminchartwo},
we see that~\mbox{$\Aut(Q)$} contains a subgroup isomorphic to~$\KK^*$.
Hence, $\Aut(Q)$ has unbounded finite subgroups. If $n$ is odd, the symmetric bilinear form
$B_q\colon V\times V \to \KK$ associated to $q$ has
a one-dimensional kernel.
In this case $q$ can be written as
$$
q(x_1, \ldots , x_n)= x_1^2 + r(x_2, \ldots, x_n)
$$
for some coordinates $x_1, \ldots, x_{n}$ on $V$ and some
non-degenerate quadratic form $r$ in $n-1$ variables.  Applying  Lemma ~\ref{lemma:quadratic-form-in-char-2} to  $r$, we see that  ~\mbox{$Q(\KK)\ne \varnothing$}
and $\Aut(Q)$ has unbounded finite
subgroups. In fact, in this case $\Aut(Q)$ is isomorphic to  the group of linear transformations
of the quotient $\bar{V}$ of $V$ by $T$
which preserve the induced bilinear form $\bar{B}_q$
on~$\bar{V}$, i.e., to the symplectic group
$\mathrm{Sp}(\bar{V}, \bar{B}_q)(\KK)$; see~\cite[\S22.6]{Borel}.
We see that in the case when $\Char \KK = 2$, assertion~(i) holds, while the assumptions of assertions~(ii), (iii),
and~(iv) do not hold.

From now on we assume that $\Char\KK \ne 2$.
The group $\Aut(Q)$ is isomorphic to the group of $\KK$-points of
the group scheme quotient $\Gamma= \mathrm{O}(V,q)/\mu_2$, where
$\mu_2 \subset  \mathrm{O}(V,q)$ is the central subgroup of order~$2$.
(More geometrically,  $\Aut(Q)$ can be identified with the group of
automorphisms of the projective space  $\PP(V)$ that preserve $q$ up
to a scalar multiple.)
By Lemma~\ref{lemma:O-anisotropic},  the group $\Gamma $ is anisotropic if and only if~\mbox{$Q(\KK)=\varnothing$}.
The connected component of identity $\Gamma^{\circ}\subset \Gamma$  is a connected semi-simple algebraic group.
Moreover,  the algebraic fundamental
group  of $\Gamma^\circ $  has order $2$ if  $n$ is odd and order $4$  if $n$ is even.
Also, note that no prime other than $2$ is a torsion prime  for $\Gamma^{\circ}$. Thus, assertion~(i) of the proposition follows from Theorem~\ref{theorem:LAG}(iii) applied to
$\Gamma^{\circ}$. (It also follows that the group~\mbox{$\Aut(Q)$} has no elements of
order~\mbox{$\Char\KK$};  cf.  Lemma~\ref{lemma:O-anisotropic}.)
The reader will see that the argument we give below for the remaining assertions of the proposition also furnishes a direct proof of~(i).

If $n$ is odd, then the embedding
$\mu_2 \hookrightarrow \mathrm{O}(V,q)$ splits,
so that  $\Aut(Q)$ is isomorphic to the subgroup~\mbox{$\mathrm{SO}(V,q)(\KK)\subset \mathrm{O}(V,q)(\KK) $} of the orthogonal group that consists of matrices whose determinant is equal to $1$.
Thus, assertion~(ii) follows from
Lemma~\ref{lemma:subgroups-in-O}.

Suppose that $n$ is even. Then, using the exact sequence
\begin{equation}\label{exactsequence}
0\to \mu_2 \to  \mathrm{O}(V,q)(\KK)\to \Aut(Q) \to \KK^*/(\KK^*)^2.
\end{equation}
and Lemma~\ref{lemma:subgroups-in-O}, we infer that
every non-trivial element of  $\mathrm{O}(V,q)(\KK)$
of finite order has order $2$.
Hence,  every non-trivial element of $\Aut(Q)$ of finite order
has order $2$ or $4$.
Let~\mbox{$G \subset \Aut(Q)$} be a finite subgroup.
Consider the embedding
$$
\Aut(Q)\cong \Gamma (\KK) \hookrightarrow  \Gamma(\bar{\KK}) \cong  \mathrm{O}(V,q)(\bar{\KK})/\{\pm 1\},
$$
and let $\tilde G$ be the preimage of $G$ in  $\mathrm{O}(V,q)(\bar{\KK})$. The order of every element of~$\tilde G$
divides~$8$, and the same is true for the subgroup
$\hat{G}\subset \GL(V)(\bar{\KK})$ generated by $\tilde G$
and scalar matrices whose orders divide $8$.
Thus, by Theorem \ref{theorem:Burnside}, we have
$|\hat{G}| \le 8^n$.
On the other hand, we know that~\mbox{$|\hat{G}|= 8|G|$}. This proves
assertion~(iii).
\end{proof}

Proposition~\ref{proposition:quadricnew}(iii) implies that
if $\KK$ is a field  that contains all roots of $1$ such that either
$\Char \KK \ne 2$ or~$\KK$ is perfect, and $Q$ is a smooth quadric surface
over $\KK$ with~\mbox{$Q(\KK)=\varnothing$}, then
every finite subgroup of $\Aut(Q)$ has order
at most $8^3=512$.
We will complete the proof of assertion~(iv)
of Proposition~\ref{proposition:quadricnew} by showing in
Corollary~\ref{cor:DP8pt} that this bound can be improved to~$32$.

\section{Varieties with zero irregularity}
\label{section:zero-irregularity}

In this section we study automorphism groups of  varieties 
with zero irregularity and prove Theorem~\ref{theorem:family}.
For any  projective  variety $X$ over a perfect field $\KK$ 
with~\mbox{$h^1(\mathcal{O}_{X})=0$},  
the connected component of identity  $\underline{\Aut}^\circ(X) \subset \underline{\Aut}(X)$
of the group scheme of automorphisms of $X$ is a linear algebraic group, whereas the group of connected
components~\mbox{$\underline{\Aut}(X)/\underline{\Aut}^\circ(X)$} can be very
difficult to understand.
For instance, there exists a surface~$X$ that is a blow up of a~$K3$ surface
such that~\mbox{$\underline{\Aut}(X)$} is a discrete infinitely generated group, see~\cite{DinhOguiso}.
However, for any normal projective variety $X$ over a field of characteristic zero the group
$\underline{\Aut}(X)/\underline{\Aut}^\circ(X)$
has bounded finite subgroups, see~\mbox{\cite[Lemma~2.5]{MZ}}.
Therefore, under these assumptions, the group $\Aut(X)$  has bounded finite subgroups
if and only if~\mbox{$\underline{\Aut}^\circ(X)(\KK)$} enjoys the same property.
We expect that this result holds for  any projective scheme over an arbitrary field.

Given a scheme $\mathcal{S}$ and a locally free sheaf $\mathcal{E}$ on $\mathcal{S}$,
we denote by $\PGL(\mathcal{E})$ the group scheme over $\mathcal{S}$  of projective linear automorphisms of $\mathcal{E}$, that is, the group scheme~\mbox{$\underline{\Aut}(\Proj (S^\bullet  \mathcal{E}))$}
of automorphisms (over $\mathcal{S}$) of the relative projective space.
Thus, over an open subscheme $U\subset \mathcal{S}$,  over which $\mathcal{E}$ is free of rank~$n$,
the group scheme $\PGL(\mathcal{E})$ is isomorphic to $\PGL_n \times U$.
The following observation is well known.

\begin{lemma}\label{lemma:ample}
Let $\mathcal{S}$ be a locally  Noetherian   scheme, and let
$\pi\colon \mathcal{X} \to \mathcal{S}$  be a flat  projective morphism with geometrically connected fibers.
Let $\mathcal{L}$ be a line bundle over~$\mathcal{X}$ that is very ample relative to~$\mathcal{S}$.
 Assume that
$R^m\pi_* \mathcal{L}=0$, for every $ m>0$.
Then the coherent sheaf $\mathcal{E}=\pi_* \mathcal{L}$ is locally free and the functor that carries
an $\mathcal{S}$-scheme $ \mathcal{T}$ to the group of automorphisms
$$
\phi\colon \mathcal{X}\times _\mathcal{S} \cT \to  \mathcal{X}\times _\mathcal{S} \cT
$$
over $\cT$,  such that Zariski locally over $\mathcal{T}$ the line bundle $\phi^*  \pr^*_\mathcal{X} \mathcal{L}$ is
isomorphic to $\pr^*_\mathcal{X} \mathcal{L}$,  is representable by a closed group subscheme $\underline{\Aut}(\mathcal{X}, \mathcal{L})$  of  $\PGL(\mathcal{E})$.
\end{lemma}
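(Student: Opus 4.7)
The plan is to proceed in three stages: first establish local freeness of $\mathcal{E}$; next construct a natural transformation from the proposed functor $\underline{\Aut}(\mathcal{X},\mathcal{L})$ to $\PGL(\mathcal{E})$; finally identify this transformation with a closed immersion of functors. For the first stage, I invoke the theorem on cohomology and base change: since $\pi$ is flat and projective, $\mathcal{L}$ is flat over $\mathcal{S}$, and the hypothesis $R^m\pi_*\mathcal{L}=0$ for $m>0$ combined with this flatness forces $\mathcal{E}=\pi_*\mathcal{L}$ to be locally free of finite rank and its formation to commute with every base change $\mathcal{T}\to\mathcal{S}$; explicitly, the canonical map $\mathcal{E}_\mathcal{T}\to(\pi_\mathcal{T})_*\mathcal{L}_\mathcal{T}$ is an isomorphism and all higher direct images continue to vanish on $\mathcal{T}$.

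For the second stage, very-ampleness of $\mathcal{L}$ together with the local freeness of $\mathcal{E}$ gives a canonical closed $\mathcal{S}$-immersion $\iota\colon\mathcal{X}\hookrightarrow\PP(\mathcal{E}):=\Proj_\mathcal{S}(S^\bullet\mathcal{E})$ whose formation commutes with base change. Given $\phi\in\underline{\Aut}(\mathcal{X},\mathcal{L})(\mathcal{T})$, choose a Zariski cover $\{\mathcal{T}_i\}$ of $\mathcal{T}$ and isomorphisms $\alpha_i\colon\phi^*\pr_\mathcal{X}^*\mathcal{L}|_{\mathcal{T}_i}\to\pr_\mathcal{X}^*\mathcal{L}|_{\mathcal{T}_i}$; pushing $\alpha_i$ forward by $(\pi_{\mathcal{T}_i})_*$ and composing with the map induced by $\phi^*$ on sections produces an automorphism $\beta_i$ of $\mathcal{E}_{\mathcal{T}_i}$. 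Any two choices of $\alpha_i$ differ by a local unit on $\mathcal{T}_i$, and hence so do the $\beta_i$; consequently the classes $[\beta_i]\in\PGL(\mathcal{E})(\mathcal{T}_i)$ are independent of choices and agree on overlaps, so they glue to a canonical element $[\beta]\in\PGL(\mathcal{E})(\mathcal{T})$. Geometrically $[\beta]$ is the unique automorphism of $\PP(\mathcal{E})_\mathcal{T}$ that extends $\phi$ along $\iota_\mathcal{T}$, which also shows that the natural transformation $\underline{\Aut}(\mathcal{X},\mathcal{L})\to\PGL(\mathcal{E})$ is a monomorphism.

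For the third stage, the image of this monomorphism consists exactly of those $g\in\PGL(\mathcal{E})(\mathcal{T})$ whose action on $\PP(\mathcal{E})_\mathcal{T}$ preserves the closed subscheme $\iota(\mathcal{X}_\mathcal{T})$; any such $g$ restricts to an automorphism $\phi$ of $\mathcal{X}_\mathcal{T}$, and any Zariski-local lift of $g$ to $\GL(\mathcal{E})$ yields the required local isomorphism $\phi^*\pr_\mathcal{X}^*\mathcal{L}\cong\pr_\mathcal{X}^*\mathcal{L}$. It therefore remains to check that the stabilizer of $\mathcal{X}\subset\PP(\mathcal{E})$ under the $\PGL(\mathcal{E})$-action is representable by a closed subscheme of $\PGL(\mathcal{E})$. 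Since $\mathcal{X}\to\mathcal{S}$ is flat and projective, $\mathcal{X}$ defines a section $[\mathcal{X}]\colon\mathcal{S}\to\mathrm{Hilb}_{\PP(\mathcal{E})/\mathcal{S}}$ of the relative Hilbert scheme, which is a closed immersion by separatedness of the Hilbert scheme, and the stabilizer is its pullback along the orbit map $g\mapsto g\cdot[\mathcal{X}]$, hence closed.

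I expect the main obstacle to be the gluing argument in the second stage: one must verify that the scalar ambiguity in each $\alpha_i$ matches exactly the kernel of $\GL\to\PGL$, so that the $\PGL$-valued classes $[\beta_i]$ are well defined and agree on overlaps. This is routine once the base-change identification from the first stage is available. The closedness statement in the third stage is the classical fact that stabilizers of closed subschemes under a group-scheme action are closed, which can alternatively be proved directly via a transporter-type construction using properness of $\mathcal{X}\to\mathcal{S}$.
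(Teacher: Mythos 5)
Your proof is correct and follows essentially the same route as the paper: local freeness and base-change compatibility of $\mathcal{E}=\pi_*\mathcal{L}$ from the cohomology-and-base-change theorem, the scalar ambiguity of the local linearizations $\alpha_i$ being absorbed by the kernel of $\GL\to\PGL$ (which is where geometric connectedness of the fibers enters, via $\pi_*\mathcal{O}_{\mathcal{X}}=\mathcal{O}_{\mathcal{S}}$), and identification of the functor with the stabilizer of the closed subscheme $\mathcal{X}\subset\Proj(S^\bullet\mathcal{E})$. The only cosmetic difference is that the paper's primary argument realizes this stabilizer inside a product of relative Grassmannians of the graded pieces $I^m\subset S^m\mathcal{E}$, with your Hilbert-scheme formulation mentioned there as an explicit alternative.
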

\begin{proof}
The coherent sheaf  $\mathcal{E}$ is locally free by the  base change theorem (see e.g.~\mbox{\cite[\S5]{Mumford}}) and the vanishing of
the higher direct images $R^m\pi_* \mathcal{L}$. It also follows from the same theorem that, for every scheme $\cT$ over $\cS$
with a structure morphism~\mbox{$g\colon \cT\to \cS$},  we have
$$
g^* \mathcal{E} \iso    \pr_{\cT *} \pr^*_\mathcal{X} \mathcal{L}.
$$

Let $i\colon\mathcal{X} \hookrightarrow \Proj (S^\bullet  \mathcal{E})$ be the embedding defined by $\mathcal{L}$.
We claim that  the functor carrying an $\mathcal{S}$-scheme $\cT$ to the subgroup $\mathcal{G}(\mathcal{T}) \subset \PGL(\mathcal{E})(\cT)$  of automorphisms
of~\mbox{$\Proj (S^\bullet  \mathcal{E}) \times _\cS \cT$} preserving the closed subscheme
$$
\mathcal{X}\times _\mathcal{S} \cT
\lhook\joinrel\xrightarrow{\ i \times \mathrm{Id}\ }
\Proj(S^\bullet  \mathcal{E}) \times_\cS \cT
$$
is represented by
a  closed subgroup subscheme~$\mathcal{G}$ of~$\PGL(\mathcal{E})$.
Indeed, the image of~$i$ is given by a sheaf  $I_\bullet$ of graded ideals in  $S^\bullet  \mathcal{E}$. For sufficiently large $m$,
the $\mathcal{O}_S$-submodule~\mbox{$I^m \subset S^m\mathcal{E}$}  is locally a direct summand. Thus, $I^m$ determines an $\cS$-point of the relative Grassmannian associated to $S^m  \mathcal{E}$. The group $\mathcal{G}(\mathcal{T})$ consists  of  the elements of $\PGL(\mathcal{E})(\mathcal{T})$ which preserve $I_m\subset S^m \mathcal{E}$  for all
sufficiently large~$m$.
Thus, our subgroup~\mbox{$\mathcal{G} \subset \PGL(\mathcal{E})$} can be defined as the stabilizer of a point in the (infinite) product
of relative Grassmann schemes~\mbox{$\text{Gr}(S^m(\mathcal{E}))$} under the action of~\mbox{$\PGL(\mathcal{E})$}.
Alternatively, $\mathcal{G}$ can be described as the stabilizer in  $\PGL(\mathcal{E})$
of the  point of the Hilbert scheme  $\text{Hilb}( \Proj (S^\bullet  \mathcal{E}))$ determined by the
embedding~\mbox{$i\colon\mathcal{X} \hookrightarrow \Proj (S^\bullet  \mathcal{E})$}.

We claim that $\mathcal{G}$ represents
the functor defined in the assertion of the lemma, that is, one has   $\underline{\Aut}(\mathcal{X}, \mathcal{L})\cong \mathcal{G} $.
Let $\cT$ and $\phi$ be as in  the assertion of the lemma, and let~\mbox{$g\colon \cT \to \cS$} be the structure morphism. Choose, Zariski locally on $\cT$, an isomorphism
$$
\alpha\colon \phi^*  \pr^*_\mathcal{X} \mathcal{L}\cong \pr^*_\mathcal{X} \mathcal{L}.
$$
Then $\phi$ and $\alpha$  induce an automorphism
of the $\mathcal{O}_\cT$-module  $g^* \mathcal{E}$ and, thus, a $\cT$-valued point of $\underline{\Aut}(\mathcal{X}, \mathcal{L})$. Since the fibers of $\pi$ are  geometrically connected,
using the Theorem on  Formal Functions  (see~\cite[Theorem~4.1.5]{EGA3-1}) we conclude
that
$$
(\pi \times \mathrm{Id})_*\mathcal{O}_{\mathcal{X}\times_\mathcal{S} \cT}= \mathcal{O}_\cT.
$$
Thus, $\alpha$ is well defined locally on $\cT$ up to multiplication by an invertible function. Hence, the $\cT$-valued point of $\underline{\Aut}(\mathcal{X}, \mathcal{L})$ does not depend on the choice of $\alpha$ and is defined on the whole~$\cT$. We leave it to the reader to check that the constructed morphism of functors is an isomorphism.
\end{proof}

We will need the following boundedness result on the Picard groups.

\begin{lemma}\label{lemma:Pic-bounded}
Let $\mathcal{S}$ be a  Noetherian   scheme, and let
$\pi\colon \mathcal{X} \to \cS$  be a flat  projective morphism with  geometrically reduced and geometrically irreducible
fibers. Suppose that for a fiber $\cX_s$ over every point $s\in\cS$ 
one has~\mbox{$h^1(\mathcal{O}_{\cX_s})=0$}. Then the minimal number
of generators of $\Pic(\cX_s)$ and the order of the torsion subgroup in
$\Pic(\cX_s)$ are both bounded by a constant that depends only on~$\cX$.
\end{lemma}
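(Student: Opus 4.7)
The plan is to combine Grothendieck's representability of the relative Picard functor with a Noetherian stratification of $\cS$ and the constructibility of higher direct images in étale cohomology. Under our hypotheses, $\mathbf{Pic}_{\cX/\cS}$ is representable by a group scheme $\mathcal{P}$ locally of finite type over $\cS$. The tangent space to the identity component $\mathcal{P}^0_{\cX_s/\kappa(s)}$ at the origin is canonically identified with $H^1(\cX_s,\mathcal{O}_{\cX_s})$, which vanishes by hypothesis; hence on every fiber $\mathcal{P}^0_{\cX_s/\kappa(s)}$ is zero-dimensional, i.e., a finite group scheme over $\kappa(s)$. By Noetherian induction on $\cS$ together with generic flatness, I would obtain a finite stratification of $\cS$ by locally closed subschemes on which the finite flat group scheme $\mathcal{P}^0_{\cX/\cS}$ has constant rank, bounded above by some constant $d$.

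Next, the Leray spectral sequence for $\cX_s \to \Spec\kappa(s)$ together with Hilbert's Theorem~90 gives an injection $\Pic(\cX_s) \hookrightarrow \Pic(\cX_{\bar s})^{\Gal(\overline{\kappa(s)}/\kappa(s))}$, so it is enough to bound the minimal number of generators and the torsion of $\Pic(\cX_{\bar s})$ uniformly in $s$. The contribution of $\mathcal{P}^0(\overline{\kappa(s)})$ to $\Pic(\cX_{\bar s})$ is bounded by $d$, so it remains to bound the geometric component group $(\mathcal{P}/\mathcal{P}^0)(\overline{\kappa(s)}) = \mathrm{NS}(\cX_{\bar s})$ uniformly.

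The principal obstacle is to bound uniformly the number of generators and the order of the torsion of $\mathrm{NS}(\cX_{\bar s})$. For this I would invoke the Kummer exact sequence, which, for every integer $n$ invertible in $\overline{\kappa(s)}$, yields injections
$$
\Pic(\cX_{\bar s})/n\Pic(\cX_{\bar s}) \hookrightarrow H^2_{\et}(\cX_{\bar s}, \mu_n), \qquad \Pic(\cX_{\bar s})[n] \hookrightarrow H^1_{\et}(\cX_{\bar s}, \mu_n).
$$
By the constructibility of the higher direct images $R^i\pi_* \mu_n$ on $\cS[1/n]$, the cardinalities of $H^i_{\et}(\cX_{\bar s}, \mu_n)$ are uniformly bounded in $s$ on each stratum. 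Choosing $n$ appropriately, together with the bound $d$ on the order of $\mathcal{P}^0$, controls both the minimal number of generators and the torsion of $\Pic(\cX_{\bar s})$. The delicate point is the treatment of $p$-primary torsion at primes $p$ equal to a residue characteristic of $\cS$, which can be handled by stratifying $\cS$ by characteristic and using flat cohomology in place of étale cohomology where needed, or by directly appealing to the finite flat structure of $\mathcal{P}^0$.
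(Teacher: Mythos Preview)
Your approach shares the initial reduction with the paper's proof: both observe that $\Pic(\cX_s)\hookrightarrow\Pic(\cX_{\bar s})$ and that the vanishing of $H^1(\cX_{\bar s},\cO_{\cX_{\bar s}})$ forces the identity component of the Picard scheme to be zero-dimensional. Your remark about bounding the order of $\mathcal{P}^0$ by some constant $d$ is in fact unnecessary: over the algebraically closed field $\overline{\kappa(s)}$, a connected finite group scheme has a unique closed point, so $\mathcal{P}^0_{\bar s}(\overline{\kappa(s)})=0$ and $\Pic(\cX_{\bar s})$ already coincides with the N\'eron--Severi group. The paper then concludes in one line by invoking Kleiman's finiteness theorem from SGA~6 (Expos\'e~XIII, Th\'eor\`eme~5.1), which bounds the rank and the torsion of the component group of the Picard scheme uniformly in the family.

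Your alternative via the Kummer sequence and constructibility of $R^i\pi_*\mu_n$ does successfully bound the \emph{rank} of $\mathrm{NS}(\cX_{\bar s})$: for any single prime $\ell$ invertible on a stratum, $\dim_{\mathbb{F}_\ell}\Pic(\cX_{\bar s})/\ell$ is bounded by $\dim_{\mathbb{F}_\ell}H^2_{\et}(\cX_{\bar s},\mu_\ell)$, and constructibility controls the latter. However, the phrase ``choosing $n$ appropriately'' conceals a genuine gap in the torsion bound. Knowing $\Pic(\cX_{\bar s})[n]\cong H^1_{\et}(\cX_{\bar s},\mu_n)$ bounds only the $n$-torsion; it gives no control over primes not dividing $n$, and as $s$ varies nothing in your argument prevents the torsion of $\mathrm{NS}(\cX_{\bar s})$ from being supported at larger and larger primes. (Note that $H^1(\cX_{\bar s},\cO)=0$ does \emph{not} force $H^1_{\et}(\cX_{\bar s},\mu_\ell)=0$ for all $\ell$, as an Enriques surface already shows.) What is actually needed is that $\Pic^\tau_{\cX/\cS}$ is of finite type over $\cS$, so that the torsion $\Pic^\tau/\Pic^0$ has uniformly bounded order on fibres---and this is precisely the content of Kleiman's theorem. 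Your suggestion to handle $p$-primary torsion via flat cohomology does not address this difficulty, since the problem is not specific to the residue characteristic.
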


\begin{proof}
Let $\bar s$  be a geometric point of $\cS$ lying over $s$ (in particular, if the residue field of
$s$ is algebraically closed, then $\bar{s}=s$).
Note that the pullback morphism ~\mbox{$\Pic(\cX_s) \to \Pic(\cX_{\bar s})$}  is injective
(see for instance~\mbox{\cite[Lemma~32.30.3]{STACK}}).
 Next, by a result of Grothendieck~\cite[Theorem~9.4.8]{Kleiman2}, the group~\mbox{$\Pic(\cX_{\bar s})$}
can be identified with the group of closed points of the Picard group
scheme~\mbox{$ \underline{\Pic}(\cX_{\bar s})$}. By~\cite[Theorem~9.5.11]{Kleiman2},
the Lie algebra of~\mbox{$\underline{\Pic}(\cX_{\bar s})$}
is isomorphic to~\mbox{$H^1(\cX_{\bar s}, \mathcal{O}_{\cX_{\bar s}})$}. The latter space is zero by our assumption.
Therefore, the homomorphism
from the Picard group of the fiber $\cX_s$ to the group of connected components of the Picard scheme of
the corresponding geometric fiber $\cX_{\bar s}$ is injective. Thus, our result
follows from the boundedness
of the order of the latter group~\cite[Theorem~5.1]{Kleiman1}.
\end{proof}

In a particular case when $\mathcal{S}$ is a point, Lemma~\ref{lemma:Pic-bounded} gives a well-known assertion that
the Picard group of any 
projective variety $X$ with 
$h^1(\mathcal{O}_{X})=0$ is finitely generated.

\begin{lemma}[{cf. the proof of~\cite[Lemma~2.5]{MZ}}]
\label{lemma:q-0}
Let $X$ be a geometrically reduced and geometrically irreducible projective variety over an arbitrary field.
Suppose that~\mbox{$h^1(\mathcal{O}_{X})=0$}.
Let $\Gamma_{\Pic}$ be the kernel of the action of $\Aut(X)$ on
the Picard group~\mbox{$\Pic(X)$}. Then there is a constant $\Xi$ that depends only on the (minimal) number
of generators of $\Pic(X)$ and the order of the torsion subgroup in
$\Pic(X)$, such that for every
finite subgroup $G\subset\Aut(X)$ the intersection $G\cap \Gamma_{\Pic}$ has index at most
$\Xi$ in~$G$.
\end{lemma}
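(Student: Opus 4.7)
The plan is to observe that, by definition, the quotient $\bar G := G/(G \cap \Gamma_{\Pic})$ acts faithfully on $\Pic(X)$ and hence embeds as a finite subgroup of $\Aut(\Pic(X))$. It then suffices to bound the order of an arbitrary finite subgroup of $\Aut(\Pic(X))$ in terms of the rank $r$ of $\Pic(X)$ (which is at most the given minimal number of generators) and the order $|T|$ of the torsion subgroup $T\subset\Pic(X)$.

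To that end, I would choose a splitting $\Pic(X)\cong \Z^r\oplus T$, which is possible because $\Pic(X)$ is finitely generated. Since $T$ is the torsion subgroup, it is canonically preserved by every automorphism of $\Pic(X)$, so there is a natural homomorphism
$$ \rho\colon \Aut(\Pic(X)) \longrightarrow \Aut(\Pic(X)/T) \cong \GL_r(\Z). $$
By Minkowski's theorem (Theorem~\ref{theorem:Minkowski}), the image under $\rho$ of any finite subgroup has order at most $\Upsilon(r)$. An element $\phi$ of the kernel of $\rho$ satisfies $\phi(x)-x\in T$ for every $x$, so the assignment $x\mapsto \phi(x)-x$ defines a homomorphism $\Pic(X)\to T$ that determines $\phi$ uniquely. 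Hence $\ker\rho$ is finite, of order at most $|\Hom(\Pic(X),T)|$, which is in turn bounded by an explicit function of $r$ and $|T|$. Multiplying the two bounds produces the desired constant~$\Xi$, and applying it to the finite subgroup $\bar G\subset\Aut(\Pic(X))$ gives~\mbox{$[G:G\cap\Gamma_{\Pic}]\le\Xi$}.

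I do not anticipate any serious obstacle: the argument is essentially a direct application of Minkowski's theorem, combined with the standard observation that $\Aut(A)$ is an extension of a subgroup of $\GL_r(\Z)$ by a finite group for any finitely generated abelian group $A$ of rank $r$. The only point that deserves a brief verification is the description of $\ker\rho$, but this is a routine exercise in the structure of finitely generated abelian groups and does not require any input beyond what is already assumed in the statement of the lemma.
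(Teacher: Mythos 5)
Your proof is correct and follows essentially the same route as the paper: embed $G/(G\cap\Gamma_{\Pic})$ into $\Aut(\Pic(X))$ and invoke Minkowski's theorem. The paper simply asserts that $\Aut(\Pic(X))$ has bounded finite subgroups, whereas you spell out the (standard) extension $1\to\ker\rho\to\Aut(\Pic(X))\to\GL_r(\Z)$ with $\ker\rho$ finite of order controlled by $r$ and $|T|$; this extra detail is accurate and makes explicit why the bound depends only on the stated invariants.
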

\begin{proof}
In our case $\Pic(X)$ is a finitely generated abelian group by Lemma~\ref{lemma:Pic-bounded}.
Thus, Theorem~\ref{theorem:Minkowski}
implies that the automorphism group of~\mbox{$\Pic(X)$}
has bounded finite subgroups. This
means that the orders of all finite subgroups in the quotient
group
$$
\Aut(X)/\Gamma_{\Pic}\hookrightarrow \Aut\big(\Pic(X)\big)
$$
are also bounded by some constant~$\Xi$ that depends only on~\mbox{$\Pic(X)$}.
\end{proof}

Now we are ready to prove Theorem~\ref{theorem:family}.

\begin{proof}[Proof of Theorem~\ref{theorem:family}]
Since $\mathcal{S}$ is Noetherian, arguing by induction it suffices to prove the theorem for
the restriction of $\pi$ to a non-empty open subscheme of $\mathcal{S}$. Thus, by generic flatness
(see~\cite[Theorem~5.12]{FAG}),
we may assume
that $\pi$ is flat. Choose a line bundle~$\mathcal{L}$ on~$\mathcal{X}$ that is very ample relative to~$\mathcal{S}$. Replacing if necessary  $\mathcal{L}$  by  its power~$\mathcal{L}^N$,  we may assume that the higher direct images
$R^j\pi_* \mathcal{L}$ vanish for all $j>0$.  Then, by Lemma ~\ref{lemma:ample},  the coherent sheaf
$\mathcal{E}=\pi_* \mathcal{L}$ is free of some rank $m$
over~$\mathcal{O}_{\mathcal{S}}$.  Let
$$
\underline{\Aut}(\mathcal{X}, \mathcal{L}) \subset \PGL(\mathcal{E})\cong \PGL_m \times \mathcal{S}
$$
be the closed subgroup scheme from  Lemma~\ref{lemma:ample}. By generic flatness we may assume that $\underline{\Aut}(\mathcal{X}, \mathcal{L})$ is flat over $\mathcal{S}$.  By~\cite[Lemma~36.25.7]{STACK}
the number of connected
components of the fibers of the morphism $\underline{\Aut}(\mathcal{X}, \mathcal{L})\to \mathcal{S}$ is bounded.
In fact, it follows from~\mbox{\cite[Lemma~36.25.7]{STACK}} that after  shrinking  $\mathcal{S}$ we have
a short exact sequence
\begin{equation}\label{eq:Aut-o}
1\to  \underline{\Aut}^\circ (\mathcal{X}, \mathcal{L}) \to \underline{\Aut} (\mathcal{X}, \mathcal{L}) \to \mathcal{C} \to 1,
\end{equation}
where $\underline{\Aut}^\circ (\mathcal{X}, \mathcal{L})$ is a flat group scheme with connected fibers
and  $\mathcal{C}$ is a finite \'etale group scheme over $\mathcal{S}$.

Let $Y$ be a variety over $\KK$ such that $Y_{\bar{\KK}}\cong \mathcal{X}_s$, for some  $s\in\mathcal{S}(\bar{\KK})$. Since  $Y_{\bar{\KK}}$ is projective,
the variety~$Y$ is projective as well. Pick a very ample line bundle $L$ over $Y$. We warn the reader that the pullback of $L$ to
$ \mathcal{X}_s$ need not be isomorphic to $ \mathcal{L}_s$.
Denote by~\mbox{$\underline{\Aut}(Y, L)$} the group scheme of automorphisms of the pair $(Y, L)$ and let  $\underline{\Aut}^\circ (Y, L)$ be the connected component of the identity automorphism. Assuming that $Y$ is not birational to $\PP^1 \times Y'$,
the reduced subgroup $\underline{\Aut}^\circ (Y, L)_{\mathrm{red}}$ is a connected anisotropic reductive group  by  Corollary~\ref{corollary:unipotent-action} and  Lemma \ref{lemma:quotient-by-Gm}. On the other hand, we claim that
\begin{equation}\label{equation:irrend}
 \underline{\Aut}^\circ (Y, L)_{\bar{\KK}}\cong \underline{\Aut}^\circ (\mathcal{X}, \mathcal{L})_s = \underline{\Aut}^\circ (\mathcal{X}_s, \mathcal{L}_s).
 \end{equation}
Indeed, since the group scheme $\underline{\Aut}^\circ (Y, L)_{\bar{\KK}}$ is connected and the Picard scheme of $\mathcal{X}_s$ is discrete, we see that the action of $\underline{\Aut}^\circ (Y, L)_{\bar{\KK}}$ on $Y_{\bar{\KK}}= \mathcal{X}_s$ induces the trivial action
on the Picard scheme of $\mathcal{X}_s$. This gives a morphism $\underline{\Aut}^\circ (Y, L)_{\bar{\KK}}\to \underline{\Aut}^\circ (\mathcal{X}_s, \mathcal{L}_s)$. Switching the roles of $Y_{\bar{\KK}}$ and  $\mathcal{X}_s$, we see that this morphism is an isomorphism. In particular, the rank of the reductive group  $\underline{\Aut}^\circ (Y, L)_{\mathrm{red}}$ is bounded by the dimension of fibers  of the flat group scheme  $\underline{\Aut}^\circ (\mathcal{X}, \mathcal{L})$.
Hence,  by Theorem~\ref{theorem:LAG}(i) the orders of finite subgroups of~\mbox{$\underline{\Aut}^\circ (Y, L)(\KK)$} are bounded by some constant which depends only on that dimension.

It remains to show that, for every finite subgroup $G\subset\Aut(Y)$, the index of~\mbox{$G\cap  \underline{\Aut}^\circ (Y, L)(\KK)$} in $G$ is bounded by some universal constant. This comes as follows.
Let~$\Gamma_{\Pic}(\mathcal{X}_s)$ be the kernel of the action of $\Aut(\mathcal{X}_s)$
on $\Pic(\mathcal{X}_s)$.
Then $\Gamma_{\Pic}(\mathcal{X}_s)$ is a subgroup of~\mbox{$\underline{\Aut} (\mathcal{X}, \mathcal{L})_s(\bar{\KK})$}. In particular, by Lemmas~\ref{lemma:Pic-bounded}
and~\ref{lemma:q-0}
there exists a constant~$\Theta$ such that for every finite subgroup $G\subset\Aut(\mathcal{X}_s)$,
the index of the intersection~\mbox{$G\cap \underline{\Aut} (\mathcal{X}, \mathcal{L})_s(\bar{\KK})$} in $G$ is at most~$\Theta$.
 Thus, using the exact sequence~\eqref{eq:Aut-o}, one concludes that the index of~\mbox{$G\cap \underline{\Aut}^\circ (\mathcal{X}, \mathcal{L})_s(\bar{\KK})$} in $G$ is bounded by
  $|\mathcal{C}|\Theta$, and so we win thanks to isomorphism~\eqref{equation:irrend}.
 \end{proof}

\section{Del Pezzo surfaces}
\label{section:DP}

In this section we study groups acting on del Pezzo surfaces
and prove Proposition~\ref{proposition:dP}.

By a del Pezzo surface we always mean a smooth projective geometrically irreducible
surface with an
ample anticanonical divisor.
By the \emph{degree} of a del Pezzo surface $X$ we
mean its anticanonical degree~$K_X^2$. One has $1\le d \le 9$.

\begin{lemma}\label{lemma:sep}
Let $\KK$ be any field. Let $X$ and $Y$ be del Pezzo surfaces
over $\KK$ such that~\mbox{$X_{\bar{\KK}}\cong Y_{\bar{\KK}}$}. Then we have~\mbox{$X_{\KK ^{sep}}\cong Y_{\KK^{sep}}$}.
\end{lemma}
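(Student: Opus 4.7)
If $\KK$ is perfect then $\KK^{sep}=\bar{\KK}$ and the assertion is trivial, so I assume $\Char\KK=p>0$ and $\KK$ is imperfect; write $L=\KK^{sep}$, so that $\bar{\KK}/L$ is purely inseparable. The plan is to realize the set of isomorphisms $X\to Y$ as the points of a single $\KK$-scheme, and then exploit smoothness to descend any $\bar{\KK}$-point of that scheme to~$L$.

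Fix $m\ge 1$ such that $|{-mK_{X}}|$ and $|{-mK_{Y}}|$ are very ample. By cohomology and base change together with the hypothesis $X_{\bar{\KK}}\cong Y_{\bar{\KK}}$, one has $h^{0}(X,-mK_{X})=h^{0}(Y,-mK_{Y})$; call this common value $N+1$. The two linear systems embed $X,Y\hookrightarrow\PP^{N}$ with the same Hilbert polynomial. Any isomorphism $\phi\colon X_{\bar{\KK}}\to Y_{\bar{\KK}}$ carries $K_{X_{\bar{\KK}}}$ to $K_{Y_{\bar{\KK}}}$ and is therefore induced by a uniquely determined element $g_{\phi}\in\PGL_{N+1}(\bar{\KK})$ mapping $X_{\bar{\KK}}$ onto $Y_{\bar{\KK}}$ as closed subschemes of $\PP^{N}_{\bar{\KK}}$. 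The functor on $\KK$-schemes
\[
T \longmapsto \bigl\{\, g\in\PGL_{N+1}(T) \colon g(X_{T})=Y_{T}\,\bigr\}
\]
is represented by a closed subscheme $W\subset\PGL_{N+1,\KK}$, and right translation makes $W$ into a torsor under the stabilizer $\underline{\Aut}(X)\subset\PGL_{N+1,\KK}$. By construction $W(\bar{\KK})\ne\varnothing$.

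The main step is to verify that $\underline{\Aut}(X)$, and therefore also the torsor $W$, is smooth over $\KK$. For del Pezzo surfaces of degree $\ge 5$ the identity component of $\underline{\Aut}(X)$ is an explicitly known positive-dimensional smooth algebraic group and the component group is finite \'etale, so smoothness is immediate. For degree $\le 4$ the group scheme $\underline{\Aut}(X)$ is finite, and smoothness reduces via $\mathrm{Lie}(\underline{\Aut}(X))\cong H^{0}(X,T_{X})$ to the vanishing $H^{0}(X,T_{X})=0$; this can be checked by representing $X_{\bar{\KK}}$ as a blow-up of $\PP^{2}$ or $\PP^{1}\times\PP^{1}$ at sufficiently many points in general position and bounding the dimension of vector fields on the base that vanish at those points.

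Granting smoothness of $W$, every closed point $w\in W$ has residue field $k(w)$ separable over $\KK$. Since $W$ is of finite type and non-empty (it has a $\bar{\KK}$-point), it admits a closed point $w$, and then $k(w)$ is a finite separable extension of $\KK$, hence contained in $L$. The resulting element $g\in W(L)\subset\PGL_{N+1}(L)$ realizes the sought isomorphism $X_{L}\cong Y_{L}$. The most delicate ingredient of this plan is the verification of smoothness of $\underline{\Aut}(X)$ for low-degree del Pezzo surfaces over imperfect fields of characteristic $2$ or $3$, which is where I expect to need the greatest care.
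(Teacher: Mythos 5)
Your proposal follows essentially the same route as the paper: both realize the isomorphisms as a torsor under $\underline{\Aut}(X)$, prove that group scheme is smooth by comparing $H^{0}(X_{\bar{\KK}},T_{X_{\bar{\KK}}})$ with the dimension of the known automorphism groups of del Pezzo surfaces over $\bar{\KK}$, and conclude that the smooth non-empty torsor has a $\KK^{sep}$-point. Two small inaccuracies to fix: the degree-$5$ case belongs with the finite (\'etale) case, since a quintic del Pezzo surface has finite automorphism group scheme, and it is not true that \emph{every} closed point of a smooth finite-type $\KK$-scheme has separable residue field (consider $\Spec \KK[x]/(x^{p}-t)$ inside $\A^{1}_{\KK}$ for imperfect $\KK$) --- you should instead invoke the standard fact that a non-empty smooth finite-type scheme over $\KK$ has a $\KK^{sep}$-point.
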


\begin{proof}
First, we show that, for any  del Pezzo surface $X$  over $\KK$,  the group scheme $\underline{\Aut}(X)$ is smooth. Indeed, it suffices to check that
 $\underline{\Aut}(X)_{\bar{\KK}}\cong \underline{\Aut}(X_{\bar{\KK}})$ is smooth. The Zariski tangent space to $ \underline{\Aut}(X_{\bar{\KK}})$ at the identity is
 the vector space  $H^0(X_{\bar{\KK}}, T_{X_{\bar{\KK}}})$ of global vector fields on $X_{\bar{\KK}}$. Thus, smoothness of $ \underline{\Aut}(X_{\bar{\KK}})$  is equivalent to the inequality
\begin{equation}\label{eq-enp}
\dim H^0(X_{\bar{\KK}}, T_{X_{\bar{\KK}}})\le  \dim \underline{\Aut}(X_{\bar{\KK}}).
\end{equation}
(Note that the opposite inequality
$$\dim H^0(X_{\bar{\KK}}, T_{X_{\bar{\KK}}})\ge  \dim \underline{\Aut}(X_{\bar{\KK}})$$ holds for trivial reasons.)
Recall that every del Pezzo surface  of degree $d$  over $\bar{\KK}$ is isomorphic either to $\PP^1 \times \PP^1$,
or to the blow up of $\PP^2$ in $9-d$ points in general position.
If $X_{\bar{\KK}}$ is a blow up of $\PP^2$, then
$H^0(X_{\bar{\KK}}, T_{X_{\bar{\KK}}})$ is identified with the space
of vector fields on $\PP^2$
vanishing at  the points that are being blown up.
In particular, if $d< 6$, then~\mbox{$H^0(X_{\bar{\KK}}, T_{X_{\bar{\KK}}})=0$}, because
the second Chern class of  $T_{\PP^2}$ equals $3$, and so there are no vector fields on $\PP^2$
vanishing at  $9-d>3$ points.
If $X_{\bar{\KK}}$ is a blow up of $\PP^2$ in at most~$3$ points, then
up to a change of coordinates there is a unique choice of such triple of points (respectively, pair of points,
or a point), and it is straightforward to check the inequality~\eqref{eq-enp}. Similarly,
if~\mbox{$X_{\bar{\KK}}\cong\PP^1\times\PP^1$},  the inequality~\eqref{eq-enp} is obvious.

Now, given $X$ and $Y$ subject to the assumptions of the lemma,
consider the scheme of ismorphisms~\mbox{$Z=\underline{\Iso}(X, Y)$}. Being a torsor over a smooth group scheme $\underline{\Aut}(X)$
the scheme $Z$ is smooth over $\KK$.
Hence, $Z$  has a $\KK^{sep}$-point, which means
that~\mbox{$X_{\KK ^{sep}}\cong Y_{\KK^{sep}}$}.
\end{proof}

Now we deal with del Pezzo surfaces of degree $8$.
We will say that a del Pezzo surface~$X$ of degree $8$
over a field $\KK$
is \emph{of product type},
if~\mbox{$X_{\bar{\KK}}\cong\PP^1\times\PP^1$}.

Given a finite field extension $\KK \subset \LL$ and a scheme $Y$ over $\LL$ denote by  $R_{\LL/\KK} Y$  its Weil restriction of scalars. Recall that $R_{\LL/\KK} Y$ is a scheme over $\KK$ which represents the functor sending a $\KK$-scheme $X$
to the set of morphisms $X_\LL= X \times \Spec \LL \to Y$ over $\LL$. In  other words, the functor $R_{\LL/\KK}$
from the category of schemes over $\LL$ to the category of schemes over $\KK$ is
right adjoint to the fiber product with $ \Spec \LL$
(that is, to the functor of extension of scalars to~$\LL$).

The first assertion of the following lemma is borrowed from the proof of 
\cite[Proposition~5.2]{CTKM08}.

\begin{lemma}
\label{lemma:DP8ptnew}
Let $\KK$ be a field, and let  $X$ be a del Pezzo surface  of degree
$K_X^2=8$ of product type over $\KK$.
 The following assertions hold.
\begin{itemize}
\item[(i)] The surface $X$ is  isomorphic either to the product  $C\times C'$
of two conics over~$\KK$, or to  $R_{\LL/\KK} Q$, where $\LL \supset \KK$ is a separable quadratic extension of $\KK$  and $ Q$ is a conic over $\LL$.
In the former case one has $\Pic(X)\cong\Z\oplus\Z$,
while in the latter case~\mbox{$\Pic(X)\cong\Z$}.
Furthermore, in the former case the (non-ordered) 
pair of conics~\mbox{$\{C,C'\}$} 
is uniquely determined by~$X$; in the latter case 
the extension~\mbox{$\LL \supset \KK$} and 
the conic~$Q$ are uniquely determined by~$X$ up conjugation by 
the Galois group~\mbox{$\Gal(\LL/\KK)$}.

\item[(ii)]  The surface $X$ is  isomorphic to a  quadric in $\PP^3$
if and only if $X \cong  C\times C$ or~\mbox{$X \cong  R_{\LL/\KK}  C_\LL$},
for some conic $C$ over $\KK$.

\item[(iii)]
Let $\Aut^\circ (X)\subset \Aut(X)$ be the subgroup that consists 
of all automorphisms of $X$ which act trivially  
on $\Pic(X_{\KK^{sep}})$. 
Then  $\Aut^\circ (X)$
has index at most $2$ in $ \Aut(X)$. That is, we have a left exact sequence
\begin{equation*}
1\to  \Aut^\circ (X)\to \Aut(X) \to \Z/2\Z.
\end{equation*}
Moreover, one has that
$$
\Aut(C)\times \Aut(C') \iso\Aut^\circ (C\times C')
$$
for any conics $C$ and $C'$ over $\KK$, and
$$
\Aut(Q) \iso \Aut^\circ (R_{\LL/\KK} Q)
$$
for any conic $Q$ over $\LL$.

\item[(iv)]
The homomorphism $\Aut(X) \to \Z/2\Z$  is surjective  if and only if either~\mbox{$X \cong C \times C$}, where $C$ is a conic over $\KK$, or $X\cong R_{\LL/\KK} Q$,
where $Q$ is a conic over $\LL$ such that the $\Gal(\LL/\KK)$-conjugate conic $Q'$ is isomorphic to $Q$.
The homomorphism~\mbox{$\Aut(X) \to \Z/2\Z$}  is a split surjection, that is
$$
\Aut(X) \cong \Aut^\circ (X) \rtimes \Z/2\Z,
$$
if and only if $X$ is a quadric.
\end{itemize}
\end{lemma}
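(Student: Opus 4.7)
\smallskip
The plan is to analyze the Galois action on $\Pic(X_{\bar{\KK}})\cong\Z e_1\oplus\Z e_2$, where $e_1,e_2$ are the classes of the two rulings of $X_{\bar{\KK}}\cong\PP^1\times\PP^1$. Since the intersection form satisfies $e_i^2=0$ and $e_1\cdot e_2=1$, and each $e_i$ is effective, the lattice automorphisms preserving both the intersection pairing and the effective cone form the group $\Z/2\Z$ generated by the swap $e_1\leftrightarrow e_2$. Consequently, the action of $\Gal(\KK^{sep}/\KK)$ on $\{e_1,e_2\}$ is either trivial or cuts out a separable quadratic extension $\LL\supset\KK$. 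In the first case, both rulings descend to $\KK$, producing two morphisms $X\to C$ and $X\to C'$ onto smooth conics via Stein factorization, whose product is the required isomorphism $X\iso C\times C'$. In the second case, one ruling is defined over $\LL$ and yields a morphism $X_\LL\to Q$ onto a conic over $\LL$; the universal property of Weil restriction then gives $X\iso R_{\LL/\KK}Q$. This proves~(i).

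For~(ii), I would observe that a realization of $X$ as a smooth quadric in $\PP^3$ is the same as a very ample $L\in\Pic(X)$ with $L^2=2$, which over $\bar{\KK}$ forces $L\equiv e_1+e_2$. Thus $X$ is a quadric if and only if the (always Galois-invariant) class $e_1+e_2$ is represented by a $\KK$-rational divisor. In the two cases from~(i), I would exhibit the diagonal: for $X=C\times C$ the diagonal $\Delta$ has bidegree $(1,1)$; for $X=R_{\LL/\KK}C_\LL$ the diagonal in $C_\LL\times C_\LL$ is preserved by the swap coming from $\Gal(\LL/\KK)$ and hence descends to $X$. Conversely, if $X$ is a quadric, a divisor $D$ of class $e_1+e_2$ is a smooth genus-$0$ curve (by adjunction) mapping isomorphically onto each ruling factor; in the product case this yields $C\cong D\cong C'$, and in the restriction case $D$ descends to a conic $C$ over $\KK$ with $C_\LL\cong Q$.

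For~(iii), the action of $\Aut(X)$ on $\Pic(X_{\bar{\KK}})$ preserves the canonical class and the effective cone, so it factors through the swap group $\Z/2\Z$, giving the left exact sequence. Automorphisms in $\Aut^\circ(X)$ preserve each ruling individually. In the product case $X=C\times C'$, such an automorphism commutes with both projections, hence decomposes as a pair $(\phi,\psi)\in\Aut(C)\times\Aut(C')$; this identification is clearly a group isomorphism. In the restriction case, by the adjunction between Weil restriction and base change, the $\KK$-group scheme of automorphisms of $R_{\LL/\KK}Q$ preserving each of the two rulings is $R_{\LL/\KK}\underline{\Aut}(Q)$, whose $\KK$-points are $\Aut(Q)$.

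For~(iv), non-triviality of $\Aut(X)\to\Z/2\Z$ means there exists an automorphism of $X$ inducing a swap of $e_1$ and $e_2$: in the product case this forces $C\cong C'$ as $\KK$-conics, and in the restriction case $X=R_{\LL/\KK}Q$ it requires an $\LL$-isomorphism $f\colon Q\iso Q^\sigma$, where $\sigma$ generates $\Gal(\LL/\KK)$. Splitting of this surjection amounts to finding an \emph{involutive} such automorphism $\tau$; in the product case the obvious swap $\tau(x,y)=(y,x)$ on $C\times C$ gives this for free. In the restriction case, writing such an involution via an $\LL$-isomorphism $f\colon Q\to Q^\sigma$ on $X_\LL=Q\times Q^\sigma$, the condition $\tau^2=\id$ translates into the descent cocycle condition $f^\sigma\circ f=\id$, which by Galois descent is equivalent to $Q\cong C_\LL$ for a conic $C$ over $\KK$; combined with~(ii), this is equivalent to $X$ being a quadric. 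The main technical point will be the careful bookkeeping of this cocycle together with checking that $Q\cong Q^\sigma$ suffices for \emph{some} swap automorphism (without requiring involutivity) in order to distinguish mere surjectivity from splitting.
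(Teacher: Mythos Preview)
Your approach to parts~(i)--(iii) is essentially the same as the paper's: analyze the Galois action on the rank-two lattice $\Pic(X_{\KK^{sep}})$, descend the rulings in the split case and use the Weil restriction adjunction in the nonsplit case, and exhibit the diagonal as a $(1,1)$-divisor for~(ii). One small point: in~(i) you work directly with $\Pic(X_{\bar\KK})$, but to descend the ruling morphisms you implicitly need $X_{\KK^{sep}}\cong\PP^1\times\PP^1$, which is exactly what Lemma~\ref{lemma:sep} supplies; the paper makes this dependence explicit. Also in~(ii) you should say \emph{a smooth} divisor of class $e_1+e_2$ rather than ``a divisor,'' since reducible members of the system are not conics.

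Part~(iv) is where you genuinely diverge from the paper. You argue algebraically: a swap automorphism of $R_{\LL/\KK}Q$ is encoded by an $\LL$-isomorphism $f\colon Q\to Q^\sigma$ (after checking that Galois-equivariance forces the second component to be $f^\sigma$), and involutivity becomes the descent cocycle condition $f^\sigma\circ f=\id$, which by Galois descent is equivalent to $Q\cong C_\LL$. This is correct and dovetails nicely with~(ii). The paper instead argues geometrically: if an involution $\phi$ swaps the rulings, then its fixed-point scheme on $X_{\bar\KK}\cong\PP^1\times\PP^1$ is a divisor of bidegree $(1,1)$, which being $\KK$-rational forces $X$ to be a quadric; conversely, for a quadric $X\subset\PP^3$ one writes down an explicit orthogonal reflection through a $\KK$-point of $\PP^3\setminus X$. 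Your cocycle argument has the advantage of making the distinction between surjectivity ($Q\cong Q^\sigma$) and splitting ($Q$ descends) completely transparent in cohomological terms, and it generalizes cleanly to other twisted products. The paper's argument is more self-contained and avoids bookkeeping with semilinear descent data, at the cost of needing the small computation of the fixed locus and the explicit reflection formula.
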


\begin{proof}
By Lemma  \ref{lemma:sep}, we have \mbox{$X_{\KK^{sep}}\cong\PP^1\times\PP^1$}.
The action of the Galois group~\mbox{$\Gal(\KK^{sep}/\KK)$}
on $\Pic(X_{\KK^{sep}})\cong \Z \oplus \Z$ is either trivial or factors through an order $2$ quotient
$$
\Gal(\KK^{sep}/\KK)\twoheadrightarrow \Gal(\LL/\KK)
$$
which acts on $ \Z \oplus \Z$  by permuting the factors.
In the former case the  classes~\mbox{$(1,0)$} and~\mbox{$(0,1)$} define two morphisms~\mbox{$X_{\KK^{sep}} \to \PP^1_{\KK^{sep}}$}, which descend to morphisms  $X\to C$ and~\mbox{$X\to C'$} from $X$ to some Severi--Brauer curves $C$ and $C'$.  
This yields an isomorphism~\mbox{$X\cong C\times C'$},
so that in particular $\Pic(X)\cong\Z\oplus\Z$. 
The surface~$X$ has exactly two
extremal contractions, and these contractions are projections to the 
factors~$C$ and~$C'$. In other words, the conics~$C$ and~$C'$ are uniquely 
determined by~$X$.

If the action of~\mbox{$\Gal(\KK^{sep}/\KK)$}
on $\Pic(X_{\KK^{sep}})$ is non-trivial, then we have~\mbox{$X_\LL \cong Q\times Q'$}, 
where~$Q$ and~$Q'$ are
$\Gal(\LL/\KK)$-conjugate conics.
By adjunction, the projections
$$
Q\times Q' \to Q \quad \text{and}\quad
Q\times Q' \to Q'
$$  
yield the morphisms
$$
X\to R_{\LL/\KK} Q \quad\text{and}\quad 
X\to R_{\LL/\KK} Q',
$$
which are isomorphisms. Note that in this case the 
sublattice of $\Gal(\KK^{sep}/\KK)$-invariant elements 
in~\mbox{$\Pic(X_{\KK^{sep}})$} 
has rank~$1$, which implies that $\Pic(X)\cong\Z$. 
Finally,  assume that 
$$
R_{\LL/\KK} Q_1 \cong R_{\LL/\KK} Q_2
$$ 
for some conics $Q_1$ and $Q_2$ over $\LL$. Then we have 
$$
Q_1 \times Q'_1 \cong (R_{\LL/\KK} Q_1)_{\LL} \cong 
(R_{\LL/\KK} Q_2)_{\LL} \cong Q_2 \times Q'_2,
$$ 
where $Q_i'$  is the conic $\Gal(\LL/\KK)$-conjugate to $Q_i$. 
Hence either $Q_1 \cong Q_2$, or $Q_1 \cong Q_2'$. 
This proves assertion~(i).

A necessary and sufficient condition for $X$ to be
isomorphic to a   quadric in $\PP^3$ is that the
$\Gal(\KK^{sep}/\KK)$-invariant class
$$
(1,1)\in \Pic(X_{\KK^{sep}})\cong \Z \oplus \Z
$$
is represented
by a line bundle over $X$.  If  $X \cong C\times C$,
the line bundle corresponding to the diagonal does the job. If~\mbox{$X\cong R_{\LL/\KK} C_\LL$},  then the
morphism~\mbox{$C \to  R_{\LL/\KK} C_\LL$}
defined by adjunction specifies a divisor  of bidegree  $(1,1)$ on $X$.
Conversely, let $X\in \PP^3$ be a quadric. Let $C\hookrightarrow X$ be a
smooth hyperplane section of $X$. If $X$ is isomorphic to the product
of two conics, then both conics must be isomorphic to $C$.
If $X\cong  R_{\LL/\KK} Q$, then the morphism~\mbox{$C_\LL \to Q$} given
by adjunction is an isomorphism.
This proves assertion~(ii).

Let us prove  assertion~(iii). The subgroup  $\Aut^\circ (X)\subset \Aut(X)$ has index at most~$2$ because this is true for $X$ replaced by  
$X_{\KK^{sep}}\cong\PP^1\times\PP^1$.
The claim that  the morphism
$$
\Aut(C)\times \Aut(C') \to \Aut^\circ (C\times C')
$$
is an isomorphism  also follows by the base change to $\KK^{sep}$. 
Finally, to show that
$$
\Aut(Q) \to \Aut^\circ (R_{\LL/\KK} Q)
$$
is an isomorphism we denote by $Q'$ the $\Gal(\LL/\KK)$-conjugate conic and observe that~\mbox{$\Aut(R_{\LL/\KK} Q)$} is isomorphic to the subgroup
of
$$
\Aut\big((R_{\LL/\KK} Q)_\LL\big)\cong \Aut(Q \times Q')
$$
consisting of $\Gal(\LL/\KK)$-invariant elements.
This proves assertion~(iii).

Let us prove  the first part of assertion~(iv) concerning the surjectivity of the map~\mbox{$\Aut(X) \to \Z/2\Z$}.
If $X$ is the product of two conics,
the proof is straightforward from the case of algebraically
closed base field. If  $X \cong R_{\LL/\KK} Q$, then the claim follows from the description of $\Aut(R_{\LL/\KK} Q)$  given in the proof of assertion~(iii).

For the second part  of assertion~(iv) we have to check that  $\Aut(X)$ contains an involution which acts non-trivially on  $\Pic(X_{\KK^{sep}})$ if and only if
$X$ is a quadric. Let $\phi$ be such an involution. Using the base change to $\bar {\KK}$, one checks that the fixed
point subscheme of $\phi$ is a divisor of bidegree~\mbox{$(1,1)$}. This implies that $X$ is a quadric. Conversely, let
$X\subset \PP^3$ be a quadric given by a quadratic form $q$ on a vector space $V$ of dimension $4$.
Let $B_q$ be the symmetric bilinear form associated to $q$ (see \S\ref{section:quadrics}).
Pick a $\KK$-point of $\PP^3-X$
and consider it as a $1$-dimensional subspace $\KK v $ of $V$.
Then the orthogonal reflection
$f\colon V\to V$  given by the formula
$$
f(u)= u - \frac{B_q(v,u)}{q(v)} v
$$
defines the desired involution of $X$.
This proves assertion~(iv).
\end{proof}

\begin{remark}
Proposition~5.2(2) from \cite{Liedtke} erroneously asserts that every  del Pezzo surface of degree $8$
with Picard group isomorphic to~$\Z$ is a quadric in~$\PP^3$.
\end{remark}

One simple consequence of Lemma~\ref{lemma:DP8ptnew} is the
following  well known result.

\begin{corollary}\label{corollary:DP8rational}
Let $\KK$ be a field, and let $X$ be a del Pezzo surface
of degree $8$ of product type over $\KK$.
Suppose that $X(\KK)\neq \varnothing$. Then $X$ is rational.
\end{corollary}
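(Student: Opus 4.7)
The plan is to use Lemma~\ref{lemma:DP8ptnew}(i), which gives a dichotomy for $X$: either $X\cong C\times C'$ for some conics $C,C'$ over $\KK$, or $X\cong R_{\LL/\KK}Q$ for some conic $Q$ over a separable quadratic extension $\LL\supset\KK$. I will treat the two cases separately, in each case exploiting the hypothesis $X(\KK)\ne\varnothing$ to trivialize the relevant Severi--Brauer object and then exhibit a rationality.

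In the first case, a $\KK$-point of $C\times C'$ projects to a $\KK$-point on each factor. A conic with a $\KK$-point is isomorphic to $\PP^1$ (this is standard; it also follows from Lemma~\ref{lemma:SB-Aut} specialized to $n=2$). Hence both $C$ and $C'$ are isomorphic to $\PP^1$ over $\KK$, and $X\cong\PP^1\times\PP^1$ is manifestly rational.

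In the second case, by the defining property of the Weil restriction, a $\KK$-point of $R_{\LL/\KK}Q$ is the same as an $\LL$-point of $Q$, so $Q(\LL)\ne\varnothing$ and hence $Q\cong\PP^1_\LL$. Thus $X\cong R_{\LL/\KK}\PP^1_\LL$. To see that this is rational over $\KK$, I will use the open cover $\PP^1_\LL=\A^1_\LL\cup\{\infty\}$: the functor $R_{\LL/\KK}$ takes open immersions to open immersions, so $R_{\LL/\KK}\A^1_\LL$ sits as a dense open subscheme of $X$. Since $\A^1_\LL\cong\Spec\LL[t]$ and $\LL$ is a two-dimensional $\KK$-vector space, the Weil restriction $R_{\LL/\KK}\A^1_\LL$ is isomorphic to $\A^2_\KK$ (concretely, a basis $\{1,\alpha\}$ of $\LL/\KK$ identifies a $T$-point $t\in\LL\otimes_\KK\mathcal{O}(T)$ with a pair of elements of $\mathcal{O}(T)$). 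Therefore $X$ contains $\A^2_\KK$ as an open subvariety and is rational.

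There is no real obstacle here: once Lemma~\ref{lemma:DP8ptnew}(i) is in hand, the only small point to verify carefully is that Weil restriction of the affine line along a degree-$2$ extension is an affine plane, which is immediate from the functorial description of $R_{\LL/\KK}$.
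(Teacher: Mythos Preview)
Your proof is correct and follows essentially the same route as the paper: invoke Lemma~\ref{lemma:DP8ptnew}(i) for the dichotomy, project the $\KK$-point to each factor in the product case, and use adjunction to get an $\LL$-point on $Q$ in the Weil restriction case. Your second case is in fact a bit more explicit than the paper's---the paper simply asserts that $Q$ rational implies $X$ rational, whereas you spell out that $R_{\LL/\KK}\A^1_\LL\cong\A^2_\KK$ sits as a dense open in $X$; one small quibble is that Lemma~\ref{lemma:SB-Aut} is not really the right reference for ``a conic with a $\KK$-point is $\PP^1$'', but as you note this fact is standard.
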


\begin{proof}
By Lemma~\ref{lemma:DP8ptnew}(i), the surface $X$ is isomorphic either
to a product $C\times C'$ of two conics, or to the Weil restriction of scalars
$R_{\LL/\KK} Q$ of a conic $Q$ defined over a separable quadratic
extension $\LL$ of $\KK$. In the former case both conics $C$ and $C'$
have $\KK$-points, so that $X\cong\PP^1\times\PP^1$.
In the latter case, the conic $Q$ has an $\LL$-point by adjunction,
so that~$Q$ is rational, and thus $X$ is rational as well.
\end{proof}

Another result implied by Lemma~\ref{lemma:DP8ptnew} is as follows.

\begin{corollary}\label{cor:DP8pt}
Let $\KK$ be a field that contains  all roots of~$1$, and let $X$ be a del Pezzo surface of degree
$8$ of product type over $\KK$.
Suppose that
$X$ is not rational and not isomorphic 
to~\mbox{$\PP^1\times C$}, where~$C$ is a conic. Then every finite subgroup of $\Aut(X)$ is a $2$-group. Moreover, 
if~$\KK$ is  either a perfect field, or a field of characteristic
different from~$2$, then
every finite subgroup of $\Aut(X)$ has order at most~$32$.
\end{corollary}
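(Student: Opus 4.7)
The plan is to reduce the problem to automorphism groups of pointless conics via the structure theorem for degree-$8$ del Pezzo surfaces of product type (Lemma~\ref{lemma:DP8ptnew}) and then invoke Corollary~\ref{corollary:Zarhin-improved}.

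First I would use Lemma~\ref{lemma:DP8ptnew}(i) to split into two cases: either $X\cong C\times C'$ for two conics $C,C'$ over $\KK$, or $X\cong R_{\LL/\KK}Q$ for a conic $Q$ over a separable quadratic extension $\LL\supset\KK$. In each case the hypothesis that $X$ is not birational to $\PP^1\times C''$ for any conic $C''$ forces the underlying conic(s) to be pointless. Indeed, in the product case a $\KK$-point on $C$ would give $C\cong\PP^1$ and hence $X\cong\PP^1\times C'$; in the Weil-restriction case an $\LL$-point on $Q$ would make $Q\cong\PP^1_\LL$ rational and hence $X$ rational by Corollary~\ref{corollary:DP8rational}, so birational to $\PP^1\times\PP^1$.

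Next I would apply Lemma~\ref{lemma:DP8ptnew}(iii), which identifies $\Aut^\circ(X)$ either with $\Aut(C)\times\Aut(C')$ or with $\Aut(Q)$, and gives the exact sequence $1\to\Aut^\circ(X)\to\Aut(X)\to\Z/2\Z$ of index at most $2$. Since $\LL$, being a finite extension of $\KK$, also contains all roots of $1$, Corollary~\ref{corollary:Zarhin-improved} applies to each of the pointless conics $C$, $C'$, $Q$: every finite subgroup of $\Aut(C)$, $\Aut(C')$, $\Aut(Q)$ is an elementary abelian $2$-group. Therefore every finite subgroup of $\Aut^\circ(X)$ is a $2$-group, and every finite subgroup of $\Aut(X)$, being an extension of a subgroup of $\Z/2\Z$ by a $2$-group, is a $2$-group as well. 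This proves the first assertion.

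For the quantitative bound assuming $\KK$ is perfect or $\Char\KK\neq 2$, note that $\LL$ inherits the same property, so the refined part of Corollary~\ref{corollary:Zarhin-improved} bounds each of $|\Aut(C)_{\mathrm{fin}}|$, $|\Aut(C')_{\mathrm{fin}}|$, $|\Aut(Q)_{\mathrm{fin}}|$ by $4$. Combined with Lemma~\ref{lemma:DP8ptnew}(iii), this yields $|\Aut^\circ(X)_{\mathrm{fin}}|\le 16$ in the product case and $|\Aut^\circ(X)_{\mathrm{fin}}|\le 4$ in the Weil-restriction case; multiplying by the index at most $2$ gives an overall bound of $32$. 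I do not expect a serious obstacle here: all of the heavy lifting is already in Lemma~\ref{lemma:DP8ptnew} and Corollary~\ref{corollary:Zarhin-improved}, and the proof just assembles these pieces after confirming that the non-birationality hypothesis forces the constituent conics to be pointless. The only mild subtlety is remembering in the Weil-restriction case to pass to $\LL$-points on $Q$ when invoking Corollary~\ref{corollary:DP8rational}.
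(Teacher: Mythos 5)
Your proposal is correct and follows essentially the same route as the paper's proof: split via Lemma~\ref{lemma:DP8ptnew}(i) into the product and Weil-restriction cases, use the non-birationality hypothesis (together with Corollary~\ref{corollary:DP8rational} and the adjunction $X(\KK)=Q(\LL)$ in the second case) to conclude the constituent conics are pointless, and then combine Lemma~\ref{lemma:DP8ptnew}(iii) with Corollary~\ref{corollary:Zarhin-improved} to get the $2$-group statement and the bound $2\cdot 4^2=32$. No gaps.
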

\begin{proof}
We know from Lemma~\ref{lemma:DP8ptnew}(i) that $X$ is isomorphic either
to a product of two conics, or to the Weil restriction of scalars
$R_{\LL/\KK} Q$ of a conic $Q$ defined over a separable quadratic
extension $\LL$ of $\KK$. In the former case the factors of $X$ 
are not isomorphic to $\PP^1$ by assumption, and thus 
have no $\KK$-points,  
so that everything follows
from Lemma~\ref{lemma:DP8ptnew}(iii)  
and Corollary~\ref{corollary:Zarhin-improved}.
In the latter case $X$ has no $\KK$-points,
since otherwise it is rational  
by Corollary~\ref{corollary:DP8rational}.
Thus the conic $Q$ has no $\LL$-points by adjunction, and again everything is implied by
Lemma~\ref{lemma:DP8ptnew}(iii)  and Corollary  \ref{corollary:Zarhin-improved}.
\end{proof}

Note that a quadric surface $X$ over a field $\KK$ 
such that $X(\KK)=\varnothing$ is not rational.
Moreover, it cannot be 
isomorphic to $\PP^1\times C$, where $C$ is a conic,
by Lemma~\ref{lemma:DP8ptnew}(i),(ii).
In particular, this means that Corollary~\ref{cor:DP8pt} proves assertion~(iv)
of Proposition~\ref{proposition:quadricnew}.

The following result is well known (see for instance \cite[Corollary~6.2]{Liedtke}).

\begin{lemma}\label{lemma:DP-deg-7-8}
Let $\KK$ be a field, and let $X$ be a
del Pezzo surface over $\KK$. Suppose that either $X$ has degree  $8$ and
is not of product type, or $X$ has degree $7$. Then $X$ is rational.
\end{lemma}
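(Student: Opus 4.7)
The plan is to treat both cases uniformly by exhibiting on $X_{\bar{\KK}}$ a $(-1)$-curve that is intrinsically distinguished (hence $\Gal(\bar{\KK}/\KK)$-invariant and defined over $\KK$), contracting it over $\KK$, and identifying the resulting surface as one whose rationality is already available.

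First suppose $X$ has degree $8$ and is not of product type, so that $X_{\bar{\KK}}$ is the blow-up of $\PP^{2}$ at a single point. There is then a unique $(-1)$-curve $E_{\bar{\KK}}\subset X_{\bar{\KK}}$, which is automatically invariant under the Galois action and descends to a smooth geometrically integral curve $E$ on $X$. By Castelnuovo's contraction theorem (which is valid over any field for a $(-1)$-curve on a smooth projective surface) one obtains a birational morphism $f\colon X\to Y$ exhibiting $X$ as the blow-up of a smooth projective surface $Y$ at the closed point $q=f(E)$. The exceptional fibre of such a blow-up is $\PP^{1}_{\kappa(q)}$, and its base change to $\bar{\KK}$ can be geometrically integral only when $\kappa(q)=\KK$; thus $q$ is a $\KK$-point of $Y$. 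Since $Y_{\bar{\KK}}\cong \PP^{2}$, the surface $Y$ is a Severi--Brauer surface, and a Severi--Brauer surface with a $\KK$-point is isomorphic to $\PP^{2}_{\KK}$. Consequently $X$ is the blow-up of $\PP^{2}_{\KK}$ at a $\KK$-point, and is therefore rational.

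Now suppose $X$ has degree $7$, so that $X_{\bar{\KK}}$ is the blow-up of $\PP^{2}$ at two distinct points $p_{1},p_{2}$. It carries exactly three $(-1)$-curves: the two exceptional divisors $E_{1},E_{2}$ and the strict transform $L$ of the line joining $p_{1}$ and $p_{2}$. The intersection numbers satisfy $E_{1}\cdot E_{2}=0$ while $L\cdot E_{1}=L\cdot E_{2}=1$, so $L$ is the unique $(-1)$-curve that meets two other $(-1)$-curves; in particular it is Galois-invariant and descends to a curve on $X$ defined over $\KK$. Contracting $L$ via Castelnuovo yields a birational morphism $X\to Y$ with $Y_{\bar{\KK}}\cong \PP^{1}\times\PP^{1}$, and the same residue-field argument as before shows that the image of $L$ is a $\KK$-point of $Y$. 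Thus $Y$ is a del Pezzo surface of degree $8$ of product type that possesses a $\KK$-point, so by Corollary~\ref{corollary:DP8rational} the surface $Y$ is rational, and consequently so is $X$.

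The one technical point requiring care in both arguments is verifying that the image of the contracted $(-1)$-curve has residue field equal to $\KK$ rather than a proper extension; this is forced by geometric integrality of the exceptional divisor, since $\PP^{1}_{\kappa(q)}\times_{\KK}\bar{\KK}$ decomposes into $[\kappa(q):\KK]$ components in the separable case and fails to be geometrically reduced in the inseparable case. Once this is in hand the rest is a direct appeal to already-established facts about Severi--Brauer surfaces and about del Pezzo surfaces of degree $8$ of product type.
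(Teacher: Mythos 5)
Your argument is correct and is essentially the proof in the paper: in both cases one identifies a canonically distinguished, hence Galois-invariant, $(-1)$-curve, contracts it over $\KK$ onto a $\KK$-point of a degree $9$ (resp.\ degree $8$ product-type) del Pezzo surface, and concludes via the Severi--Brauer-with-a-point argument (resp.\ Corollary~\ref{corollary:DP8rational}); the paper establishes that the contracted point is a $\KK$-point by observing that the $(-1)$-curve is a Galois-invariant line under the anticanonical embedding, while your geometric-integrality-of-the-exceptional-fibre argument is an equally valid alternative. The one point to tighten is the descent of the distinguished $(-1)$-curve: over an imperfect field, $\Gal(\bar{\KK}/\KK)$-invariance of a curve on $X_{\bar{\KK}}$ is not by itself enough to descend it to $\KK$ --- you first need to know the curve is defined over $\KK^{sep}$, which is exactly what the paper's Lemma~\ref{lemma:sep} supplies by showing $X_{\KK^{sep}}$ is already a blow-up of $\PP^2$ at $\KK^{sep}$-points.
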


\begin{proof}
Let $X$ be such a surface. Using Lemma~\ref{lemma:sep}, it follows that
$X_{{\KK}^{sep}}$ is a
blow up of $\PP^2$ at one or  two ${\KK}^{sep}$-points.
Thus, in both cases,  $X_{{\KK}^{sep}}$ contains a Galois-invariant $(-1)$-curve. The image of this curve under the anticanonical
 embedding is a Galois-invariant line in the projective space. Therefore,
this  Galois-invariant $(-1)$-curve contains  a Galois-invariant point.
  Hence, $X$ is a blow up of a del Pezzo surface $X'$ of degree $9$ or~$8$ at a $\KK$-point.
 In the former case, $X'$ is a Severi--Brauer surface with a $\KK$-point, which implies that $X'\cong \PP^2$. In the latter case, $X'$ is   a del Pezzo surface of product type with a
$\KK$-point which is rational by Corollary~\ref{corollary:DP8rational}.
\end{proof}

Now we proceed to del Pezzo surfaces of degree $6$ and less.

\begin{lemma}\label{lemma:DP6}
Let $\KK$ be  any field that contains all roots of~$1$, and let $X$ be a del Pezzo surface of degree
$6$ over $\KK$.
Suppose that
$X$ is not birational to~\mbox{$\PP^1\times C$}, where $C$ is a conic.
Then every finite subgroup of $\Aut(X)$ has order at most~$432$.
\end{lemma}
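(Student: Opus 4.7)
The plan is to exploit the well-known structure of the automorphism group of a degree~$6$ del Pezzo surface, namely that its connected component is a two-dimensional torus, and then apply the bound from Lemma~\ref{lemma:tori-bounded-subgroups} on finite subgroups of anisotropic two-dimensional tori, together with an obvious bound on the component group.

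First I would recall the standard description over the algebraic closure: $X_{\bar{\KK}}$ is the blow-up of $\PP^2$ at three points in general position, and
\[
\Aut(X_{\bar{\KK}}) \cong T_{\bar{\KK}} \rtimes (S_3 \times \Z/2\Z),
\]
where $T_{\bar{\KK}} \cong \Gm^2$ is induced from the diagonal torus of $\PGL_3$, the factor $S_3$ permutes the three blown-up points, and $\Z/2\Z$ is generated by the standard Cremona involution. The group scheme $\underline{\Aut}(X)$ is smooth (as established in the proof of Lemma~\ref{lemma:sep}), so $\underline{\Aut}^\circ(X)$ descends to a two-dimensional $\KK$-torus $T$, and we obtain a left-exact sequence
\[
1 \to T(\KK) \to \Aut(X) \to W_X,
\]
where $W_X \subset \bigl(\underline{\Aut}(X)/\underline{\Aut}^\circ(X)\bigr)(\KK)$ has order at most $|S_3 \times \Z/2\Z| = 12$.

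The core of the argument is to show that $T$ is anisotropic. Assume for contradiction that $T$ contains a $\KK$-subtorus isomorphic to $\Gm$. Then $X$ carries a non-trivial $\Gm$-action defined over $\KK$. Using the results of the appendix on $\Gm$-actions (Lemma~\ref{lemma:quotient-by-Gm}), this implies that $X$ is $\KK$-birational to $\PP^1 \times Y'$ for some smooth projective curve $Y'$ over $\KK$. Since $X_{\bar{\KK}}$ is rational, $Y'_{\bar{\KK}}$ is rational as well, so $Y'$ is a Severi--Brauer curve, i.e., a conic. This contradicts the hypothesis on $X$, and hence $T$ must be anisotropic.

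Once $T$ is known to be anisotropic, Lemma~\ref{lemma:tori-bounded-subgroups} gives $|G \cap T(\KK)| \le 36$ for every finite subgroup $G \subset \Aut(X)$, and combining with $|W_X| \le 12$ yields the desired bound $|G| \le 36 \cdot 12 = 432$. The main obstacle is the step where non-anisotropy of $T$ is converted into a birational equivalence $X \sim_{\KK} \PP^1 \times C$; one must check that the appendix results apply to $\Gm$-actions on surfaces in arbitrary characteristic, and that the resulting quotient is a smooth projective conic rather than merely a geometrically rational curve, which is automatic because any smooth projective $\KK$-form of $\PP^1$ is a conic.
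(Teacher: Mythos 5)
Your proposal is correct and follows essentially the same route as the paper: identify the two-dimensional torus $T\subset\underline{\Aut}(X)$ with quotient of order $12$ (the paper writes the component group as $\mathrm{D}_{12}$, which is your $S_3\times\Z/2\Z$), deduce anisotropy of $T$ from Lemma~\ref{lemma:quotient-by-Gm} and the hypothesis that $X$ is not birational to $\PP^1\times C$, and conclude via Lemma~\ref{lemma:tori-bounded-subgroups} that $|G|\le 12\cdot 36=432$. No substantive differences.
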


\begin{proof}
Consider the group scheme $\underline{\Aut}(X)$.  One has
$$
\underline{\Aut}(X)_{\bar{\KK}}\cong \underline{\Aut}(X_{\bar{\KK}}) \cong (\G_m \times \G_m) \rtimes \mathrm{D}_{12},
$$
where $\mathrm{D}_{12}$ is the dihedral group of order $12$,
cf. \cite[Theorem~8.4.2]{Dolgachev-ClassicalAlgGeom}.
Thus, the group scheme ~\mbox{$\underline{\Aut}(X)$} contains  a two-dimensional  torus $T$ such that  the quotient~\mbox{$\underline{\Aut}(X)/T$} is a finite group scheme of order $12$.
If $T$ contains a subtorus isomorphic to $\Gm$,
then~$X$ is birational to~\mbox{$\PP^1\times C$}
for some conic~$C$ by Lemma~\ref{lemma:quotient-by-Gm},
which is not the case by assumption.
Thus we see that $T$ is anisotropic,
so that every finite subgroup of $T(\KK)$ has order at most $36$
by Lemma~\ref{lemma:tori-bounded-subgroups}.
Therefore, every finite subgroup of $\Aut(X)$
has order at most~\mbox{$12\cdot 36=432$}.
\end{proof}

\begin{remark}\label{remark:dP-low-degree}
Let $X$ be a del Pezzo surface of degree $d\le 5$ over a field $\KK$.
Then the group $\Aut(X)$ is finite. Moreover, in the case when $\Char\KK=0$,
one has~\mbox{$|\Aut(X)|\le 648$}. To show this one
can assume that $\KK$ is algebraically closed, and use an explicit classification
of automorphism groups of del Pezzo surafces of low degree, see~\mbox{\cite[\S8.5.4]{Dolgachev-ClassicalAlgGeom}},
\mbox{\cite[\S8.6.4]{Dolgachev-ClassicalAlgGeom}}, \cite[\S8.7.3]{Dolgachev-ClassicalAlgGeom}, \cite[\S8.8.4]{Dolgachev-ClassicalAlgGeom}, and \cite[\S9.5.3]{Dolgachev-ClassicalAlgGeom}.

If $\KK$ is an arbitrary field,
the known upper bound is much higher.
Namely, one can check that $|\Aut(X)|$ is bounded by the order of the Weyl
group $\mathrm{W}(\mathrm{E}_8)$, that is, by the
number~$696\,729\,600$, see~\mbox{\cite[Corollary~8.2.40]{Dolgachev-ClassicalAlgGeom}}
(where this is claimed for del Pezzo surfaces over fields of characteristic zero, but the
proof does not depend on the characteristic).
This bound is not sharp. However, over appropriate fields there exist del Pezzo surfaces
of low degree with automorphism groups of rather large order.
For instance, the automorphism group of the Fermat cubic surface over an algebraically closed field of
characteristic~$2$ has order~$25\,920$, see~\cite[Table~1]{DolgachevDuncan}.
\end{remark}

Now we are ready to prove Proposition~\ref{proposition:dP}.

\begin{proof}[Proof of Proposition~\ref{proposition:dP}]
As usual, we set $d=K_X^2$.

If $d=9$, then $X$ is a Severi--Brauer surface
without $\KK$-points. If $\Char \KK \ne 3$, then by Proposition~\ref{proposition:SB}(ii) we have $|G|\le 81$.
If $\Char \KK =3$ (in this case $\KK$ must be non-perfect by
Remark~\ref{remark:char-vs-dim-perfect}), then by
Proposition~\ref{proposition:SBp} the group
$G$ is isomorphic to  $(\Z/3\Z)^n $ for some positive integer $n$, so that~\mbox{$|G|'=1$}.

If $7\le d\le 8$, then it follows from Lemma~\ref{lemma:DP-deg-7-8} that $d=8$ and $X$ is
of product type.  Thus,   by Corollary~\ref{cor:DP8pt} we have $|G|\le 32$
unless $\KK$ is an non-perfect field of characteristic~$2$. If $\KK$ is an non-perfect field of characteristic $2$,
again by  Corollary~\ref{cor:DP8pt}
the group $G$ is a $2$-group, so that~\mbox{$|G|'=1$}.

Finally, if $d=6$, then $|G|\le 432$ by Lemma~\ref{lemma:DP6}.
\end{proof}

\begin{remark}\label{remark:dP-char-2-3}
In the notation of Proposition~\ref{proposition:dP},
suppose that $\Char \KK$ equals either~$2$ or~$3$. Then one can
improve the bound for $|G|'$ provided by Proposition~\ref{proposition:dP}(ii).
Namely, if~\mbox{$\Char\KK=2$}, one can
show that $|G|' \le 108$. If $\Char \KK=3$, then
we have~\mbox{$|G|' \le 48$}.
\end{remark}

\section{Conic bundles}
\label{section:conic-bundles}

In this section we study groups acting on conic bundles. Our results  are summarized in Proposition  \ref{proposition:conicsummary} that plays a key role
in the proof of  Theorem~\ref{theorem:main}.

We begin by recalling some basic definitions concerning conic bundles;
we refer the reader to~\cite[\S3]{Iskovskikh80} for the proofs.
By a \emph{conic bundle}
we will mean a smooth geometrically irreducible surface~$X$  over a field $\KK$ together with
a proper surjective morphism~\mbox{$\phi\colon X\to C$}
to a smooth curve $C$ whose fiber $X_\eta$ over the scheme-theoretic generic point $\eta$ of $C$ is  smooth
and geometrically  irreducible, and such that the anticanonical line
bundle~$\omega^{-1}_X$ is very ample relative to $C$.
In other words, we have an embedding~\mbox{$X\hookrightarrow \PP_C(\phi_* \omega^{-1}_X)$},  and an isomorphism between~$\omega^{-1}_X$ and the restriction of
 $\mathcal{O}_{\PP_C(\phi_* \omega^{-1}_X)}(1)$.
It follows that all the fibers of~$\phi$ are geometrically reduced, the scheme-theoretic general fiber $X_\eta$ is a conic  
over the field $\KK(\eta)$ of rational functions on $C$, and every singular  geometric fiber is isomorphic to a pair of lines in $\PP^2$ meeting at one point. The \emph{singular locus} of $\phi$,
i.e., the closed subscheme of $X$ defined by the sheaf of
ideals  $\Ann(\coker(T_X \to \phi^* T_C))$, where~\mbox{$T_X \to \phi^* T_C$}
is the differential of $\phi$,  is smooth of dimension zero  over $\KK$, which means
reduced and supported on finitely many points with separable  residue fields.

By the \emph{discriminant locus}
$\Omega \subset C$ of $\phi$ we mean the image of the singular locus of $\phi$.
This is a smooth zero-dimensional subscheme of $C$.
The smooth conic bundle
$$
X - \phi^{-1} (\Omega) \to C - \Omega
$$
is a Severi--Brauer scheme over  $C - \Omega$; the latter are in one-to-one correspondence
with rank $4$ Azumaya algebras  over   $C - \Omega$. We denote by
$$
[X - \phi^{-1} (\Omega)] \in \Br(C-\Omega)
$$
the class in the Brauer group of the Azumaya algebra corresponding to  $X - \phi^{-1} (\Omega)$.

We say that $\phi \colon X\to C$ is \emph{relatively minimal}
if it does not admit a non-trivial factorization $X\to Y \to C$, where  $Y$ is a smooth surface. If $\phi$ is not relatively minimal
and~\mbox{$X\to Y \to C$} is  a  factorization as above, then
 $Y\to C$
is also a conic bundle.
A  conic bundle is relatively minimal  if and only if  the discriminant locus of $\phi$ coincides with  the discriminant locus of
the class $[X_\eta] \in \Br(\KK(\eta))$, that is,  the class~\mbox{$[X - \phi^{-1} (\Omega)] \in \Br(C-\Omega)$}  is not in the image of the (injective) restriction
map~\mbox{$\Br(U) \to   \Br(C-\Omega)$},
for any open $U\subset C$ strictly larger than $C-\Omega$.
Also, $\phi$ is relatively minimal if and only
if~\mbox{$\rk\Pic(X)-\rk\Pic(C)=1$}.

For a scheme $Y$ over $\KK$, we  set
$$
\Br ^{\mathrm{t}}(Y)= \ker\big(\Br(Y)\to \Br(Y_{\KK^{sep}} )\big),
$$
where the homomorphism of the Brauer groups is induced by the projection~\mbox{$Y_{\KK^{sep}} \to Y$}.
Given a positive integer $l$, we write $\Br(Y)_l$
for the subgroup of $l$-torsion elements in~\mbox{$\Br(Y)$}.
Similarly, we write $\Br^{\mathrm{t}}(Y)_l$
for the subgroup of $l$-torsion elements in~$\Br^{\mathrm{t}}(Y)$.
Note that if the characteristic of $\KK$ does not divide $l$
then  $\Br^{\mathrm{t}}(Y)_l= \Br(Y)_l$.

\begin{lemma}\label{lemma:tameness}
Let $C$ be a smooth proper curve over a field $\KK$, and
let~\mbox{$\phi\colon X\to C$} be a conic bundle. Then~\mbox{$[X_\eta] \in \Br^{\mathrm{t}}(\KK(\eta))_2$}.
\end{lemma}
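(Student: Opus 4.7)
The plan is to verify separately that $[X_\eta]\in\Br(\KK(\eta))_2$ and that it lies in $\Br^{\mathrm{t}}(\KK(\eta))$, i.e., dies under restriction to $\KK^{sep}$.

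The $2$-torsion claim is immediate from the theory of Severi--Brauer varieties recalled in~\S\ref{section:SB}. By the definition of a conic bundle, the generic fiber $X_\eta$ is a smooth geometrically integral conic over $F=\KK(\eta)$, i.e., a Severi--Brauer variety of dimension $1$ corresponding to a quaternion algebra, whose Brauer class has period dividing $2$.

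For tameness, I first observe that $C$ is geometrically integral. Indeed, since $X$ is geometrically irreducible and $\phi$ is surjective, $C_{\bar{\KK}}$ is irreducible; smoothness of $C$ ensures it is also reduced. Hence $F\otimes_\KK\KK^{sep}$ is the function field $\KK^{sep}(\eta')$ of $C_{\KK^{sep}}$, and the goal becomes to show $[X_\eta]$ vanishes in $\Br(\KK^{sep}(\eta'))$. Tsen's theorem yields $\Br(\bar{\KK}(\eta'))=0$. Since $\bar{\KK}/\KK^{sep}$ is purely inseparable of $(\Char\KK)$-power degree (trivial if $\Char\KK=0$), the pullback on $l$-torsion Brauer groups is injective for every $l$ coprime to $\Char\KK$, so $\Br(\KK^{sep}(\eta'))_l=0$ for such $l$. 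In particular, when $\Char\KK\ne 2$, the $2$-torsion class $[X_\eta]$ dies automatically.

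The remaining, and main, obstacle is the case $\Char\KK=2$, in which the purely inseparable injectivity argument fails precisely on the relevant $2$-torsion. Here I plan to argue directly by producing a rational section of $\phi_{\KK^{sep}}\colon X_{\KK^{sep}}\to C_{\KK^{sep}}$. Two facts should be exploited: first, every closed point of $C_{\KK^{sep}}$ has residue field equal to $\KK^{sep}$ (algebraic extensions of $\KK^{sep}$ are purely inseparable), so the discriminant points of $\phi_{\KK^{sep}}$ and the nodes of the corresponding reducible fibers are $\KK^{sep}$-rational; second, the smooth surface $X_{\KK^{sep}}$ carries, geometrically, a Picard class of intersection one with the general fiber, given over $\bar{\KK}$ by any section of the conic bundle. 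The delicate step will be to realize such a class by an effective divisor defined already on $X_{\KK^{sep}}$, which then produces a $\KK^{sep}(\eta')$-rational point on the generic fiber and forces $[X_\eta]$ to vanish in $\Br(\KK^{sep}(\eta'))$.
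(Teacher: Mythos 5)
Your reduction is sound as far as it goes: the $2$-torsion claim is immediate, and your argument for $\Char\KK\neq 2$ (Tsen's theorem over $\bar{\KK}$ combined with injectivity of restriction on prime-to-$p$ torsion along the purely inseparable extension $\bar{\KK}(\eta')\supset\KK^{sep}(\eta')$) is correct and correctly isolates $\Char\KK=2$ as the only case with real content. But that case is exactly where your text stops being a proof and becomes a plan: you announce that you will produce a rational section of $\phi_{\KK^{sep}}$ by descending a fiber-degree-one Picard class from $\bar{\KK}$ to $\KK^{sep}$, and you explicitly defer ``the delicate step''. That step is the whole lemma. (It can in principle be carried out: $\Br(\KK^{sep})=0$ identifies $\Pic(X_{\KK^{sep}})$ with the $\KK^{sep}$-points of the Picard scheme, the component group of the Picard scheme is \'etale, and $\Pic^0$ of the ruled surface $X_{\bar{\KK}}$ is the Jacobian of $C_{\bar{\KK}}$, which is smooth, so a class of fiber degree one descends to $\KK^{sep}$; and on the genus-zero generic fiber a degree-one class is automatically represented by a rational point by Riemann--Roch. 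But none of this is in your write-up.) There is also a false assertion along the way: it is not true that every closed point of $C_{\KK^{sep}}$ has residue field $\KK^{sep}$, since a separably closed field of characteristic $p$ admits nontrivial purely inseparable finite extensions and these do occur as residue fields of closed points. What saves you for the discriminant points is the definition of conic bundle adopted in the paper, which builds in that the singular locus of $\phi$, and hence the discriminant locus, is smooth of dimension zero over $\KK$; \emph{that} is why its points become $\KK^{sep}$-rational.

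For comparison, the paper's proof avoids Picard-scheme descent entirely. Over $\KK=\KK^{sep}$ one may assume the conic bundle relatively minimal; a reducible fiber would then lie over a $\KK$-point (\'etaleness of the discriminant again) and be a union of two $(-1)$-curves, contradicting relative minimality; so $\phi$ is everywhere smooth, hence a Severi--Brauer scheme over the proper curve $C$, and the vanishing of $\Br(C)$ over a separably closed field produces a section. You should either adopt that argument or fully write out the descent of the Picard class; as it stands, the characteristic-$2$ case is not proved.
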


\begin{proof}
It suffices to check that if $\KK = \KK^{sep}$, then every
conic bundle~\mbox{$\phi\colon X\to C$} has a section.
To do this, we can assume that $\phi$ is relatively minimal.

First, let us check that $\phi$  is smooth over $C$. Indeed, let $X_c$ be a singular fiber. Then
the residue field of the point $c\in C$ is $\KK$, because the discriminant locus of $\phi$
is smooth over~$\KK$. Hence, $X_c$ is a degree $2$ curve in $\PP^2_\KK$ whose base change to $\bar{\KK}$ is reducible.
Therefore, $X_c$ is itself reducible
(see, for example,~\mbox{\cite[Lemma~32.8.2]{STACK}}),
i.e.,  a pair of lines in $\PP^2_\KK$ meeting at one point.
Each of these lines is a $(-1)$-curve on $X$. It follows that  $\phi\colon X\to C$ is not  relatively minimal.

Thus, ~\mbox{$\phi\colon X\to C$} is a Severi--Brauer curve over $C$.
Since $\KK=\KK^{sep}$ is separably closed,
the Brauer group of $C$ is trivial by~\cite[Corollary 5.7]{GrBr3}. Thus,
$\phi$ has a section.
\end{proof}

\begin{remark}
Lemma~\ref{lemma:tameness}
shows that in characteristic $2$ not every conic $X_\eta$
over the scheme-theoretic generic point $\eta$ of $C$ can be extended to a conic bundle over $C$.
This seems to contradict~\mbox{\cite[Theorem~3(4)]{Iskovskikh80}}; see also \cite[pp.~33--34]{Iskovskikh80},
and especially formula~(3.3) therein.
For example, take~\mbox{$\KK =\overline{\mathbf{F}}_2(a)$}, where $a$ is a transcendental variable,
and consider the smooth conic bundle  $X_{\A^1}  \to \A^1$  given by equation
$$
ax^2 +xy +ty^2 +z^2=0
$$
in $\PP^2 \times \A^1$,
where $t$ is the coordinate on $\A^1$ and $(x:y:z)$
are homogeneous coordinates on~$\PP^2$.
The class
$$
[X_\eta]\in \Br(\A^1) \subset  \Br(\KK(t))
$$
is represented
by the division algebra over~\mbox{$\KK(t)= \overline{\mathbf{F}}_2(a,t)$}
from Example~\ref{example:BMR} and, therefore, is not equal to $0$.
However, the Leray spectral sequence
$$
E_2^{ij}=H^i\big(\Gal  (\KK^{sep}/\KK),
H^j(\A^1_{\KK^{sep}, \et}, \cO^*_{\A^1_{\KK^{sep}}})\big)
$$
converging to
$H^{\bullet}(\A^1_{\KK, \et}, \cO^*_{\A^1_{\KK }})$ shows that
$\Br^{\mathrm{t}}(\A^1_\KK)=\Br(\KK)$. On the other hand, we have~\mbox{$\Br(\KK)=0$} by Tsen's theorem.
This means that the group $\Br^{\mathrm{t}}(\A^1_\KK)$ is trivial, and thus~\mbox{$[X_\eta]\not\in\Br^{\mathrm{t}}(\A^1_\KK)$}.
Since
$$
\Br^{\mathrm{t}}(\A^1_\KK)=\Br^{\mathrm{t}}(\KK(t))\cap \Br(\A^1_\KK)\subset\Br(\KK(t)),
$$
we conclude that $[X_\eta]\not\in\Br^{\mathrm{t}}(\KK(t))$.
Now Lemma~\ref{lemma:tameness} shows that $X_{\A^1}$ and $X_\eta$  do  not extend to conic bundles over $\PP^1$.

However, one can show  that  there is a one-to-one correspondence between relatively minimal conic bundles over $C$ with discriminant $\Omega$
and those  Severi--Brauer schemes~$X_{C-\Omega}$ of dimension $1$ over  $C - \Omega$
whose class in the Brauer group belongs to~\mbox{$\Br^{\mathrm{t}}(C-\Omega)$}
and cannot be extended to a class in the Brauer group of a larger
subscheme of $C$. We will not prove this claim  here because we do not need it in the full generality, but
we will check  its special case in Lemma~\ref{lemma:extendingconicbundles}
below.
\end{remark}

\begin{lemma}\label{lemma:extendingconicbundles}
Let $C$ be a smooth proper curve over a perfect field $\KK$ of characteristic~$2$,
and let~\mbox{$U\subset C$} a dense open subscheme. Then
every conic bundle $\phi\colon X\to U$ over $U$ can be extended to a conic bundle over
$C$.
\end{lemma}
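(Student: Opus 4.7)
The strategy is to construct an initial projective extension of $X\to U$, then apply resolution of singularities and relative minimalization to obtain a genuine conic bundle over $C$. First I would reduce to the case where $C\setminus U=\{c\}$ is a single closed point, which is harmless since each missing point can be handled separately; let $R=\mathcal{O}_{C,c}$, a DVR with perfect residue field of characteristic $2$.

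Next I would build an initial projective model. The conic bundle structure yields a closed immersion $X\hookrightarrow\PP_U(\phi_*\omega_X^{-1})$ into a $\PP^2$-bundle over $U$. Since $C$ is a smooth curve, the rank-$3$ locally free sheaf $\phi_*\omega_X^{-1}$ on $U$ extends to a rank-$3$ locally free sheaf $\mathcal{E}$ on $C$ (for instance by taking the reflexive hull of $j_*(\phi_*\omega_X^{-1})$, where $j\colon U\hookrightarrow C$). I would then take the scheme-theoretic closure $\bar{X}$ of $X$ inside $\PP_C(\mathcal{E})$. Being integral and dominating the one-dimensional regular base $C$, this $\bar{X}$ is automatically flat over $C$ and restricts to $X$ over $U$.

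The total space $\bar{X}$ may acquire isolated singularities above $c$, and the fiber over $c$ may be one of the degenerate plane conics (two lines or even a double line). Here I would invoke two classical tools: (a) resolution of singularities for excellent two-dimensional schemes over the perfect field $\KK$ (Lipman), yielding a smooth projective model $\tilde{X}\to C$ isomorphic to $X$ over $U$; and (b) relative minimalization over $C$, by successively contracting $(-1)$-curves contained in the fiber over $c$. Since the generic fiber is a smooth conic (arithmetic genus $0$), the theory of relatively minimal models for genus-$0$ fibrations guarantees that the resulting minimal surface $\pi\colon Y\to C$ has fibers over $c$ of one of the two classical types allowed in a conic bundle: a smooth conic, or a pair of $(-1)$-curves meeting transversally at one point.

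It remains to verify that $\pi$ is a conic bundle in the strict sense of the paper, i.e., that $\omega_Y^{-1}$ is relatively very ample. Adjunction on each fiber shows that $\omega_Y^{-1}$ restricts to an invertible sheaf of degree $2$ on every (possibly reducible) fiber of $\pi$, and is very ample on each fiber (embedding it as a plane degree-$2$ curve). A cohomology-and-base-change argument, using $H^1(Y_c,\omega_{Y_c}^{-1})=0$, then gives that $\pi_*\omega_Y^{-1}$ is a rank-$3$ locally free sheaf on $C$ and that $\pi^*\pi_*\omega_Y^{-1}\twoheadrightarrow\omega_Y^{-1}$ is surjective, producing the required closed embedding $Y\hookrightarrow\PP_C(\pi_*\omega_Y^{-1})$. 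The main obstacle I expect is ruling out multiple (non-reduced) fibers in the minimal model, which would spoil the geometric reducedness required in the definition of a conic bundle; handling this relies on the generic fiber being a smooth conic rather than merely a rational curve, and on the perfectness of $\KK$, which is invoked both in the resolution of singularities step and in excluding pathological degenerations of conics over non-perfect fields of characteristic $2$ (cf. Example~\ref{example:BMR}).
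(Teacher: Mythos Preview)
Your approach is genuinely different from the paper's, and it has a real gap at exactly the point you flag as ``the main obstacle''.

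The paper does not attempt resolution-plus-minimalization at all. Instead it reduces to the completion $\KK((t))$ and uses Witt's exact sequence for $\Br(\KK((t)))$ together with the vanishing of $\Br(\KK)[2]$ (perfectness of $\KK$ in characteristic~$2$) to identify $\Br(\KK((t)))_2$ with $\Hom(\Gal(\bar\KK/\KK),\Z/2\Z)$. From this it reads off an explicit normal form: every pointless conic over $\KK((t))$ is projectively equivalent to
\[
x^2+xy+ay^2+tz^2=0
\]
for some $a\in\KK$, and one checks by hand that this hypersurface in $\PP^2_{\KK[[t]]}$ is regular, with special fiber a pair of lines meeting at the $\KK$-point $(0{:}0{:}1)$. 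The explicit equation does all the work; no minimal-model theory is invoked.

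Your proposal, by contrast, asserts that ``the theory of relatively minimal models for genus-$0$ fibrations guarantees'' that the special fiber of the minimal model is a smooth conic or two $(-1)$-curves meeting transversally. This assertion is not justified, and in fact it is \emph{false} without the perfectness hypothesis: the conic in the Remark following Lemma~\ref{lemma:tameness} (built from Example~\ref{example:BMR}) has no extension to a conic bundle over $\PP^1$, so its minimal regular model over the relevant DVR cannot have a fiber of either classical type. Thus the step you describe as routine is exactly where the perfectness hypothesis must enter substantively, and you have not explained how. Saying that perfectness is ``invoked in excluding pathological degenerations'' is a restatement of the goal, not an argument. Lipman resolution, incidentally, does not require perfectness, so that is not where the hypothesis is spent.

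If you want to salvage your strategy, you would need an honest argument that the minimal regular model of a conic over a complete DVR with perfect residue field of characteristic~$2$ has reduced special fiber with \'etale singular locus. One route is to base change to the strict henselization, use that $k((t))$ is $C_1$ for $k$ algebraically closed (so the conic splits there), and then analyze how the $\PP^1_{R^{sh}}$ model descends along the \'etale cover $R^{sh}/R$; but carrying this out is essentially as much work as, and less transparent than, the paper's Brauer-group computation.
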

\begin{proof}
It suffices to prove the following local statement: let $Y$ be a conic over the field of Laurent series $\KK((t))$, where $\KK$ is  a perfect field of characteristic $2$. Then $Y$ can be
extended to a conic bundle over $\Spec \KK[[t]]$, that is, there exists a connected regular closed  subscheme $\bar Y\subset \PP^2_{\KK[[t]]}$ whose fiber over $\KK((t))$ is $Y$.

To check the local statement recall that by a theorem of Witt (see, for example,~\mbox{\cite[p.~188]{GrBr3}})
the Brauer group of~$\KK((t))$
fits into the short exact sequence
$$
0\to  \Br(\KK)\to   \Br\big(\KK((t))\big) \to \Hom\big(\Gal(\bar{\KK}/\KK), \Q/\Z\big) \to 0.
$$
Since the multiplication by $2$ is an isomorphism  on  $\Br(\KK)$ (see Remark \ref{remark:char-vs-dim-perfect}) we conclude that
$$
\Br\big(\KK((t))\big)_2 \cong \Hom\big(\Gal(\bar{\KK}/\KK), \Z/2\Z\big).
$$
The above isomorphism is compatible with algebraic extensions of $\KK$, that is, for every extension $\KK \subset \LL\subset \bar{\KK}$, we have a commutative diagram
\begin{equation*}
\def\normalbaselines{\baselineskip20pt
\lineskip3pt  \lineskiplimit3pt}
\def\mapright#1{\smash{
\mathop{\to}\limits^{#1}}}
\def\mapdown#1{\Big\downarrow\rlap
{$\vcenter{\hbox{$\scriptstyle#1$}}$}}
\begin{matrix}
 \Br\big(\KK((t))\big)_2  & \cong &  \Hom\big(\Gal(\bar{\KK}/\KK), \Z/2\Z\big)  \cr
 \mapdown{}  && \mapdown{}  \cr
  \Br\big(\LL((t))\big)_2  & \cong &  \Hom\big(\Gal(\bar{\KK}/\LL), \Z/2\Z\big)
\end{matrix}
\end{equation*}
where the left vertical arrow is the pullback homomorphism and the right vertical arrow is the restriction homomorphism.
It follows that  non-zero elements of $\Br(\KK((t)))_2$ are in one-to-one correspondence with quadratic extensions  $\KK \subset \LL\subset \bar{\KK}$ so that, for every such extension, there
exists a unique non-zero class in  $\Br(\KK((t)))_2$  which splits over $\LL((t))$.

Now we claim that every conic over $\KK((t))$
can be given in $\PP^2_{\KK((t))}$ by a homogeneous equation of the form
\begin{equation}\label{equation:conicext}
x^2 +xy +ay^2 + t z^2=0
\end{equation}
for some $a\in \KK$. Indeed, one can easily see that the above conic has no $\KK((t))$-points if and only if the quotient ring $\LL=\KK[h]/(h^2-h -a)$ is a field, and that in this case
the conic has a point over~$\LL$.
Hence, since every quadratic extension of $\KK$ has the form~\mbox{$\KK[h]/(h^2-h -a)$} for some $a\in \KK$,
every class in $ \Br(\KK((t)))_2$ is represented by a conic of the form~\eqref{equation:conicext},
and this proves the claim.

It remains to observe that equation~\eqref{equation:conicext} defines a regular subscheme of
 $\PP^2_{\KK[[t]]}$.
\end{proof}

\begin{lemma}\label{lemma:perfection}
Let   $\phi \colon X\to C$ be a  relatively minimal   conic bundle over a field $\KK$. Then, for every purely inseparable algebraic extension $\KK \subset \LL$, the conic bundle $\phi_\LL \colon X_\LL\to C_\LL$
obtained from $\phi$ by the base change is  relatively minimal.
\end{lemma}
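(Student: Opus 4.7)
The plan is to reduce the statement to a pointwise analysis on the discriminant locus. Since smoothness is preserved under arbitrary base change, and a conic over a field is smooth if and only if its base change to any field extension is smooth, the singular locus of $\phi_\LL$ coincides with the base change of the singular locus of $\phi$. Consequently, the discriminant locus $\Omega_\LL$ of $\phi_\LL$ equals $\Omega\times_\KK\LL$, and the natural map $\Omega_\LL\to\Omega$ is a bijection on underlying points (by universal injectivity of the morphism $\Spec\LL\to\Spec\KK$); moreover, for each $x\in\Omega$ with unique preimage $x'\in\Omega_\LL$, the residue field extension $\kappa(x)\subset\kappa(x')=\kappa(x)\otimes_\KK\LL$ is purely inseparable.

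The next step is to characterize relative minimality fiberwise: $\phi$ fails to be relatively minimal if and only if there exists $x\in\Omega$ such that the two geometric components of $(X_x)_{\overline{\kappa(x)}}$ are individually defined over $\kappa(x)$, that is, $X_x$ splits as a union of two distinct $\kappa(x)$-rational lines meeting at a point. Indeed, any such $\kappa(x)$-rational line gives a closed integral subscheme of $X$ whose contraction produces a smooth conic bundle over $C$ sitting in a nontrivial factorization $X\to Y\to C$; conversely, any factorization of $\phi$ through a smooth surface over $C$ must arise this way, since all exceptional divisors of $X\to Y$ must be supported in singular fibers of $\phi$. The separability of $\kappa(x)/\KK$, built into the definition of the singular locus as a smooth zero-dimensional subscheme of $C$, is what ensures that the contracted surface is smooth as a $\KK$-scheme. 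This criterion is equivalent to the Brauer-theoretic one recalled in the preamble of the section: the two components fail to be $\kappa(x)$-rational precisely when the class $[X_\eta]$ ramifies at $x$.

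Granting the characterization, the proof reduces to the following pointwise claim: if the two geometric components of $X_x$ are not defined over $\kappa(x)$, then they are not defined over $\kappa(x')$ either. In this situation the components are defined over a separable quadratic extension $M\supset\kappa(x)$ (separable because the Galois action on them factors through a finite quotient of the absolute Galois group acting on a two-element set), and non-splitness over $\kappa(x)$ amounts to $M\neq\kappa(x)$. Since $\kappa(x')/\kappa(x)$ is purely inseparable, the tensor product $M\otimes_{\kappa(x)}\kappa(x')$ remains a field of degree $2$ over $\kappa(x')$: any root in $\kappa(x')$ of the minimal polynomial of a generator of $M$ over $\kappa(x)$ would be simultaneously separable and purely inseparable over $\kappa(x)$, and hence would already lie in $\kappa(x)$, contradicting $M\neq\kappa(x)$. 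Thus non-splitness at $x'$ is preserved, and so is relative minimality.

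The main obstacle is the verification of the pointwise characterization of relative minimality, which requires a careful analysis of the $\KK$-smoothness of the candidate contractions inside a singular fiber over a possibly non-rational point of $C$, and its matching with the Brauer-theoretic criterion stated in the paper; once this is in place, the remainder is the elementary Galois-theoretic observation that a nontrivial separable extension cannot be trivialized by a purely inseparable one.
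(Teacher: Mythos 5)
Your proof is correct and follows essentially the same route as the paper: the paper also reduces relative minimality to the statement that every fiber of $\phi$ is irreducible, and then invokes the general fact that irreducibility is preserved under purely inseparable base change (citing the Stacks project). The only difference is cosmetic: where the paper quotes that general lemma, you reprove its special case for conics by hand, observing that the separable quadratic extension over which the two components of a singular fiber are defined stays a field after a purely inseparable extension.
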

 \begin{proof}
A conic bundle $\phi \colon X\to C$ is  relatively minimal if and only if,
for every point~\mbox{$c\in C$}, the fiber $X_c$ is irreducible. Now the claim follows from a general fact that,
for an irreducible scheme $Y$ over a field $\KK'$ and a  purely inseparable
algebraic extension $\KK' \subset \LL'$, the scheme~$Y_{\LL'}$ is irreducible (see, for example, ~\cite[Lemma 10.45.7]{STACK}).
\end{proof}

\begin{lemma}\label{lemma:CB-1-fiber}
Let $C$ be a smooth proper curve over a field $\KK$, and
let~\mbox{$\phi\colon X\to C$ be a conic bundle.
Suppose that} $\phi_{\bar{\KK}}\colon X_{\bar{\KK}}\to C_{\bar{\KK}}$ has a unique reducible fiber.
Then $\phi\colon X\to C$ is not relatively minimal.
\end{lemma}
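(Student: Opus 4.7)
The plan is to exhibit a non-trivial factorization $X\to Y\to C$ of $\phi$ through a smooth $\KK$-surface $Y$, which by definition shows that $\phi$ is not relatively minimal. The candidate for $Y$ will be obtained by contracting one component of the unique reducible fiber.

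First, because $\phi_{\bar{\KK}}$ has exactly one reducible fiber, this fiber is preserved by $\Gal(\bar{\KK}/\KK)$, and therefore lies above a closed point of $C$ which is Galois-invariant, i.e., a $\KK$-rational point $c\in C(\KK)$. Let $F=\phi^{-1}(c)$; over $\bar{\KK}$ we have $F_{\bar{\KK}}=L_1\cup L_2$, where $L_1$ and $L_2$ are two $(-1)$-curves meeting transversally at the node $p$. Since the singular locus of $\phi$ is smooth of dimension zero over $\KK$ with separable residue fields and $p$ is its unique closed point (again by uniqueness and Galois invariance), $p$ is a $\KK$-rational point of $X$.

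The crux of the argument is to show that $L_1$ and $L_2$ are each defined over $\KK$ individually, rather than being Galois conjugate over some separable quadratic extension $\LL/\KK$. I would do this by contradiction, using the Brauer-theoretic criterion for relative minimality recalled earlier in the section: relative minimality of $\phi$ is equivalent to the class $[X-\phi^{-1}(\Omega)]\in\Br(C-\Omega)$ failing to extend to $\Br(U)$ for any open $U\subsetneq C$ strictly larger than $C-\Omega$. If $L_1$ and $L_2$ were conjugate over a quadratic $\LL/\KK$, then the generic-fiber Brauer class $[X_\eta]\in\Br(\KK(C))$ would have a single non-trivial $2$-torsion residue, supported on the single $\KK$-rational point $c$, with value the class of $\LL$ in $\KK^{*}/(\KK^{*})^{2}$. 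A reciprocity argument on the proper curve $C$, analogous to Faddeev reciprocity when $C=\PP^1$, forces the sum of residues (pushed forward to $\KK$) of any $2$-torsion Brauer class on $\KK(C)$ to vanish; since the unique residue is at a $\KK$-point, corestriction is the identity and the residue itself must vanish, contradicting its non-triviality. Hence both components are $\KK$-rational.

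Once $L_1$ and $L_2$ are defined over $\KK$, each is a $\KK$-rational $(-1)$-curve in a fiber of $\phi$. Contracting, say, $L_1$ over $\KK$ by Castelnuovo's criterion produces a smooth projective surface $Y$ together with a morphism $Y\to C$ through which $\phi$ factors non-trivially. The hardest step, I expect, is justifying the reciprocity argument in full generality: over $C=\PP^1$ it is classical, but when $C$ has positive genus one must use the purity sequence for $\Br(\KK(C))$ together with the vanishing of the relevant obstruction in $H^{1}(\KK,\operatorname{Pic}^{0}(C))[2]$ coming from the support being concentrated at a single $\KK$-rational point.
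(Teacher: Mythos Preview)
Your approach is in the same spirit as the paper's but takes a detour and leaves the key step unjustified. The paper argues by contradiction: assume $\phi$ is relatively minimal, reduce to the case of a perfect field via Lemma~\ref{lemma:perfection}, and then prove directly that the restriction map $\Br(C)\to\Br(C-c)$ is an isomorphism (via the Leray spectral sequence for $\cO^*$, Tsen's theorem, and the vanishing of $H^1((\Spec\KK)_{\et},\Z)$). This immediately contradicts the Brauer-theoretic criterion for relative minimality recalled at the start of the section, without ever needing to show that the two components $L_1,L_2$ are individually defined over~$\KK$.

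Your argument ultimately rests on the same fact---that a class in $\Br(C-c)$ with $c\in C(\KK)$ extends to $\Br(C)$---but you phrase it as a ``reciprocity'' statement and explicitly flag it as the hardest step, which you do not prove. That is precisely the content of the paper's cohomological computation, so this is a genuine gap rather than an alternative argument. Two further points: you should reduce to perfect $\KK$ first, as the paper does, to avoid subtleties; and your identification of the residue with a class in $\KK^*/(\KK^*)^2$ is only valid when $\Char\KK\neq 2$---over a perfect field of characteristic~$2$ you would need the Artin--Schreier description instead. Once the isomorphism $\Br(C)\cong\Br(C-c)$ is in hand, the paper's direct appeal to the relative-minimality criterion is cleaner than your route through $\KK$-rationality of the components and Castelnuovo contraction, though the latter would also work.
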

\begin{proof}
Assume that $\phi$ is relatively minimal.
Replacing $\KK$ by its perfection (that is, by the maximal subfield of
$\bar{\KK}$ which is purely inseparable over $\KK$)
and using  Lemma \ref{lemma:perfection} we may assume that $\KK$ is perfect.  By our assumptions the discriminant of $\phi$ consists of a  single point  $c$ whose residue field is $\KK$.
Set $U=C-c$.
We claim that the restriction morphism~\mbox{$\Br(C) \to  \Br(U)$}
is an isomorphism. This is well known, but we sketch the argument
for the reader's convenience.
	
By Tsen's theorem the complexes $R\Gamma(C_{\bar{\KK}, \et},  \cO^*_{C_{\bar{\KK}}})$ and  $R\Gamma(U_{\bar{\KK}, \et},  \cO^*_{U_{\bar{\KK}}})$ of sheaves for the \'etale topology  on $ \Spec \KK $ have non-trivial cohomology sheaves in degrees $0$ and $1$ only (see, for example,~\cite[Theorem 1.1]{GrBr3} and \cite[Proposition 2.1]{GrBr3}). It follows that the restriction morphism
$$
R\Gamma(C_{\bar{\KK}, \et},  \cO^*_{C_{\bar{\KK}}}) \to R\Gamma(U_{\bar{\KK}, \et},  \cO^*_{U_{\bar{\KK}}})
$$
together with
the projection on the top cohomology group
$$
R\Gamma(C_{\bar{\KK}, \et},  \cO^*_{C_{\bar{\KK}}}) \to \Pic C_{\bar{\KK}} [-1]
$$
composed with the degree map
$$
\Pic C_{\bar{\KK}} [-1] \to  \Z[-1]
$$
induce a quasi-ismorphism
$$
R\Gamma(C_{\bar{\KK}, \et},  \cO^*_{C_{\bar{\KK}}}) \cong  R\Gamma(U_{\bar{\KK}, \et},  \cO^*_{U_{\bar{\KK}}})\oplus   \Z[-1].
$$
Passing to the hypercohomology we conclude that, for every integer $i\ge 0$, one has
$$ H^i(C_{\et}, \cO^*_{C}) \cong  H^i(U_{\et}, \cO^*_{U})\oplus   H^{i-1}((\Spec \KK)_{\et}, \Z).$$
Since  $H^{1}((\Spec \KK)_{\et}, \Z) =0$  (see, for example,~\cite[formula (2.5 bis)]{GrBr3}),  the claim follows.

Hence, the class $[X - \phi^{-1}(c)] \in  \Br(U)$   extends to  a class in  $\Br(C) $. This contradicts the relative minimality of $\phi$.
\end{proof}

The following result is well-known at least if $\KK$ is perfect (see~\cite[\S4.7]{Iskovskikh96}
or~\mbox{\cite[Proposition~5.2]{Prokhorov-CB}}),
but we recall its proof for completeness.

 \begin{lemma}\label{lemma:CB-2-fibers}
Let $C$ be a conic over a field $\KK$, and let $\phi\colon X\to C$ be a
conic bundle  such that $\phi_{\bar{\KK}}$ has exactly two
reducible fibers.
Suppose that $\phi$ has no sections.
Then~$X$ is a del Pezzo surface with $K_X^2=6$.
\end{lemma}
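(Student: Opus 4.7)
The plan is to reduce everything to the geometry of $X_{\bar{\KK}}$ and then use the no-section hypothesis as a Galois-descent input. Over $\bar{\KK}$ one has $C_{\bar{\KK}}\cong\PP^1$, and contracting one $(-1)$-curve in each of the two reducible fibers of $\phi_{\bar{\KK}}$ yields a smooth conic bundle without reducible fibers, hence a Hirzebruch surface $\mathbb{F}_n$ for some $n\ge 0$. Thus $X_{\bar{\KK}}$ is the blow-up of $\mathbb{F}_n$ at two points $p_1,p_2$ lying on two distinct fibers, which immediately gives $K_X^2 = K_{\mathbb{F}_n}^2 - 2 = 6$.

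It remains to show that $-K_X$ is ample, and after base change to $\bar{\KK}$ this amounts to ruling out irreducible curves $D\subset X_{\bar{\KK}}$ with $D^2\le -2$. A routine intersection-number calculation on the blow-up of $\mathbb{F}_n$, using that the strict transform of a smooth curve $D'\subset\mathbb{F}_n$ has self-intersection $(D')^2 - m_1^2 - m_2^2$ with $m_i\in\{0,1\}$, shows that such a $D$ can occur only in one of three ``bad'' configurations: $n\ge 2$; or $n=1$ with at least one $p_i$ lying on the negative section $s_\infty$; or $n=0$ with $p_1$ and $p_2$ lying on a common horizontal ruling. In each bad case I would enumerate the candidate classes in $\Pic(\mathbb{F}_n)$ and check that there is exactly one such curve $D$, namely the strict transform of $s_\infty$ in the first two cases and of the degenerate horizontal ruling in the third, and that this $D$ is a section of $\phi_{\bar{\KK}}$.

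Since $D$ is the unique irreducible curve on $X_{\bar{\KK}}$ with $D^2\le -2$, it is automatically invariant under the natural action of $\Gal(\KK^{sep}/\KK)=\Aut(\bar{\KK}/\KK)$, so as a reduced closed subscheme it descends to a closed subscheme $S\subset X$; since $D\to C_{\bar{\KK}}$ is an isomorphism, so is $S\to C$, producing a section of $\phi$ and contradicting the hypothesis. Consequently $X_{\bar{\KK}}$ admits no $(-k)$-curve with $k\ge 2$, so $X$ is a smooth del Pezzo surface of degree $6$. The main obstacle is the case-by-case enumeration in the middle step: one has to verify that no curve coming from a higher-bidegree or singular class on $\mathbb{F}_n$ contributes an additional curve of self-intersection $\le -2$ that could be permuted non-trivially by the Galois action, so that the uniqueness assertion truly does hold and descent applies.
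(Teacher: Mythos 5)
Your proposal is correct in substance and follows essentially the same route as the paper: contract one component of each reducible fiber to reach $\mathbb{F}_n$, analyze the position of the two blown-up points relative to the negative (or zero) sections, and observe that in each bad configuration the unique negative curve is a Galois-invariant section, contradicting the hypothesis; $K_X^2=6$ then follows by a direct count (the paper invokes Noether's formula).

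The one place where the paper is more careful than you are is the base field used for descent. You work over $\bar{\KK}$ and descend the distinguished curve using invariance under $\Aut(\bar{\KK}/\KK)$; for an imperfect $\KK$ this only gives descent to the perfection, and a reduced invariant subscheme of $X_{\bar{\KK}}$ need not come from a closed subscheme of $X$ mapping isomorphically to $C$. The paper avoids this by running the whole argument over $\KK^{sep}$: the singular locus of $\phi$ is \'etale over $\KK$ by the definition of a conic bundle, so the reducible fibers, their components, the contraction to $\mathbb{F}_n$ (whose smooth model has a section over $\KK^{sep}$ by Lemma~\ref{lemma:tameness}), and hence the distinguished negative section are all defined over $\KK^{sep}$, after which ordinary Galois descent applies. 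Your argument is repaired by exactly this observation, so I would call it a point to tighten rather than a genuine gap.
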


\begin{proof}
By Lemma~\ref{lemma:perfection} the conic bundle~\mbox{$\phi_{\KK^{sep}}\colon X_{\KK^{sep}} \to C_{\KK^{sep}}$} has two reducible fibers over some $\KK^{sep}$-points of
$C_{\KK^{sep}}$, and it is smooth over the complement to these points. Contracting one irreducible component of each reducible fiber on $X_{\KK^{sep}}$ we obtain
a smooth conic bundle $\phi'\colon X'\to\PP^1$ over $\KK^{sep}$.
By Lemma \ref{lemma:tameness} the conic bundle $\phi'$ has a section.
Thus, we have
$$
X'\cong \mathbb{F}_n=\PP_{\PP^1}\big(\mathcal{O}\oplus\mathcal{O}(n)\big)
$$
for some $n\ge 0$,
and $X_{\KK^{sep}}$ is obtained from $X'$ by blowing up two $\KK^{sep}$-points $P_1$ and $P_2$ not contained in one fiber of~$\phi'$.

Suppose that $n\ge 1$. Denote by $D$ the unique section of $\phi'$ with negative self-intersection.
Let $\bar{D}$ be
its proper transform on $X_{\KK^{sep}}$, so that $\bar{D}$ is a section of~$\phi_{\KK^{sep}}$.
Note that any section of $\phi'$ except $D$ has self-intersection at least~$n$.
If $n\ge 2$, or if~\mbox{$n=1$} and at least one of the points $P_1$ and $P_2$ is contained in $D$,
then $\bar{D}$ is the only section of~$\phi_{\KK^{sep}}$ with negative self-intersection;
hence it is Galois-invariant, which gives a contradiction.
Thus, we see that  $n=1$ and  none of the points $P_1$ and $P_2$ is contained in~$D$,
which implies that $X_{\KK^{sep}}$, and thus also $X$, is a del Pezzo surface.

Now suppose that $n=0$. If $P_1$ and $P_2$ are contained in one section $D$ of $\phi_{\KK^{sep}}$ with
self-intersection $0$, then the proper transform of $D$ on $X_{\KK^{sep}}$ is the only section of $\phi_{\KK^{sep}}$ with negative self-intersection, which gives a contradiction.
Thus we conclude that $P_1$ and $P_2$ are contained in different sections of $\phi_{\KK^{sep}}$ with
self-intersection $0$, which implies that $X_{\KK^{sep}}$,
and thus also $X$, is a del Pezzo surface (cf. \cite[Proposition~7.1(2)]{Liedtke}).

The equality $K_X^2=6$ follows from Noether's formula.
\end{proof}

\begin{lemma}\label{lemma:CB-2-fibersbis}
Let  $\phi\colon X\to \PP^1$  be a conic bundle over a field $\KK$ and $\Omega$ its discriminant locus. Assume that $|\Omega(\bar{\KK})| = |\Omega(\KK)| =2$, that is, $\phi$ has exactly two reducible fibers, and their images are
points with residue field~$\KK$.
Then $X$ is birational to the product $\PP^1 \times C'$, for some conic $C'$ over $\KK$.
\end{lemma}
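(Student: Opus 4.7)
The plan is to reduce the claim to the rationality of the surface $X$ over $\KK$ (taking $C'=\PP^1$) via a contraction to a del Pezzo surface of degree $8$ of product type. First I would dispose of the easy case: if $\phi$ admits a $\KK$-rational section, then $X_\eta$ has a $\KK(t)$-point, hence $X_\eta\cong\PP^1_{\KK(t)}$ and $X$ is $\KK$-birational to $\PP^1\times\PP^1=\PP^1\times C'$ with $C'=\PP^1$.

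Assume henceforth that $\phi$ has no $\KK$-section. By Lemma~\ref{lemma:CB-2-fibers}, $X$ is a smooth del Pezzo surface of degree~$6$, so $X_{\bar\KK}$ is the blowup of $\PP^2$ at three points in general position. Among the six $(-1)$-curves of $X_{\bar\KK}$ — which form the standard hexagon — exactly two are sections of $\phi_{\bar\KK}$: in the usual labelling these are the exceptional divisor $E_3$ and the strict transform $L_{12}$ of the line through the other two blown-up points, and they are disjoint. The assumption that $\phi$ has no $\KK$-section forbids either curve being individually Galois-invariant, so a Galois involution defining a quadratic extension $\LL/\KK$ must interchange them; their union therefore descends to a $\KK$-subscheme $Z\subset X$.

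Since $Z_{\bar\KK}$ is a disjoint union of two $(-1)$-curves, the Galois-equivariant contraction $X\to Y$ exists over $\KK$, and $Y$ is a smooth $\KK$-surface with $Y_{\bar\KK}\cong\PP^1\times\PP^1$ — i.e., a del Pezzo surface of degree~$8$ of product type. The two rulings on $Y_{\bar\KK}$ pull back to the other two conic bundle structures $\phi_1,\phi_2$ on $X_{\bar\KK}$ (given by the divisor classes $L-E_1$ and $L-E_2$, which are precisely the $(-1)$-curve-independent classes orthogonal to both $E_3$ and $L_{12}$), and a direct verification on the hexagon shows that the Galois involution swapping $E_3\leftrightarrow L_{12}$ also swaps $L-E_1\leftrightarrow L-E_2$. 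Hence Galois acts non-trivially on $\Pic(Y_{\bar\KK})\cong\Z^2$ by interchanging the rulings, and Lemma~\ref{lemma:DP8ptnew}(i) yields $Y\cong R_{\LL/\KK}Q$ for some conic $Q$ over $\LL$; the splittness of $Y_\LL\cong\PP^1_\LL\times\PP^1_\LL$ forces $Q\cong\PP^1_\LL$, so $Y\cong R_{\LL/\KK}\PP^1_\LL$.

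To conclude, by the Weil-restriction adjunction $Y(\KK)=\PP^1(\LL)\neq\varnothing$, so Corollary~\ref{corollary:DP8rational} gives that $Y$ is $\KK$-rational. Therefore $X$ is $\KK$-birational to $Y$, hence to $\PP^1\times\PP^1=\PP^1\times C'$ with $C'=\PP^1$. The only non-formal input is the identification of the two $(-1)$-curve sections of $\phi_{\bar\KK}$ as a Galois-conjugate pair, which uses only the absence of a $\KK$-section together with the hexagonal configuration of $(-1)$-curves on a del Pezzo sextic; everything else is an assembly of the preceding lemmas.
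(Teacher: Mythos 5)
Your route---contracting the two disjoint $(-1)$-sections to a degree $8$ del Pezzo surface of product type and then applying Lemma~\ref{lemma:DP8ptnew}---is genuinely different from the paper's, which stays on the sextic del Pezzo surface, produces a split subtorus $\Gm$ inside the maximal torus of $\underline{\Aut}(X)$ via the character attached to a rational function with divisor $[c_1]-[c_2]$, and concludes by Lemma~\ref{lemma:quotient-by-Gm}. Your route can be made to work, but as written it has a genuine gap at its central combinatorial step.

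The gap is the claim that any hexagon symmetry interchanging the two sections $E_3$ and $L_{12}$ must interchange the two rulings $L-E_1$ and $L-E_2$. This is false: the central symmetry of the hexagon ($E_1\leftrightarrow L_{23}$, $E_2\leftrightarrow L_{13}$, $E_3\leftrightarrow L_{12}$) swaps the two sections but fixes each of the classes $L-E_1=E_3+L_{13}$ and $L-E_2=E_3+L_{23}$ individually. If the Galois image were generated by that element, Lemma~\ref{lemma:DP8ptnew}(i) would give $Y\cong C_1\times C_2$, and for $C_1\not\cong C_2$ both pointless such a surface is genuinely not birational to $\PP^1\times C'$ (cf.\ the proof of Corollary~\ref{corollary:SB-Bir}), so your argument cannot simply ignore this branch. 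What excludes it is precisely the hypothesis $|\Omega(\KK)|=2$, which you never invoke: since each degenerate fiber lies over a $\KK$-point, each pair of components $\{E_1,L_{13}\}$ and $\{E_2,L_{23}\}$ is separately Galois-stable, whereas the central symmetry interchanges the two degenerate fibers. The only admissible nontrivial involution is then $E_1\leftrightarrow L_{13}$, $E_2\leftrightarrow L_{23}$, $E_3\leftrightarrow L_{12}$, and that one does swap the rulings. (If the discriminant is instead a single degree-$2$ closed point, the central-symmetry configuration really occurs, and the base of the pencil $|L-E_3|$ is then a nontrivial conic rather than $\PP^1$.) A second, smaller flaw: deducing $Q\cong\PP^1_\LL$ from ``the splitness of $Y_\LL$'' is circular, since $Y_\LL\cong Q\times Q'$ is split exactly when $Q(\LL)\neq\varnothing$; the correct justification is that the contracted locus $Z$ yields a closed point of $Y$ with residue field $\LL$, i.e.\ an $\LL$-point of $Q\times Q'$, whence $Q(\LL)\neq\varnothing$ and $Y(\KK)=Q(\LL)\neq\varnothing$.
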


\begin{proof}
We may  assume that $\phi$ has no sections. Then $X$ is
a del Pezzo surface of degree~$6$ by Lemma  \ref{lemma:CB-2-fibers}.
Recall that the
connected component of identity in the group scheme~\mbox{$\underline{\Aut}(X)$} of automorphisms of any
degree $6$  del Pezzo surface is a rank~$2$ torus $T$ (cf. the proof of Lemma~\ref{lemma:DP6}). The morphism $\phi$ determines a character
\begin{equation}\label{eq:character}
\chi\colon T\to \G_m
\end{equation}
as follows. Let $\{c_1, c_2\}\subset \PP^1$ be the discriminant locus
of $\phi$. Pick a rational function on $\PP^1$ whose divisor is $[c_1]-[c_2]$,
and let $f$ be its pullback to $X$. We claim
that the divisor~$(f)$ of the function $f$
is fixed under the action of $T$. Indeed, the pull back of $(f)$
to $X_{\bar{\KK}}$ is supported on the union $Z_{\bar{\KK}}$  of all six
$(-1)$-curves on
$X_{\bar{\KK}}$.  The subscheme $Z_{\bar{\KK}}$ is fixed under the action the group scheme  $\underline{\Aut}(X_{\bar{\KK}})$. Thus, its  connected subgroup $T_{\bar{\KK}}$ fixes every irreducible component of $Z_{\bar{\KK}}$ and, in particular,  the pull back of $(f)$  to $X_{\bar{\KK}}$.
It follows that, for every smooth scheme $S$ over $\KK$ and an $S$-point $v$ of $T$, we have that
$$
\frac{v^*f}{f}\in \Gamma(X\times S, \mathcal{O}^*_{X\times S})= \Gamma( S, \mathcal{O}^*_{S}).
$$

We define the character~\eqref{eq:character} by the formula
$$
v^*f= \chi(v) f, \quad v\in T.
$$
We claim that  $\chi$ is not identically equal to $1$: otherwise, the action of $T$ on $X$ would preserve every fiber of $\phi$,  which is impossible.
Since the torus $T$ has a non-trivial character, it also has a subtorus isomorphic to $\G_m$  (see~\mbox{\cite[\S8.12]{Borel}}).
Thus, we get a faithful action of  $\G_m$ on $X$. This completes the proof  by  Lemma~\ref{lemma:quotient-by-Gm}.
\end{proof}

Given a conic bundle  $\phi\colon X\to C$,
denote by  $\Bir(X, \phi)$ the group  consisting
of pairs~\mbox{$(\sigma_X, \sigma_C)$}, where $\sigma_X \in \Bir(X)$ and $\sigma_C\in \Bir(C)=\Aut(C)$
such that  $\phi \circ \sigma_X = \sigma_C \circ \phi$
as rational maps from $X$ to $C$. Note that   $\Bir(X, \phi)$
is a subgroup of $\Bir(X)$. One can consider
$\Bir(X, \phi)$ as the group of all birational automorphisms of $X$
that respect the fibration~$\phi$.

We have a left exact
sequence of groups
\begin{equation}\label{eq:exact-sequence}
1\to \Aut(X_{\eta}/\KK(\eta)) \to  \Bir(X, \phi)  \to \Aut(C),
\end{equation}
 where $ \Aut(X_{\eta}/\KK(\eta))$ is the group of automorphisms of the conic $X_{\eta}$  over  $\KK(\eta)$ which is identified with
the  subgroup  of $ \Bir(X, \phi)$ that consists of
pairs $(\sigma_X, \Id_C)$. Note that
if~\mbox{$(\sigma_X, \sigma_C) \in  \Bir(X, \phi)$}, then $\sigma_C$
 preserves the class $[X_\eta]\in \Br(\KK(\eta))$ and, in particular, it preserves the discriminant locus of this class.
For example, if  $\phi\colon X\to C$ is a relatively
minimal conic bundle with the discriminant
locus $\Omega$,
then  $\Omega$ coincides with the discriminant locus of
$[X_\eta]$, so that
$\sigma_C$ belongs to the automorphism group $\Aut(C-\Omega)$ of the affine
curve $C-\Omega$. (The latter group
can be also identified with the stabilizer of $\Omega$ in the group $\Aut(C)$.)
Thus, \eqref{eq:exact-sequence}
actually gives
a left exact sequence
\begin{equation}\label{eq:exact-sequenceminimal}
1\to  \Aut(X_{\eta}/\KK(\eta))   \to  \Bir(X, \phi)  \to \Aut(C-\Omega).
\end{equation}

Now we can prove the main result of this section.

\begin{proposition}\label{proposition:conicsummary}
For every non-negative integer $m$ there exists a number $d(m)$ with the following property.
Let $\KK$ be any field containing all roots of $1$, let
$C$ a conic over $\KK$,  and let $\phi\colon X\to C$ be a conic bundle that has not more than $m$ reducible
fibers over $\bar{\KK}$.
Assume that $X$   is not birational to~\mbox{$\PP^1\times C'$},
where $C'$ is a conic. The following assertions hold.
  \begin{itemize}
\item[(i)] If $\Char \KK \ne 2$,
then the order of every finite subgroup of  $\Bir(X, \phi)$
is at most~$d(m)$.

\item[(ii)]   If $\Char \KK = 2$, then
for every finite  subgroup  $G\subset \Bir(X, \phi)$, one has
$|G|' \leqslant d(m)$, where $|G|' $ is the largest odd factor of $|G|$.
\end{itemize}

\end{proposition}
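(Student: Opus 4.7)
The plan is to reduce to the case where $\phi$ is relatively minimal and then to apply the exact sequence~\eqref{eq:exact-sequenceminimal}. First, I would pass from $\phi$ to a relatively minimal conic bundle $\phi'\colon X'\to C$ obtained by iteratively contracting Galois-invariant sets of $(-1)$-curves inside fibers; this step preserves $\Bir(X,\phi)=\Bir(X',\phi')$, can only decrease the number of reducible geometric fibers, and does not destroy the hypothesis that $X$ is not birational to any $\PP^1\times C'$. I may therefore assume $\phi$ is relatively minimal, so that for any finite subgroup~\mbox{$G\subset\Bir(X,\phi)$} the left exact sequence~\eqref{eq:exact-sequenceminimal} splits the task of bounding $|G|$ (respectively $|G|'$) into bounding $|G\cap\Aut(X_\eta/\KK(\eta))|$ and the order of the image of $G$ in $\Aut(C-\Omega)$, where $\Omega$ is the discriminant of $\phi$.

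For the kernel, the key observation is that if $X_\eta(\KK(\eta))\neq\varnothing$, then $X_\eta\cong\PP^1_{\KK(\eta)}$ and hence $X$ is birational to $\PP^1\times C$, contradicting the hypothesis with $C'=C$. Thus $X_\eta$ is an anisotropic conic over the field $\KK(\eta)$, which again contains all roots of~$1$. Corollary~\ref{corollary:Zarhin-improved} then bounds the finite subgroups of $\Aut(X_\eta/\KK(\eta))$: they have order at most~$4$ in case (i) and trivial odd part in case (ii).

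For the image in $\Aut(C-\Omega)$, I would proceed by case analysis on $k=|\Omega(\bar\KK)|\le m$. When $k\ge 3$, sharp $3$-transitivity of $\PGL_2(\bar\KK)$ on $C_{\bar\KK}\cong\PP^1_{\bar\KK}$ makes the natural map $\Aut(C-\Omega)\to\mathrm{Sym}(\Omega(\bar\KK))$ injective, so $|\Aut(C-\Omega)|\le k!\le m!$. The small values of $k$ must either contradict one of our hypotheses or reduce to a group with bounded finite subgroups: $k=1$ is ruled out by Lemma~\ref{lemma:CB-1-fiber} combined with relative minimality; $k=2$ with two $\KK$-rational points on $C\cong\PP^1$ is ruled out by Lemma~\ref{lemma:CB-2-fibersbis}; a closed point of degree $2$ on $\PP^1$ falls under Corollary~\ref{corollary:P1-without-2-pts}, giving $|\Aut|\le 4$; and when $C$ is anisotropic, $\Aut(C-\Omega)\subset\Aut(C)$ is bounded in case (i) and has trivial odd part in case (ii) by Corollary~\ref{corollary:Zarhin-improved}. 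The remaining case $k=0$ with $C\cong\PP^1$ is eliminated by a Brauer-theoretic argument: Lemma~\ref{lemma:tameness} together with the isomorphism $\Br^{\mathrm{t}}(\PP^1_\KK)\cong\Br(\KK)$ (from a standard Leray spectral sequence computation, cf.\ the remark following Lemma~\ref{lemma:tameness}) shows that $[X_\eta]$ descends to a class in $\Br(\KK)_2$, i.e.\ to a conic $C'$ over $\KK$ with $X_\eta\cong C'_{\KK(\eta)}$, and hence $X$ is birational to $\PP^1\times C'$, contradicting the hypothesis.

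Multiplying the kernel and image bounds produces a constant $d(m)$ depending only on~$m$, yielding both assertions. The main obstacle will be the low-$k$ case analysis, particularly the elimination of the case where $\phi$ is smooth over $C\cong\PP^1$ via the Brauer-theoretic argument; this is the only step genuinely sensitive to characteristic~$2$ and to non-perfectness of~$\KK$, and it relies on the tame Brauer group computation for~$\PP^1_\KK$.
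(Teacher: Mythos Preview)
Your proposal is correct and follows essentially the same approach as the paper's proof: reduce to a relatively minimal conic bundle, use the exact sequence~\eqref{eq:exact-sequenceminimal} to split the bound into kernel and image, bound the kernel via Corollary~\ref{corollary:Zarhin-improved}, and handle the image by the same case analysis on $|\Omega(\bar\KK)|$ (with Lemmas~\ref{lemma:CB-1-fiber} and~\ref{lemma:CB-2-fibersbis} and Corollary~\ref{corollary:P1-without-2-pts} disposing of the small cases). The only cosmetic differences are that the paper treats the case of a non-rational base conic $C$ separately at the outset via~\eqref{eq:exact-sequence}, and for $k=0$ simply invokes $\Br(\KK)\cong\Br(\PP^1)$ directly rather than routing through $\Br^{\mathrm t}$ and Lemma~\ref{lemma:tameness}.
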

\begin{proof}
Since  $X$  is not birational to  the product of $\PP^1$ and a conic,
we see that $\phi$ does not have sections.
If $C$ is not rational then,  using the exact sequence~\eqref{eq:exact-sequence}
and  Corollary~\ref{corollary:Zarhin-improved},  we have that, for every finite subgroup  $G \subset \Bir(X, \phi)$,
the number $|G|'$ is equal to~$1$, and if~\mbox{$\Char \KK \ne 2$}, then  $|G| \le 16$.

Now assume that $C\cong \PP^1$.
We may suppose that
$\phi$ is relatively minimal. Let $\Omega$ be
its discriminant locus.  Set $m'=|\Omega(\bar{\KK})|$.

Assume that $m'=0$. Then, since $\Br(\KK) \iso \Br(\PP^1)$, we have that $X_\eta \cong X_c \otimes _\KK \KK(\eta)$, where $c\in \PP^1(\KK)$ is any point.
Hence, $X$ is birational to  the product of $\PP^1$ and a conic $X_c$ which we assumed to be not the case.
Thus, by Lemma~\ref{lemma:CB-1-fiber},
we have that $m'>1$.

Assume that $m'=2$.
Then, by Lemma  \ref{lemma:CB-2-fibersbis}, we have~\mbox{$|\Omega(\KK)|=1$},
that is, $\Omega$ consists of a single point whose residue field is a separable quadratic extension $\LL\supset \KK$. By Corollary~\ref{corollary:P1-without-2-pts}
we have that every finite subgroup of $\Aut(\PP^1-\Omega)$
is a $2$-group of order at most $4$. Hence,  using the sequence~\eqref{eq:exact-sequenceminimal}
and  Corollary \ref{corollary:Zarhin-improved}, we have that, for every finite subgroup~\mbox{$G \subset \Bir(X, \phi)$}, the number
 $|G|'$ is equal to $1$, and if $\Char \KK \ne 2$ then  $|G| \le 16$.

Finally, if $m'>2$ then
$$
|\Aut(\PP^1-\Omega)|\le m'!
$$
Thus, we have $|G|'\le  m'!$, and  if $\Char \KK \ne 2$,
then
$$
|G| \le 4m'!
$$
Summarizing, we find that $d(m)= 16 m!$ works in all cases.
\end{proof}

We conclude this section by recalling that relatively minimal conic bundles
with many geometrically reducible fibers are known to have
few birational models.

\begin{theorem}[{see \cite[Theorem~1.6(iii)]{Iskovskikh96}}]
\label{theorem:conic-bundles-negative-degree}
Let $X$ and $X'$ be smooth projective geometrically rational
surfaces over a field~$\KK$. Let $X\to B$ and $X'\to B'$ be relatively minimal conic bundles.
Suppose that $X$ and $X'$ are birational, and~\mbox{$K_X^2\le 0$}.
Then~\mbox{$K_{X'}^2=K_X^2$}.
\end{theorem}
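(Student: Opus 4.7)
The plan is to apply the two-dimensional Sarkisov program developed by Iskovskikh to any birational map $\phi\colon X\dashrightarrow X'$ between the given relatively minimal conic bundles. First I would fix a very ample linear system $|H'|$ on $X'$ and consider its proper transform $\mathcal{H}=\phi^{-1}_*|H'|$ as a mobile linear system on $X$. Writing the numerical class of $\mathcal{H}$ in the form $\mathcal{H}\sim -nK_X + \mu\,\phi^*(\mathrm{pt})$ one defines a Sarkisov degree whose positivity is measured against the log pair $(X,\tfrac{1}{n}\mathcal{H})$. The Noether--Fano--Iskovskikh inequality then asserts that either $\phi$ is already fibered (it descends to an identification of the two conic bundle structures, possibly after a birational change of base curve), or else $(X,\tfrac{1}{n}\mathcal{H})$ fails to be canonical along the Galois orbit of some geometric point; such an orbit serves as the center of an elementary link in the Sarkisov factorization of $\phi$.

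Next I would classify the elementary links issuing from a relatively minimal conic bundle $Y\to C$ under the standing hypothesis $K_Y^2\le 0$. The numerical threshold for the existence of a Sarkisov link to a del Pezzo surface, or to a conic bundle of different numerical type, is precisely that $K_Y^2$ be positive; when $K_Y^2\le 0$ only two kinds of links survive. These are (a) fiberwise birational maps, for which the equality $K^2=K^2$ is trivial since the set of geometrically singular fibers is preserved, and (b) elementary transformations of Iskovskikh type, centered at a Galois orbit of smooth points of $Y$, which consist of a blow up of the chosen orbit followed by the contraction of the strict transforms of the fibers through it. A direct computation via Noether's formula shows that an elementary transformation again preserves the count of geometrically singular fibers, hence $K^2$. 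Since Sarkisov's algorithm terminates after finitely many such links, iterating yields $K_{X'}^2=K_X^2$.

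The main obstacle is the Noether--Fano--Iskovskikh inequality itself together with the accompanying untwisting step that strictly decreases the Sarkisov degree after each elementary transformation. Its proof requires careful log-pair analysis of discrepancies, not only with respect to $K_Y$ but also relative to the fibration $Y\to C$, and the conclusion that under $K_Y^2\le 0$ no link of del Pezzo type can occur rests on sharp control of these thresholds. Establishing this rigorously is the technical heart of \cite{Iskovskikh96}, and sidestepping this delicate log-discrepancy analysis does not appear to be possible: both the classification of elementary links and the assertion that the Sarkisov complexity drops after untwisting rely on it in an essential way.
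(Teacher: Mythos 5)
The paper does not reprove this theorem: it imports it wholesale from \cite{Iskovskikh96}, and the only argument it supplies is the one in Remark~\ref{remark:isk96}, namely the reduction of the arbitrary-field case to the perfect-field case. Your proposal instead sketches the internals of Iskovskikh's own proof (Noether--Fano--Iskovskikh inequality, classification of elementary links from a conic bundle with $K^2\le 0$, invariance of the number of geometrically singular fibers under fiberwise maps and elementary transformations). As an outline of that proof it is broadly accurate, but you concede that the entire technical content --- the NFI inequality, the untwisting/termination argument, and the link classification --- is deferred to the reference, so at the level of what is actually being verified your proposal and the paper are doing the same thing: citing Iskovskikh.

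The genuine gap is the base field. The statement is asserted over an arbitrary field $\KK$, whereas Iskovskikh's Theorem~1.6(iii) is proved under the assumption that $\KK$ is perfect, and your sketch tacitly inherits that assumption: you take centers of links to be ``Galois orbits of geometric points,'' which over a non-perfect field does not account for closed points with inseparable residue fields, and the Sarkisov machinery is not set up there. The fix the paper uses (and which you should add) is to pass to the perfection $\KK'\subset\bar{\KK}$ of $\KK$: smoothness, geometric rationality, birationality of $X$ and $X'$, and the values of $K_X^2$, $K_{X'}^2$ are all unchanged under this purely inseparable base change, and --- this is the one nontrivial point, Lemma~\ref{lemma:perfection} --- the conic bundles $X_{\KK'}\to B_{\KK'}$ and $X'_{\KK'}\to B'_{\KK'}$ remain \emph{relatively minimal}, because relative minimality is equivalent to irreducibility of all fibers and irreducibility is preserved under purely inseparable extensions. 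One then applies Iskovskikh's theorem over $\KK'$. Without this step your argument only establishes the theorem for perfect $\KK$, which is strictly weaker than what the paper needs (it invokes the theorem for function fields such as $\Bbbk(Y)$, which are not perfect in positive characteristic).
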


\begin{remark}\label{remark:isk96}
Theorem~\ref{theorem:conic-bundles-negative-degree}
was proved in \cite{Iskovskikh96} under the assumption
that $\KK$ is perfect.
However, the general case follows using Lemma~\ref{lemma:perfection}.
\end{remark}

\section{Proof of the main theorem}
\label{section:proof}

In this section we prove Theorems~\ref{theorem:main} and~\ref{theorem:main-char}
and Corollary~\ref{corollary:SB-Bir},
and provide a counterexample to Theorem~\ref{theorem:main} over a perfect field of characteristic~$2$.
We start with Theorem~\ref{theorem:main}.

\begin{proof}[Proof of Theorem~\ref{theorem:main}]
The proof is similar to that of Theorem~\ref{theorem:rational-surface-vs-BFS},
see \cite[\S2]{ProkhorovShramov-3folds}.
If~$X$ is birational to $\PP^1\times C$, then the group $\Bir(X)$ contains $\KK^*$ as a subgroup.
Since $\KK$ contains all roots of $1$ by assumption, we
see that $\Bir(X)$ contains cyclic subgroups of arbitrary finite order coprime to
$\Char\KK$, and in particular $\Bir(X)$ has unbounded finite subgroups (and also
fails to have $\Char\KK$-bounded subgroups).
Thus we will assume that $X$ is not birational to such a surface.

Let $G$ be a finite group acting by birational automorphisms of~$X$.
Replace $X$ with
a surface $\tilde{X}$ birational to $X$ such that $G\subset\Aut(\tilde{X})$, and then
further replace $\tilde{X}$ with its $G$-minimal model $Y$.
Therefore, $Y$ is either a $G$-del Pezzo surface, or a $G$-conic bundle
over some conic~$C$ (this means that $C$ can be equipped with an action of $G$, such that the morphism $\phi$ is $G$-equivariant), see~\cite[Theorem~1G]{Iskovskikh80}.
In the former case the order of $G$ is bounded by
universal constants from Proposition~\ref{proposition:dP}(i) and  Remark~\ref{remark:dP-low-degree}.
Thus, we may
assume that $Y$ has the structure $\phi\colon Y\to C$ of a
$G$-conic bundle over a conic~$C$.
In particular, it follows that $G\subset  \Bir(Y, \phi)$.
We can factor $\phi$ as~\mbox{$Y\to Y' \to C$}, where~\mbox{$\phi' \colon  Y' \to C$} is a  relatively minimal conic bundle.
The action of $G$ on $Y$ need not descend to an action on~$Y'$. However, we have that
$$
G\subset \Bir(Y', \phi' ) =  \Bir(Y, \phi).
$$
On the other hand, by
Theorem \ref{theorem:conic-bundles-negative-degree} the number of reducible fibers of $\phi'$ over $\bar{\KK}$ is bounded by some constant which depends on $X$ only. Thus, we are done by
Proposition~\ref{proposition:conicsummary}(i).
\end{proof}

The proof of Theorem~\ref{theorem:main-char} is identical to the proof of  Theorem~\ref{theorem:main}
except for the following. Let $|G|'$ be the largest factor of $|G|$ coprime to $\Char\KK$.
If $Y$ is a del Pezzo surface, to show that $|G|'$ is bounded we apply Remark~\ref{remark:dP-low-degree}
together with Proposition~\ref{proposition:dP}(ii) instead of Proposition~\ref{proposition:dP}(i).
If $Y$ is a $G$-conic bundle, we bound $|G|$ in the case~\mbox{$\Char\KK=3$} using Proposition~\ref{proposition:conicsummary}(i)
as before, while in the case $\Char\KK=2$  we bound $|G|'$
using Proposition~\ref{proposition:conicsummary}(ii).

Now we prove Corollary~\ref{corollary:SB-Bir}.

\begin{proof}[Proof of Corollary~\ref{corollary:SB-Bir}]
By Theorems~\ref{theorem:main} and~\ref{theorem:main-char},
we have to prove that $X$ is not  birational to the surface~\mbox{$\PP^1\times C$},  where $C$ is a conic.   By the theorem of Lang and Nishimura (see for instance~\mbox{\cite[Proposition~IV.6.2]{Kollar-1996-RC}}),  since  $X$ has no $\KK$-points,
every smooth proper variety birational to $X$ has no $\KK$-points either. Thus, we may assume
that  $C(\KK)= \varnothing $. We complete the proof by computing a certain birational invariant for $X$ and for $\PP^1\times C$.

For every proper geometrically integral scheme $Y$ over a field $\KK$, one has an exact sequence
\begin{equation}\label{equation:brscheme}
 0 \to \Pic(Y) \to (\Pic(Y_{\KK^{sep}}))^{\Gal(\KK^{sep}/\KK)} \to  \Br(\KK) \rar{\alpha_Y} \Br(Y).
 \end{equation}
Composing $\alpha_Y$ with the injection  $\Br(Y)\hookrightarrow \Br(\KK(Y))$, where $\KK(Y)$ is the field of rational functions on $Y$,  we obtain a homomorphism  $\Br(\KK) \to \Br(\KK(Y))$ induced by the embedding of the fields
$\KK \hookrightarrow \KK(Y)$. This shows that $\ker \alpha_Y$ is a birational invariant of $Y$.  We use exact sequence~\eqref{equation:brscheme} to compute this invariant.

If $Y= \PP^1\times C$ for some conic $C$ with $C(\KK)= \varnothing $,  then $\ker \alpha_Y$ is generated by $[C]\in  \Br(\KK)$ and, thus, has order $2$.
If $Y$ is a Severi--Brauer surface, then  $\ker \alpha_Y$  has order $3$. If $Y$ is a product of two non-isomorphic non-rational conics, then  $\ker \alpha_Y$  has order $4$. If
$Y$ is a quadric surface with  $Y(\KK)= \varnothing $ and $\Pic(Y)\cong \Z$, then   $\ker \alpha_Y$ is trivial. Indeed, in this case the group
$(\Pic(Y_{\KK^{sep}}))^{\Gal(\KK^{sep}/\KK)}$ is generated by the class of a hyperplane section in~$\PP^3$,
and thus the canonical map $\Pic(Y)\to(\Pic(Y_{\KK^{sep}}))^{\Gal(\KK^{sep}/\KK)}$
is an isomorphism. We conclude that surfaces of the latter three types cannot be birational to a product
of~$\PP^1$ and a conic.
\end{proof}

\begin{remark}
To prove Corollary~\ref{corollary:SB-Bir} in the case of Severi--Brauer surfaces
one can give a geometric argument.
Suppose
that the group of birational automorphisms of $X$ has unbounded finite subgroups.
Then by Theorem~\ref{theorem:main} the surface $X$ is birational to a surface $Y\cong\PP^1\times C$ for
some conic $C$. Since $\PP^1$ has a $\KK$-point, and $C$ has a point defined over
a quadratic extension of $\KK$, we conclude that $Y$ has a point defined over
a quadratic extension of $\KK$ as well. Therefore, the same holds for $X$ by the theorem of Lang and Nishimura.
However, a Severi--Brauer surface with a point defined over a quadratic
extension is isomorphic to $\PP^2$,
so we obtain a contradiction.
\end{remark}

Note that there is a description of birational automorphism groups of Severi--Brauer surfaces via their generators and relations, see~\cite[\S3]{IskovskikhTregub}. We do not know whether it can be used to prove Corollary~\ref{corollary:SB-Bir}.

Below, we will give an example showing that  Theorem~\ref{theorem:main}
fails over perfect fields of characteristic~$2$. We start with the following general construction.

\begin{lemma}\label{lemma:example-Azumaya}
Let $\KK$ be a perfect field of characteristic $p>0$, let $\Delta \subset \PP^1(\KK)$ be a finite subset consisting of more than one point, and let $Y=\PP^1  -  \Delta$. Assume that $\KK$ admits a Galois extension of degree $p$.
Assume also that either $p\ne 2$, or the cardinality of $\Delta$ is even. Then the following assertions hold.

\begin{itemize}
\item[(i)] There exists an element  $\alpha \in \Br (Y)$ of order $p$  in the Brauer group of $Y$ which is not contained in the image of the restriction map $\Br(\PP^1  -  \Delta') \to \Br(Y)$, for any proper subset $\Delta'\subset \Delta$.

 \item[(ii)] For any $\alpha$ as above, there exists an Azumaya algebra $A$ over $Y$ of rank $p^2$ whose class in the Brauer group $\Br(Y)$ is equal to $\alpha$.
\end{itemize}
\end{lemma}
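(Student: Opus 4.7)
For part (i), the plan is to exhibit $\alpha$ explicitly as an Artin--Schreier cyclic algebra. First choose $a\in\KK$ whose class in $\KK/(F-1)\KK$ is nonzero; such $a$ exists because a Galois extension of degree $p$ of a characteristic $p$ field must be of Artin--Schreier type $\KK[z]/(z^p - z - a)$. Next select integers $\{n_x\}_{x\in\Delta}$ each nonzero modulo $p$ such that $\sum_{x\in\Delta} n_x \equiv 0\pmod p$, where $n_x$ is to play the role of the valuation at $x$ of a rational function on $\PP^1$. For $p>2$ and $|\Delta|\ge 2$ this system is easy to produce; for $p=2$ each $n_x$ must be odd, so $\sum n_x\equiv |\Delta|\pmod 2$, and the assumption that $|\Delta|$ is even is precisely what makes the construction possible. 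Now form $b\in\KK(t)^*$ with principal divisor $\sum_{x\in\Delta} n_x[x]$, so that $b\in\Gamma(Y,\mathcal{O}_Y^*)$, and let $\alpha$ be the class of the cyclic Artin--Schreier algebra $(a,b]_p$ in $\Br(\KK(t))$. By the classical residue formula for such symbols, $\partial_x(\alpha) = v_x(b)\cdot a = n_x\cdot a$ in $\KK/(F-1)\KK$ at each $x\in\Delta$, which is nonzero, while the residue vanishes at every $x\notin\Delta$. Hence $\alpha$ lies in $\Br(Y)$, has order exactly $p$, and cannot come from $\Br(\PP^1-\Delta')$ for any $\Delta'\subsetneq\Delta$, since extension would force $\partial_x(\alpha)$ to vanish for $x\in\Delta-\Delta'$.

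For part (ii), given any $\alpha\in\Br(Y)$ of order $p$, the plan is to invoke Albert's theorem: over any field of characteristic $p$, every Brauer class of order $p$ is represented by a cyclic algebra of degree $p$, and in particular has index $p$. Applying this to the restriction $\alpha|_{\KK(Y)}\in\Br(\KK(Y))$ produces a central division algebra $D$ of dimension $p^2$ over the function field $\KK(Y)$ with $[D]=\alpha|_{\KK(Y)}$. The coordinate ring $R=\Gamma(Y,\mathcal{O}_Y)$ is a Dedekind domain, being a localization of the PID $\KK[t]$. Let $\Lambda\subset D$ be a maximal $R$-order. By Auslander--Goldman, $\Lambda$ is Azumaya over $R$ if and only if $[D]$ is unramified at every closed point of $Y$, equivalently, if and only if $[D]$ lies in the image of $\Br(R)=\Br(Y)\hookrightarrow\Br(\KK(Y))$; this holds by hypothesis, so $\Lambda$ is Azumaya. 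Since $\Lambda\otimes_R\KK(Y)=D$ has $\KK(Y)$-dimension $p^2$, the rank of $\Lambda$ over $R$ is $p^2$, yielding the required Azumaya algebra.

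The main technical input on the constructive side is the residue formula for Artin--Schreier cyclic algebras in characteristic $p$: once this is granted, the parity condition at $p=2$ emerges automatically from requiring the sum of residues to vanish while each individual residue is nonzero. For part~(ii), the substantive ingredient is Albert's period-equals-index theorem for $p$-primary Brauer classes in characteristic $p$, which guarantees the minimal possible degree $p$; extending the generic division algebra to an Azumaya algebra over $Y$ is then a standard maximal order argument over a Dedekind base.
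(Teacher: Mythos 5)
Your route is genuinely different from the paper's on both halves, and the comparison is worth recording. For (i) the paper computes $\Br(Y)_p$ abstractly, via the Leray spectral sequence and the split sequence of Galois modules $0\to\bar{\KK}^*\to\cO^*(Y_{\bar{\KK}})\to\Z[\Delta]\to\Z\to0$, identifying $\Br(Y)_p$ with the kernel of the sum map $\bigoplus_{\Delta}\Hom(\Gal(\bar{\KK}/\KK),\Z/p\Z)\to\Hom(\Gal(\bar{\KK}/\KK),\Z/p\Z)$; your residues $\partial_x\big((a,b]\big)=v_x(b)\cdot\bar a$ are precisely the components of that identification, so your explicit Artin--Schreier symbol realizes the same class constructively. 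This part is essentially correct. The one nit: a rational function $b$ on $\PP^1$ with divisor supported on $\Delta$ must satisfy $\sum_x n_x=0$ in $\Z$, not merely modulo $p$; either lift your residues to integers summing to zero, or allow an extra zero/pole of order divisible by $p$ at a point outside $\Delta$ (where the residue then vanishes). The parity obstruction at $p=2$ is unaffected.

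Part (ii) contains a genuine gap. The statement you attribute to Albert --- that over \emph{any} field of characteristic $p$ a Brauer class of period $p$ has index $p$ and is given by a single cyclic algebra of degree $p$ --- is false. Albert's theorems say that an algebra of \emph{degree} $p$ is cyclic, and that $\Br(F)[p]$ is \emph{generated} by degree-$p$ cyclic algebras; but a sum of such symbols can be a division algebra of degree $p^2$ and exponent $p$ (Albert's non-cyclic $p$-algebras give examples), and in general one only has $\mathrm{ind}\mid[F:F^p]$, since every $p$-torsion class is split by $F^{1/p}$. What rescues your argument is special to the field at hand: for $F=\KK(t)$ with $\KK$ perfect one has $[F:F^p]=p$, so $\Omega^1_F$ is one-dimensional over $F$, every class in $\Br(F)[p]$ is a single differential symbol $x\,dt$, and is split by the degree-$p$ extension $F(t^{1/p})=F^{1/p}$; hence index $=$ period $=p$. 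With that corrected input, your maximal-order argument over the Dedekind ring $\Gamma(Y,\cO_Y)$ does go through (Auslander--Goldman: a maximal order is Azumaya exactly where the class is unramified, which holds because $\alpha\in\Br(Y)$). Note that this repaired argument is essentially the mechanism the paper uses, packaged differently: the paper quotes the surjection $\Psi\colon H^0(Y,\Omega^1_Y)\to\Br(Y)_p$ from \cite{OV07}, which hands you directly a rank-$p^2$ Azumaya algebra with generators $u,v$ satisfying $v^p=x$, $u^p=y$, $vu-uv=1$, bypassing both the period--index issue and maximal orders.
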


\begin{proof}
We compute $\Br(Y)\cong H^2(Y_{\et}, \cO^*_Y)$ using the Leray spectral sequence that converges
to $H^\bullet(Y_{\et}, \cO^*_Y)$ and whose second page is given by
$H^i\big(\Gal (\bar{\KK}/\KK), H^j(Y_{\bar{\KK}, \et}, \cO^*_{Y_{\bar{\KK}}})\big)$.
By Tsen's theorem, we know that
$$
H^2(Y_{\bar{\KK}, \et}, \cO^*_{Y_{\bar{\KK}}})\cong \Br(Y_{\bar{\KK}})=0.
$$
In addition, one has $H^1(Y_{\bar{\KK}, \et}, \cO^*_{Y_{\bar{\KK}}})\cong \Pic(Y_{\bar{\KK}})=0$. Hence, we conclude that
$$
\Br(Y) \cong H^2(\Gal (\bar{\KK}/\KK),  \cO^* (Y_{\bar{\KK}})).
$$

We have an exact sequence of Galois modules
\begin{equation}\label{eq:Z-Delta}
0\to\bar{\KK}^* \to \cO^* (Y_{\bar{\KK}}) \to \Z[\Delta] \to \Z \to 0,
\end{equation}
where  $\Z[\Delta]$ is the group of divisors on $\PP^1$ supported on $\Delta$, and the  last morphism is the degree map.
The sequence~\eqref{eq:Z-Delta} splits as a sequence of Galois modules, that is, it is a direct sum of sequences of Galois modules of length $2$.
Since~\eqref{eq:Z-Delta} splits, it remains exact (and split) after passing to Galois cohomology:
$$
0\to \Br(\KK) \to \Br(Y) \rar{\Res} H^2(\Gal (\bar{\KK}/\KK),  \Z[\Delta]) \to  H^2(\Gal (\bar{\KK}/\KK),  \Z) \to 0.
$$
Since $ H^i(\Gal (\bar{\KK}/\KK),  \Q)=0$ for every $i>0$, we have
$$
H^2(\Gal (\bar{\KK}/\KK),  \Z)\cong \Hom(\Gal (\bar{\KK}/\KK), \Q/\Z).
$$
Thus, the above exact sequence takes the form
$$
0\to \Br(\KK) \to \Br(Y) \rar{\Res}  \bigoplus_{\Delta}  \Hom(\Gal (\bar{\KK}/\KK), \Q/\Z) \rar{\Sigma}   \Hom(\Gal (\bar{\KK}/\KK), \Q/\Z) \to 0.
$$

Tensoring this exact sequence by $\Z/p\Z$ 
and using the vanishing of the $p$-torsion part of~\mbox{$\Br(\KK)$}   
(see Remark \ref{remark:char-vs-dim-perfect}), we find  the  following short exact sequence
$$
0\to  \Br(Y)_p \rar{\Res} \bigoplus_{\Delta}  \Hom(\Gal (\bar{\KK}/\KK), \Z/p\Z) \rar{\Sigma}   \Hom(\Gal (\bar{\KK}/\KK), \Z/p\Z) \to 0,
$$
where  we write  $\Br(Y)_p$  for the group of $p$-torsion elements in $\Br(Y)$.  Since $\KK$ has a Galois extension of degree $p$, the group $\Hom(\Gal (\bar{\KK}/\KK), \Z/p\Z)$ is non-trivial (and, thus, it contains a cyclic subgroup of order $p$).
Under our assumptions on $p$ and the cardinality of~$\Delta$ we can find an element of
$$
\ker \Sigma \subset \bigoplus_{\Delta}  \Hom(\Gal (\bar{\KK}/\KK), \Z/p\Z)
$$
such that each of its components in~\mbox{$\Hom(\Gal (\bar{\KK}/\KK), \Z/p\Z)$} is not equal to zero.
Let $\alpha$ be the corresponding element of $ \Br(Y)_p$. The compatibility of all our constructions  with the restriction map  $\Br(\PP^1 -  \Delta') \to \Br(Y)$ shows that $\alpha$ is not contained in the image of such map. This proves assertion~(i).

To prove assertion~(ii), we recall from \cite[Proposition 4.2]{OV07} that for every smooth affine scheme $Z$ over  a perfect field $\KK$ of characteristic $p>0$ there is a surjective homomorphism
$$
\Psi\colon H^0(Z,\Omega^1_Z) \to  \Br(Z)_p,
$$
which takes a differential form $\omega = xdy$ to the class of the Azumaya algebra $A_{\omega}$  generated over $\cO(Z)$ by elements $v$ and  $u$ subject to the relations $v^p=x$, $ u^p=y$ and  $vu-uv=1$. This Azumaya algebra has rank $p^2$.    Applying this to $Z=Y$
and observing that every differential form on $Y$ can be written as $xdy$ for some regular functions
$x$ and $y$ on $Y$ (which holds since $Y$ is one-dimensional and $\Delta\neq\varnothing$)
we conclude that every $p$-torsion class in $\Br(Y)$ is represented by  an Azumaya algebra  of rank $p^2$. This completes the proof of assertion~(ii).

\end{proof}

\begin{remark}
The Artin--Schreier short exact sequence
$$
0 \to \Z/p\Z \to \bar{\KK} \rar{\theta} \bar{\KK} \to 0,
$$
where the homomorphism $\theta$ is defined by $\theta(c)= c - c^p$,  gives rise to an isomorphism
$$
\Hom(\Gal (\bar{\KK}/\KK), \Z/p\Z)\cong \coker \theta.
$$
One can verify that the composition
$$
H^0(Y,\Omega^1_Y) \rar{\Psi} \Br(Y)_p \rar{\Res} \bigoplus_{\Delta}  \Hom(\Gal (\bar{\KK}/\KK), \Z/p\Z) \cong   \bigoplus_{\Delta} \coker \theta
$$
carries a differential form $\omega$ to its residues (modulo the image of $ \theta $) at all points of $\Delta$. This can be used to construct a class $\alpha$ and an Azumaya algebra $A$ satisfying the requirements of Lemma~\ref{lemma:example-Azumaya}
explicitly. Namely, take an invertible function $f$  on $Y$ whose order at every point of $\Delta$ is not equal to $0$ modulo $p$, and an element $t\in \KK$, which is not in the image of  the Artin--Schreier homomorphism. Then the class
$$
\Psi(t\frac{df}{f}) \in  \Br(Y)_p
$$
satisfies the requirements of Lemma~\ref{lemma:example-Azumaya}(i),
and the Azumaya algebra $A_{t\frac{df}{f}}$ satisfies the requirements of Lemma~\ref{lemma:example-Azumaya}(ii).
\end{remark}

Recall the following result.

\begin{theorem}[{see~\cite[\S4]{Iskovskikh96}}]
\label{theorem:Iskovskikh}
Let $\KK$ be a field, and let $X$ be a smooth projective geometrically
rational surface over $\KK$ with
$X(\KK)\neq\varnothing$. Suppose that $X$ has a structure of a relatively minimal conic bundle and $K_X^2\le 4$.
Then $X$ is not rational.
\end{theorem}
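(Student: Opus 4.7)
The plan is to follow the classical Noether--Fano--Iskovskikh method of maximal singularities. Assume for contradiction that $X$ is $\KK$-rational. Since $X(\KK)\neq\varnothing$, there is a $\KK$-birational map $\psi\colon \PP^2\dashrightarrow X$. Pulling back the linear system of lines on $\PP^2$, we obtain a mobile $\KK$-linear system $\mathcal{M}$ on $X$ without fixed components, whose numerical class we write as $[\mathcal{M}]\equiv -aK_X+bF$ in $\Pic(X)\otimes\Q$, where $F$ is the class of a fiber of $\phi$ and $a,b\ge 0$. Since $X$ has Picard rank at least two, while any mobile linear system on $\PP^2$ has one-dimensional numerical class, the map $\psi$ cannot be an isomorphism. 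The Noether--Fano--Iskovskikh inequality then asserts that the pair $(X,\tfrac{1}{a}\mathcal{M})$ is not canonical: there exists a prime divisor $E$ over $X$ with center $Z=\mathrm{center}_X(E)$ satisfying $\mathrm{mult}_E(\mathcal{M})>a\cdot a(E,X)$, where $a(E,X)$ is the discrepancy.

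I would then analyze the possible locations of $Z$. If $Z$ is a smooth point $p$ lying on a smooth fiber $F_p$, then the identity $F_p\cdot\mathcal{M}=2a$ (which follows from $F\cdot K_X=-2$) forces $\mathrm{mult}_p\mathcal{M}\le 2a$, contradicting the maximal singularity inequality. A similar local analysis rules out the case where $p$ lies at the node of a reducible fiber, using the two components of the fiber and their intersection numbers with $\mathcal{M}$. If $Z$ is a curve, it must be horizontal over $C$ and its numerical class is tightly constrained by $Z\cdot F\ge 1$ and adjunction; in this case one invokes the Sarkisov link (elementary transformation of the conic bundle centered along $E$), which produces another relatively minimal conic bundle $X'$ birational to $X$ with a new mobile system $\mathcal{M}'$ of strictly smaller anticanonical degree. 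Iterating, the descent must terminate, but since every such link preserves the birational class of the conic bundle structure and the number of geometrically reducible fibers, it preserves $K_X^2$. Thus we never reach $\PP^2$ as long as $K_X^2\le 4$, and we obtain the required contradiction.

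The main obstacle is the detailed verification of the Noether--Fano--Iskovskikh inequality and the careful bookkeeping of multiplicities through each Sarkisov link, especially when the center $Z$ of the maximal singularity lies on a reducible fiber, where the local structure is more delicate. The hypothesis $K_X^2\le 4$, equivalent to the existence of at least four geometrically reducible fibers of $\phi$, is precisely what prevents the untwisting from terminating at a model of smaller Picard rank. An alternative route would be via the stable birational invariant $H^1(\Gal(\KK^{sep}/\KK),\Pic(X_{\KK^{sep}}))$: this group vanishes for $\PP^2$, whereas a computation using the Galois action on the components of the reducible fibers shows that it is non-trivial for relatively minimal conic bundles with $\ge 4$ geometrically reducible fibers arranged in non-trivial Galois orbits, thereby directly distinguishing $X$ from $\PP^2$ up to birational equivalence.
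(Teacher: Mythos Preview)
The paper does not give a self-contained proof of this theorem: it is quoted from \cite[\S4]{Iskovskikh96}, and the only argument supplied is the remark that Iskovskikh proved it assuming $\KK$ perfect, while the general case reduces to the perfect case via Lemma~\ref{lemma:perfection} (base change to the perfection of $\KK$ preserves relative minimality of the conic bundle, and rationality descends). Your sketch does not address this reduction, but your main approach---the Noether--Fano--Iskovskikh method / Sarkisov factorisation---is precisely the method of the cited reference, so in spirit you are reconstructing the proof behind the citation rather than offering a different one.

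That said, your sketch has a genuine error in the point case. For a smooth point $p$ the discrepancy of the exceptional divisor is $1$, so the maximal singularity inequality is $\mathrm{mult}_p\mathcal{M}>a$, and the bound $\mathrm{mult}_p\mathcal{M}\le 2a$ coming from $F\cdot\mathcal{M}=2a$ does \emph{not} contradict it. In Iskovskikh's argument one does not derive a contradiction here; one performs the elementary transformation centred at $p$ (a Sarkisov link of type~II), which lowers the Sarkisov degree of $\mathcal{M}$. The substance of the proof is the classification of all Sarkisov links out of a relatively minimal conic bundle with $K_X^2\le 4$: one checks that every such link lands on another relatively minimal conic bundle with the same $K_X^2$, so the factorisation of $\psi$ can never terminate at $\PP^2$ (or any minimal del Pezzo surface). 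Your outline gestures at this but misplaces where the contradiction actually arises.

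Your alternative route via $H^1\bigl(\Gal(\KK^{sep}/\KK),\Pic(X_{\KK^{sep}})\bigr)$ does not work in the stated generality. This invariant can vanish for relatively minimal conic bundles with four or more geometrically reducible fibres (for instance when the components of each bad fibre are swapped by a single quadratic extension and the resulting permutation module has trivial $H^1$), yet such surfaces are still non-rational by Iskovskikh's theorem. So $H^1$ is only a one-way test and cannot replace the Sarkisov analysis here.
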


\begin{remark}[{cf. Remark~\ref{remark:isk96}}]
Theorem~\ref{theorem:Iskovskikh}
was proved in \cite{Iskovskikh96} under the assumption
that the field~$\KK$ is perfect.
The general case follows using Lemma~\ref{lemma:perfection}.
\end{remark}

Finally, we present a counterexample to Theorem~\ref{theorem:main} over a perfect field of characteristic~$2$.

\begin{example}
\label{example:main-counterexample}
Let $\Bbbk$ be an algebraically closed field of characteristic~$2$, and let
$$
\KK=\Bbbk\big(t^{2^{-\infty}}\big),
$$
that is, $\KK$ is obtained from $\Bbbk$ by adjoining
all roots of a transcendental variable $t$ of degrees~$2^r$, for all positive
integers~$r$.
Then $\KK$ is a perfect field of characteristic~$2$, and it has a Galois extension of degree $2$. Pick a subset $\Delta \subset \PP^1(\KK)$ of $4$ points. Set~\mbox{$Y=\PP^1 - \Delta$}.
Let $\alpha\in\Br(Y)$ be an element with the property 
described in Lemma~\ref{lemma:example-Azumaya}(i),  
and let $X\to Y$ be the smooth conic
bundle corresponding to the Azumaya algebra from
Lemma~\ref{lemma:example-Azumaya}(ii). 
By Lemma~\ref{lemma:extendingconicbundles}  there exists a conic bundle~\mbox{$\phi\colon \hat{X} \to\PP^1$}
that extends~\mbox{$X\to Y$}.
Then~$\phi$ has degenerate fibers exactly over the points of $\Delta$.
This means that~\mbox{$K_{\hat{X}}^2=4$}. Moreover, 
the conic bundle $\phi$ is relatively minimal by the choice of~$\alpha$.
Also, we know that every fiber
of $\phi$ has a $\KK$-point (see Remark~\ref{remark:char-vs-dim-perfect}),
and thus, in particular, $\hat{X}$ has a $\KK$-point.
Hence, the surface $\hat{X}$ is not rational 
by Theorem~\ref{theorem:Iskovskikh}.
On the other hand, the group of birational selfmaps of $\hat{X}$ contains an automorphism
group of the scheme-theoretic general fiber~$\hat{X}_\eta$ 
of $\phi$, which is a conic over a non-perfect field~\mbox{$\KK(\PP^1)$} 
of characteristic $2$. 
The latter automorphism group has unbounded finite subgroups by
Lemma~\ref{p-center}(ii).
We conclude that the analog of Theorem~\ref{theorem:rational-surface-vs-BFS}
(and, thus,  also Theorem~\ref{theorem:main})
fails over perfect fields of characteristic~$2$.
\end{example}

\section{Jordan property}
\label{section:Jordan}

In this section we apply Theorem~\ref{theorem:main} to study groups of birational automorphisms
of higher-dimensional varieties. The group-theoretic property we will be interested in here
is described as follows.

\begin{definition}[{see \cite[Definition~2.1]{Popov-Jordan}}]
\label{definition:Jordan}
A group~$\Gamma$ is called \emph{Jordan}
if there is a constant~$J$ such that
for any finite subgroup $G\subset\Gamma$ there exists
a normal abelian subgroup~\mbox{$A\subset G$} of index at most~$J$.
\end{definition}

The first result concerning Jordan property (and motivating the modern terminology)
is an old theorem by C.\,Jordan that asserts this property for a
general linear group over a field of characteristic zero (see e.g.~\cite[Theorem~36.13]{Curtis-Reiner-1962}).

J.-P.\,Serre noticed that Jordan property sometimes holds
for groups of birational automorphisms.

\begin{theorem}[{\cite[Theorem~5.3]{Serre2009}}]
\label{theorem:Serre}
The group of birational automorphisms of $\PP^2$
over a field of characteristic zero is Jordan.
\end{theorem}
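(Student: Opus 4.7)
The plan is to combine $G$-equivariant birational geometry on rational surfaces with Jordan's classical theorem on finite subgroups of $\GL_n$ over fields of characteristic zero.

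First, given a finite subgroup $G\subset\Bir(\PP^2)$, a standard regularization argument (resolve simultaneously the indeterminacy loci of all $g\in G$ and pass to a $G$-equivariant resolution of singularities) produces a smooth projective surface $X$ birational to $\PP^2$ on which $G$ acts biregularly. Running a $G$-equivariant minimal model program as in~\cite[Theorem~1G]{Iskovskikh80}, we may further assume that $X$ is either a $G$-del Pezzo surface or a $G$-conic bundle $\phi\colon X\to C$. In the latter case, since $X$ is rational it has a $\KK$-point by Lang--Nishimura, so $C(\KK)\neq\varnothing$ and $C\cong\PP^1$.

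In the del Pezzo case, a suitable tensor power of the ample $\Aut(X)$-invariant anticanonical bundle gives a $G$-equivariant embedding $X\hookrightarrow\PP^N$, hence $G\subset\PGL_{N+1}(\KK)$. Jordan's classical theorem applied to $\GL_{N+1}(\KK)$ then produces a normal abelian subgroup of $G$ of index bounded by a constant depending only on $N$, which is itself controlled by the bounded anticanonical degree $K_X^2\le 9$.

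In the conic bundle case, there is a left-exact sequence
\[
1\to G_{\mathrm{fib}}\to G\to G_{\mathrm{base}},
\]
where $G_{\mathrm{base}}\hookrightarrow\Aut(\PP^1)=\PGL_2(\KK)$ and $G_{\mathrm{fib}}$ acts faithfully on the generic fiber $X_\eta$, giving an embedding $G_{\mathrm{fib}}\hookrightarrow\Aut(X_\eta)\subset\PGL_2(\KK(\PP^1)^{sep})$. Both ambient linear groups are Jordan by Jordan's theorem applied to $\GL_2$. The main obstacle is that the Jordan property is not preserved under arbitrary group extensions, so the two bounds must be combined by hand. The remedy is to first pass to a bounded-index normal subgroup $G'\subset G$ such that both $A:=G'\cap G_{\mathrm{fib}}$ and $G'/A$ are abelian, and then to observe that $A$ sits inside the $\KK(\PP^1)^{sep}$-points of an algebraic torus $T\subset\Aut^\circ(X_\eta)$ of rank at most two. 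The conjugation action of $G'/A$ on $A$ factors through the induced action on the cocharacter lattice of $T$, whose image in $\GL_2(\Z)$ has order at most $\Upsilon(2)=12$ by Theorem~\ref{theorem:Minkowski}. Passing once more to the kernel of this action produces a bounded-index subgroup in which $A$ is central; the resulting two-step nilpotent group is then refined, using that the commutator map $G'/A\times A\to A$ is $\Z$-bilinear and takes values in a finite group of bounded exponent, to extract a normal abelian subgroup of $G$ of index bounded by a universal constant.
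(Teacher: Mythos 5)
The paper does not actually prove this statement; it is quoted from Serre's article \cite{Serre2009}, so there is no internal proof to compare against. Your overall strategy --- regularize the action, run the $G$-equivariant minimal model program, handle del Pezzo surfaces via a power of the anticanonical embedding together with Jordan's classical theorem, and handle conic bundles via the left exact sequence $1\to G_{\mathrm{fib}}\to G\to G_{\mathrm{base}}$ --- is essentially the strategy of Serre's proof, and the del Pezzo half is fine (modulo the small point that $G$ lands in $\PGL_{N+1}(\KK)$ rather than $\GL_{N+1}(\KK)$, which one fixes by embedding $\PGL_{N+1}\hookrightarrow\GL_{(N+1)^2}$ via the adjoint action).

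The conic bundle half has genuine gaps. First, $\Aut(X_\eta)$ is a form of $\PGL_2$ over $\KK(\PP^1)$, so its maximal tori have rank $1$, not $2$; more importantly, a finite abelian subgroup of $\PGL_2$ need not lie in a torus at all --- the Klein four group generated by the images of $\operatorname{diag}(1,-1)$ and the antidiagonal involution is the standard counterexample --- so the assertion that $A$ sits inside the points of a torus must be repaired, e.g.\ by first replacing $A$ by a cyclic subgroup of index at most $2$ (recall that finite subgroups of $\PGL_2$ in characteristic zero are cyclic, dihedral, $A_4$, $S_4$, or $A_5$). Second, and more seriously, your final step does not work as stated: a central extension of an abelian group by an abelian group whose commutator pairing takes values in a group of bounded exponent need \emph{not} have an abelian subgroup of bounded index. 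Extraspecial $2$-groups of order $2^{2n+1}$ have commutator subgroup of order $2$, yet their largest abelian subgroups have index $2^n$; this is precisely the mechanism behind the failure of the Jordan property in Theorem~\ref{theorem:Zarhin}. What rescues the argument is the additional fact that the quotient $G'/A$, being a finite abelian subgroup of $\PGL_2(\KK)$, is generated by at most two elements: the centralizer of those two generators inside the two-step nilpotent group then has index bounded by the (bounded) order of the commutator subgroup and is the desired abelian subgroup. You must invoke this bounded generation of the base group explicitly; ``bilinearity plus bounded exponent'' alone is insufficient.
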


In \cite[Theorem~1.8]{ProkhorovShramov-RC}, Yu.\,Prokhorov and C.\,Shramov generalized
Theorem~\ref{theorem:Serre} to the case of rationally connected varieties of arbitrary dimension
over fields of characteristic zero
(actually, their
results were initially obtained modulo boundedness of terminal Fano
varieties, which
was recently proved by C.\,Birkar in~\cite[Theorem~1.1]{Birkar}).
Jordan property was also proved in many other cases for groups of birational automorphisms
and for automorphism groups,
see for instance~\cite{ProkhorovShramov-Bir}, \cite{MZ},
\cite{BandmanZarhin2015a}, \cite{BandmanZarhin2017}, \cite{ProkhorovShramov-compact},
\cite{Riera-Spheres}, \cite{Riera-HamSymp},
and references therein.
However, there are varieties whose groups of birational automorphisms are not Jordan.

\begin{theorem}[\cite{Zarhin14}]
\label{theorem:Zarhin}
Let $A$ be a positive-dimensional abelian variety over an algebraically closed field of characteristic zero.
Then the group of birational automorphisms
of $A\times\PP^1$ is not Jordan.
\end{theorem}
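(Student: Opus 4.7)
The plan is to show that $\Bir(A \times \PP^1)$ fails to be Jordan by exhibiting, for every positive integer $n$, a finite subgroup isomorphic to the Heisenberg group $H_n$ of order $n^3$---the unique non-abelian central extension $1 \to \mu_n \to H_n \to (\Z/n\Z)^2 \to 1$ whose induced commutator pairing on the quotient is non-degenerate with values in $\mu_n$. An elementary group-theoretic computation shows that every maximal abelian subgroup of $H_n$ is the preimage of a Lagrangian subgroup of $(\Z/n\Z)^2$ and has order $n^2$; in particular every abelian subgroup (normal or not) has index at least $n$ in $H_n$. Hence no single constant $J$ as in Definition~\ref{definition:Jordan} can accommodate all the $H_n$ simultaneously, and $\Bir(A \times \PP^1)$ is not Jordan.

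To realize $H_n \hookrightarrow \Bir(A \times \PP^1)$, I would use Mumford's theta group. Choose any ample line bundle $M$ on $A$ and set $L = M^n$; then $A[n] \subseteq K(L) = \ker \phi_L$, and the $e_L$-pairing on $K(L)$ restricts to a non-degenerate pairing $A[n] \times A[n] \to \mu_n$ (coinciding up to convention with the classical Weil pairing $e_n$). Since $\dim A \ge 1$, the group $A[n] \cong (\Z/n\Z)^{2\dim A}$ has $\Z/n\Z$-rank at least two, so one can pick $a_1, a_2 \in A[n]$ of order $n$ with $e_L(a_1, a_2) = \zeta$ a primitive $n$-th root of unity. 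Consider now the $\PP^1$-bundle $V = \PP(\cO_A \oplus L) \to A$; because $L$ is generically trivial, the generic fiber of $V$ is $\PP^1_{k(A)}$, so $V$ is birational to $A \times \PP^1$ and $\Aut(V) \subseteq \Bir(A \times \PP^1)$.

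For each $i \in \{1,2\}$, any isomorphism $\varphi_i \colon t_{a_i}^* L \xrightarrow{\sim} L$ assembles with $\id_{\cO_A}$ into a lift $\tilde t_i \in \Aut(V)$ of the translation $t_{a_i}$ on $A$. Since $(t_{a_i})^n = \id_A$, the power $(\tilde t_i)^n$ acts on the $L$-summand by a scalar $c_i \in k^*$; rescaling $\varphi_i$ by an $n$-th root of $c_i$ (possible because $k$ is algebraically closed), one may assume $(\tilde t_i)^n = \id_V$. The commutator $[\tilde t_1, \tilde t_2]$ covers $\id_A$ and is therefore a fiberwise scalar on $L$; a direct calculation comparing $\varphi_1 \circ t_{a_1}^* \varphi_2$ with $\varphi_2 \circ t_{a_2}^* \varphi_1$ shows that this scalar is independent of the choices of $\varphi_i$ and equals $e_L(a_1, a_2) = \zeta$. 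Letting $\tau \in \Aut(V)$ denote the scalar $(\id_{\cO_A},\, \zeta \cdot \id_L)$, which has order $n$ and commutes with each $\tilde t_i$, the subgroup $\langle \tilde t_1, \tilde t_2, \tau\rangle \subseteq \Aut(V)$ has order exactly $n^3$ and is the desired copy of $H_n$.

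The main technical obstacle is the commutator identity used in the previous paragraph: that $[\tilde t_1, \tilde t_2]$ is well-defined independently of the choices of lifts, and that it equals the pairing $e_L(a_1, a_2)$. This is the content of Mumford's classical theorem identifying the theta group $\mathcal{G}(L)$ with a central extension $1 \to \Gm \to \mathcal{G}(L) \to K(L) \to 1$ whose group-theoretic commutator realizes the $e_L$-pairing. The finite Heisenberg group $H_n$ is cut out of $\mathcal{G}(L)$ by restricting to $\langle a_1, a_2\rangle \subseteq K(L)$ and to $\mu_n \subseteq \Gm$, and the construction above is its geometric incarnation inside $\Aut(V) \subseteq \Bir(A \times \PP^1)$.
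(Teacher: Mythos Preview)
The paper does not prove Theorem~\ref{theorem:Zarhin}; it simply quotes the result from Zarhin~\cite{Zarhin14}, whose argument is exactly the theta-group construction you outline. So your approach coincides with the cited one.

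One minor correction: the claim that $e_{M^n}$ restricts to a non-degenerate pairing on $A[n]$ fails for general ample $M$. For instance, if $A$ is an elliptic curve, $M=\cO_A(2P)$, and $n=2$, then $K(M^2)=A[4]$, and the restriction of $e_{M^2}$ to the subgroup $A[2]=2\cdot A[4]$ is identically trivial. The fix is immediate: restrict attention to integers $n$ coprime to $|K(M)|$. Then $K(M^n)=A[n]\oplus K(M)$ with the two summands $e_{M^n}$-orthogonal (for $x\in A[n]$ and $y\in K(M)$ one has $e_{M^n}(x,y)^n=1$ and $e_{M^n}(x,y)^{\exp K(M)}=1$, and these exponents are coprime), so non-degeneracy of $e_{M^n}$ on $K(M^n)$ forces non-degeneracy on $A[n]$. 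This still produces Heisenberg subgroups $H_n$ for arbitrarily large $n$, and the conclusion is unaffected.
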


Recall that on any variety $X$ over a field of characteristic zero
one can construct the \emph{maximal rationally connected fibration},
which is a canonically defined rational map with
rationally connected fibers and non-uniruled base
(see~\cite[\S\,IV.5]{Kollar-1996-RC},
\cite[Corollary~1.4]{Graber-Harris-Starr-2003}).
The maximal rationally connected fibration is equivariant
with respect to the whole group of birational automorphisms of~$X$.
Keeping in mind Theorem~\ref{theorem:Zarhin}, it seems natural to try to understand
the groups of birational automorphisms for varieties with maximal rationally
connected fibration of small relative dimension;
note that the case when the relative dimension is~$0$ (that is, the maximal rationally
connected fibration is birational or, equivalently, the variety is not uniruled)
is settled by~\cite[Theorem~1.8(ii)]{ProkhorovShramov-Bir}.

\begin{theorem}[{\cite[Theorem~1.6]{BandmanZarhin2015a}}]
\label{theorem:BZ-Jordan}
Let $X$ be an irreducible variety over a field of characteristic zero, and let $\phi\colon X\dasharrow Y$
be its maximal rationally connected fibration.
Suppose that the relative dimension of $\phi$ equals $1$, and that $\phi$ has no rational sections.
Then the group of birational automorphisms of $X$ is Jordan.
\end{theorem}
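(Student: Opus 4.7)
The plan is to use the birational equivariance of the MRC fibration to exhibit $\Bir(X)$ inside a group extension, and then assemble Jordan-ness from the kernel and the quotient.

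First I will set up the extension. Since the MRC fibration is canonical up to birational equivalence, every element of $\Bir(X)$ descends to a birational self-map of $Y$, giving a left-exact sequence
\begin{equation*}
1 \longrightarrow K \longrightarrow \Bir(X) \longrightarrow \Bir(Y),
\end{equation*}
where $K = \Aut(X_\eta/\KK(\eta))$ is the automorphism group of the generic fiber of $\phi$ over the function field $\KK(\eta)$. The relative-dimension-one hypothesis means $X_\eta$ is a smooth geometrically rational curve, hence a conic, and the absence of a rational section means $X_\eta(\KK(\eta))=\varnothing$, so $X_\eta$ is a non-trivial conic.

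Next I will control both ends of the sequence. Since $Y$ is non-uniruled (by the defining property of the MRC fibration in characteristic zero), $\Bir(Y)$ is Jordan by~\cite[Theorem~1.8(ii)]{ProkhorovShramov-Bir}. For the kernel, Lemma~\ref{lemma:SB-Aut} identifies $K$ with $A^*(\KK(\eta))/\KK(\eta)^*$, where $A$ is the quaternion division algebra corresponding to $X_\eta$. A finite subgroup of $K$ embeds into $\PGL_2$ over the algebraic closure of $\KK(\eta)$, hence is Jordan by the classical theorem of Klein; to obtain sharper control I would extend scalars from $\KK(\eta)$ to the larger field $\KK(\eta)(\mu_\infty)$ obtained by adjoining all roots of unity, then apply Corollary~\ref{corollary:Zarhin-improved} to the base-changed conic and conclude that finite subgroups of the enlarged $K$ are elementary abelian $2$-groups of order at most~$4$. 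A Galois-descent argument then yields a uniform bound on the orders of finite subgroups of the original $K$.

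Finally, the theorem will follow from the standard extension lemma: given a left-exact sequence $1 \to K \to G \to Q$ with $K$ having uniformly bounded finite subgroups and $Q$ Jordan, the group $G$ is Jordan. Applied to a finite subgroup $G_0\subset \Bir(X)$, split as $G_0 \cap K$ inside $K$ and as its image in $\Bir(Y)$, this produces an abelian normal subgroup of bounded index in $G_0$. The main obstacle I expect is the Galois-descent step controlling the kernel: enlarging the base to get all roots of unity makes Corollary~\ref{corollary:Zarhin-improved} available, but translating the resulting bound back to a uniform bound over $\KK(\eta)$ itself — in a way that survives the passage through the extension lemma — is the technical heart of the argument, and is where the original Bandman--Zarhin proof concentrates most of its effort.
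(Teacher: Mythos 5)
This theorem is quoted from Bandman--Zarhin and is not proved in the paper; the closest in-house analogue is the proof of Proposition~\ref{proposition:Jordan}, whose skeleton (left-exact sequence onto $\Bir(Y)$, control of the fiberwise kernel, Jordan property of $\Bir(Y)$, group-theoretic extension lemma) your proposal correctly reproduces. The genuine gap is in your treatment of the kernel: over an arbitrary field of characteristic zero the group $K=\Aut(X_\eta/\KK(\eta))$ does \emph{not} have uniformly bounded finite subgroups. The paper itself points at the basic counterexample right after Proposition~\ref{proposition:quadricnew}: for the conic $C$ given by $x^2+y^2+z^2=0$ over $\mathbb{R}$ one has $\Aut(C)\cong\mathbb{H}^*/\mathbb{R}^*\cong\mathrm{SO}(3)$, which contains cyclic subgroups of every order. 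Taking $X=C$ itself (or, to make the extension problem visible, $X=C\times A$ with $A$ an elliptic curve over $\mathbb{R}$ having a real point, so that the projection to $A$ is the maximal rationally connected fibration and has no rational section) shows that the kernel can contain $\mathrm{SO}(3)$. Your proposed repair --- base change to $\KK(\eta)(\mu_\infty)$, apply Corollary~\ref{corollary:Zarhin-improved}, then descend --- fails at the first step: the conic may acquire a point over the enlarged field (here $C$ splits over $\mathbb{C}(A)$), so the corollary does not apply and the enlarged kernel is all of $\PGL_2$; and since $K$ injects into the enlarged kernel, no descent can recover a bound that is false upstream. Note also that $X_{\mathbb{C}}$ is birational to $\PP^1\times A_{\mathbb{C}}$, whose birational automorphism group is not Jordan by Theorem~\ref{theorem:Zarhin}, so one cannot reduce to an algebraically closed ground field either (this reduction is what makes the paper's Proposition~\ref{proposition:Jordan} work, since $\bar{\Bbbk}(Y)$ then contains all roots of unity). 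The Bandman--Zarhin proof must therefore exploit the finer structure of finite subgroups of $\Aut(C)\subset\PGL_2(\overline{\KK(\eta)})$ in characteristic zero (cyclic, dihedral, or one of three exceptional groups), not mere boundedness; that analysis is the actual content of their argument and is missing from your proposal.

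A secondary but independent error: the extension lemma you invoke is false as stated. A bounded kernel and a Jordan quotient do not suffice --- a central extension of an elementary abelian $p$-group by $\Z/p\Z$ can be extraspecial, with no abelian subgroup of bounded index, even though the kernel has order $p$ and the quotient is abelian. One also needs every finite subgroup of the quotient to be generated by a bounded number of elements; this is why the proof of Proposition~\ref{proposition:Jordan} cites~\cite[Remark~6.9]{ProkhorovShramov-Bir} for $\Bir(Y)$ before applying~\cite[Lemma~2.8]{ProkhorovShramov-Bir}, and you would need the same hypothesis here.
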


\begin{theorem}[{\cite[Lemma~4.6]{ProkhorovShramov-3folds}}]
\label{theorem:PS-Jordan}
Let $X$ be an irreducible variety over a field of characteristic zero, and let $\phi\colon X\dasharrow Y$ be its maximal rationally connected fibration.
Suppose that the relative dimension of $\phi$ equals $2$, and that $\phi$ \emph{has} a rational section.
Suppose also that $X$ is not birational to $Y\times\PP^2$.
Then the group of birational automorphisms of $X$ is Jordan.
\end{theorem}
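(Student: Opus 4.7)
The plan is to exploit the canonicity of the maximal rationally connected fibration $\phi\colon X\dasharrow Y$ together with the standard extension lemma for Jordan groups. Since every birational automorphism of $X$ preserves $\phi$, one obtains a short exact sequence
\[
1 \longrightarrow \Bir_K(F) \longrightarrow \Bir(X) \longrightarrow \Bir(Y),
\]
where $K=\KK(Y)$ and $F=X_\eta$ is the generic fiber, a smooth projective surface over $K$. The base $Y$ is non-uniruled by the definition of the MRC fibration, so $\Bir(Y)$ is Jordan by~\cite[Theorem~1.8(ii)]{ProkhorovShramov-Bir}. Recall that if $\Gamma$ fits in $1\to N\to\Gamma\to Q\to 1$ with $N$ having uniformly bounded finite subgroups and $Q$ Jordan, then $\Gamma$ is Jordan; so the crux of the matter is to show that $\Bir_K(F)$ has bounded finite subgroups.

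The surface $F$ is geometrically rational; the rational section of $\phi$ produces a $K$-point of $F$; and Lang--Nishimura shows that the hypothesis ``$X$ is not birational to $Y\times\PP^2$'' is equivalent to $F$ being non-$K$-rational. When $K$ contains all roots of unity, these are precisely the hypotheses of Theorem~\ref{theorem:rational-surface-vs-BFS}, which directly delivers the desired bound on $\Bir_K(F)$. In general, I would pass to the cyclotomic extension $L=K(\mu_\infty)$ and use the injection~\mbox{$\Bir_K(F)\hookrightarrow\Bir_L(F_L)$} to reduce to bounding finite subgroups in $\Bir_L(F_L)$. Over $L$ one may invoke Theorem~\ref{theorem:main}, provided one can certify that $F_L$ is not $L$-birational to $\PP^1\times C'$ for any conic~$C'$ over $L$; since $F_L$ has an $L$-point, this is again equivalent, via Lang--Nishimura, to $F_L$ being non-$L$-rational.

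The hard part will be precisely this last descent step: non-$K$-rationality of $F$ does not formally imply non-$L$-rationality, since certain birational obstructions may be trivialized by the cyclotomic extension. To sidestep this difficulty, I would follow the strategy used in the proof of Theorem~\ref{theorem:main}: for every finite $G\subset\Bir_K(F)$, pick a smooth projective $G$-equivariant model of $F$ and run the $G$-equivariant MMP over $K$, producing a $G$-minimal model that is either a $G$-del Pezzo surface or a $G$-conic bundle over a curve. One can then bound $|G|$ case by case after base change to $L$, using the material of~\S\ref{section:DP} in the del Pezzo case and Proposition~\ref{proposition:conicsummary} combined with Theorem~\ref{theorem:conic-bundles-negative-degree} in the conic bundle case; the non-$K$-rationality of $F$ is what prevents the bound from degenerating in the relevant subcases. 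Feeding the resulting uniform bound on the orders of finite subgroups of $\Bir_K(F)$ into the extension lemma then completes the proof.
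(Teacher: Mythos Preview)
The paper does not give its own proof of Theorem~\ref{theorem:PS-Jordan}; it is quoted from \cite[Lemma~4.6]{ProkhorovShramov-3folds}. Your outline is essentially the argument found there, and it mirrors the proof the paper does give for the closely analogous Proposition~\ref{proposition:Jordan}.

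One genuine correction: the extension lemma you invoke is stated incompletely. Having $N$ with bounded finite subgroups and $Q$ Jordan is \emph{not} enough to conclude that $\Gamma$ is Jordan; one also needs that every finite subgroup of $Q$ is generated by a uniformly bounded number of elements. This is exactly how the paper argues in the proof of Proposition~\ref{proposition:Jordan}: it cites \cite[Remark~6.9]{ProkhorovShramov-Bir} for the bounded generation of finite subgroups of $\Bir(Y)$ (using that $Y$ is non-uniruled), and then applies \cite[Lemma~2.8]{ProkhorovShramov-Bir}. Without this extra hypothesis your extension step has a gap.

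On the cyclotomic detour: the paper's proof of Proposition~\ref{proposition:Jordan} handles the roots-of-unity issue more directly by first reducing to the case that the ground field $\Bbbk$ is algebraically closed, using the inclusion $\Bir_\Bbbk(X)\subset\Bir_{\bar\Bbbk}(X_{\bar\Bbbk})$. Then $K=\Bbbk(Y)$ automatically contains all roots of~$1$, and Theorem~\ref{theorem:rational-surface-vs-BFS} applies to the generic fiber $F$ immediately. Your concern that non-$K$-rationality of $F$ might not survive a field extension is reasonable in the abstract, but the case-by-case $G$-MMP analysis you propose as a workaround is precisely how Theorem~\ref{theorem:rational-surface-vs-BFS} is proved in \cite{ProkhorovShramov-3folds}; so rather than reducing to that theorem you end up reproving it. Either route works, but invoking Theorem~\ref{theorem:rational-surface-vs-BFS} after the reduction to $\Bbbk=\bar\Bbbk$ is shorter.
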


Using Theorem~\ref{theorem:main}, we obtain the following result that is somewhat similar
to Theorems~\ref{theorem:BZ-Jordan}
and~\ref{theorem:PS-Jordan}.

\begin{proposition}\label{proposition:Jordan}
Let $X$ be an irreducible variety over a field $\Bbbk$ of characteristic zero, and let~\mbox{$\phi\colon X\dasharrow Y$} be its maximal rationally connected fibration.
Suppose that the relative dimension of~$\phi$ equals $2$, and that $\phi$ has no rational sections.
Suppose also that $X$ is not birational
to~\mbox{$Z\times\PP^1$}, where $Z$ is a conic bundle over~$Y$.
Then the group of birational automorphisms of $X$ is Jordan.
\end{proposition}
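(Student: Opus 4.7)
My plan is to exploit the $\Bir(X)$-equivariance of the maximal rationally connected (MRC) fibration $\phi$, combining Theorem~\ref{theorem:main} applied to the generic fibre with the Jordan property of $\Bir(Y)$. Given a finite subgroup $G\subset\Bir(X)$, the canonicity of the MRC fibration yields a homomorphism $\rho\colon G\to\Bir(Y)$; I would denote its kernel by $K$, so that $K$ acts faithfully by birational automorphisms on the generic fibre $X_\eta$, which is a smooth projective geometrically rational surface over the function field $\KK=\Bbbk(Y)$. The hypothesis that $X$ is not birational to $Z\times\PP^1$ for any conic bundle $Z$ over $Y$ translates, by spreading out, into the statement that $X_\eta$ is not birational over $\KK$ to $\PP^1\times C$ for any conic $C$, while the absence of a rational section of $\phi$ gives $X_\eta(\KK)=\varnothing$.

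Next I would pass to the cyclotomic extension $\KK'=\Bbbk(\mu_\infty)(Y)$, which contains all roots of unity and into which $K$ embeds via the natural inclusion $\Bir(X_\eta)\hookrightarrow\Bir(X_\eta\otimes_\KK\KK')$. Granted that the condition ``not birational to $\PP^1\times C$'' persists after this base change, Theorem~\ref{theorem:main} yields a universal bound $|K|\le N$. The image $\rho(G)\subset\Bir(Y)$ lies in a Jordan group, since $Y$ is non-uniruled by Graber--Harris--Starr, and the group of birational selfmaps of a non-uniruled variety over a field of characteristic zero is Jordan by \cite[Theorem~1.8(ii)]{ProkhorovShramov-Bir}. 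Combining these two pieces via the standard lemma asserting that an extension of a Jordan group by a finite normal subgroup of bounded order is again Jordan, one concludes that every finite $G\subset\Bir(X)$ has a normal abelian subgroup of index bounded by a constant depending only on $X$.

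The main obstacle will be verifying that the crucial hypothesis---that $X_\eta$ is not birational to $\PP^1\times C$ over $\KK$---survives the extension of scalars from $\KK$ to $\KK'=\KK(\mu_\infty)$. This property is not automatically preserved under enlarging the base field: for instance, a fibre product of two non-isomorphic non-rational conic bundles over $Y$ can become birationally $Y\times\PP^2$ after adjoining suitable roots of unity. Circumventing this difficulty is likely to require a case split according to whether the hypothesis persists after base change, and in the complementary case invoking either Theorem~\ref{theorem:PS-Jordan} or a separate Jordan result for varieties of the form $Y'\times\PP^2$ with $Y'$ non-uniruled.
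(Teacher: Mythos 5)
Your overall strategy coincides with the paper's: both exploit the $\Bir(X)$-equivariance of the maximal rationally connected fibration to obtain the left exact sequence $1\to\Bir(X)_\phi\to\Bir(X)\to\Bir(Y)$, identify $\Bir(X)_\phi$ with $\Bir(S)$ for the generic fibre $S$ over $\KK=\Bbbk(Y)$, bound its finite subgroups via Theorem~\ref{theorem:main}, and combine this with the Jordan property of $\Bir(Y)$ (non-uniruled by Graber--Harris--Starr) through a group-theoretic extension lemma. The one structural difference is how the roots-of-unity hypothesis of Theorem~\ref{theorem:main} is arranged: the paper reduces at the very outset to $\Bbbk$ algebraically closed (using $\Bir(X)\subset\Bir(X_{\bar{\Bbbk}})$ and the fact that the Jordan property passes to subgroups), after which $\KK=\Bbbk(Y)$ automatically contains all roots of $1$ and ``$S$ is not birational to $\PP^1\times C$'' is read off directly from the hypothesis on $X$; it does not base-change the generic fibre separately, so the persistence problem you worry about is absorbed into that initial reduction rather than handled by a case split. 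Your honest flagging of the persistence issue is reasonable, but you leave it unresolved, whereas the paper's route makes the translation of hypotheses to the generic fibre immediate.

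There is one genuine flaw in your final step. The ``standard lemma'' you invoke --- that an extension of a Jordan group by a finite normal subgroup of bounded order is again Jordan --- is false as stated: the Heisenberg group over an infinite elementary abelian $p$-group is a central extension of an abelian (hence Jordan) group by $\Z/p\Z$, yet it contains extraspecial subgroups whose abelian subgroups have unbounded index. What \cite[Lemma~2.8]{ProkhorovShramov-Bir} actually requires is the additional hypothesis that every finite subgroup of the quotient is generated by a bounded number of elements; this is supplied for $\Bir(Y)$ with $Y$ non-uniruled by \cite[Remark~6.9]{ProkhorovShramov-Bir}, and the paper cites exactly this input. With that ingredient added, your argument closes.
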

\begin{proof}
The proof is similar to those of Theorems~\ref{theorem:BZ-Jordan}
and~\ref{theorem:PS-Jordan}, see~\cite[\S5]{BandmanZarhin2015a}
and~\mbox{\cite[\S4]{ProkhorovShramov-3folds}}. We may assume that the field
$\Bbbk$ is algebraically closed.

Let $S$ be the fiber of $\phi$ over the scheme-theoretic generic point of $Y$.
Then $S$ is a geometrically rational surface defined over the field
$\KK=\Bbbk(Y)$, and $S$ has no $\KK$-points by assumption.
Since $\phi$ is equivariant with respect to $\Bir(X)$, we have an exact sequence
of groups
$$
1\longrightarrow \Bir(X)_\phi \longrightarrow \Bir(X)\longrightarrow\Bir(Y),
$$
where the action of $\Bir(X)_\phi$ is fiberwise with respect to $\phi$.

Our assumptions imply that $S$ is not birational to $\PP^1\times C$ over~$\KK$, where $C$ is a conic.
The group $\Bir(X)_\phi$ is isomorphic to the group
$\Bir(S)$, and thus has bounded finite subgroups by Theorem~\ref{theorem:main}.
On the other hand, the variety $Y$ is not uniruled. Thus the group~\mbox{$\Bir(Y)$} is Jordan
by~\cite[Theorem~1.8(ii)]{ProkhorovShramov-Bir}.
Moreover, there is a constant $R$ such that every finite subgroup
in $\Bir(Y)$ is generated by at most $R$ elements, see e.g.~\mbox{\cite[Remark~6.9]{ProkhorovShramov-Bir}}.
Therefore, the group $\Bir(X)$ is Jordan by a simple group-theoretic argument,
see~\mbox{\cite[Lemma~2.8]{ProkhorovShramov-Bir}}.
\end{proof}

Note that if the field $\Bbbk$ is algebraically closed, the assumptions of Proposition~\ref{proposition:Jordan}
imply that the dimension of $Y$ is at least~$2$ (so that
the dimension of $X$ is at least~$4$) due to the theorem of Graber, Harris, and Starr,
see~\cite{Graber-Harris-Starr-2003}.
It would be interesting to describe more explicitly the varieties with maximal rationally connected
fibration of relative dimension~$2$ whose birational automorphism groups are not Jordan, similarly to
what was done in~\cite[Theorem~1.8]{ProkhorovShramov-3folds}.
Also, it would be interesting to try to generalize Theorem~\ref{theorem:main}
to the case of threefolds, and to derive conclusions for birational automorphisms
of varieties with maximal rationally connected
fibration of relative dimension~$3$.

\appendix
\section{Quotients}
\label{section:appendix}

In this section we recall some well known facts about the quotients of algebraic varieties by
the groups~$\Gm$ and~$\Ga$. We start with a simple observation.

\begin{lemma}\label{lemma:subgroup}
Let $\KK$ be an arbitrary field.
Let $\G$ be a linear algebraic group over $\KK$ isomorphic either to $\Gm$ or to $\Ga$, and let
$H$ be a finite subgroup scheme of $\G$.
Then the quotient $\G/H$ is isomorphic to~$\G$.
\end{lemma}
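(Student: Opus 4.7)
The plan is to split into the two cases $\G\cong\Gm$ and $\G\cong\Ga$ and, in each, to produce an explicit isogeny $\G\to\G$ whose kernel is exactly $H$. Since such an isogeny is faithfully flat and surjective as a morphism of fppf group sheaves, the induced map $\G/H\to\G$ will be an isomorphism.

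For $\G=\Gm$, I would use Cartier duality: closed subgroup schemes of $\Gm$ correspond to quotients of the constant group scheme $\Z$, so every finite subgroup scheme $H\subset\Gm$ is of the form $\mu_n$ for some $n\ge 1$. Then the $n$-th power map $[n]\colon\Gm\to\Gm$ is a homomorphism of commutative group schemes with kernel $\mu_n$ (this is the defining property of $\mu_n$), and it is faithfully flat since on the Hopf-algebra side it corresponds to the inclusion $\KK[x,x^{-1}]\hookrightarrow \KK[x,x^{-1}]$, $x\mapsto x^n$, which exhibits the target as a free module of rank $n$ over the source. Therefore $[n]$ factors as $\Gm\to\Gm/\mu_n\iso \Gm$.

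For $\G=\Ga$ and $\Char\KK=0$, there is nothing to prove: an ideal $I\subset\KK[t]$ such that $\KK[t]/I$ is a (finite-dimensional) Hopf quotient of the Hopf algebra of $\Ga$ must be compatible with $t\mapsto t\otimes 1+1\otimes t$, and a short binomial-coefficient computation shows that the only such $I$ giving a finite quotient is $I=(t)$ (so $H$ is trivial). For $\Char\KK=p>0$, I would invoke the classical fact that a non-zero ideal $(f)\subset\KK[t]$ is a Hopf ideal exactly when $f$ is (a scalar multiple of) a $p$-polynomial, $f(t)=\sum_{i=0}^m a_i t^{p^i}$ with $a_m\ne 0$; this comes from expanding the Hopf-ideal condition $f(x+y)\in(f(x),f(y))$ and observing that each term $\binom{n}{k}x^ky^{n-k}$ must vanish unless $k$ is a power of $p$. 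Such an $f$ defines a homomorphism $f\colon\Ga\to\Ga$ with kernel $H$; its image is a closed subgroup scheme of $\Ga$ of the same dimension, hence equals $\Ga$, and $f$ is faithfully flat because $\KK[t]\to\KK[t]$, $t\mapsto f(t)$, is finite free of rank $p^m$. Thus $\Ga/H\iso\Ga$.

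The main obstacle is the characteristic-$p$ classification of finite subgroup schemes of $\Ga$ as kernels of $p$-polynomials; although standard, it is the only step that requires a genuine computation rather than a formal invocation of duality or dimension count. Everything else reduces to checking that the candidate isogeny is faithfully flat, which in both cases is immediate from the explicit formula on coordinate rings.
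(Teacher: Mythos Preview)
Your proposal is correct and follows a genuinely different route from the paper's proof. The paper proceeds by splitting $H$ along the connected--\'etale sequence: it first shows that the connected part $H_0$ of $H$ is the kernel of an iterated Frobenius $F^n\colon\G\to\G$, then treats the \'etale quotient $H_1$ separately (for $\Gm$ via $x\mapsto x^n$ with $n$ prime to $p$; for $\Ga$ by writing down $f_{H_1}(x)=\prod_{\alpha\in H_1(\KK^{sep})}(x-\alpha)$ and proving additivity by induction on $\dim_{\mathbf F_p} H_1(\bar\KK)$), and finally composes the two isogenies. You instead classify \emph{all} finite subgroup schemes in one stroke: for $\Gm$ via the character-group anti-equivalence (what you call Cartier duality), and for $\Ga$ via the Hopf-ideal description, producing a single isogeny with kernel $H$ without ever invoking the connected--\'etale decomposition.

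Your approach is cleaner for $\Gm$ (no case split between $\mu_{p^n}$ and $\mu_n$ with $(n,p)=1$) and treats the $\Ga$ case uniformly. The trade-off is that it rests on the Hopf-ideal classification for $\Ga$, which the paper avoids. Your sketch of that step is a little thin: the ``binomial-coefficient'' remark only shows that the \emph{degree} of $f$ is a $p$-power, not that $f$ itself is a $p$-polynomial. The clean way to finish is to note that $g(x,y):=f(x+y)-f(x)-f(y)$ has $\deg_x g,\deg_y g<\deg f=:n$, while $\KK[x,y]/(f(x),f(y))$ has $\{x^iy^j:0\le i,j<n\}$ as a basis; since $g$ lies in the ideal $(f(x),f(y))$ and is already reduced, it must vanish, so $f$ is additive and hence a $p$-polynomial. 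With that filled in, your argument is complete.
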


\begin{proof}
We first prove the claim for a connected  finite subgroup scheme $H$.  If $\Char \KK =0$ then every such $H$  is trivial and there is nothing to prove. Assume $\Char \KK =p>0$.
If
$$
\G = \Ga= \Spec \KK[x]
$$
then the subscheme $H\subset  \G$ is given by the ideal $(x^{p^n})$ for some integer
$n\ge 0$.
Similarly, if
$$
\G = \Gm = \Spec \KK[x,x^{-1}]
$$
then the subscheme $H\subset  \G$ is given by the ideal $((x-1)^{p^n})$ for some integer
$n\ge 0$.
It follows that $H$  fits into an exact sequence of group schemes
$$
1\longrightarrow H \xlongrightarrow{\phantom{\quad F^n\quad}} \G\xlongrightarrow{\quad F^n\quad}\G \longrightarrow  1,
$$
where $F$ is the relative Frobenius morphism. Thus $\G/H $ is  isomorphic to~$\G$.

Next, assume that $H$ is  an \'etale subgroup scheme.  Note that  \'etale subgroup schemes of $\G$ are in one-to-one correspondence with finite subgroups of $\G(\KK^{sep})$
stable under the action of the Galois group. It follows that every  \'etale subgroup scheme $H$  of $\G_m$ fits
into an exact sequence
$$
1\longrightarrow H \xlongrightarrow{\phantom{\quad x\mapsto x^n\quad}} \G_m\xlongrightarrow{\quad x\mapsto x^n\quad} \G_m \longrightarrow  1
$$
for some non-negative integer  $n$ not divisible by the characteristic of $\KK$. Hence, the quotient~\mbox{$\G_m/H$} is isomorphic to $\G_m$.

The additive group scheme $\G_a$ over a field of characteristic~$0$ has no non-trivial finite subgroup schemes.
If $\Char \KK =p>0$ and   $H\subset \G_a$ is  an \'etale   subgroup scheme,  we set
$$
f_H(x) = \prod_{\alpha \in H(\KK^{sep}) \subset  \KK^{sep}} (x - \alpha) \in \KK[x],
$$
and let $f_H\colon \G_a \to \G_a$ be the morphism of schemes given by $f_H(x)$. We claim that $f_H$ is a  group scheme homomorphism whose kernel is $H$, i.e. the identity
$$
f_H(x+y)= f_H(x) +f_H(y)
$$
holds in $\KK[x,y]$.

It suffices to verify the claim for $\KK= \bar \KK$. To do this we use the induction on the dimension of the $\mathbb{F}_p$-vector space  $H(\bar  \KK )\subset \bar  \KK$.
If $H(\bar  \KK )$ is spanned by some $0\ne c\in  \bar  \KK$, then
$$
f_H(x)= x^p- a^{p-1} x
$$
which is additive. For the induction step, pick a one-dimensional subspace
 $ H'(\bar  \KK)  \subset   H(\bar  \KK)$ and write
$f_H$ as the composition
$$
\G_a \xlongrightarrow{\quad f_{H'}\quad} \G_a \xlongrightarrow{\quad f_{H/H'}\quad} \G_a,
$$
where  $f_{H/H'}$ is the morphism corresponding to the image of $H$ under $f_{H'}$ which is an \'etale subgroup scheme of $\G_a$. The morphisms  $f_{H'}$ and $f_{H/H'}$ are group scheme
homomorphisms by the induction assumption. Hence, the morphism $f_{H}$ is also a  group scheme
homomorphism. It follows that $\G_a/H \cong \G_a$.

Finally, for  an arbitrary  finite subgroup scheme $H$ consider the exact sequence
$$
1\longrightarrow  H_0 \longrightarrow  H \longrightarrow   H_1 \longrightarrow  1,$$
where $H_0$ is the maximal connected subgroup scheme of $H$ and $H_1$ is an \'etale group scheme. The quotient of $\G/H$ is isomorphic to the quotient of $\G/H_0 \cong \G $ by
$H_1$ and we win.
\end{proof}

Let   $X$ be a scheme over a field $\KK$ acted on by a group scheme  $\Gamma$ over $\KK$:
$$
\sigma \colon   \Gamma \times X \to X.
$$
Recall  from~\cite[Definition 0.6]{MumfordFogartyKirwan} that a morphism $\pi\colon X\to Y$ to a scheme $Y$ over  $\KK$ is said to be a geometric quotient of $X$ by $\Gamma$ if the following conditions hold.
\begin{itemize}
\item The morphism  $\pi $  is  $\Gamma$-equivariant for the trivial action of $\Gamma$ on $Y$. That is
$$
\pi \circ \sigma= \pi \circ p_X,
$$
where $p_X\colon  \Gamma \times X \to X$
is the projection.

\item The morphism $\pi$ is submersive, i.e. for any subset $W\subset Y$ the set $\pi^{-1}(W) \subset X$ is open if and only if $W$ is open.

\item The group scheme $\Gamma$ acts transitively on all fibers of $\pi$,  that is the morphism
$$
\Gamma \times X \to X\times _Y X, \quad (g,x) \mapsto (gx,x),
$$
is surjective.

\item  The natural morphism of sheaves $\pi^\sharp\colon \cO_Y \to \pi_* \cO_X$ identifies the sheaf   $ \cO_Y$ with the subsheaf of  $\pi_* \cO_X$ that consists of $\Gamma$-invariant
functions. For a linear group scheme $\Gamma$ this is equivalent to the requirement that the sequence
$$
0\longrightarrow \cO_Y\longrightarrow \pi_*\cO_X\longrightarrow  \pi_*\cO_X\otimes\KK[\Gamma]
$$
is exact, where $\KK[\Gamma]$ denotes the algebra of functions on the affine variety~$\Gamma$.
\end{itemize}
By~\cite[Proposition 0.1]{MumfordFogartyKirwan} a geometric quotient  $\pi\colon X\to Y$ is also a categorical quotient. In particular, a geometric quotient  of $X$ by $\Gamma$ is unique if it exists.

\begin{lemma}\label{lemma:smoothness}
Let   $X$ be  an irreducible algebraic variety over a field $\KK$ with an action of an algebraic group $\Gamma$ over $\KK$, and let $\pi\colon X\to Y$ be a
geometric quotient of $X$ by $\Gamma$. Then~$\pi$ is generically smooth on the target, that is the fiber $X_\eta$ of $\pi$ over the scheme-theoretic generic
point $\eta \in Y$ is smooth over $\KK(\eta)$.
\end{lemma}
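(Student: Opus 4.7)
The plan is to exploit the third condition in the definition of a geometric quotient: since the fibers of $\pi$ are exactly the $\Gamma$-orbits, the generic fiber $X_\eta$ is a single orbit of $\Gamma_{\KK(\eta)}$, and should therefore be a homogeneous space of a smooth algebraic group, which is necessarily smooth.

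First, I would reduce to checking smoothness after base change to the algebraic closure of $\KK(\eta)$. Set $K = \overline{\KK(\eta)}$, $X' = X_\eta \otimes_{\KK(\eta)} K$, and $\Gamma' = \Gamma_K$; by faithfully flat descent, smoothness of $X_\eta$ over $\KK(\eta)$ is equivalent to smoothness of $X'$ over $K$. Base-changing the third condition gives surjectivity of the action map $\Gamma' \times X' \to X' \times_K X'$, which yields that $\Gamma'(K)$ acts transitively on the $K$-points of $X'$. Choosing any $K$-point $x_0 \in X'(K)$, the stabilizer $\mathrm{Stab}(x_0) \subset \Gamma'$ is a closed subgroup scheme, and the homogeneous space $\Gamma'/\mathrm{Stab}(x_0)$ is smooth over $K$, being a quotient of a smooth algebraic group by a closed subgroup scheme. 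The orbit map embeds $\Gamma'/\mathrm{Stab}(x_0)$ as a locally closed subscheme of $X'$ whose underlying set, by transitivity, is all of $X'$. This forces $\Gamma'/\mathrm{Stab}(x_0)$ to coincide scheme-theoretically with $(X')_{\mathrm{red}}$, so $(X')_{\mathrm{red}}$ is smooth over $K$.

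The remaining step, and the main obstacle, is to show $X' = (X')_{\mathrm{red}}$, i.e.\ that $X_\eta$ is geometrically reduced. Reducedness of $X_\eta$ itself is straightforward: since $X$ is reduced and $X_\eta$ is the pullback along $\Spec \KK(\eta) \to Y$, it is a filtered colimit of localizations of the structure sheaf, and reducedness is preserved under this operation. To obtain geometric reducedness I would invoke the fourth condition of a geometric quotient: passing to stalks at $\eta$ it identifies $\KK(\eta)$ with the $\Gamma_{\KK(\eta)}$-invariant subring of $\kappa(\xi) = \KK(X)$, where $\xi$ is the generic point of $X$; combined with smoothness (hence geometric reducedness) of $\Gamma$, this should force the extension $\KK(X)/\KK(Y)$ to be separable, and therefore $X_\eta$ geometrically reduced. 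Once this is established, $X'$ is reduced, coincides with the smooth orbit $\Gamma'/\mathrm{Stab}(x_0)$, and the proof is complete.
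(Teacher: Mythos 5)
Your overall strategy is the paper's: everything reduces to showing that $X_\eta$ is geometrically reduced, equivalently that $\KK(X_\eta)$ is separable over $\KK(\eta)$, after which transitivity of the $\Gamma$-action spreads smoothness from one point to the whole fiber. (The paper shortcuts your orbit-map discussion by observing that a scheme with a transitive action of an algebraic group is smooth as soon as its smooth locus is nonempty; your identification of $(X')_{\mathrm{red}}$ with $\Gamma'/\mathrm{Stab}(x_0)$ is a correct, if more elaborate, way to say the same thing.)

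The genuine gap is precisely at the step you flag as ``this should force the extension $\KK(X)/\KK(Y)$ to be separable.'' That implication is the entire nontrivial content of the lemma in characteristic $p>0$, and no argument is given for it; ``$\KK(\eta)=\KK(X_\eta)^{\Gamma}$ with $\Gamma$ smooth'' does not yield separability by any general principle you can cite, so it has to be proved. The paper's proof runs as follows: to show $\KK(X_\eta)\otimes_{\KK(\eta)}\KK(\eta)^{1/p}$ is a field, it suffices to treat finite subextensions $\KK(\eta)(a_1,\dots,a_n)$ with $a_i^p\in\KK(\eta)$, and one argues by induction on $n$. Assuming $\LL=\KK(X_\eta)\otimes_{\KK(\eta)}\KK(\eta)(a_1,\dots,a_{n-1})$ is a field, either $x^p-a_n^p$ has no root in $\LL$ (and the next tensor product is again a field), or some $f\in\LL$ satisfies $f^p=a_n^p\in\KK(\eta)$; in the latter case $f^p$ is $\Gamma$-invariant, and \emph{because $\Gamma$ is smooth} (so that the relevant rings of functions on $\Gamma\times X_\eta$ are reduced and $p$-th roots in them are unique) the element $f$ itself is $\Gamma$-invariant, hence lies in $\LL^{\Gamma}=\KK(\eta)(a_1,\dots,a_{n-1})$, so adjoining $a_n$ changes nothing. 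This is exactly where the smoothness hypothesis on $\Gamma$ enters, and it is the step your proposal must supply to be complete; the remainder of your argument (reducedness of $X_\eta$ itself, existence of a $K$-point, smoothness of $\Gamma'/\mathrm{Stab}(x_0)$, and the identification with $(X')_{\mathrm{red}}$) is sound.
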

 \begin{proof}
Since $X_\eta$ carries a transitive action of an algebraic group it suffices to check that the smooth locus of   $X_\eta$ is not empty.
Let us check that the field of rational functions~\mbox{$\KK(X_\eta)$} on  $X_\eta$  is separable over $\KK(\eta)$.
If $\Char \KK =0$ this is automatic. Assume for the rest of the proof that  $\Char \KK =p>0$. Denote by  $ (\KK(\eta))^{\frac{1}{p}}$
the  subfield of an algebraic closure of $\KK(\eta)$ that consists of all elements whose $p$-th power is in $\KK(\eta)$.
We have to verify that~\mbox{$\KK(X_\eta) \otimes _{\KK(\eta)} (\KK(\eta))^{\frac{1}{p}}$}  is a field.
Moreover, it suffices to do this for  $(\KK(\eta))^{\frac{1}{p}}$ replaced by a finite subextension:
$$
\KK(\eta) \subset (\KK(\eta))(a_1, \ldots, a_n) \subset (\KK(\eta))^{\frac{1}{p}}.
$$
We use induction on $n$ taking $n=0$ as the base of induction.

Assume that
$$
\LL=\KK(X_\eta) \otimes _{\KK(\eta)} (\KK(\eta))(a_1, \ldots, a_{n-1})
$$
is a field.
Set $a=a_n^p \in  \KK(\eta)$. If the polynomial $x^p -a$ does not have zeros in $\LL$ then
$$
\KK(X_\eta) \otimes _{\KK(\eta)} (\KK(\eta))(a_1, \ldots, a_n) \cong \LL[x]/(x^p-a)
$$
is also a field and we are done. Thus, we suppose that the polynomial $x^p -a$ does have
a zero $f \in \LL$, $f^p=a$. The equality  $\KK(\eta)= \KK(X_\eta)^{\Gamma}$ implies that
$$
(\KK(\eta))(a_1, \ldots, a_{n-1})  = \LL^{\Gamma}.
$$
Now, since $\Gamma$ is smooth over $\KK$
and $f^p \in \LL$ is $\Gamma$-invariant, the element $f$ itself is $\Gamma$-invariant. Hence, one has
$f\in   (\KK(\eta))(a_1, \ldots, a_{n-1})$ and
$$
(\KK(\eta))(a_1, \ldots, a_n)=   (\KK(\eta))(a_1, \ldots, a_{n-1}).
$$
This proves that  $\KK(X_\eta)$  is separable over $\KK(\eta)$, and the assertion of the lemma follows.
\end{proof}

\begin{remark}
If $\Gamma$ is a  non-smooth group scheme over $\KK$, then a geometric quotient~\mbox{$\pi\colon X\to Y$} with respect to $\Gamma$  need not be generically smooth.
For example, if $\Char \KK =p>0$ the morphism
$$
\G_m\xlongrightarrow{\quad x\mapsto x^p\quad} \G_m
$$
is a geometric quotient of $\G_m$ by the finite subgroup scheme $\mu_p \subset \G_m$ and it is not generically smooth.
\end{remark}

Recall the following classical result.

\begin{theorem}[{\cite[Theorem~2]{Rosenlicht56}}]
\label{theorem:Rosenlicht}
Let
$\KK$ be a field, and let $\Gamma$ be a linear algebraic group over $\KK$.
Let $X$ be an irreducible algebraic variety over $\KK$ with an action of $\Gamma$.
Then there exists a dense open $\Gamma$-invariant subscheme $U\subset X$
such that a geometric quotient~\mbox{$\pi\colon U\to Y$} exists.
\end{theorem}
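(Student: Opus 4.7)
The plan is to construct the quotient via the field of invariant rational functions, following the classical approach pioneered by Rosenlicht. First I would form the subfield $K = \KK(X)^{\Gamma} \subset \KK(X)$ of $\Gamma$-invariant rational functions. One needs to see that $K$ is finitely generated over $\KK$: since the action $\sigma\colon \Gamma \times X \to X$ is algebraic, any rational function on $X$ together with all its $\Gamma$-translates generates a finite-dimensional $\KK$-subspace of $\KK(\Gamma \times X)$, so in particular $K$ is the field of fractions of a finitely generated $\KK$-algebra. Let $Y_{0}$ be an irreducible variety with $\KK(Y_{0}) = K$; the inclusion $K \hookrightarrow \KK(X)$ induces a dominant rational map $\pi_{0}\colon X \dashrightarrow Y_{0}$.

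Next I would pass to a dense $\Gamma$-invariant open subset on which $\pi_{0}$ becomes a morphism with the expected fiber dimension. By Chevalley's theorem, $\Gamma$-orbits in $X$ are locally closed and $x \mapsto \dim(\Gamma\cdot x)$ is upper semi-continuous, so there is a dense $\Gamma$-invariant open $V \subset X$ on which every orbit has the common maximal dimension $d$. Removing the indeterminacy locus of $\pi_{0}$ (which is $\Gamma$-invariant, since $\pi_{0}$ is $\Gamma$-equivariant for the trivial action on $Y_{0}$) and shrinking $Y_{0}$ to a suitable open $Y$, I obtain a surjective $\Gamma$-equivariant morphism $\pi\colon U \to Y$ with $U\subset V$ open and dense, whose fibers are equidimensional of dimension $\dim X - \dim Y = d$.

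The main obstacle is to arrange that the fibers of $\pi$ are precisely single $\Gamma$-orbits, not merely finite unions of them. A generic fiber $F$ is a $\Gamma$-invariant closed subscheme of $U$ of dimension $d$, so it decomposes into finitely many orbits of maximal dimension $d$, which are therefore open in $F$. If some generic fiber broke into at least two irreducible components, I could find a rational function $f$ on $X$ constant along each component and taking distinct generic values on them; averaging over $\Gamma$ (more precisely, using that the components of a generic fiber are permuted algebraically in a finite étale manner and passing to a symmetric function) produces a $\Gamma$-invariant rational function on $X$ that is not a pullback from $Y_{0}$, contradicting $K = \KK(X)^{\Gamma}$. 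Hence, after yet another shrinking, every fiber of $\pi\colon U \to Y$ is a single $\Gamma$-orbit.

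Finally, I would verify the defining properties of a geometric quotient in the sense recalled before the statement: $\Gamma$-equivariance is automatic from the trivial action on $Y$; the identification $\mathcal{O}_{Y} \cong (\pi_{*}\mathcal{O}_{U})^{\Gamma}$ follows from $K = \KK(X)^{\Gamma}$ together with normality (after one more shrinking) and the exact sheaf-level interpretation given in the text; fiberwise transitivity is the content of the previous paragraph; and the submersive property follows after shrinking further so that $\pi$ is faithfully flat (by generic flatness on the target) and hence in particular a quotient map on the underlying topological spaces.
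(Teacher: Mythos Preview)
The paper does not give its own proof of this theorem: it is quoted as a classical result of Rosenlicht, and the paragraph following the statement merely remarks that the original argument is complicated, that for the paper's purposes the perfect-field case suffices and can be reduced by Galois descent to the algebraically closed case, and that a modern treatment of the latter appears in \cite{Gyoja}. So there is no in-paper proof to compare against.

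Your outline is the classical Rosenlicht strategy and is broadly on the right track, but two steps would need tightening before it could stand as a proof. First, your argument for finite generation of $K=\KK(X)^{\Gamma}$ is not quite to the point; the clean reason is simply that $K$ is an intermediate field of the finitely generated extension $\KK\subset\KK(X)$, hence finitely generated. Second, and more seriously, the ``averaging'' paragraph is the heart of the matter and is where Rosenlicht's proof actually does its work: one cannot literally average over an infinite linear group, and over a non-perfect base field the finite permutation of fiber components need not be \'etale, so producing the extra invariant rational function that separates components requires real care (this is precisely why the paper calls Rosenlicht's argument ``rather complicated'' and points to \cite{Gyoja} for the algebraically closed case). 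Your sketch of the remaining verifications (submersiveness via generic flatness, the sheaf condition via normality after shrinking) is fine at the level of an outline.
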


The proof of Theorem~\ref{theorem:Rosenlicht} given in \cite{Rosenlicht56} is rather complicated.
For  our main application (which is Corollary~\ref{corollary:unipotent-action} below),
 it is sufficient to use the version of Theorem~\ref{theorem:Rosenlicht} over a perfect field.
In this case, the assertion can be easily reduced to the case of an algebraically closed field $\KK$ using the Galois descent.
On the other hand, for an algebraically closed field $\KK$ a proof of Theorem~\ref{theorem:Rosenlicht} in modern language can be found in~\cite{Gyoja}.

The proof of the following result for an algebraically closed $\KK$
is contained in the proof of~\cite[Theorem~2]{BB}; cf. an earlier but less accurate
proof in \cite{Matsumura} (we refer the reader to \cite{Popov-quotient} for a detailed comment on the
latter).

\begin{lemma}\label{lemma:quotient-by-Gm}
Let $\KK$ be a field.
Let $\G$ be a linear algebraic group over $\KK$ isomorphic either to $\Gm$ or to $\Ga$.
Let $X$ be an irreducible variety
over $\KK$ with a non-trivial  action of~$\G$.
Then $X$ is birational to $\PP^1\times Y$ for some variety $Y$.
\end{lemma}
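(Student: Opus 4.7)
The plan is to use Rosenlicht's theorem to produce a generic quotient and then reduce the question to the triviality of $\G$-torsors over a field. First, by Theorem \ref{theorem:Rosenlicht} there exists a dense open $\G$-invariant subscheme $U \subseteq X$ admitting a geometric quotient $\pi \colon U \to Y$. Since the action of $\G$ on $X$ is non-trivial and $\dim \G = 1$, the morphism $\pi$ has relative dimension $1$, and by Lemma \ref{lemma:smoothness} the generic fiber $U_\eta$ over $\eta = \Spec \KK(Y)$ is a smooth integral curve over $\KK(Y)$ equipped with a transitive action of $\G_{\KK(Y)}$.

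Next, I argue that $U_\eta$ can be viewed as a torsor under $\G_{\KK(Y)}$. Since $U_\eta$ is smooth and non-empty, it has a point $\bar p \in U_\eta(\KK(Y)^{sep})$. Let $H \subset \G_{\KK(Y)^{sep}}$ denote its scheme-theoretic stabilizer; this is a finite closed subgroup scheme, since the orbit of $\bar p$ fills out the one-dimensional curve $U_\eta \times_{\KK(Y)} \KK(Y)^{sep}$. Because $\G$ is commutative, the stabilizer of any other $\KK(Y)^{sep}$-point of $U_\eta$ coincides with $H$, and hence $H$ is stable under the action of $\Gal(\KK(Y)^{sep}/\KK(Y))$ on closed subschemes of $\G_{\KK(Y)^{sep}}$. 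By Galois descent $H$ descends to a finite closed subgroup scheme of $\G_{\KK(Y)}$, which I again denote by $H$, and $U_\eta$ becomes a torsor under $\G_{\KK(Y)}/H$. By Lemma \ref{lemma:subgroup}, the quotient $\G_{\KK(Y)}/H$ is isomorphic to $\G_{\KK(Y)}$, so $U_\eta$ is in fact a $\G_{\KK(Y)}$-torsor.

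Finally, $\G_{\KK(Y)}$-torsors over $\Spec \KK(Y)$ are trivial: for $\G = \Gm$ this is Hilbert's Theorem 90, and for $\G = \Ga$ this follows from the vanishing of $H^1(\Spec \KK(Y), \cO)$. Thus $U_\eta \cong \G_{\KK(Y)}$ as $\KK(Y)$-schemes. Since $\G$ embeds as an open subscheme of $\A^1 \subset \PP^1$, the generic fiber $U_\eta$ is birational to $\PP^1_{\KK(Y)}$. Spreading this isomorphism over a dense open subset of $Y$ gives a birational equivalence between $X$ and $\PP^1 \times Y$.

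The main obstacle I anticipate is the Galois descent of the scheme-theoretic generic stabilizer from $\KK(Y)^{sep}$ down to $\KK(Y)$: in positive characteristic the stabilizer may be non-reduced, so one must track the full group scheme structure rather than just the underlying reduced subscheme. Commutativity of $\G$ is what makes the descent go through cleanly, and Lemma \ref{lemma:subgroup} is what lets us promote a $\G/H$-torsor to a $\G$-torsor without losing information.
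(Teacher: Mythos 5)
Your proof is correct and follows essentially the same route as the paper's: Rosenlicht's generic quotient, generic smoothness of the fibers, identification of the generic fiber as a torsor under $\G/H$ via the Galois-invariance of the stabilizer (using commutativity), Lemma~\ref{lemma:subgroup} to replace $\G/H$ by $\G$, and triviality of $\G$-torsors over a field. The only cosmetic difference is that you invoke Hilbert's Theorem~90 and the vanishing of coherent $H^1$ for the triviality of torsors, where the paper cites Rosenlicht directly.
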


\begin{proof}
By Theorem~\ref{theorem:Rosenlicht} there exists
a dense open $\Gamma$-invariant subscheme $U\subset X$
such that a geometric quotient  $\pi\colon U\to Y$ exists.
By Lemma~\ref{lemma:smoothness}
the fiber $U_\eta$ of $\pi$ over the scheme-theoretic generic
point $\eta \in Y$ is smooth over  $\KK(\eta)$. Denote by $\LL$ a separable closure
of~\mbox{$\KK(\eta)$} and by  $U_\eta \otimes \LL$ the base change of $U_\eta$ to $\Spec \LL$. The smoothness of~$U_\eta$  implies that the set of $\LL$-points of $U_\eta$  is not empty.
Pick a point $x_0 \in U_\eta (\LL)$. The scheme~\mbox{$U_\eta \otimes\LL$} is acted on by the algebraic group $\G_\eta \otimes\LL$. Let $H_{\LL} \subset \G_\eta \otimes \LL$ be the stabilizer of the
point~$x_0$ with respect to this action. This is a finite (possibly non-reduced) subgroup scheme of~\mbox{$\G_\eta \otimes \LL$}. We claim that the orbit morphism
\begin{equation}\label{eq:orbit}
\sigma\colon (\G_\eta \otimes\LL) /H_\LL \to U_\eta \otimes \LL, \quad g H_\LL \mapsto g(x_0),
\end{equation}
is an isomorphism. Indeed, the morphism~$\sigma$  is surjective because $\G_\eta \otimes\LL$ acts transitively on $ U_\eta \otimes \LL$, and it is flat
because $ U_\eta \otimes \LL$ is smooth. Next, by definition of the   stabilizer subgroup scheme, for any scheme $S$ over $\LL$,  the morphism~$\sigma$ is injective  on $S$-points.
It follows that~$\sigma$ is a bijection on $S$-points and, hence, an isomorphism.

Now, since $\G$ is abelian and the action of $\G_\eta $ on $U_\eta$ is transitive, the stabilizer $H_\LL$ does not depend on the point $x_0$. It follows that $H_\LL  \subset \G_\eta \otimes \LL$
is a $\Gal(\LL/\KK(\eta))$-invariant closed subscheme, i.e. $H_\LL$ is obtained from a closed subgroup scheme $H\subset \G_\eta$ by the base change to $\Spec \LL$.

The group scheme
$\G_{\KK(\eta)}/H$  acts on   $U_\eta$. We claim that $U_\eta$  is a  $\G_{\KK(\eta)}/H$-torsor, that is the morphism
$$
\G_{\KK(\eta)}/H \times  U_\eta \to   U_\eta \times   U_\eta, \quad (g,x) \mapsto (gx,x),
$$
is an isomorphism. Indeed, it suffices to check this after the base change to $\Spec \LL$, where the assertion is clear since the morphism~$\sigma$
in~\eqref{eq:orbit} is an isomorphism.

Finally, by Lemma~\ref{lemma:subgroup}
the quotient $ \G_{\KK(\eta)}/H$   is isomorphic to  $\G_{\KK(\eta)}$.
On the other hand, by~\mbox{\cite[Theorem~10]{Rosenlicht56}}  every $\G_{\KK(\eta)}$-torsor is trivial,
so that $U_\eta \cong \G_{\KK(\eta)}$ (see also~\cite{Rosenlicht67}  for a different proof of this statement).  Now the assertion easily follows.
\end{proof}

An alternative proof of Lemma~\ref{lemma:quotient-by-Gm} over algebraically closed fields
can be found in~\cite{Popov-quotient}. An interesting feature of this proof is that
it avoids working with the torsor~$U_\eta$.

\begin{corollary}\label{corollary:unipotent-action}
Let $\KK$ be a perfect field, and let $X$ be an irreducible variety over $\KK$ with a faithful action of a  connected linear
algebraic group $\Gamma$.
Suppose that $\Gamma$ is not reductive. Then $X$ is birational to $\PP^1\times Y$ for some variety $Y$.
\end{corollary}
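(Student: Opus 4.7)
The plan is to reduce the assertion to Lemma~\ref{lemma:quotient-by-Gm} by exhibiting inside $\Gamma$ a subgroup isomorphic to $\Ga$ (or to $\Gm$) that is defined over $\KK$ and acts non-trivially on $X$.

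First, I would use non-reductivity of $\Gamma$ to produce a non-trivial unipotent subgroup defined over $\KK$. Since $\KK$ is perfect, the unipotent radical $R_u(\Gamma_{\bar{\KK}}) \subset \Gamma_{\bar{\KK}}$ is stable under the Galois group $\Gal(\bar{\KK}/\KK)$ and descends to a closed normal subgroup $R_u(\Gamma) \subset \Gamma$ defined over $\KK$; here we use that descent of smooth closed subgroups works over perfect fields (see, e.g., \cite[\S14.2]{Borel}). By assumption $\Gamma$ is not reductive, so $R_u(\Gamma)$ is a non-trivial connected unipotent algebraic group over~$\KK$.

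Next, I would invoke the standard fact that over a perfect field every connected unipotent algebraic group is \emph{split}, i.e.\ admits a composition series with successive quotients isomorphic to $\Ga$ (see \cite[Corollary~15.5]{Borel}). Applied to $R_u(\Gamma)$, this produces a closed subgroup $U \subset R_u(\Gamma) \subset \Gamma$ defined over $\KK$ and isomorphic to $\Ga$. Since the action of $\Gamma$ on $X$ is faithful, the restricted action of $U \cong \Ga$ on $X$ is non-trivial.

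Finally, Lemma~\ref{lemma:quotient-by-Gm}, applied to the non-trivial $\Ga$-action on $X$, immediately yields that $X$ is birational to $\PP^1 \times Y$ for some variety~$Y$. This completes the argument; the only real input beyond the preceding lemma is the existence of a $\KK$-rational $\Ga$-subgroup inside the unipotent radical, which is precisely where the perfectness hypothesis enters (a non-perfect field admits non-split unipotent groups, so this step is the one that would fail in general and is the main conceptual point to verify).
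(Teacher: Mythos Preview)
Your argument is correct and follows essentially the same route as the paper: use perfectness of $\KK$ to get a non-trivial unipotent radical defined over $\KK$, extract a $\KK$-subgroup isomorphic to $\Ga$ from it (the paper cites \cite[Theorem~15.4(iii)]{Borel} rather than Corollary~15.5, but the content is the same), and apply Lemma~\ref{lemma:quotient-by-Gm}. Your write-up is slightly more explicit about the descent of $R_u(\Gamma)$ and about why the restricted action is non-trivial, but there is no substantive difference.
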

\begin{proof}
Since $\Gamma$ is not reductive and~$\KK$ is perfect, the unipotent radical of~$\Gamma$ (that is, its largest normal
unipotent connected closed subgroup) is non-trivial.
Hence, again using the perfectness assumption
together with~\mbox{\cite[Theorem~15.4(iii)]{Borel}}, we see that~$\Gamma$
contains a subgroup isomorphic to~$\Ga$.
Now the assertion follows from Lemma~\ref{lemma:quotient-by-Gm}.
\end{proof}

\end{document}